\newcommand{\reg}{{\rm reg}}
\newcommand{\rR}{{\rm R}}
\newcommand{\ol}{\overline}
\newcommand{\Af}{\mathbb{A}_F}
\newcommand{\Ae}{\mathbb{A}_E}
\newcommand{\Gal}{{\rm Gal}}
\newcommand{\diag}{{\rm diag}}
\newcommand{\Fr}{{\rm Fr}}
\newcommand{\antidiag}{{\rm antidiag}}
\newcommand{\K}{\mathcal{K}}
\newcommand{\CC}{\mathbb{C}}
\newcommand{\Hom}{{\rm Hom}}
\newcommand{\lp}{\left(}
\newcommand{\rp}{\right)}
\newcommand{\la}{\left\langle}
\newcommand{\ra}{\right\rangle}
\newcommand{\lmx}{\begin{matrix}}
\newcommand{\rmx}{\end{matrix}}
\newcommand{\lsm}{\begin{smallmatrix}}
\newcommand{\rsm}{\end{smallmatrix}}
\newcommand{\tr}{{\rm tr}\;}
\newcommand{\ve}{\varepsilon}
\newcommand{\ep}{\epsilon}
\newcommand{\vp}{\varpi}
\newcommand{\GL}{{\rm GL}}
\newcommand{\PGSp}{{\rm PGSp}}
\newcommand{\Sp}{{\rm Sp}}
\newcommand{\Mp}{{\rm Mp}}
\newcommand{\HH}{\mathbf{H}(\Af)}
\newcommand{\N}{{\rm N}}
\newcommand{\bs}{\backslash}
\newcommand{\dps}{\displaystyle}
\newcommand{\n}{m}
\renewcommand{\K}{l}
\newcommand{\rH}{H'}
\newcommand{\Un}{{\rm U}}
\newcommand{\rG}{G'}
\newcommand{\rM}{M'}
\newcommand{\Aep}{\mathbb{A}_{E'}}
\newcommand{\rf}{f'}
\newcommand{\evp}{}
\newcommand{\st}{{\rm St}}
\providecommand{\mb}[1]{\mathbf{#1}}
\providecommand{\mc}[1]{\mathcal{#1}}
\providecommand{\mf}[1]{\mathfrak{#1}}
\providecommand{\mbb}[1]{\mathbb{#1}}
\providecommand{\abs}[1]{\left\vert{#1}\right\vert}
\providecommand{\idc}[1]{C_{#1}}
\newtheorem{generic}{}[section]
\newtheorem{theorem}[generic]{Theorem}
\newtheorem{lemma}[generic]{Lemma}
\newtheorem{proposition}[generic]{Proposition}
\numberwithin{equation}{section}
\numberwithin{figure}{section}
\theoremstyle{remark}
\newtheorem*{remark}{\sc Remark}
\theoremstyle{plain}
\newtheorem*{thm*}{\sc Theorem}
\newtheorem*{propa}{Proposition A}%
\newtheorem*{propb}{Proposition B}%
\newtheorem*{propc}{Proposition C}%
\newtheorem*{propd}{Proposition D}%
\title{Cyclic odd degree base change lifting\\for unitary groups in three variables}
\author{Ping-Shun Chan}
\address{Max-Planck-Institut f\"ur Mathematik\\Vivatsgasse 7\\53111 Bonn\\GERMANY}
\curraddr{Department of Mathematics\\University of California, San Diego\\9500 Gilman Drive \# 0112\\La Jolla, CA 92093\\USA}
\email{pschan@caa.columbia.edu}
\author{Yuval Z. Flicker}
\address{Max-Planck-Institut f\"ur Mathematik\\Vivatsgasse 7\\53111 Bonn\\GERMANY}
\curraddr{Department of Mathematics\\The Ohio State University\\231 W 18th Avenue\\Columbus, OH 43210\\USA}
\email{flicker@math.osu.edu}
\date{\today}
\begin{document}

\subjclass[2000]{Primary 11F70; Secondary 22E50, 22E55}
\keywords{Automorphic representations, base change, Langlands Functoriality, trace formula, unitary groups}

\maketitle

\markboth{PING-SHUN CHAN and YUVAL Z. FLICKER}{BASE CHANGE FOR UNITARY GROUPS}

\pagestyle{headings}

%%% TEXEXPAND: INCLUDED FILE MARKER ././_u3bc_final.tex
\begin{abstract}
Let $F$ be a number field or a $p$-adic field of odd residual characteristic.
Let $E$ be a quadratic extension of $F$, and $F'$ an odd degree cyclic field extension of $F$.
We establish a base-change functorial lifting of automorphic (resp.\ admissible) 
representations from the unitary group $\Un(3, E/F)$ to the unitary group $\Un(3, F'E/F')$.
As a consequence, we classify, up to certain restrictions,
the packets of $\Un(3, F'E/F')$ 
which contain irreducible automorphic (resp.\ admissible) representations invariant under the action 
of the Galois group $\Gal(F'E/E)$. 
We also determine the invariance of individual representations.
This work is the first study of base change into an algebraic group
whose packets are not all singletons, and which does not satisfy
the rigidity, or ``strong multiplicity one,'' theorem. Novel phenomena are encountered:
e.g.\ there are invariant packets where not every irreducible 
automorphic (resp.\ admissible) member is Galois-invariant. 
The restriction that the residual characteristic of the local fields be odd may be removed once the multiplicity one theorem
for $\Un(3)$ is proved to hold unconditionally without restriction on the dyadic places.
\end{abstract}
\section{Introduction}\label{sec:intro}
Let $F$ be a number field or a local nonarchimedean field of odd residual characteristic.
In the local case,
the restriction on the residual characteristic of $F$ 
may be removed once the multiplicity one theorem for $\Un(3)$ is proved to hold for all automorphic representations
without restriction on their dyadic local components.
Let $E$ be a quadratic extension of $F$, with $\alpha$ the generator of $\Gal(E/F)$.
For any $g = (g_{ij})$ in $\GL(3, E)$, or in the ad\`ele group $\GL(3, \Ae)$ if $E$ is a number field, 
put $\alpha(g) := (\alpha (g_{ij}))$ and
\[
\sigma(g) :=  J\,\alpha ({}^t g^{-1}) J^{-1},\quad
J = \lp\lsm &&1\\&-1&\\1&&\rsm\rp.
\]
Let $\mb{G} = \Un(3, E/F)$ be the unitary group in three variables with respect to $E/F$ and the Hermitian form
$(x, y) \mapsto x J\, {}^t\!\alpha(y)$.  
It is quasisplit and its group of $F$-points is:
\[
\Un(3, E/F)(F)
= \left\{
g \in \GL(3, E) : \sigma(g) = g
\right\}.
\]
Let $\rR_{E/F}\mb{G}$ be the $F$-group obtained from $\mb{G}$ by the restriction of scalars functor from $E$ to $F$.
Thus, $(\rR_{E/F}\mb{G})(F)$ is $\GL(3, E)$.

The base-change lifting from $\mb{G}$ to $\rR_{E/F}\mb{G}$ is established in \cite{F}
(some of whose results we summarize in Section \ref{sec:summaryF} of the Appendix, per kind suggestion of the referee).
If $F$ is a number field, then the lifting is a one-to-one correspondence between the stable packets 
of automorphic representations of $\mb{G}(\Af)$ 
and the $\sigma$-invariant, discrete spectrum, automorphic representations of $\GL(3, \Ae)$.  
Here, for a representation $(\pi, V)$ of a group $M$ and an automorphism $\gamma$ of $M$, 
we say that $\pi$ is {\bf $\gamma$-invariant} if $(\pi, V)$ is equivalent to $(\gamma\pi, V)$, 
where $\gamma\pi : m \mapsto \pi(\gamma(m))$ for all $m \in M$.  
If $F$ is local, the lifting is a one-to-one correspondence
between the packets of admissible representations of $\mb{G}(F)$ 
and the $\sigma$-invariant, $\sigma$-stable, admissible representations of $\GL(3, E)$.
Implicit is a definition of packets.  
We shall in due course recall these results in further detail.

Our work too is a study of base change for $\mb{G}$, albeit in a different sense.
Namely, we consider a nontrivial cyclic field extension $F'$ of $F$ of odd degree $n$,
and we establish the base-change lifting from the unitary group $\mb{G}$ associated with $E/F$
to the unitary group $\mb{\rG} := \rR_{F'/F}\mb{G}$ associated with $E'/F'$, 
where $E'$ is the compositum field $F'E$.  Note that $\mb{\rG}(F)$ is $\Un(3, E'/F')(F')$.
If $F$ is a number field (resp.\ local nonarchimedean field),
we classify, in terms of base change from $\mb{G}$,
the {\bf invariant packets} of $\mb{\rG}(\Af)$ (resp.\ $\mb{\rG}(F)$), 
namely those packets which contain an automorphic (resp.\ admissible) representation invariant under the action of $\Gal(E'/E)$.

In the nonarchimedean case, we show that every member of every invariant local packet is $\Gal(E'/E)$-invariant,
except for one case when $n = 3$.  
In this case, there is an invariant local packet $\{\pi'\}$ on $\Un(3, E'/F')$
consisting of four representations,
and only the unique generic member is Galois invariant (see Proposition \ref{prop:n3case}).

Let $\beta$ denote a generator of $\Gal(F'/F)$.  Denote by $\alpha$ and $\beta$ also the generators of
$\Gal(E'/F')$ and $\Gal(E'/E)$ whose restrictions to $E$ and $F'$ are $\alpha$ and $\beta$.
Thus, $\Gal(E'/F) = \la \alpha, \beta\ra$, with $\alpha \beta = \beta \alpha$.
We have the following system of field extensions, where each edge is labeled by the generator of the
Galois group of the corresponding field extension:
\[
\xymatrix@!=.7pc{
& & & E' & \quad\quad\quad \tilde{\pi}', \GL(3, E') \\
\pi', \Un(3, E'/F')\quad\quad\quad\;\; & F'\ar@{-}[rru]^\alpha  & & &\\
& & & E\ar@{-}[uu]_\beta & \quad\quad\quad \tilde{\pi}, \GL(3, E) \\
\pi, \Un(3, E/F) \quad\quad\quad & F\ar@{-}[uu]^\beta \ar@{-}[rru]_\alpha & & &
}
\]
Also recorded in the diagram above is our convention for denoting the representations of the groups.  
Namely, a representation of a general linear group is marked with a $\sim$, 
and a representation of a group obtained via the restriction-of-scalars functor $\rR_{F'/F}$ is marked with a prime $\prime$.

Unlike the base-change lifting from $\GL(m, F)$ to $\GL(m, F')$ (see \cite{BCGL2}, \cite{AC}; $m$ any positive integer), 
where each $\Gal(F'/F)$-invariant automorphic or local admissible representation of $\GL(m, {F'})$ is a lift from $\GL(m, F)$,
our case of base change for $\Un(3)$ involves two twisted endoscopic groups, $\mb{G}$ {\it and} $\mb{H} := \Un(2, E/F)$.  

In the global case, 
we say that a (quasi-) packet $\{\pi\}$ of $\mb{G}(\Af)$ or $\mb{H}(\Af)$ {\bf weakly lifts}
to a (quasi-) packet $\{\pi'\}$ of $\mb{\rG}(\Af)$ if, for almost all places $v$ of $F$,
the unramified $v$-component $\pi_v$ of $\{\pi\}$ 
lifts to the unramified $v$-component $\pi'_v$ of $\{\pi'\}$ according to the underlying base-change $L$-group homomorphism.
We say that $\{\pi\}$ {\bf lifts} (or ``strongly lifts'' for emphasis) 
to $\{\pi'\}$ if for {\it all} $v$ the base-change lift $(\{\pi\}_v)'$ of the $v$-component $\{\pi\}_v$ 
(which is a local packet) of $\{\pi\}$ is the $v$-component $\{\pi'\}_v$ of $\{\pi'\}$.
The definition of local base-change lifting at a place $v$ where $F'/F$ is split is simple: 
A local packet $\{\pi_v\}$ of $G_v$ {\bf lifts} to $\otimes_{i = 1}^n\{\pi_v\}$ of $\rG_v = \otimes_{i = 1}^n G_v$,
and a local packet $\{\rho_v\}$ of $H_v$ lifts to $\otimes_{i = 1}^n \pi(\{\rho_v\})$, where $\pi(\{\rho_v\})$ 
is the lift of $\{\rho_v\}$ to $G_v$ as defined in \cite[Part 2.\ Sec.\ III.2.3.\ Cor.]{F} (See Proposition \ref{prop:u2u3localpacket}
in the Appendix).
Note that if $v$ is archimedean, then $F'/F$ is split at $v$, since $n = [F':F]$ is odd.
In the nonsplit case, where the place $v$ is necessarily nonarchimedean, 
we define local lifting, in the context of $G_v$, $H_v$ and $\rG_v$, 
in terms of the local character identities summarized below.
Namely, one local packet lifts to another if they satisfy one of these local character identities.

We say that a (quasi-) packet is {\bf discrete spectrum} if it contains a discrete spectrum automorphic representation.
We say that a discrete spectrum (quasi-) packet of $\mb{\rG}(\Af) = \Un(3, E'/F')(\mbb{A}_{F'})$ 
is {\bf $\beta$-invariant} if it contains a $\beta$-invariant, discrete spectrum, automorphic, irreducible representation.
The proof of the following main global result is achieved at the end of the paper:
\begin{theorem}[Main Global Theorem]
\label{thm:classification}
Assuming that the multiplicity one theorem holds for $\Un(3)$,
each discrete spectrum $($quasi-$)$ packet $\{\pi\}$ 
of $\mb{G}(\Af)$ or $\mb{H}(\Af)$ weakly lifts to a discrete spectrum $($quasi-$)$ packet 
$\{\pi'\}$ of $\mb{\rG}(\Af)$.  This weak lifting is in fact a lifting.  
Its image consists precisely of those discrete spectrum $($quasi-$)$ packets of $\mb{\rG}(\Af)$ which are $\beta$-invariant.

\begin{remark}
(i) Our proof of the existence of the weak lifting does not require a new use of the trace formula.  
It relies mainly on the lifting results in \cite{F}.  Note that not all members of a 
$\beta$-invariant (quasi-) packet are necessarily $\beta$-invariant.

(ii) To show that every $\beta$-invariant (quasi-) packet of $\mb{\rG}(\Af)$ is in the image of the weak lifting,
we use a twisted trace formula, and the assumed multiplicity one theorem for $\Un(3)$.
These tools are also needed to establish the local lifting, and thus the (strong) global lifting.
It has been shown in \cite{F} that the multiplicity one theorem for $\Un(3)$ holds for
any automorphic representation each of whose dyadic local components lies in the packet of 
a constituent of a parabolically induced representation.
It is likely that the same proof applies in the remaining cases, but this has not been checked as yet.

\end{remark}
\end{theorem}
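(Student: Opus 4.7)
The plan is to prove the three assertions of the theorem---existence of the weak lifting, its coincidence with a strong lifting, and the identification of its image---in that order, localizing the genuinely new trace-formula input to the latter two steps as the remark indicates.

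First I would establish the weak lifting using only the base change results of \cite{F} and cyclic base change for $\GL(3)$, without any new trace formula. Given a discrete spectrum (quasi-) packet $\{\pi\}$ of $\mb{G}(\Af)$, the results recalled in Section \ref{sec:summaryF} attach to it a $\sigma$-invariant discrete spectrum automorphic representation $\tilde\pi$ of $\GL(3,\Ae)$. Arthur--Clozel cyclic base change along $E'/E$ produces a $\Gal(E'/E)$-invariant automorphic representation $\tilde\pi'$ of $\GL(3,\Aep)$, still $\sigma$-invariant for $E'/F'$ because $\sigma$ commutes with base change and $[F':F]$ is odd. Descending $\tilde\pi'$ by \cite{F} applied to $E'/F'$ produces a discrete spectrum (quasi-) packet $\{\pi'\}$ of $\mb{\rG}(\Af)$, and at every unramified place $v$ the Satake parameters on both sides agree with those prescribed by the natural $L$-homomorphism ${}^L\mb{G}\to{}^L\mb{\rG}$, giving the weak lift. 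For a packet $\{\rho\}$ of $\mb{H}(\Af)$ one first forms the quasi-packet $\pi(\{\rho\})$ of $\mb{G}(\Af)$ via the endoscopic lifting of \cite[Part 2, III.2.3]{F} (cf.\ Proposition \ref{prop:u2u3localpacket}) and then repeats the argument.

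Next I would upgrade ``weakly lifts'' to ``lifts'' and prove surjectivity in a single package using the trace formula of $\mb{\rG}$ twisted by the automorphism $\beta$, compared via matching of orbital integrals with the (stable) trace formulas of the two relevant twisted endoscopic groups $\mb{G}$ and $\mb{H}$. For the strong lifting, at each finite nonsplit place $v$ I would choose global test functions that are spherical at all places $w \ne v$, vary the unramified data to isolate the contribution of $\{\pi'\}$ on the spectral sides, and invoke multiplicity one for $\Un(3)$ together with linear independence of twisted characters to extract a character identity at $v$ between the $\beta$-twisted character of $\pi'_v$ and the (stable) character of $\{\pi\}_v$ (respectively $\pi(\{\rho\})_v$)---which is precisely the definition of local lifting in the nonsplit case. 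For surjectivity, a $\beta$-invariant member $\pi'$ of a given $\beta$-invariant discrete spectrum (quasi-) packet $\{\pi'\}$ contributes nontrivially to the $\beta$-twisted trace formula; matching against the trace formulas of $\mb{G}$ and $\mb{H}$ and separating spectra via multiplicity one then produces a discrete spectrum (quasi-) packet of $\mb{G}(\Af)$ or $\mb{H}(\Af)$ that strongly lifts to $\{\pi'\}$.

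The main obstacle will be the disentangling required in the second step: the $\beta$-twisted spectral side of $\mb{\rG}$ must be separated into $\mb{G}$- and $\mb{H}$-contributions, and the associated matching of orbital integrals (fundamental lemma and transfer of functions) must be established in the twisted setting. A secondary difficulty is that one must track not merely packets but individual Galois-invariant members, because $\beta$ may permute nonisomorphic members of a packet nontrivially: the four-element packet of Proposition \ref{prop:n3case}, whose unique generic member is the only $\beta$-invariant one, is the prototypical phenomenon and must be handled by hand so that the twisted character identity captures precisely that member.
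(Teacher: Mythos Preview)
Your proposal is correct and matches the paper's approach: weak lifting via the commutative $L$-group diagram and the known functorialities of \cite{F} and \cite{AC} (Proposition~\ref{prop:weaklift}), surjectivity onto $\beta$-invariant packets via the Kottwitz--Shelstad comparison of the $\beta$-twisted trace formula of $\mb{\rG}$ with the stable trace formulas of $\mb{G}$ and $\mb{H}$ plus multiplicity one (Proposition~\ref{prop:classification1}), and strong lifting via the local character identities of Section~\ref{sec:localtraceidentities}, assembled in Proposition~\ref{prop:classification2}. The only point on which the paper is more elaborate than your sketch is the mechanism for isolating a single place: rather than taking test functions spherical at all $w\neq v$, the paper constructs global representations with prescribed cuspidal or square-integrable components at several auxiliary places (Lemmas~\ref{lemma:globalconstruction}, \ref{lemma:globalconstruction1}) and principal series elsewhere, so that Lemma~\ref{lemma:tracelocalinduced} can be used to cancel the contributions at the bad places---this is what makes the extraction of the identities in Propositions~\ref{prop:tracelocalunstable}, \ref{prop:uustablecaseii}, and \ref{prop:n3case} go through.
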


\subsection*{Summary of Local Character Identities}
\renewcommand{\K}{E}
Let $F, F', E, E'$ now be local nonarchimedean fields of odd residual characteristic.
We denote in roman font the group of $F$-points 
of an algebraic $F$-group, e.g.\ $G := \mb{G}(F)$.  
Let $\mb{\rH} = \rR_{F'/F}\mb{H}$.  Let:
\[
\begin{split}
\K^F &= \{x \in \K^\times : \N_{\K/F}x = 1\},\\
{\K'}^{F'} &= \{x \in {\K'}^\times: \N_{\K'/F'}x = 1\}.
 \end{split}
\]
For an irreducible admissible representation $(\pi, V)$ of a $p$-adic algebraic group $M$, a linear map $A : V \rightarrow V$, 
and a smooth, compactly supported modulo center function $f$ on $M$, 
which transforms under the center $Z(M)$ of $M$ via the inverse of the central character of $\pi$, 
put:
\[
\la \pi, f\ra_A := \tr \pi(f)A.
\]
Here, $\pi(f)$ is the convolution operator $\int_{Z(M)\bs M}\pi(m)f(m)\, dm$, 
and $dm$ is a fixed Haar measure on $M$, implicit in the notations.
The operator $\pi(f)$ is of trace class, since it has finite rank.
For simplicity we put $\la \pi, f\ra := \la \pi, f\ra_{{\rm id}}$, where ${\rm id}$ is the identity map on $\pi$.
If $\{\pi\}$ is a local packet of representations, 
we let $\la \{\pi\}, f \ra$ denote the sum $\sum_{\pi}\la \pi, f\ra$ over the members of $\{\pi\}$.

We now summarize our results in the nonarchimedean case.
Functions on the groups are assumed to be smooth, compactly supported modulo center.
They are said to be matching if their orbital integrals match in the sense of \cite[p.\ 17]{KS}.
\begin{propa}
Let $\pi'$ be the cuspidal $\rG$-module which is the sole member of the singleton local packet $\{\pi'(\tilde{\pi}')\}$
which lifts to a $\sigma$-invariant cuspidal representation $\tilde{\pi}'$ of $\GL(3, \K')$.
Then, the representation $\pi'$ is $\beta$-invariant if and only if there exists a cuspidal $G$-module $\pi$,
and a nonzero intertwining operator $A \in \Hom_G(\pi', \beta\pi')$, such that
the following character identity holds for matching functions $f$ and $\rf$ on $G$ and $\rG$, respectively$:$
\[
\la \pi', \rf\ra_A = \la \pi, f\ra.
\]
In the case where $\pi'$ is $\beta$-invariant,
$\pi$ is the sole member of the singleton local packet $\{\pi(\tilde{\pi})\}$ which lifts to
a $\sigma$-invariant cuspidal representation $\tilde{\pi}$ of $\GL(3, E)$, 
which in turn lifts to $\tilde{\pi}'$ via base change from $\GL(3, E)$ to $\GL(3, E')$.
{\rm (See Proposition \ref{prop:traceglobaldiscrete1}.)}
\end{propa}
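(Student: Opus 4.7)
The reverse direction ($\Leftarrow$) is essentially immediate: a nonzero $A \in \Hom_G(\pi', \beta\pi')$ with $\pi'$ irreducible forces $\pi' \cong \beta\pi'$ by Schur's lemma, so $\pi'$ is $\beta$-invariant. For the forward direction, assume $\pi'$ is $\beta$-invariant. The plan is to first construct the candidate $\pi$ by running the lifting diagram backward, and then to establish the character identity by comparing a twisted trace formula for $\mb{\rG}(\Af)$ against the trace formula for $\mb{G}(\Af)$ at a globalization of $\pi'$, isolating the single local contribution at the place of interest.

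\emph{Step 1 (construction of $\pi$).} Since the lift $\pi' \mapsto \tilde{\pi}'$ is functorial (in particular equivariant for the $\beta$-action), the cuspidal $\sigma$-invariant representation $\tilde{\pi}'$ inherits $\beta$-invariance from $\pi'$. By the cyclic base-change theorem for $\GL(3)$ (Arthur-Clozel), there is a cuspidal $\tilde{\pi}$ on $\GL(3, E)$ whose base change to $\GL(3, E')$ is $\tilde{\pi}'$; compatibility of $\alpha$ and $\sigma$ on $E'$ with their restrictions to $E$ renders $\tilde{\pi}$ itself $\sigma$-invariant (any residual twist is absorbed using that $n = [F':F]$ is odd). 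The base-change results of \cite{F} then furnish the sole member $\pi$ of the singleton local packet $\{\pi(\tilde{\pi})\}$ lifting to $\tilde{\pi}$, giving the representation asserted in the second part of the statement.

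\emph{Step 2 (global identity and local extraction).} Globalize: embed $\pi$ as the component at $v_0$ of a cuspidal automorphic representation $\Pi$ of $\mb{G}(\Af)$, with controlled local data (e.g.\ supercuspidal or Steinberg at two auxiliary places) so that $\Pi$ is isolated by rigidity away from $v_0$. Lift up through the diagram to produce a discrete spectrum $\beta$-invariant $\Pi'$ on $\mb{\rG}(\Af)$ with $\Pi'_{v_0} = \pi'$, whose existence is guaranteed by the Main Global Theorem combined with the $\GL$ base change chain $\tilde{\Pi} \mapsto \tilde{\Pi}'$. Comparing the $\beta$-twisted trace formula of $\mb{\rG}$ with the trace formula of $\mb{G}$ for matching global test functions, and taking the auxiliary local components at $v \neq v_0$ to be pseudo-coefficients of the already-constructed global components, a Kazhdan-type linear independence of characters kills all global terms except those of $\Pi'$ and $\Pi$. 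What survives is the local equality $\la \pi', \rf\ra_A = c\,\la \pi, f\ra$ for matching $f, \rf$, with $c$ a nonzero scalar depending on the choice of $A$ arising from the global normalization; absorbing $c$ into $A$ gives the identity in its stated form.

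\emph{Main obstacle.} The principal difficulty lies not in Step 1 but in Step 2: the twisted endoscopic picture for $\mb{\rG}$ involves the second endoscopic group $\mb{H} = \Un(2, E/F)$ as well as $\mb{G}$, so the global $\beta$-twisted trace formula identity for $\mb{\rG}$ is, a priori, a sum of $\mb{G}$- and $\mb{H}$-contributions. One must choose the auxiliary local data so that the $\mb{H}$-contribution at $v_0$ vanishes (or is separately cancelled by a parallel comparison with an $\mb{\rH}$-analogue), leaving only the $\mb{G}$-transfer. This is where the explicit transfer factors and the stabilization of the twisted trace formula must be invoked, and it is also the step where the assumption that $\pi'$ lies in a singleton packet (rather than a packet of size $> 1$) is used, to rule out interference from $\mb{H}$-type contributions.
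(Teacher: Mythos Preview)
Your overall strategy---globalize, compare the $\beta$-twisted trace formula of $\mb{\rG}$ with the stable trace formula of $\mb{G}$, cancel auxiliary local factors, and read off the identity at the distinguished place---matches the paper's proof of Proposition~\ref{prop:traceglobaldiscrete1}. Two points need correction, however. First, invoking the Main Global Theorem to produce the global $\Pi'$ is circular: that theorem is proved only at the end of the paper and relies on the local identities you are trying to establish. The paper instead runs the chain $\Pi \to \tilde{\Pi} \to \tilde{\Pi}' \to \Pi'$ using the already-available base change of \cite{F} for $\Un(3, E'/F')$ (Theorem~\ref{thm:u3gl3stableglobalpacket}), together with Arthur--Clozel for $\GL(3)$; this suffices and avoids the loop. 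Second, your proposed handling of auxiliary places via pseudo-coefficients of square-integrable components would itself require the local character identities at those places, which you do not yet have. The paper sidesteps this by choosing the globalization so that, outside the distinguished place, every nonsplit local component is an irreducible principal series (where Lemma~\ref{lemma:tracelocalinduced} gives the identity by direct computation) and one auxiliary place $u$ splits in $E$, so that $G_u$ and $\rG_u$ are general linear groups and the needed cancellation is exactly Arthur--Clozel local base change.

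On your ``main obstacle'': you correctly identify that the singleton hypothesis on $\{\pi'\}$ is what kills the $\mb{H}$-contribution, but the mechanism is global, not a matter of choosing local data at $v_0$. Because the local packet at $v_0$ is a singleton, $\tilde{\pi}'_{v_0}$ is cuspidal, hence the global $\tilde{\Pi}'$ is cuspidal; but any packet of $\mb{H}(\Af)$ that $e_H$-lifted to $\{\Pi'\}$ would force $\tilde{\Pi}'$ to be of the form $I_{(2,1)}(\cdot,\cdot)$ via the commutativity of the $L$-group diagram (Figure~\ref{fig:lgroups}), a contradiction. So the $\mb{H}$-term in equation~\eqref{eq:sepevp} is simply absent, and one obtains the clean identity of Lemma~\ref{lemma:traceglobalstable} directly---no local tailoring is needed.
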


Let $\rho$ be a cuspidal representation of $H$ which is not monomial, 
namely does not belong to any packet which is the endoscopic lift of a pair of characters of $\K^F$ 
(see Proposition \ref{prop:u1u2localpacket}).
Let $\pi(\rho)$ denote the local packet of $G$ which is the endoscopic lift of $\rho$.
The $H$-module $\rho$ base-change lifts to a $\Gal(E'/E)$-invariant cuspidal representation $\rho'$ 
of $\rH$.
Let $\pi'(\rho')$ denote the local packet of $\rG$ which is the endoscopic lift of $\rho'$ from $\rH$.  
It consists of two cuspidal representations $\pi'^+$ and $\pi'^-$.
\begin{propb}
There exist nonzero operators 
$A^\dagger \in \Hom_{\rG}(\pi'^\dagger, \beta\pi'^\dagger)$$(\dagger$ is $+$ or $-)$, 
such that
the following system of character identities holds for matching functions$:$
\[
\begin{split}
2 \la \pi'^+, \rf\ra_{A^+} &= \la \pi(\rho), f\ra + \la \rho, f_H\ra,\\
2 \la \pi'^-, \rf\ra_{A^-} &= \la \pi(\rho), f\ra - \la \rho, f_H\ra.
\end{split}
\]
In particular, both $\pi'^+$ and $\pi'^-$ are $\beta$-invariant.
{\rm (See Proposition \ref{prop:tracelocalunstable}.)}
\end{propb}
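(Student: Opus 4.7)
The plan is to derive the two identities from a global comparison of twisted trace formulas, running in parallel the two routes $H \to G \to \rG$ and $H \to \rH \to \rG$ in the diagram of endoscopic and base-change liftings. First I globalize: choose number fields $F, F'$ with $F'/F$ cyclic of odd degree and a non-monomial global cuspidal representation $R$ of $\mb{H}(\Af)$ whose $v_0$-component is the given $\rho$, and whose remaining local components are controlled (unramified outside a small finite set, of a type at which the relevant character identities are already known from \cite{F} or from Proposition A).

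Next, apply the endoscopic lifting from \cite{F} to obtain a global packet $\pi(R)$ on $\mb{G}(\Af)$, apply the base change for $\Un(2)$ to obtain a $\Gal(E'/E)$-invariant cuspidal $R'$ on $\mb{\rH}(\Af)$, and apply the endoscopic lift from $\rH$ to $\rG$ to obtain a packet $\pi'(R')$ on $\mb{\rG}(\Af)$ whose $v_0$-component is the local packet $\pi'(\rho') = \{\pi'^+, \pi'^-\}$. I then compare, for matching test functions $\rf$, $f$, $f_H$, the $\beta$-twisted trace formula for $\mb{\rG}$ with the stable trace formula for $\mb{G}$ and the trace formula for $\mb{H}$ as a twisted endoscopic group of $\mb{\rG}$. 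The geometric sides match in the sense of \cite{KS}; on the spectral side the contribution of $\pi(R)$ appears on the $\mb{\rG}$ side exclusively through the two members of $\pi'(R')$, and its decomposition according to the two characters of the component group $S_{R'} \cong \mathbb{Z}/2$ produces two global identities, one with each sign.

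To extract the local identity at $v_0$, I use linear independence of characters: varying the test functions at places $v \neq v_0$ and substituting the already-known local identities at those places (Proposition A for the stable base-change lifts, and \cite{F} for the endoscopic lifts $\Un(2) \to \Un(3)$ and for base change of $\Un(2)$) peels off all but the $v_0$ contribution. The resulting local identity comes equipped with nonzero operators $A^\pm \in \Hom_{\rG}(\pi'^\pm, \beta\pi'^\pm)$; by Schur's lemma $A^\pm \neq 0$ then forces $\pi'^\pm \simeq \beta\pi'^\pm$, giving the $\beta$-invariance claim.

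The main obstacle is the spectral separation: one must normalize the intertwining operators $A^\pm$ consistently so that the two signs in the global identity decouple into the separate contributions of $\pi'^+$ and $\pi'^-$ rather than only into their sum. This reduces to showing that the $\beta$-action on the packet $\pi'(\rho')$ commutes with the endoscopic decomposition, i.e., that $\beta$ acts trivially on $S_{\rho'} \cong \mathbb{Z}/2$, so that intertwining operators and endoscopic characters can be diagonalized simultaneously. A secondary difficulty is that the global argument determines each $A^\pm$ only up to a scalar; one therefore needs a compatibility check at an auxiliary place of $F$ where $F'/F$ splits (so base change is the diagonal embedding and the lifting is explicit) in order to pin down the normalization uniquely.
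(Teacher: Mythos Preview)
Your overall architecture---globalize, compare the $\beta$-twisted trace formula of $\mb{\rG}$ with the stable formulas of $\mb{G}$ and $\mb{H}$, then isolate the place $v_0$---is the paper's strategy, but the separation step has a genuine gap.

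If the auxiliary places are chosen so that the local packets there are singletons (which is what ``identities already known from Proposition~A'' forces, since Proposition~A only treats singleton packets and \cite{F} gives no $\beta$-twisted identities), then the multiplicity formula for the unstable global packet $\pi'(R')$ reads $m(\pi') = 1$ when an even number of local components are $\pi'^-$, and $0$ otherwise. With $v_0$ the only place where the local packet has two members, the global member with $\pi'^-_{v_0}$ has an odd count and multiplicity zero. Thus $\pi'^-$ never appears on the spectral side, and the global comparison yields only the first of the two identities. There is no ``second global identity with the other sign'': the nontrivial character of $S_{R'}$ picks out precisely the representations of multiplicity zero, so decoupling via characters of the component group does not help.

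The paper's fix is to globalize with $m \geq 2$ places $w_1,\dots,w_m$ all carrying the \emph{same} local data $\rho$, so that each local packet $\{\pi'_{w_i}\}$ has two members and $\pi'^-$ does occur (e.g.\ $\pi'^-\otimes\pi'^-$ at two places has even count). The price is that the local identities at the $w_i$ are then all unknown simultaneously, so one cannot peel them off directly. Instead one substitutes normalized matrix coefficients $\rf^{\pm}$ of $\pi'^{\pm}$ at all $w_i$, obtaining for each $m$ numerical relations such as $2\xi(m) = (D_\pi^+)^m + (D_\rho^+)^m$ and $0 = (D_\pi^+)^{m-1}D_\pi^- + (D_\rho^+)^{m-1}D_\rho^-$, where $D_\pi^{\pm} := \la \pi(\rho), f^{\pm}\ra$, $D_\rho^{\pm} := \la \rho, f_H^{\pm}\ra$. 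Summing the latter against $z^{m-1}$ and comparing poles of the resulting rational function forces $D_\pi^+ = D_\rho^+$ and $D_\pi^- = -D_\rho^-$ up to roots of unity; specializing back to $m=2$ with one test function left free then yields both local identities. Your sketch contains nothing that plays the role of this $m$-variable trick.

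A secondary point: the $\beta$-invariance of $\pi'^{\pm}$ is a prerequisite for the argument, not a consequence of it---one needs nonzero twisted characters before one can plug in matrix coefficients. The paper establishes this separately (Proposition~\ref{prop:unstablebetainvariance}), using rigidity for global packets of $\Un(3)$ together with the fact that $\beta$ has odd order and hence cannot permute the two members of $\{\pi'^+,\pi'^-\}$ nontrivially.
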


Let $\theta_1, \theta_2, \theta_3$ be distinct characters of $\K^F$.
For $i = 1, 2, 3$, 
let $\theta_i'$ denote the character $\theta_i\circ\N_{\K'/\K}$ of $\K'^{F'}$.
For $i \neq j \in \{1, 2, 3\}$,
let $\rho_{i, j} = \rho(\theta_i, \theta_j)$ denote the packet of representations of $H$ 
which is the endoscopic lift of the unordered pair $(\theta_i, \theta_j)$. 
Likewise, let  $\rho_{i, j}' = \rho(\theta_i', \theta_j')$ denote the packet of representations of $\rH$
which is the endoscopic lift of the unordered pair $(\theta_i', \theta_j')$.
Let $\{\pi\} := \{\pi(\theta_i, \theta_j)\}$  denote the unstable packet of $G$ 
which is the endoscopic lift of $\rho_{i, j}$ from $H$.
Let $\{\pi'(\theta_i', \theta_j')\}$  denote the unstable packet of $\rG$ 
which is the endoscopic lift of $\rho'_{i, j}$ from $\rH$ (see Proposition \ref{prop:u2u3localpacket}).
The packets $\{\pi'(\theta_i', \theta_j')\}$ are all equivalent to the same packet 
$\{\pi'\} = \{\pi'_a, \pi'_b, \pi'_c, \pi'_d\}$ consisting of four cuspidal representations of $\rG$.
We index the unique generic representation in $\{\pi'\}$ by $a$.
\begin{propc}
There exist nonzero intertwining operators 
$A_{*} \in \Hom_{\rG}(\pi'_*, \beta\pi'_*)$ $(* = a, b, c, d)$, 
and a way to index $\{\rho_{i, j} : {i \neq j \in \{1, 2, 3\}}\}$ as $\{\rho_1, \rho_2, \rho_3\}$,
such that the following system of local character identities holds for matching functions$:$
\[\begin{split}
4 \la \pi'_a, \rf\ra_{A_a} &= \la \{\pi\}, f\ra
+ \la \rho_1, f_H\ra + \la \rho_2, f_H\ra + \la \rho_3, f_H\ra,\\
4 \la \pi'_b, \rf\ra_{A_b} &= \la \{\pi\}, f \ra
- \la \rho_1, f_H\ra - \la \rho_2, f_H\ra + \la \rho_3, f_H\ra,\\
4 \la \pi'_c, \rf\ra_{A_c} &= \la \{\pi\}, f\ra
- \la \rho_1, f_H\ra + \la \rho_2, f_H\ra  - \la \rho_3, f_H\ra,\\
4 \la \pi'_d, \rf\ra_{A_d} &= \la \{\pi\}, f\ra
+ \la \rho_1, f_H\ra - \la \rho_2, f_H\ra - \la \rho_3, f_H\ra.
\end{split}\]
{\rm (See Proposition \ref{prop:uustablecaseii}.)}
\end{propc}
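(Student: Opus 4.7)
The plan is to reduce to a global twisted trace formula comparison, following the pattern of Propositions A and B. First globalize: embed the local data into a number-field setting with a distinguished place $v_0$ matching the given one, and extend $\theta_1, \theta_2, \theta_3$ to distinct Hecke characters $\Theta_1, \Theta_2, \Theta_3$ of $E^F \bs \Ae^F$ with controlled ramification elsewhere, writing $\Theta_i'$ for the corresponding base-change character on $E'^{F'} \bs \Aep^{F'}$. By the endoscopic classification of \cite{F}, the three unstable packets $\{\Pi(\Theta_i, \Theta_j)\}$ of $\mb{G}(\Af)$ attached to the three unordered pairs depend only on the unordered triple $(\Theta_1, \Theta_2, \Theta_3)$, hence coincide in a single global packet $\{\Pi\}$; similarly, the three packets $\{\Pi'(\Theta_i', \Theta_j')\}$ collapse to a single packet $\{\Pi'\}$ of $\mb{\rG}(\Af)$.

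Next, apply the $\beta$-twisted trace formula on $\mb{\rG}(\Af)$ and match it against the stable trace formula of $\mb{G}(\Af)$ together with, for each pair $(i, j)$, the endoscopic contribution from $\mb{H}(\Af)$ via the composite transfer $H \to H' \to \rG$ (whose unramified image on packets is base change for $\mb{H}$ followed by the endoscopic lift of Proposition B). Running the comparison for each of the three pairs yields three simultaneous identities, each of the form
\[
\la \{\Pi'\}, \rf\ra_{A} = \la \{\Pi\}, f\ra + \ve_{ij}\la \rho(\Theta_i, \Theta_j), f_H\ra,
\]
with signs $\ve_{ij} \in \{\pm 1\}$ determined by the component-group character attached to $\rho(\Theta_i, \Theta_j)$. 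The packet $\{\Pi'\}$ decomposes into four irreducible constituents, indexed by the characters of a quotient of $(\ZZ/2)^3$ isomorphic to $(\ZZ/2)^2$ (the kernel of the product map); each of the three endoscopic identities pairs with these constituents via one of the three nontrivial characters of this $(\ZZ/2)^2$. Summing the three endoscopic identities with the stable base-change identity for $\{\Pi\}$ produces the coefficient $4$ on the left. Labeling $\pi'_a$ as the constituent corresponding to the trivial character (the generic member, by Whittaker normalization of endoscopic transfer) and fixing a numbering $\rho_1, \rho_2, \rho_3$ of $\{\rho_{i,j}\}$ yields the displayed system.

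Finally, descend to the local identity at $v_0$ by the usual linear-independence-of-characters arguments, choosing test functions at places $v \neq v_0$ to isolate a single local component on each side. The main obstacle will be the consistent simultaneous normalization of the four intertwining operators $A_* \in \Hom_{\rG}(\pi'_*, \beta\pi'_*)$: each is a priori defined only up to a scalar. Two of the $A_*$ may be fixed using Propositions A and B applied to auxiliary representations whose twisted characters are already pinned down, after which the remaining two are determined by consistency of the $4 \times 4$ sign system. A secondary obstacle is confirming that $\{\pi'\}$ has exactly four distinct members; this is equivalent to nondegeneracy of the sign matrix and follows from the distinctness of the three endoscopic lifts $\rho_{i,j}$ at $v_0$, which in turn reflects the distinctness of $\theta_1, \theta_2, \theta_3$.
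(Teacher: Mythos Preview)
Your globalization-and-descent outline has the right overall shape, but the core mechanism by which the paper separates the four constituents of $\{\pi'\}$ is missing, and without it the argument does not go through.

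First, the global twisted trace identity (Lemma \ref{lemma:traceglobaluunstable}) is a \emph{single} equation
\[
4\sum_{\pi'\in\{\Pi'\}} m(\pi')\ep(\pi')\la\pi',\rf\times\beta\ra_S
=\la\{\Pi\},f\ra_S+\sum_{\{i,j\}}\la\rho(\Theta_i,\Theta_j),f_H\ra_S,
\]
not three separate identities indexed by pairs, and there is no additional ``stable base-change identity for $\{\Pi\}$'' to add in (the packet $\{\Pi\}$ is itself unstable). So the summation you describe does not produce the factor $4$; the $4$ is already the coefficient in the Kottwitz--Shelstad comparison.

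Second, the left-hand side is a \emph{multiplicity-weighted} sum over the packet, with $m(\pi')=\tfrac14(1+\sum_i\ve_i(\pi'))$ given by a product of local signs $\ve_i(\pi'_v)$. A single distinguished place $v_0$ is not enough to isolate the four local constituents: you cannot distinguish $\pi'_a$ from $\pi'_b,\pi'_c,\pi'_d$ by varying test functions only at $v_0$, because the left side mixes them according to global multiplicities you do not control. The paper's device is to globalize with $m\ge2$ places $w_1,\dots,w_m$ all carrying the \emph{same} local data, insert matrix coefficients $\rf^*$ of the various $\pi'_*$ at these places, and exploit the resulting equations in the unknowns $D_\pi^*=\la\{\pi\},f^*\ra$ and $D_{\rho_{i,j}}^*=\la\rho_{i,j},f_H^*\ra$ for varying $m$. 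A generating-function/meromorphic-continuation argument (summing over $m$) then forces the sign pattern and pins the values of the $D$'s up to roots of unity. Only after that does one specialize to $m=2$ with one free place to obtain the four local identities. Your proposal does not contain this step, and the suggestion to fix two of the $A_*$ via Propositions~A and~B does not apply: those propositions concern different (singleton and two-member) packets.

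Finally, the $\beta$-invariance of each individual $\pi'_*$ is not a formality here. The paper proves it separately (Proposition~\ref{prop:uustablebetainvariance}) via the Howe correspondence and the $\beta$-invariance of the oscillator representation; this is what guarantees that nonzero $A_*$ exist before any trace identity can be written down.
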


Suppose $n = 3$.
In the global situation,
the cuspidal monomial automorphic representations of $\GL(3, \Ae)$, associated with non-$\beta$-invariant characters of $C_{E'}$,
base-change lift to parabolically induced representations of $\GL(3, \mbb{A}_{E'})$.  
Consequently, there are stable packets of $\Un(3, E/F)(\Af)$ 
which lift to {\it unstable} packets of $\Un(3, E'/F')(\mbb{A}_{F'})$.  
This phenomenon in turn leads to the following result in the local case:
\begin{propd}
Suppose $n = 3$.
Let $\theta$ be a character of $\K'^{F'}$ such that $\theta \neq \beta\theta$.
Let $\rho' = \rho'(\theta, \beta\theta)$ be the local packet $($of cardinality $2$$)$ of $\rH$ 
which is the lift of  $\theta\otimes\beta\theta$ from $\K'^{F'} \times \K'^{F'}$.
Let $\{\pi'\} = \{\pi'_a, \pi'_b, \pi'_c, \pi'_d\}$ be the packet of $\rG$
with central character $\omega = \theta\circ\N_{E'/E}$, which is the lift of $\rho'$,
where $\pi'_a$ is the unique generic representation in the packet.
Then, $\pi'_a$ is the \textbf{only} 
$\beta$-invariant representation in the packet, and there exists a cuspidal representation $\pi$ of $G$
and a nonzero intertwining operator $A \in \Hom_{\rG}(\pi'_a, \beta\pi'_a)$, such that$:$
\[
\la \pi'_a, \rf\ra_A = \la \pi, f\ra
\]
for all matching functions.
More precisely, $\pi$ is the sole member of the singleton packet of $G$ 
which lifts to the cuspidal monomial representation $\pi(\kappa'^2\tilde{\theta})$ 
of $\GL(3, \K)$ associated with $\kappa'^2\tilde{\theta}$, where
$\tilde{\theta}(z) := \theta(z/\alpha(z))$, $z \in \K'^\times$, and $\kappa$ is a character of $\K'^\times$ 
whose restriction to $F'^\times$ is the quadratic character corresponding to the field extension $\K'/F'$ via local class field theory.
In particular, $\pi(\kappa'^2\tilde{\theta})$ is $\sigma$-invariant.
{\rm (See Proposition \ref{prop:n3case}.)}
\end{propd}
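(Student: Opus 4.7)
The plan is to reduce the local statement to a global one, by globalizing $\theta$ and invoking base-change functoriality for $\GL(3)$ together with the unitary-group base-change results of \cite{F}. First, I would choose a Hecke character $\bar{\theta}$ of $\mathbb{A}_{E'}^{1}/E'^{1}$ whose local component at the chosen place $v_0$ is the prescribed $\theta$, and which is not globally $\beta$-invariant; the other local components can be arranged to be unramified or otherwise simple enough. Set $\tilde{\bar{\theta}}(z) := \bar{\theta}(z/\alpha(z))$, a character of $\mathbb{A}_{E'}^{\times}$. Since the $\Gal(E'/E)$-orbit of $\bar{\theta}$ has cardinality three, the cubic automorphic induction $\tilde{\bar{\pi}} := \pi(\kappa'^{2}\tilde{\bar{\theta}})$ is a cuspidal monomial $\sigma$-invariant representation of $\GL(3, \mathbb{A}_{E})$; by the $\GL$-to-unitary transfer of Proposition A, it descends to a stable singleton cuspidal packet $\{\bar{\pi}\}$ on $\mathbf{G}(\Af)$.

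Next, applying the cubic base change $\GL(3, \mathbb{A}_{E}) \to \GL(3, \mathbb{A}_{E'})$ to $\tilde{\bar{\pi}}$ yields the parabolically induced representation $I(\tilde{\bar{\theta}}\boxtimes \beta\tilde{\bar{\theta}}\boxtimes\beta^{2}\tilde{\bar{\theta}})$, which corresponds, via the endoscopic lifting $\mathbf{\rH}\to \mathbf{\rG}$ recalled in Proposition \ref{prop:u2u3localpacket}, to the unstable packet $\{\bar{\pi}'\}$ on $\mathbf{\rG}(\Af)$ obtained as the lift of $\rho'(\bar{\theta}, \beta\bar{\theta})$ from $\mathbf{\rH}(\Af)$. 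By transitivity of base change, this exhibits $\{\bar{\pi}\}$ as a weak (hence, by the Main Global Theorem, also strong) global lift to $\{\bar{\pi}'\}$, whose local component at $v_0$ relates the singleton $\{\bar{\pi}\}_{v_0}$ on $G$ to the four-element local packet $\{\pi'\} = \{\pi'_{a}, \pi'_{b}, \pi'_{c}, \pi'_{d}\}$ on $\rG$.

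To extract the four individual local character identities and single out $\pi'_{a}$, I would adapt the argument of Proposition C: set up a system of four identities for the $\pi'_{*}$, with source packets $\rho_{j}' := \rho'(\beta^{j-1}\theta, \beta^{j}\theta)$ for $j = 1, 2, 3$ mod $3$ on $\rH$. Unlike in Proposition C, the three $\rho_{j}'$ are cyclically permuted by $\beta$ rather than $\beta$-fixed. Applying $\beta$ to an identity cyclically permutes the three contributions $\la \rho_{j}', \rf_{\rH}\ra$; among the four sign patterns $(+,+,+), (-,-,+), (-,+,-), (+,-,-)$, only $(+,+,+)$ is invariant under this cycling, so $\pi'_{a}$ is the unique $\beta$-invariant member of $\{\pi'\}$. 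For $\pi'_{a}$, the cyclic sum $\sum_{j}\la \rho_{j}', \rf_{\rH}\ra$ equals, up to normalization, the $\sigma$-twisted character of the parabolically induced representation on $\GL(3, E')$, which by base-change transitivity from the monomial $\pi(\kappa'^{2}\tilde{\theta})$ on $\GL(3, E)$ can be rewritten as $\la \pi, f\ra$ for the local singleton packet on $G$; this yields the required identity $\la \pi'_{a}, \rf\ra_{A} = \la \pi, f\ra$.

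The main obstacle will be the twisted trace-formula comparison needed to isolate the local identity at $v_0$ from the ambient global identity, while ensuring that the auxiliary local data chosen in the globalization do not contaminate the extraction. This uses the assumed multiplicity-one theorem for $\Un(3)$ together with the character identities of Propositions A--C already established at the other nonarchimedean places. One also needs to verify compatibility between the normalizations of the intertwining operators $A_{*}$ on the four members of $\{\pi'\}$ and the central character $\omega = \theta\circ \N_{E'/E}$, and to check that the $\sigma$-invariance of $\pi(\kappa'^{2}\tilde{\theta})$ descends consistently through all steps.
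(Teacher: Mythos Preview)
Your globalization strategy and the commutative diagram of liftings (first two paragraphs) match the paper's proof of Proposition~\ref{prop:n3case}. Your sign-pattern argument for the non-$\beta$-invariance of $\pi'_b, \pi'_c, \pi'_d$ is a genuinely different route from the paper's: you invoke the \emph{non-twisted} endoscopic character identities on $\rG$ (from \cite{F} applied to $E'/F'$), observe that $\beta$ cyclically permutes the three source packets $\rho'_j$ on $\rH$, and conclude that $\beta$ must cyclically permute the three members carrying the non-trivial sign patterns. This is a valid and rather elegant idea, provided one checks that the $\rG$--$\rH$ transfer factors are $\beta$-equivariant. The paper instead first uses rigidity to reduce to the dichotomy ``either all four or only $\pi'_a$ is $\beta$-invariant,'' and then rules out the first case by contradiction: assuming $\pi'_b$ is $\beta$-invariant, plugging a matrix coefficient of $\pi'_b$ into the global twisted identity \eqref{eq:traceglobaldiscrete3} at $m$ places forces $\la\{\pi_0\}, f^b\ra^m$ to be a root of unity for even $m$ and $0$ for odd $m$, which is impossible.

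Your derivation of the twisted character identity $\la\pi'_a, \rf\ra_A = \la\pi, f\ra$, however, has a gap. The identities in your third paragraph involving $\la\rho'_j, \rf_{\rH}\ra$ are \emph{non-twisted} identities between $\rG$ and its ordinary endoscopic group $\rH$; the target identity is a \emph{$\beta$-twisted} identity between $\rG$ and its twisted endoscopic group $G$. These two transfers use different matchings of test functions: $\rf_{\rH}$ on $\rH$ is matched to $\rf$ via ordinary endoscopy, while $f$ on $G$ is matched to $\rf$ via $\beta$-twisted endoscopy, so your assertion that the cyclic sum over the $\rho'_j$ ``can be rewritten as $\la\pi, f\ra$'' has no justification. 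Moreover, the $\rho'_j$ are not $\beta$-invariant, so there is no source packet on $H$ through which to pass; this is exactly what distinguishes Proposition~D from Proposition~C. The paper avoids this by working entirely on the stable side: since $\{\pi\}$ is a stable singleton on $\mb{G}$, Lemma~\ref{lemma:traceglobalstable} gives directly $\sum_{\pi'} m(\pi')\ep(\pi')\la\pi', \rf\times\beta\ra_S = \la\{\pi\}, f\ra_S$ with no $H$-term at all. Once only $\pi'_a$ survives on the left (by your argument or the paper's), the local identity follows by the matrix-coefficient extraction using $m\geq 2$ auxiliary places. You should replace the second half of your third paragraph with this direct route.
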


\renewcommand{\K}{l}

In both the global and local cases, we make extensive use of the results in \cite{F}, 
where much of the heavy work pertaining to this article has been done.  

\subsection*{Acknowledgment}
The first author would like to thank the MPIM-Bonn and UC San Diego for their generous support throughout the preparation of this article.
The second author also wishes to thank the MPIM-Bonn, and the Humboldt foundation, for their kind support while working on this project.
We thank the referee for the careful reading of this manuscript,
and Wee Teck Gan and Dipendra Prasad for helpful conversations on the theta correspondence.

\section{Global Case}\label{sec:globalcase}
In this section, $F$ is a number field.
We set up the trace formula machinery and establish  global character identities,
which are instrumental in deducing the local character identities in Section \ref{sec:localtraceidentities},
and the base-change lifting of automorphic representations from $\mb{G}$ and $\mb{H}$ to $\mb{\rG}$.  
As an intermediate step, we first prove the weak base-change lifting, 
which simply follows from book-keeping of a system of $L$-group homomorphisms
(Proposition \ref{prop:weaklift}).
\subsection{Definitions and Notations}
For a number field $L$, let $\mbb{A}_L$ denote its ring of ad\`eles, 
and $C_L$ the id\`ele class group $L^\times \bs \mbb{A}_L^\times$.

For a character $\mu$ of a group $M$ and an automorphism $\gamma$ of $M$,
we let $\gamma\mu$ denote the character defined by
$
(\gamma\mu)(m) = \mu(\gamma(m))$, $m \in M.
$
So, for example, if $\mu$ is a character of $C_E$, 
then $(\alpha\mu)(x) = \mu(\alpha(x))$ for all $x \in C_E$.

Put
\[
C_E^F := \Un(1, E/F)(F)\, \bs\, \Un(1, E/F)(\Af).
\]  
The embedding of the group $\Un(1, E/F)(\Af)$ in $\Ae^\times$ induces an embedding $C_E^F \hookrightarrow C_E$.
We define $C_{E'}^{F'}$ similarly.
Fix once and for all a $\beta$-invariant character $\omega'$ of $C_{E'}^{F'}$.
Noting that the center $\mb{Z}_{\rG}(\Af)$ of $\mb{\rG}(\Af)$ is isomorphic to the group 
$\Un(1, E'/F')(\mbb{A}_{F'})$,
we identify $\omega'$ with a character of $\mb{Z}_{\rG}(\Af)$.
By Lemmas \ref{lemma:globalcentralchars} and \ref{lemma:globalnormindex} in the Appendix, 
$\omega'$ determines a unique character $\omega$ of $C_E^F$ such that $\omega' = \omega\circ\N_{E'/E}$.
We identify $\omega$ with a character of the center $\mb{Z}_{\mb{G}}(\Af)$ of $\mb{G}(\Af)$.

Let $\rho$ be the right-regular representation of $\mb{G}(\Af)$ on the space of 
functions $\phi$ in $L^2(\mb{G}(F)\bs\mb{G}(\Af))$ satisfying $\phi(z g) = \omega(z)\phi(g)$ for all $z \in \mb{Z}_{\mb{G}}(\Af)$, 
$g \in \mb{G}(\Af)$.
By an automorphic representation of $\mb{G}(\Af)$
we mean an irreducible representation which is equivalent to a subquotient of $\rho$.
In particular, the automorphic representations of $\mb{G}(\Af)$ which we consider all have central character
$\omega$.  We define an automorphic representation of $\mb{\rG}(\Af)$ similarly, restricting our attention to those which have
central character $\omega'$.

Strictly speaking, our definition of an automorphic representation is not the same as that in the traditional sense, 
namely, a representation which acts on the space of automorphic forms.  Nonetheless, the discrete spectrum of the
right-regular representation on the space of square-integrable functions coincide with that on the space of automorphic forms.  

For an automorphic representation $(\pi', V)$ of $\mb{\rG}(\Af)$,
where $V$ is the vector space of $\pi'$, let $(\beta \pi', V)$ denote the representation:
\[
\beta\pi' : g \mapsto \pi'(\beta (g)),\quad \forall g \in \GL(3, \Ae),
\]
where $\beta(g_{ij}) := (\beta (g_{ij}))$.
We say that the representation $(\pi', V)$ is {\bf $\beta$-invariant} if $(\pi', V) \cong (\beta \pi', V)$.
That is, there exists a vector space automorphism $A$ of $V$ such that $\pi'(g)A = A\pi'(\beta(g))$ for all $g \in \mb{\rG}(\Af)$.

\subsection{Endoscopy, $L$-Group Homomorphisms}\label{sec:endoscopy}
We let ${}^L\! M$ denote the $L$-group of an algebraic group $\mb{M}$.
All of the algebraic $F$-groups which we consider are quasisplit over $F$ and split over a finite extension of $F$.  
For the Weil component of an $L$-group, instead of the full Weil group
we consider only the Weil group associated with the smallest field extension over which the group is split.

Recall that $n := [F':F] = [E':E]$.
The Weil group $W_{E'/F}$ is an extension of $\Gal(E'/F) = \la \alpha, \beta\ra$ by $C_{E'}$ (see \cite{T}).
The $L$-group of ${\mb \rG}$ is 
\[
{}^L \rG = \GL(3, \mbb{C})^n\rtimes W_{E'/F},
\]
where $C_{E'} \subset W_{E'/F}$ acts trivially on the identity component 
$\hat{G}' := {}^L \rG^0 = \GL(3, \CC)^n$,
and the actions of $\alpha, \beta \in W_{E'/F}$ on $\hat{G}'$ are given by:
\[
\alpha(g_1,\ldots, g_n) = (\theta(g_1),\ldots, \theta(g_n)),
\quad 
\beta(g_1,\ldots, g_n) = (g_n, g_1, g_2, \ldots, g_{n-1}),\\
\]
for all $(g_1,\ldots g_n) \in \GL(3, \mbb{C})^n$, where 
\[
\theta(g) := J\; {}^t g^{-1} J,\quad J = \lp\lsm &&1\\&-1&\\1&&\rsm\rp.
\]
The action of $\beta$ on ${}^L \rG$ induces an automorphism, also denoted by $\beta$, of the algebraic group $\mb{\rG}$, 
which in turn induces the automorphism $\beta : (g_{ij}) \mapsto (\beta(g_{ij}))$ of $\mb{\rG}(\Af)$.

In analogy to base change for $\GL(m)$, base change with respect to $F'/F$ for $\mb{G}$
is a case of twisted endoscopic lifting with respect to the automorphism $\beta$ of $\mb{\rG}$.
In \cite{KS},
the elliptic regular part of the geometric side of the twisted trace formula of a connected reductive group is stabilized,
and shown to be equal to the sum of the geometric sides of the stabilized nontwisted trace formulas of 
the twisted elliptic endoscopic groups, 
at least for global test functions whose orbital integrals are supported on elliptic regular elements.

In this work, by a twisted (or ``$\beta$-twisted'' for emphasis) endoscopic group we mean a quasisplit reductive $F$-group 
whose $L$-group is isomorphic to
\[
Z({}^L \rG, s\beta) := \{ g \in {}^L G' : s \beta(g) s^{-1} = g \},
\] 
the $\beta$-twisted centralizer in ${}^L \rG$ of some semisimple element $s$ in $\hat{G}'$.
We say that an endoscopic group $\mb{M}$ is {\bf elliptic} if $Z(\hat{M})^{\Gal(\bar{F}/F)}$ 
is contained in $Z(\hat{G})$.  Here, $\hat{M}$ denotes the identity component of the $L$-group of $\mb{M}$, 
$Z(\hat{M})$ denotes its center, 
and $Z(\hat{M})^{\Gal(\bar{F}/F)}$ 
denotes the subgroup of elements in $Z(\hat{M})$ fixed by the action of $\Gal(\bar{F}/F)$.

Observe that any element $(g_1, g_2, g_3, \ldots, g_n)$ in $\hat{G}'$ is $\beta$-conjugate
(i.e.\ conjugate in $\hat{G}'\rtimes\la\beta\ra$)
to \[(g_n g_{n - 1}\cdots g_2 g_1, I, I,\ldots, I),\]
where $I$ is the $3 \times 3$ identity matrix.
In this work, it suffices to consider the twisted elliptic endoscopic groups of $\mb{\rG}$ 
only up to the equivalence of endoscopic data (see \cite[Chap.\ 2]{KS}).
Hence, we may assume that the semisimple element
$s \in \hat{G}$ has the form $s = (s_1, I, I, \ldots, I)$, where $s_1 = \diag(a, b, c)$.
The condition $s\beta(g)s^{-1} = g$ for all $g \in {}^L G'$ implies in particular that:
\[
(s_1\theta(s_1^{-1}), I, I,\ldots, I; \alpha)
= s (I, I, \ldots, I; \alpha) s^{-1}
= (I, I, \ldots, I; \alpha).
\]
Thus, $s_1 = \diag(a, \pm 1, a^{-1})$ for some $a \in \mbb{C}^\times$.
If $a \neq \pm 1$, then the twisted endoscopic group associated with $s$ is not elliptic.
Hence, up to the equivalence of endoscopic data, 
$\mb{\rG}$ has two $\beta$-twisted elliptic endoscopic groups.
One is $\mb{G} = \Un(3, E/F)$.  Its $L$-group is isomorphic to the twisted centralizer of
$s = (I, \ldots, I) \in \GL(3, \CC)^n$.  The other is
$\mb{H} = \Un(2, E/F) \times \Un(1, E/F)$.  It corresponds to the element:
\[s = (\diag(-1, 1, -1), I, I,\ldots, I).\]
Here, we go by the convention:
\[
\Un(2, E/F)(F) := \left\{
g \in \GL(2, E) : \lp\lsm & 1\\-1&\rsm\rp \alpha ({}^t g^{-1})\lp\lsm & -1\\1&\rsm\rp = g
\right\}.
\]

We now describe the $L$-group of each twisted endoscopic group and its accompanying $L$-homomorphism into ${}^L \rG$.
Their computation is straightforward given the work of \cite{KS}.  Hence, we only give the final results without
elaborating on the intermediate work.  
For the Weil components of these $L$-groups, we use:
\[
W_{E/F} = \la z, \alpha : z \in C_E,\; \alpha^2 \in C_F - \N_{E/F}C_E,\; \alpha z = \alpha(z) \alpha\ra.
\]
$\mb{G}$:\\
${}^L G = \GL(3, \mbb{C}) \rtimes W_{E/F}$.  The action of $\alpha \in W_{E/F}$
on $\GL(3, \CC)$ is defined by $\alpha(g) = \theta(g)$, and $C_E \subset W_{E/F}$ acts trivially.
The $L$-homomorphism $b_G: {}^L G \rightarrow {}^L \rG$ is defined as follows:
\begin{gather*}
b_G(g) = \Delta g := \underbrace{(g, g,\ldots, g)}_{\text{diagonal embedding}} \in \GL(3, \CC)^n ,\quad \forall g \in \GL(3, \CC);\\
b_G(w) = w,\quad \forall\, w \in W_{E/F}.
\end{gather*}
$\mb{H}$:\\
${}^L H = \left[\GL(2, \CC) \times \CC^\times\right] \rtimes W_{E/F}$.
The action of $\alpha$ on $\GL(2, \CC)\times\CC^\times$
is defined by:
\[
\alpha (g, x) = (\theta_2(g), x^{-1}), \quad \forall\, (g, x) \in \GL(2, \CC)\times \CC^\times,
\] 
where $\theta_2(g) := \lp\lsm &1\\-1&\rsm\rp {}^t g^{-1} \lp\lsm &-1\\1&\rsm\rp$.
The $L$-homomorphism $e_H : {}^L H \rightarrow {}^L \rG$ is given by:
\begin{gather*}
e_H((g, x)) = \Delta [g, x];\; [g, x] := \lp\lsm a & & b\\& x &\\c & & d\rsm\rp,
\quad g = \lp\lsm a&b\\c&d\rsm\rp \in \GL(2, \CC),\, x \in \CC^\times;\\
e_H(\alpha) = \Delta [\diag(1, -1), 1] \rtimes \alpha,\\
e_H(z) = \Delta[\diag(\kappa(z), \kappa(z)), 1] \rtimes z,\quad \forall\, z \in C_E \subset W_{E/F}.
\end{gather*}
Here, $\kappa$ is a fixed character of $C_E$ such that $\kappa|_{C_F} = \ve_{E/F}$, 
the quadratic character of $\idc{F}$ associated to the number field extension $E/F$ via global class field theory.

A representation $\pi$ of $\mb{H}(\Af)$ has the form $\rho \otimes \chi$, where $\rho$ is a representation of the group 
$\Un(2, E/F)(\Af)$ and $\chi$ is a character of $\Un(1, E/F)(\Af)$.
Since the automorphic representations of $\mb{\rG}(\Af)$ which we study all have the same fixed central character $\omega'$, 
the representations which lift to them from the endoscopic groups all have the same fixed central character $\omega$.
Thus, $\chi$ is uniquely determined by $\rho$.  
We often abuse notation and simply write $\mb{H} = \Un(2, E/F)$, $\pi = \rho$.

In \cite{F}, 
the classification of the packets of 
$\mb{G}(\Af)$ is expressed
in terms of the Langlands functorial lifting from $\Un(2, E/F)$ to $\mb{G}$, and from $\mb{G}$ to $\rR_{E/F}\mb{G}$.  
These liftings correspond to a system of $L$-homomorphisms among the $L$-groups.
Our work makes extensive use of these results; hence, it is important to see how our system of liftings
``fits in'' with those of \cite[Sec.\ 1]{F}.  This is described by the diagram of $L$-homomorphisms in Figure \ref{fig:lgroups}, 
where the $L$-homomorphisms of \cite{F} and base change for $\GL(m)$ 
($m = 2, 3$; see \cite{BCGL2}, \cite{AC}) are represented by dotted arrows. 
Except for the two curved arrows labeled as $b_s$ and $b_s'$, the diagram is commutative.
\setcounter{figure}{1}
\begin{figure}[htp]
\begin{gather*}
\xymatrix@!=3.5pc{
{}^L \rR_{E'/F} \Un(2, E/F) \ar@{.>}[rrr]_*+++\txt{\bf{}}^{i'} &&& {}^L \rR_{E'/F}\Un(3, E/F)\\
& **[l]
\substack{{}^L \rR_{F'/F} H = {}^L\!\rH}
\ar@{.>}[lu]_{b_u'}\ar@{.>}[r]^{e'} \ar@/^1pc/@{.>}[ul]|-{b_s'} 
& **[r]\substack{{}^L \rG  = {}^L \rR_{F'/F} G} \ar@{.>}[ru]^{b'}&\\
& {}^L H \ar@{.>}[dl]^{b_u} \ar@/_1pc/@{.>}[dl]|-{b_s}  \ar[u]^{b_H} 
\ar[ru]|{e_H}_*+{\txt{\bf{}}} 
\ar@{.>}[r]_e & {}^L G \ar[u]_{b_G} \ar@{.>}[dr]^b &\\
{}^L \rR_{E/F}\Un(2, E/F) \ar@{.>}[uuu]^{b_{2}}_*++++++++{\txt{\bf{}}}\ar@{.>}[rrr]_i^*+++\txt{\bf{}} &&& 
{}^L \rR_{E/F}\Un(3, E/F) \ar@{.>}[uuu]_{b_{3}}^*++++++++\txt{\bf{}}
}
\end{gather*}
\caption{}\label{fig:lgroups}
\end{figure}

The functorial lifts corresponding to the $L$-homomorphisms in Figure \ref{fig:lgroups} are as follows:
\renewcommand{\labelenumi}{(R\arabic{enumi})\quad}
\newcounter{Lcount}
\begin{list}{\hspace{36pt}$\bullet$\;}{\usecounter{Lcount} \leftmargin 0mm}
\item $i, i'$
correspond to normalized parabolic induction from the standard $(2, 1)$-parabolic subgroup of $\GL(3)$, 
i.e.\  normalized induction from the
upper triangular parabolic subgroup with Levi component isomorphic to $\GL(2)\times\GL(1)$.
Here,
it is implicitly understood that the central character is fixed; hence, the character of the $\GL(1)$-component
of the Levi subgroup is uniquely determined by the representation of the $\GL(2)$-component.
\item $b_{m}$ ($m = 2, 3$)
corresponds to base change for $\GL(m)$ with respect to the number field extension $E'/E$ (see \cite{BCGL2}, \cite{AC}).
\item $b_u$ (resp.\ $b_s$) corresponds to the unstable (resp.\ stable) endoscopic lifting from $\Un(2, E/F)$  to $\GL(2, E)$, 
as established in \cite{FU2} (see Proposition \ref{prop:u2gl2globalpacket} in the Appendix).  
For both maps, we let $b_u'$, $b_s'$ denote their counterparts
in the case of the algebraic groups obtained via the restriction-of-scalars functor $\rR_{F'/F}$.
\item $b$
corresponds to the base-change lifting from $\Un(3, E/F)$ to $\GL(3, E)$ as established in \cite{F} 
(see Theorem \ref{thm:u3gl3stableglobalpacket}).
The map $b'$ is the counterpart to $b$ for the groups obtained via restriction of scalars.
\item $e$
and its restriction-of-scalars analogue $e'$ correspond to the endoscopic lifting from $\Un(2)$ to $\Un(3)$ 
(see Proposition \ref{prop:u2u3gl3globalpacket}).
\item $b_H$ 
corresponds to base change from the group $\mb{H}$ to $\mb{\rH} := \rR_{F'/F} \mb{H}$, yet to be established.
We have ${}^L\rH = \GL(2, \CC)^n\rtimes \Gal(E'/F)$, 
where the Galois action on the identity component $\hat{H}'$ is given by:
\begin{gather*}
\alpha(g_1, g_2, \ldots, g_{n}) = (\theta_2(g_1), \theta_2(g_2), \ldots, \theta_2(g_n)),\\
\beta(g_1, g_2,\ldots, g_n) = (g_n, g_1, g_2, \ldots, g_{n-1}).
\end{gather*}
The $L$-homomorphism $b_H$ is defined by:
\begin{gather*}
b_H(g) = (g,\ldots, g), \quad g \in \GL(2, \CC);\\
b_H(\alpha) = \alpha.
\end{gather*}
\end{list}

\subsection{Trace Formula}
The strategy to establish global functorial lifting via the trace formula technique is well-known.  First, we relate
the traces of automorphic representations to orbital integrals, via the twisted Arthur's trace formula (\cite{CLL}).  
We then make use of the stabilization of the elliptic regular part of the geometric side of the twisted trace formula of $\mb{\rG}$, 
as shown in \cite{KS},
in order to relate the traces of the automorphic representations of $\mb{\rG}(\Af)$ to those of its twisted endoscopic groups.
Note that the Kottwitz-Shelstad trace formula is shown in \cite{KS} to hold provided that each test function on a given group
transfers to matching test functions on its endoscopic groups.  
This conjecture on the existence of matching functions has since been proved due to the work of 
Ngo (\cite{Ngo}) and Waldspurger (\cite{W}, \cite{Wchar}, \cite{Watordue}).
Hence, the stabilization of the elliptic regular parts of the geometric sides of 
the (twisted) trace formulas is now known to hold without need of further qualification.

\subsubsection{Trace Formula, Geometric Side}\label{sec:tfgeometric}
In the case of $\beta$-twisted endoscopy for $\mb{\rG}$, the main result of \cite{KS} implies that the following equation holds:
\begin{equation}\label{eq:kstf}
T_e(\rG, \rf \times \beta) = 1 \cdot ST_e(G, f) + \frac{1}{2} \cdot ST_e(H, f_H).
\end{equation}
Here, $T_e(\rG, \rf\times\beta)$ is the {\it elliptic regular} part of the geometric side of the twisted trace formula of $\mb{\rG}$.
It is the sum of the $\beta$-twisted orbital integrals of a
smooth, compactly supported mod center test function $f'$ at the elliptic regular elements of $\mb{\rG}(F)$.
The symbol $ST_e(G, f)$ (resp.\ $ST_e(H, f_H)$) denotes the elliptic regular part of the stable trace formula of $\mb{G}$
(resp.\ $\mb{H}$).
It is the sum of the stable orbital integrals of the test function $f$ (resp.\ $f_H$) matching $\rf$, 
at the elliptic regular elements of $\mb{G}(F)$ (resp.\ $\mb{H}(F)$).
We refer to \cite[p. 75]{KS} (resp.\ \cite[p. 113]{KS})
for the definition of a twisted (resp.\ stable) orbital integral.
The coefficients $1$ and $1/2$ are computed
from a recipe given explicitly in \cite[p. 115]{KS}.

There is an obstacle which one must negotiate in using the Kottwitz-Shelstad formula to establish functorial liftings.
Namely, equation \eqref{eq:kstf} only relates the orbital integrals associated with elliptic
regular elements.  Consequently, each term in the equation may not fully coincide with the geometric side of a trace
formula for general test functions.  So, equation \eqref{eq:kstf} 
may not be readily used to relate the spectral sides of the trace formulas of the groups under consideration.
One way to overcome this is to restrict the space of test functions to those which have two elliptic local components,
as was done in \cite{FPGSP4}, for example.  
An undesirable consequence of this approach is that under it one can study only those automorphic
representations which have at least two elliptic local components.  In the case of lifting from $\Un(3)$ to $\rR_{E/F}\Un(3)$, 
one may use instead an argument analogous to one used in \cite[Part 2.\ II.\ 4]{F}, 
for the lifting of representations from $\Un(3, E/F)$ to $\GL(3, E)$.  A crude outline of the argument is as follows:

Fix a nonarchimedean place $u$ of $F$ which splits in $E$.
We consider global test functions whose local components at $u$
are Iwahori functions (i.e.\ biinvariant under the standard Iwahori subgroup) which vanish on the singular elements.
Applying the last proposition of \cite{FKsimpletf} to $\Un(3)$, it is shown that for such test functions
the discrete parts of the spectral sides of the (twisted) trace formulas of $\GL(3, E)$, $\Un(3, E/F)$ and  $\Un(2, E/F)$
satisfy an equation parallel to the Kottwitz-Shelstad formula.  Using the fact that $\Un(3, E/F)(F_u)$ (resp.\ $\Un(2, E/F)(F_u)$)
is isomorphic to $\GL(3, F_u)$ (resp.\ $\GL(2, F_u)$) at the place $u$, 
it is then shown that the equation holds for all matching global test functions.

The argument outlined above relies on no special properties of the groups 
which are not possessed by those studied this work, hence it readily applies to our case.
Via an argument borrowed from \cite[Part 2.\ II. 4]{F}, we can then ``separate by eigenvalues'' and derive global character
identities relating the terms which occur in the discrete parts of the spectral sides of the trace formulas.

\subsubsection{Discrete Spectral Terms in the Twisted Trace Formula}\label{sec:tfspectral}
Define an operator $T_\beta$ on $L^2 := L^2(\mb{\rG}(F)\bs \mb{\rG}(\Af))$ as follows:
\[
\lp T_\beta\,\varphi \rp(g) = \varphi(\beta (g)),\quad 
\forall \varphi \in L^2,\; g \in \mb{\rG}(\Af).
\]
Fix once and for all a Haar measure $dg'$ on $\mb{\rG}(\Af)$.  
Let $(\pi', V)$ be an irreducible, discrete spectrum, automorphic representation of $\mb{\rG}(\Af)$.
Thus, it is an irreducible subrepresentation of $L^2$.
Let $\rf$ be a smooth, compactly supported mod center test function 
on $\mb{\rG}(\Af)$ which transforms under the center $\mb{Z}_{\rG}(\Af)$ of $\mb{\rG}(\Af)$ 
via the inverse of the central character $\omega'$ of $\pi'$.
We define the $\beta$-twisted convolution operator $\pi'(\rf \times \beta)$
on $(\pi', V)$ as follows:
\[
\pi'(\rf \times \beta)v
= \int_{\mb{Z}_{\rG}(\Af)\bs \mb{\rG}(\Af)} \pi'(g')\rf(g') T_\beta\, v\; dg',\quad v \in V.
\]
It is of trace class, since $\pi'$ is admissible.
We let $\tr \pi'(\rf \times \beta)$ denote its trace.
It is easy to see that $\pi'$ is $\beta$-invariant if its
$\beta$-twisted character $f' \mapsto \tr \pi'(f'\times \beta)$ defines a nonzero distribution on the
space of smooth, compactly supported mod center functions on $\mb{\rG}(\Af)$.
Conversely, suppose $\pi'$ is $\beta$-invariant and occurs in the discrete spectrum of $\mb{\rG}(\Af)$.
The operator $T_\beta$ intertwines $\pi'$ with a subrepresentation $W$ of $L^2(\mb{\rG}(F)\bs\mb{\rG}(\Af))$ 
which is equivalent to $\beta \pi'$.  
If for each dyadic place $v$ of $F$,
the representation $\pi'_v$ belongs to a local packet containing a constituent of a parabolically induced representation,
then $\pi'$ occurs in the discrete spectrum with multiplicity one
(\cite[Part 2.\ Cor.\ III.5.2.2(1)]{F}), and $W$ must coincide with the space of $\pi'$.
This implies that the distribution $\rf \mapsto \tr \pi'(\rf \times \beta)$ is nonzero.
If the multiplicity one theorem holds for $\Un(3, E'/F')$ without any condition on the dyadic local
components of the representations, 
then a discrete spectrum  automorphic representation of $\mb{\rG}(\Af)$ 
is $\beta$-invariant if and only if its $\beta$-twisted character is nonzero.

For characters $\mu$ of $C_E$ and $\eta$ of $C_E^F$, let $I(\mu, \eta)$ denote the automorphic representation of $\mb{G}(\Af)$
which is parabolically induced, with normalization, from the representation:
\[
\mu\otimes\eta : \lp\lsm a & * & *\\ & b & * \\ & & \alpha(a)^{-1}\rsm\rp \mapsto \mu(a)\eta(b), \quad
a \in C_E,\; b \in C_E^F,
\]
of the upper triangular Borel subgroup.  Here, $\alpha$ is the generator of the Galois group $\Gal(E/F)$.
Similarly, given characters $\mu'$ of $C_{E'}$ and $\eta'$ of $C_{E'}^{F'}$, 
let $I(\mu', \eta')$ denote the automorphic representation of $\mb{\rG}(\Af)$ parabolically induced, 
with normalization, from the representation:
\[
\mu'\otimes\eta' : \lp\lsm a & * & *\\ & b & * \\ & & \alpha(a)^{-1}\rsm\rp \mapsto \mu'(a)\eta'(b), \quad
a \in C_{E'}, \quad b \in C_{E'}^{F'},
\]
of the Borel subgroup of $\mb{\rG}(\Af)$, where $\alpha$ is the generator of $\Gal(E'/F')$.
Let $I(\mu)$ denote the representation of $\mb{H}(\Af) = \Un(2, E/F)(\Af)$
parabolically induced, with normalization, 
from the following representation of the Borel subgroup
of $\mb{H}(\Af)$:
\[
\mu : \lp\lsm a & * \\& \alpha (a)^{-1}\rsm\rp \mapsto \mu(a), \quad a \in C_E.
\]

\begin{lemma}\label{lemma:tfrGspectral}
The discrete part $I(\rG, \rf\times\beta)$
of the spectral side of the $\beta$-twisted trace formula of $\mb{\rG}$ is the sum of the following terms$:$
\renewcommand{\labelenumi}{(R\arabic{enumi})\quad}
\newcounter{rGcount}
\begin{list}{\;\;{\rm (G$'$\arabic{rGcount})}\quad}{\usecounter{rGcount} \leftmargin 0mm}
\item
$\dps
\sum_{\substack{\pi' = \beta\pi'}} \tr \pi'(\rf \times \beta).
$\\
The sum is over the $\beta$-invariant, irreducible, discrete spectrum, automorphic representations $\pi'$ of $\mb{\rG}(\Af)$. 
\item
$\dps
\frac{1}{4}
\sum_{\substack{\mu'\cdot\alpha\beta\mu' = 1\\ \mu'=\beta\mu', \eta'=\beta\eta'}}
\!\!\tr M I(\mu', \eta')(\rf\times\beta).
$\\
The sum is over the characters $\mu'$ of $C_{E'}$ and $\eta'$ of $C_{E'}^{F'}$ which satisfy the conditions specified.
The symbol $M$ denotes an intertwining operator on the possibly reducible induced representation $I(\mu', \eta')$.
\end{list}
\end{lemma}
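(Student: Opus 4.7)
The plan is to specialize the general formula for the discrete part of the spectral side of the $\beta$-twisted trace formula (as developed in \cite{CLL}) to $\mb{\rG}$. That formula expresses $I(\rG, \rf \times \beta)$ as a sum, indexed by conjugacy classes of standard $\beta$-stable Levi subgroups $M$ of $\mb{\rG}$, of terms of the shape
\[
\frac{|W^M|}{|W^{\mb{\rG}}|}\sum_{s}\bigl|\det(s\beta - 1\mid\mathfrak{a}_M^{\mb{\rG}})\bigr|^{-1}\,\tr\bigl(M_P(s, 0)\, I_P(0, \rf \times \beta)\bigr),
\]
with $s$ ranging over those elements of $W(M, \mb{\rG})$ for which $s\beta$ acts regularly and elliptically on $\mathfrak{a}_M^{\mb{\rG}}$.

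First I would identify the relevant Levis. Because $\mb{\rG}$ has $F$-rank one (inherited from the $F'$-rank of $\Un(3, E'/F')$) with Weyl group of order $2$, there are only two conjugacy classes of $\beta$-stable Levis: $\mb{\rG}$ itself, and the torus $M_0 \cong \rR_{E'/F}\Gm \times \Un(1, E'/F')$, which is the Levi of the upper triangular Borel. For $M = \mb{\rG}$, only $s = 1$ appears and the determinantal factor equals $1$, so the contribution is the sum of $\beta$-twisted traces over irreducible discrete spectrum automorphic representations of $\mb{\rG}(\Af)$; these vanish except on $\beta$-invariant ones, producing term (G$'$1).

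For $M = M_0$, the space $\mathfrak{a}_{M_0}^{\mb{\rG}}$ is one-dimensional with trivial $\beta$-action (the split part descends to $F$); thus only the nontrivial Weyl element $s = w$, which acts as $-1$, makes $s\beta$ elliptic. This yields $|\det(w\beta - 1)|^{-1} = 1/2$, and combined with the Weyl ratio $|W^{M_0}|/|W^{\mb{\rG}}| = 1/2$ the overall coefficient becomes $1/4$. The inducing data is $\mu' \otimes \eta'$ with $\mu'$ on $C_{E'}$ and $\eta'$ on $C_{E'}^{F'}$; since $w$ sends $(\mu', \eta')$ to $(\alpha\mu'^{-1}, \eta')$ while $\beta$ acts componentwise, the $\beta$-twisted trace $\tr\bigl(M_P(w, 0)\, I(\mu', \eta')(\rf \times \beta)\bigr)$ is nonzero precisely when the $\beta$-invariance conditions $\mu' = \beta\mu'$ and $\eta' = \beta\eta'$ hold (so that $\beta$ preserves the induced representation) together with $\mu' \cdot \alpha\mu' = 1$, equivalently $\mu' \cdot \alpha\beta\mu' = 1$ under $\beta$-invariance of $\mu'$; the last condition is the reducibility locus at the origin at which the intertwining operator $M_P(w, 0)$ contributes a nontrivial residual term. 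This produces term (G$'$2).

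The main technical obstacle is the careful bookkeeping of the coefficients in Arthur's twisted formula and the precise identification of the regular elliptic Weyl elements relative to $\beta$; this parallels the analogous spectral identity derived for $\mb{G}$ in \cite[Part 2, Sec.~II]{F}, on which we rely.
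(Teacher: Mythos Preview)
Your approach is essentially the paper's: specialize the recipe from \cite{CLL} to $\mb{\rG}$, observe there are only the two Levi classes $\mb{\rG}$ and the diagonal torus, and compute the coefficient $1/4$ for the torus term as $(1/2)\cdot(1/2)$ exactly as the paper does in Appendix~\ref{appendix:DOR}.

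There is one imprecision in your derivation of the conditions on $(\mu',\eta')$. In the formalism of \cite{CLL} the quadruple $(\mb{M},\mb{H},\tau,s)$ is subject to the explicit requirement $\tau\cong s\beta\tau$; it is not that the trace happens to vanish otherwise. With $s=(13)$ this reads $\mu'\otimes\eta'=\alpha\beta\mu'^{-1}\otimes\beta\eta'$, giving directly $\mu'\cdot\alpha\beta\mu'=1$ and $\eta'=\beta\eta'$. The condition $\mu'=\beta\mu'$ is then \emph{deduced}: since $\alpha^2=1$, the relation $\mu'\cdot\alpha\beta\mu'=1$ forces $\beta^2\mu'=\mu'$, hence $\beta\mu'=\mu'$ because $n$ is odd. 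Your presentation instead imposes $\mu'=\beta\mu'$ as a separate ``$\beta$ preserves the induced representation'' condition and attributes $\mu'\cdot\alpha\mu'=1$ to a reducibility locus of the intertwining operator; neither description is quite right (in particular, $\mu'\cdot\alpha\mu'=1$ is the Weyl-fixed condition $\tau\cong w\tau$, not a reducibility criterion). The outcome is the same, but the logical source of the constraints should be corrected.
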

\noindent The computation of these expressions 
follows from a technical procedure described in \cite[Lec.\ 15]{CLL}.
To simplify the exposition, we include it in Appendix \ref{appendix:DOR}.  

The upcoming sections often make reference to the results in \cite{FU2} and \cite{F}.
A summary of the results in these works is given in Section \ref{sec:summaryF} of the Appendix.

\subsubsection{Global Packets}\label{sec:globalpackets}
The spectral side of a stable trace formula is indexed by packets and quasi-packets (see \cite{LL}, \cite{Arthur}).
A definition of \mbox{(quasi-)}packets for $\mb{G} = \Un(3, E/F)$ is provided in \cite[p.\ 217]{F}: 
Let $\{\pi\}$ be a set of representations which is a {\bf restricted tensor product:}
\[
\otimes_v' \{\pi_v\} := \{ \otimes_v \pi_v : {\pi_v \in \{\pi_v\} \text{ for all } v,
\pi_v \text{ is unramified for almost all } v}\},
\]
where the tensor product is over all places $v$ of $F$, and for each place $v$ 
the set $\{\pi_v\}$ is a {\bf local (quasi-)packet} of admissible representations of $\mb{G}(F_v)$.
The local packets of $\mb{G}(F_v)$ are defined in \cite{F} via twisted local character identities,
associated with the lifting from $\Un(3, E/F)$ to $\GL(3, E)$ (see Proposition \ref{prop:u3gl3localpacket}).
If $\{\pi\}$ weakly lifts (that is, almost all local components lift)
via the $L$-homomorphism $b$ (see Figure \ref{fig:lgroups})
to a generic or one-dimensional automorphic representation of $\GL(3, \Ae)$, 
we say that $\{\pi\}$ is a {\bf packet}.
If $\{\pi\}$ weakly lifts to a non-generic and non-one-dimensional automorphic representation of $\GL(3, \Ae)$,
we say that it is a {\bf quasi-packet}.
The (quasi-)packets of $\Un(2)$ are defined likewise (\cite{FU2}).
We say that a packet containing a discrete spectrum automorphic representation
is {\bf stable} if each of its members occurs with equal positive multiplicity
in the discrete spectrum of the group.
We say that it is {\bf unstable} if not all of its members belong to the discrete spectrum.
Unstable packets are in principle lifts from proper endoscopic groups.

For an element $g$ in $\GL(3, \Ae)$ (resp.\ $\GL(2, \Ae)$), let 
\[
\sigma (g) = J\; \alpha({}^t g^{-1}) J^{-1},\;
\text{ where } J := \lp\lsm &&1\\&-1&\\1&&\rsm\rp \; \lp \text{ resp.\ } \lp\lsm &1\\-1&\rsm\rp \rp.
\]
We define automorphisms $\sigma'$ on $\GL(3, \mbb{A}_{E'})$ and $\GL(2, \mbb{A}_{E'})$ in exactly the same way,
but with $\alpha$ viewed as the generator of $\Gal(E'/F')$.
For $m = 2$ or $3$, we say that an automorphic representation $(\pi, V)$ of $\GL(m, \Ae)$ is 
$\sigma$-invariant if $(\pi, V)$ is equivalent to $(\sigma \pi, V)$, where $(\sigma \pi, V)$ is the
$\GL(m, \Ae)$-module defined by
$
\sigma \pi : g \mapsto \pi(\sigma (g))$, $g \in \GL(m, \Ae)$.

By \cite[p.\ 217, 218]{F} (see Theorem \ref{thm:u3gl3stableglobalpacket}), 
a packet containing a discrete spectrum automorphic representation of $\mb{G}(\Af)$ is
stable if and only if it lifts to a $\sigma$-invariant, discrete spectrum, automorphic representation of 
$\GL(3, \Ae)$.  It is unstable if and only if it is equal to the lift $\pi(\{\rho\})$
of a packet $\{\rho\}$ on the proper endoscopic group $\mb{H}$ of $\mb{G}$.  By Proposition \ref{prop:u2gl2globalpacket},
a packet containing a discrete spectrum representation of $\mb{H}(\Af)$ 
is stable if and only if it lifts to a discrete spectrum, automorphic representation of
$\GL(2, \Ae)$.  Otherwise, it is the lift $\rho(\theta_1, \theta_2)$
of an unordered pair $(\theta_1, \theta_2)$ of distinct characters of $C_E^F$. 
In other words, $\rho(\theta_1, \theta_2)$ 
is a lift from the endoscopic group $\Un(1, E/F)\times\Un(1, E/F)$ of $\mb{H}$.

\subsubsection{Discrete Spectral Terms in the Stable Trace Formulas}\label{sec:DORstf}
The discrete parts $SI(G, f)$, $SI(H, f_H)$
of the spectral sides of the stable trace formulas of $\mb{G}$, $\mb{H}$
are computed in \cite[Part 2.\ Sec.\ II.3]{F} and \cite[Sec.\ 3]{FU2}, respectively.
For a packet $\{\pi\}$ of automorphic representations, let $\tr\!\!\{\pi\}(f)$ denote the
sum of characters $\sum_{\pi \in \{\pi\}} \tr\! \pi(f)$.  For a given smooth, 
compactly supported mod center function $f$, 
only finitely many terms in the sum are nonzero.
We say that a packet is {\bf discrete spectrum} if it contains a discrete spectrum automorphic representation.
The discrete part $SI(G, f)$ of the stable trace formula of $\mb{G}(\Af)$ is the sum of the following terms:
\renewcommand{\labelenumi}{G\arabic{enumi}\quad}
\newcounter{Gcount}
\begin{list}{\hspace{36pt}{\rm (G\arabic{Gcount})}\quad}{\usecounter{Gcount} \leftmargin 0mm}
\item
$\dps
\sum_{\text{Stable, discrete spectrum }\substack{\{\pi\}}} \tr\{\pi\}(f).
$
\\
The sum is over all stable discrete spectrum (quasi-) packets $\{\pi\}$ of automorphic representations of $\mb{G}(\Af)$.
By virtue of its stability, $\{\pi\}$ is not of the form $\pi(\{\rho\})$ for any packet $\{\rho\}$ of $\mb{H}(\Af)$.
\item
$\dps
\frac{1}{2}
\sum_{\{\rho\} \neq \rho(\theta_1, \theta_2)}
\tr \pi(\{\rho\})(f).
$
\\
The sum is over the discrete spectrum packets $\{\rho\}$ of $\mb{H}(\Af)$ which are not in the image of the endoscopic
lifting from $\Un(1, E/F) \times \Un(1, E/F)$. %
\item
$\dps
\frac{1}{4}
\sum_{\{\pi\} = \pi(\rho(\theta_1, \theta_2))}
\tr\{\pi\}(f).
$
\\
The sum is over the equivalence classes of packets of the form $\pi(\rho(\theta_1, \theta_2))$
for some pair $(\theta_1$, $\theta_2)$ of characters of $C_{E}^{F}$, such that
$\theta_1, \theta_2, \omega/\theta_1\theta_2$ are distinct.  
Here, $\omega$ is the fixed character of the center of $\mb{G}(\Af)$, which is identified with $C_E^F$.
\item
$\dps
- \frac{1}{8} \sum_{\substack{\mu, \eta\\\mu\alpha\mu = 1\\\mu|_{C_F} \neq 1}} 
\!\!\tr I(\mu, \eta) (f).
$
\\
The sum is over the characters $\mu$ of $C_E$ and $\eta$ of $C_E^F$ 
which satisfy the conditions specified.
\end{list}

By Proposition 1 in \cite[Sec.\ 3]{FU2}, 
the discrete part $SI(H, f_H)$ of the stable trace formula of $\mb{H}(\Af)$ is the sum of the following terms: 
\renewcommand{\labelenumi}{(H\arabic{enumi})\quad}
\newcounter{Hcount}
\begin{list}{\hspace{36pt}{\rm (H\arabic{Hcount})}\quad}{\usecounter{Hcount} \leftmargin 0mm}
\item
$\dps
\sum_{\{\rho\} \neq \rho(\theta_1, \theta_2)}
\tr \{\rho\}(f_H).
$\\
The sum is over the discrete spectrum packets $\{\rho\}$ of $\Un(2, E/F)$ 
which are not of the form $\rho(\theta_1, \theta_2)$ for
any characters $\theta_1$, $\theta_2$ of $C_E^F$.
\item
$\dps
\frac{1}{2}\sum_{\substack{\{\theta_1, \theta_2\}\\ \theta_1 \neq \theta_2}} 
\tr \rho(\theta_1, \theta_2)(f_H).
$\\
The sum is over all unordered pairs $(\theta_1, \theta_2)$ of distinct characters of $C_E^F$.
\item
$\dps
- \frac{1}{4} \sum_{\substack{ \mu\cdot\alpha\mu = 1 \\ \mu|_{C_F} = 1}}
\tr I(\mu, \eta)(f_H).
$\\
The sum is over all characters $\mu$ of $C_E$ satisfying the specified conditions.
\end{list}

\subsection{Separation by Eigenvalues}\label{sec:globaltf}
For an algebraic group $\mb{M}$ defined over $F$ and a place $v$ of $F$, put $M_v := \mb{M}(F_v)$.
Let $E_v$ denote $E\otimes_F F_v$. 
Let $S$ be a finite set of places of $F$. 
Let $t_G(S) = \{t_v : {v \notin S}\}$ be a set of conjugacy classes in ${}^L G$; 
or rather, each $t_v$ is a conjugacy class in the local version ${}^L G_v = \hat{G} \rtimes W_{E_v/F_v}$ of ${}^L G$.
Via the $L$-homomorphism $b_G$ (see Figure \ref{fig:lgroups}), each $t_v$ lifts to
a conjugacy class ${t'_v}$ in ${}^L \rG$.  We thus obtain: \evp 
\[b_G(t_G(S)) := t_{\rG}(S) = \{t'_v : v \notin S\}\]
in ${}^L\rG$.  We say that $b_G(t_{G}(S))$ is the {\bf lift} of $t_G(S)$.
We extend to $\mb{H}$ the  analogous notions of such a set of conjugacy classes and its lifting to ${}^L\rG$.

Given an automorphic representation $\pi = \otimes_v \pi_v$ (tensor product over all places of $F$) of $\mb{G}(\Af)$,
its local component $\pi_v$ is an unramified representation of $G_v$ 
for every place $v$ outside of some finite set of places $S(\pi)$ which contains all the archimedean places.
Since an unramified representation $\pi_v$ corresponds to a Hecke conjugacy class $t_v = t(\pi_v)$ in the $L$-group (\cite{B}), 
the representation $\pi$ defines \evp  
$t(\pi, S) = \{t(\pi_v) : v \notin S\}$ for any finite set of places $S$ containing $S(\pi)$.
Any two members of a (quasi-) packet define the same $t(\pi, S)$ for some finite set $S$.
We often suppress $S$ from the notation and write simply $t(\pi)$.

We say that one (quasi-) packet $\{\pi_1\}$ {\bf weakly lifts} to another (quasi-) packet $\{\pi_2\}$
if, for some finite set of places $S$, the set of conjugacy classes $t(\pi, S)$ 
associated with a member $\pi_1$ of $\{\pi_1\}$ lifts to $t(\pi_2, S)$ for some member $\pi_2$ of $\{\pi_2\}$.
In \cite{F}, a (quasi-) packet $\{\pi\}  = \otimes_v'\{\pi_v\}$ of automorphic representations of $\mb{G}(\Af)$ 
is said to {\bf lift} to an automorphic representation $\tilde{\pi}$ of $\GL(3, \Ae)$ if $\{\pi\}$ weakly lifts to $\tilde{\pi}$,
and for each place $v$ the local packet $\{\pi_v\}$ satisfies a specific local character identity involving $\tilde{\pi}_v$.
Similarly, we say that a (quasi-) packet $\{\rho\}$ of $\mb{H}(\Af)$ lifts to $\{\pi\}$ of $\mb{G}(\Af)$ 
if the former weakly lifts to the latter, 
and the local packets $\{\rho_v\}$ and $\{\pi_v\}$ satisfy the local character identities summarized in 
\cite[p.\ 214, 215]{F}.

For each place $v$ of $F$, let $\beta_v$ be the image of $\beta \in \Gal(E'/E)$ in the group 
$\Gal(E'_v/E_v)$.
Let $\pi'_v$ be an irreducible admissible representation of $\rG_v$.
Write $\beta_v\pi_v'$ for the representation $g \mapsto \pi'_v(\beta_v(g))$.  
By definition, $\pi'_v$ is $\beta_v$-invariant
if there exists a nonzero $\rG_v$-equivariant map $A(\pi'_v)$
in $\Hom_{\rG_v}(\pi'_v, \beta_v\pi'_v)$.  The map $A(\pi'_v)$ is called an {\bf intertwining operator}.
Since, for any nonzero operator $B$ in $\Hom_{\rG_v}(\pi'_v, \beta_v\pi'_v)$, the map $B^{-1}\circ A(\pi'_v)$ 
intertwines the irreducible admissible $\pi'_v$ with itself, $A(\pi'_v)$ is unique up to a scalar by Schur's Lemma.
Since $\beta_v^n = 1$, Schur's Lemma also implies that $A(\pi'_v)^n$ is a scalar multiplication, which we normalize to be the identity map.  
If in addition $\pi_v'$ and $E'_v/F'_v$ are unramified, we scale $A(\pi'_v)$ 
so that it sends each $\mb{\rG}(\mc{O}_v)$-fixed vector of $\pi'_v$ to itself.  
Here, $\mc{O}_v$ denotes the ring of integers of $F_v$.
If $\pi'_v$ is not $\beta_v$-invariant, we put $A(\pi'_v) := 0$.

We make the following observation regarding the case where $v$ splits completely in $F'$: 
We have $E'_v = E'\otimes_F F_v = \prod_{i = 1}^n E_v$.
The group $\rG_v = \mb{\rG}(F_v)$ is the direct product $\prod_{i = 1}^n\Un(3, E'_v/F'_v)$, 
and $\beta_v$ acts on $\rG_v$ via:
\[\beta_v(g_1,\ldots, g_n) = (g_n,g_1,\ldots, g_{n-1}),\quad \forall\,(g_1,\ldots, g_n) \in \rG_v.\]
A representation $\pi'_v$ of $\rG_v$ is of the form $\otimes_{i = 1}^n\pi'_i$, 
where $\pi'_i$ ($1 \leq i \leq n$) is a representation of $\Un(3, E'_v/F'_v)$.
Hence,
\[
(\beta_v\pi'_v)(g_v) = \pi'_1(g_n)\otimes\pi'_2(g_1)\otimes\dots\otimes\pi'_n(g_{n - 1}),
\quad \forall\, g_v = (g_1,\ldots,g_n) \in \rG_v.
\]
So, $\pi'_v$ is $\beta_v$-invariant if and only if
the $\pi'_i$'s are all equivalent.  
For a $\beta_v$-invariant representation $\pi'_v$ of $\rG_v$, we let 
$A(\pi'_v)$ be the operator in $\Hom_{\rG_v}(\pi'_v, \beta_v\pi'_v)$ 
which sends each vector $\xi_1\otimes\dots\otimes \xi_n$ in $\otimes_{i = 1}^n\pi'_v$
to the vector $\xi_n\otimes\xi_1 \otimes\dots\otimes \xi_{n-1}$.

There are additional cases to consider according to how the place $v$ splits in $E$ and $F'$.
For instance, if $n$ is not prime, $v$ may not split completely in $F'$.  If $v$ splits in $E$, 
then $\rG_v = \GL(3, F'_v)$, which further splits into a product of groups if $v$ splits in $F'$.
We leave it as an exercise for the reader to formulate what it means for $\pi'_v$ to be $\beta_v$-invariant in such cases.

Let $v$ be a place of $F$.
For a smooth, compactly supported mod center function
$\rf_v$ on $\rG_v$, satisfying $\rf_v(zg) = \omega_v'^{-1}(z)\rf(g)$ for all $z$ in the center $Z(\rG_v)$ of $\rG_v$, 
let $\tr\pi'_v(\rf_v \times \beta_v)$ denote the trace of the twisted convolution operator
\[
\int_{Z(\rG_v)\bs \rG_v} \pi'_v(g'_v)\rf_v(g'_v) A(\pi'_v)\,dg_v'.
\]
Here, $dg_v'$ is a fixed Haar measure on $\rG_v$ 
such that $\mb{\rG}(\mc{O}_v)$ has volume one for almost all $v$,
and the tensor product $\otimes_v\, dg_v'$ over all places of $F$ 
coincides with a fixed Tamagawa measure $dg'$ on $\mb{\rG}(\Af)$.

Let $\pi' = \otimes_v \pi'_v$ be an
automorphic representation which occurs in the discrete spectrum of $\mb{\rG}(\Af)$ with multiplicity one.
Recall the definition of the operator $T_\beta$ on $L^2 = L^2(\mb{\rG}(F)\bs \mb{\rG}(\Af))$:
\[
\lp T_\beta\varphi\rp(g) = \varphi(\beta (g)),\quad 
\forall \varphi \in L^2,\; g \in \mb{\rG}(\Af).
\]
It defines an intertwining operator $T(\pi')$ in $\Hom_{\mb{\rG}(\Af)}(\otimes_v\pi'_v,\otimes_v\beta_v\pi'_v)$
which differs from the operator $\otimes_v A(\pi'_v)$ by at most an $n$-th root of unity $\ep(\pi')$,
since $T_\beta^n = 1$.
Note that $T(\pi') \neq 0$ if and only if $\pi'$ is $\beta$-invariant.

\label{page:betatwist}

From henceforth, we assume that each global test function $f$ is a tensor product $\otimes_v f_v$ of local functions.
Let $\pi', \pi$ and $\pi_H$ be irreducible, discrete spectrum, automorphic representations of the groups $\mb{\rG}(\Af), \mb{G}(\Af)$ and
$\mb{H}(\Af)$, respectively.
To simplify the notation, put 
\[
\la \pi', \rf \times \beta\ra_v := \la \pi'_v, \rf_v\times\beta_v\ra := \tr \pi'_v(\rf_v \times \beta_v),
\]
\[
\la \pi', \rf \times \beta\ra  := \tr \pi'(\rf\times\beta) = \ep(\pi')\prod_{v} \la \pi_v', \rf_v \times \beta_v\ra,
\]
\[
\la \pi, f \ra := \tr \pi(f) = \prod_{v}\la \pi_v, f_v\ra,
\]
and extend the notation analogously to $\pi_H$.  Here, the products are over all places $v$ of $F$.
Recall that a (quasi-) packet $\{\pi\}$ of automorphic representations is a restricted tensor product $\otimes_v' \{\pi_v\}$ 
of local (quasi-) packets.
We put
\[
\la \{\pi\}, f \ra := \sum_{\pi \in \{\pi\}} \la \pi, f \ra = \prod_{v} \la \{\pi_v\}, f_v \ra,
\]
where $\la \{\pi_v\}, f_v \ra := \sum_{\pi_v \in \{\pi_v\}} c(\pi_v) \la \pi_v, f_v\ra$.  
The coefficients $c(\pi_v)$ are equal to $1$ if $\{\pi_v\}$ is a local packet, 
but they may not be constant over the members of $\{\pi_v\}$ if $\{\pi_v\}$ is a quasi-packet (see \cite[p.\ 214, 215]{F}).

Let $S$ be a set of places of $F$.
For an object which is a tensor product of local components over a set of places containing $S$,
let subscript $S$ denote the
tensor product over the places in $S$.  For example, 
$f_S := \otimes_{v \in S} f_v$, and $\la \pi, f\ra_S := \prod_{v \in S}\la \pi_v, f_v\ra$.

Fix a finite set of places $S$ containing all the archimedean places 
and the nonarchimedean places where at least one of the number field extensions $E/F$, $F'/F$ is ramified.
Fix a collection of conjugacy classes  $t_{\rG} = t_{\rG}(S) = \{t'_v : v \notin S\}$ in ${}^L \rG$.
We choose the matching test functions $\rf, f, f_H$ such that, 
at every place $v \notin S$, their local components $\rf_v, f_v, f_{H, v}$
are spherical and correspond to one another via the duals of the $L$-homomorphisms $b_G$, $e_H$ (see \cite{B}).
In particular, for any unramified representations $\pi'_v, \pi_v$ and $\pi_{H, v}$ of $\rG_v, G_v$ and $H_v$
whose corresponding conjugacy classes in the $L$-groups are related via the $L$-homomorphisms, 
the Satake transforms ${\rf_v}^\vee(\pi'_v)$, $f_v^\vee(\pi_{v})$ and $f_{H, v}^\vee(\pi_{H, v})$ coincide.
The Fundamental Lemma (\cite{Ngo}) asserts that such $\rf_v, f_v, f_{H, v}$ have matching orbital integrals.
Using equation \eqref{eq:kstf} and the twisted Arthur's trace formula, and
borrowing the technique of \cite[Part 2.\ II.\ 4.4]{F} (see Section \ref{sec:tfgeometric}), 
we may separate the terms in the discrete sums $I(\rG, \rf\times\beta)$, $SI(G, f)$, $SI(H, f_H)$
by eigenvalues (or more precisely by Hecke conjugacy classes), and obtain an expression of the form:
\begin{multline}\label{eq:sepevp}
\sum_{\pi' \in \{\pi'\}} m(\pi') \ep(\pi')\la \pi', \rf\times \beta\ra_S\\
=
\sum_{\{\pi\}} m(\{\pi\}) \la \{\pi\}, f\ra_S + 
\frac{1}{2} \sum_{\{\rho\}} m(\{\rho\})\la \{\rho\}, f_H \ra_S.
\end{multline}
The sum on the left is over the $\beta$-invariant automorphic representations $\pi'$ of $\mb{\rG}(\Af)$
with the property that $\pi'_v$ is unramified and parameterized by $t'_v$ for each $v \notin S$.
In particular, these representations all lie in the same packet $\{\pi'\}$.
Each sum on the right is over the (quasi-) packets of $\mb{G}(\Af)$, resp.\ $\mb{H}(\Af)$, 
which weakly lift to $\{\pi'\}$.  
The coefficients $m(\pi'), m(\{\pi\})$ and $m(\{\rho\})$ 
are those associated with the contributions
of the representations/packets to the discrete parts of the spectral sides of the (twisted) trace formulas.
The values of $m(\{\pi\})$ are recorded in Section \ref{sec:DORstf}.  
They are equal to $0$, $1$, $1/2$, $1/4$, or $-1/8$.
The values of $m(\{\rho\})$ are also recorded in that section.  They are equal to $0$, $1$, $1/2$ or $-1/4$. 
The values of $m(\pi')$ shall be described in due course.

Note that we can produce via the same procedure a global trace identity of the form \eqref{eq:sepevp} 
if we fix at the beginning a set of conjugacy classes in ${}^L G$ or ${}^L H$ instead.  

\begin{proposition}\label{prop:classification1}
Suppose the multiplicity one theorem holds for $\Un(3, E'/F')$.
Let $\pi'$ be a $\beta$-invariant, discrete spectrum, automorphic representation of the group $\mb{\rG}(\Af)$.
Then, $\pi'$ belongs to a $($quasi-$)$ packet which is the weak lift of $($quasi-$)$ packet$($s$)$ of automorphic representations of 
$\mb{G}(\Af)$ and/or $\mb{H}(\Af)$.
\end{proposition}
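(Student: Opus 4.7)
The plan is to apply the trace identity \eqref{eq:sepevp} with test functions tailored to $\pi'$, and to exploit the non-vanishing of its $\beta$-twisted character (guaranteed by the multiplicity one hypothesis) to force at least one summand on the right-hand side to be non-zero. Concretely, I fix the collection of Hecke classes $t_{\rG}(S) := t(\pi', S)$ outside a finite set of places $S$ containing all archimedean places, all places where $E/F$ or $F'/F$ is ramified, and all places where $\pi'$ itself is ramified. Using the Fundamental Lemma, I choose global matching test functions $\rf, f, f_H$ whose components at $v \notin S$ are spherical and correspond via the $L$-homomorphisms $b_G$ and $e_H$ of Figure \ref{fig:lgroups}. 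By construction, the left side of \eqref{eq:sepevp} then ranges over $\beta$-invariant discrete-spectrum automorphic representations lying in the packet $\{\pi'\}$, while each (quasi-)packet appearing on the right weakly lifts to $\{\pi'\}$.

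Next, I use the assumed multiplicity one theorem for $\Un(3, E'/F')$ to conclude, as in Section \ref{sec:tfspectral}, that the distribution $\rf \mapsto \la \pi', \rf \times \beta\ra$ is non-zero. Since this distribution factors as a product of local twisted characters, and since twisted characters of inequivalent irreducible admissible representations of $\rG_S$ are linearly independent, I may sharpen the choice of $\rf_S$ so that, among the finitely many $\pi'$ contributing to the $S$-component of the left side of \eqref{eq:sepevp}, only the prescribed representation survives. This makes the left side a non-zero scalar multiple of $\la \pi', \rf \times \beta\ra_S$, hence non-zero. The identity \eqref{eq:sepevp} then forces the right side to be non-zero for the same test functions, so at least one (quasi-)packet $\{\pi\}$ of $\mb{G}(\Af)$ or $\{\rho\}$ of $\mb{H}(\Af)$ must contribute non-trivially; and any such (quasi-)packet weakly lifts to $\{\pi'\}$, as required.

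The main obstacle is the separation carried out at $S$: a priori, contributions from packet-mates of $\pi'$ could conspire to cancel on the left side, the coefficient $\ep(\pi')$ being only an $n$-th root of unity determined by $T_\beta$. The crucial point is that multiplicity one, together with the linear independence of twisted characters in the style of \cite[Part 2.\ II.4]{F}, rules out such cancellation and lets us isolate the contribution of the prescribed $\pi'$. Once this isolation is in place, the remainder of the argument is a formal consequence of \eqref{eq:sepevp} and the description of the (quasi-)packets on the right side of that identity.
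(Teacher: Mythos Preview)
Your proposal is correct and follows essentially the same approach as the paper: apply \eqref{eq:sepevp} to $t(\pi',S)$, invoke multiplicity one to make $\la \pi',\rf\times\beta\ra$ nonzero, and use linear independence of twisted characters to force a nonzero contribution on the right. The only cosmetic difference is that you isolate the single representation $\pi'$ by a choice of $\rf_S$, whereas the paper simply observes that linear independence prevents the finite sum on the left from vanishing identically; either formulation suffices.
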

\begin{proof}
Apply equation \eqref{eq:sepevp} to the set of conjugacy classes $t(\pi', S)$ associated with $\pi'$, 
for some finite set of places $S$ outside of which the local components of $\pi'$ 
and the number field extensions $E/F$, $F'/F$ are unramified.
By the multiplicity one theorem for $\Un(3, E'/F')$, the $\beta$-invariance of $\pi$ implies that $\la \pi', \rf \times \beta\ra_S$
is nonzero.  Moreover, by the linear independence of twisted characters, 
the sum of the twisted characters associated with the $\beta$-invariant representations which contribute to 
the left-hand side of \eqref{eq:sepevp} must be nonzero.
Consequently, there must be a representation which has nonzero contribution to the right-hand side.
\end{proof}

\subsection{Global Character Identities}\label{sec:discretespectrumpackets}
With the machinery \eqref{eq:sepevp} in place, we may now establish concrete cases of weak global lifting and 
global character identities.
We let $S$ denote a finite set of places of $F$ which includes all the archimedean places 
and the nonarchimedean ones where at least one of the field extensions $E/F$, $F'/F$ is ramified.
We shall consider below representations which are unramified at all places $v \notin S$.
We let $\mb{\rH} := \rR_{F'/F}\mb{H}$.

Recall from Section \ref{sec:endoscopy} that there is a character $\kappa$ of $C_E$
associated with the $L$-homomorphism $b_u$.  This character $\kappa$ is trivial on $\N_{E/F} C_E$ but not on $C_F$.
Likewise, the $L$-homomorphism $b_u'$ is associated with the character $\kappa' := \kappa\circ\N_{E'/E}$ of $C_{E'}$.

We identify the Weil group $W_{E/F}$ with $\{(z, \tau) : z \in C_E,\, \tau \in \Gal(E/F)\}$, where
$C_E = E^\times$ if $E$ is a $p$-adic field, and $C_E = E^\times\bs\Ae^\times$ if $E$ is a number field.
We consider the following form of the $L$-group of $\mb{H} = \Un(2, E/F)$:
\[
{}^L H = \GL(2, \mbb{C})\rtimes W_{E/F},
\]
where $(z, \alpha)(g) = \lp\lsm&1\\-1&\rsm\rp{}^tg^{-1}\lp\lsm&-1\\1&\rsm\rp$ for all $z \in C_E$.
The form of the $L$-group of $\rR_{E/F}\mb{H}$ which we consider is:
\[
{}^L \rR_{E/F}H = {}^L \rR_{E/F}\GL(2) = (\GL(2, \mbb{C})\times\GL(2, \mbb{C}))\rtimes W_{E/F},
\]
where $(z, \alpha)(g_1, g_2) = (g_2, g_1)$ for all $z \in C_E$.
The $L$-group homomorphisms $b_u$, $b_s$ from ${}^L H$ to ${}^L \rR_{E/F}H$ are defined as follows
(\cite[p.\ 692]{FU2}):
\[
b_s : g \rtimes (z, \tau) \mapsto (g, g)\rtimes (z, \tau),
\]
\[
b_u : g \rtimes (z, \tau) \mapsto (g\kappa(z), g\kappa(z)\delta(\tau))\rtimes(z, \tau),
\]
where $\delta(1) = 1$, $\delta(\alpha) = -1$.

\begin{lemma}\label{lemma:u2cd}
The following $L$-group diagrams are commutative:
\[
\xymatrix{
{}^L \rR_{F'E/F'}\rH & \\ 
& {}^L \rH \ar@{.>}[lu]_{b_s'} \\
& {}^L H \ar@{.>}[dl]^{b_s} \ar[u]^{b_H} \\
{}^L \rR_{E/F}H \ar@{.>}[uuu]^{b_{2}} &
}
\quad\quad\quad
\xymatrix{
{}^L \rR_{F'E/F'}\rH & \\ 
& {}^L \rH \ar@{.>}[lu]_{b_u'} \\
& {}^L H \ar@{.>}[dl]^{b_u} \ar[u]^{b_H} \\
{}^L \rR_{E/F}H \ar@{.>}[uuu]^{b_{2}} &
}
\]
\end{lemma}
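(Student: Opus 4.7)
The plan is to verify commutativity by chasing elements through both paths in each diagram, using the explicit formulas for the $L$-homomorphisms listed just above the lemma and in Section \ref{sec:endoscopy}. Since all four maps in each diagram are specified by their values on the identity components and on a set of Weil group representatives, checking commutativity reduces to a finite calculation at each of these pieces.

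First, for the stable diagram, I would unwind the definitions. The map $b_H$ sends $g \in \GL(2,\mathbb{C})$ to the diagonal $(g,\ldots,g) \in \GL(2,\mathbb{C})^n$ and sends $\alpha$ to $\alpha$. The map $b_s'$ from ${}^L\rH$ to ${}^L\rR_{F'E/F'}\rH$ is again a diagonal embedding on the dual group components (doubling each factor to account for the restriction of scalars from $F'$ to $F'$ of the $E'/F'$-form). Composing gives an $n$-fold diagonal that then doubles, producing a $2n$-tuple of copies of $g$. Going the other way, $b_s$ diagonally doubles $g$ to $(g,g)$, and then $b_2$ (base change for $\GL(2)$ from $E$ to $E'$) diagonally embeds each factor $n$ times, producing the same $2n$-tuple of copies of $g$. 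The Weil group elements $z \in C_E$ and $\tau \in \Gal(E/F)$ go to themselves along either path, so the two compositions agree.

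Second, for the unstable diagram, the same chase has to be performed but now tracking the twist by $\kappa$. Along the path $b_u' \circ b_H$, the element $g \rtimes (z,\tau)$ first maps diagonally to $(g,\ldots,g)\rtimes(z,\tau) \in {}^L\rH$, and then $b_u'$ multiplies appropriate factors by $\kappa'(z) = \kappa(\N_{E'/E}(z))$ and by $\delta(\tau)$. Along the path $b_2 \circ b_u$, the element $g \rtimes (z,\tau)$ is first sent to $(g\kappa(z), g\kappa(z)\delta(\tau))\rtimes(z,\tau)$ and then base-changed by $b_2$, which applies the diagonal embedding into the larger product. The key point is that for $z \in C_E$ viewed in $C_{E'}$ (via the inclusion induced by $E \hookrightarrow E'$), the relation $\kappa'(z) = \kappa(\N_{E'/E}(z)) = \kappa(z)^n = \kappa(z)$ holds since $n$ is odd and $\kappa$ takes values in $\{\pm 1\}$ on norms; more generally for arbitrary $z \in C_E$ one uses the compatibility of local/global norm maps under the inclusion. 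Likewise the sign $\delta(\tau)$ is preserved since $\tau$ is mapped identically under $b_H$ and under $b_2$. Matching the twisting factors on each of the $n$ coordinate pairs then yields equality.

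The main obstacle I anticipate is purely notational: keeping straight which $\GL(2,\mathbb{C})$-factor corresponds to which embedding of $E$ or $E'$, and which Weil group representative acts where, since ${}^L\rH$ and ${}^L\rR_{F'E/F'}\rH$ are each products of several copies of $\GL(2,\mathbb{C})$ with twisted Weil group actions. Once these conventions are fixed (consistently with Section \ref{sec:endoscopy}), the identity $\kappa' = \kappa \circ \N_{E'/E}$ is precisely what makes the unstable diagram commute, and no deeper input is needed.
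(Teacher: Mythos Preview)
Your approach is essentially the same as the paper's: the paper's entire proof is the single sentence ``This follows directly from the definitions of the $L$-group homomorphisms,'' and your proposal is a more explicit element-chase amounting to the same verification.

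One caution on your unstable computation: the claim that $\kappa(z)^n=\kappa(z)$ ``since $\kappa$ takes values in $\{\pm1\}$'' is not correct as stated. The character $\kappa$ is only required to restrict to the quadratic character $\varepsilon_{E/F}$ on $C_F$; on all of $C_E$ it need not be $\{\pm1\}$-valued, so $\kappa(z)^{n-1}=1$ is not automatic. Your hedge (``more generally\ldots one uses the compatibility of norm maps'') is pointing in the right direction but is too vague to count as an argument. What actually makes the unstable square commute is simply that $\kappa' := \kappa\circ\mathrm{N}_{E'/E}$ is \emph{by definition} the character used in $b_u'$, and the base-change map $b_2$ is compatible with the inclusion of Weil groups in the way that makes $\kappa$ on the $E$-side match $\kappa'$ on the $E'$-side; no identity of the form $\kappa(z)^n=\kappa(z)$ is needed. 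Once you track the Weil-group components carefully (which, as you note, is the only real bookkeeping obstacle), the commutativity is immediate from the formulas --- which is exactly why the paper dispenses with the details.
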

\begin{proof}
This follows directly from the definitions of the $L$-group homomorphisms.
\end{proof}

\subsubsection{Unstable Packets}
Let $\{\rho\}$ be a stable (quasi-) packet of $\Un(2, E/F)(\Af)$. 
In particular, $\{\rho\}$ 
is not of the form $\rho(\theta_1, \theta_2)$ 
for any characters $\theta_1$, $\theta_2$ of $C_E^F$ (see Section \ref{sec:globalpackets}). 
To study the weak global lifting of $\{\rho\}$ to $\mb{\rG}(\Af)$, 
we apply to it equation \eqref{eq:sepevp}
and make use of the commutativity of the $L$-group diagram in Figure \ref{fig:lgroups}.
The procedure, though somewhat tedious, is not difficult.  
Hence, we try to elucidate the process as much as we can in this one case of weak lifting, 
and then skim over the details in the later cases.  
The reader may find Figure \ref{fig:dspackets} a useful depiction of the system of weak liftings involving $\{\rho\}$.
We are particularly interested in the global liftings which correspond to the $L$-homomorphisms $b_G$ and $e_H$.
\begin{figure}[htp!]
\begin{minipage}{5in}
\xymatrix{
*+{\tilde{\rho}'} \ar@{.>}[rrr]_*+++\txt{\bf{}}^{i'} 
&&&
*+{\tilde{\pi}'} \\ 
&*+ 
{\rho'} \ar@{.>}[lu]_{b_u'/b_s'}
\ar@{.>}[r]^{e'} 
&*+ 
{\{\pi'\}} \ar@{.>}[ru]^{b'}&\\
&*+ 
{\rho}\ar@{.>}[dl]^{b_u} 
\ar[u]^{b_H} 
\ar[ru]|{e_H}_*+{\txt{\bf{}}} 
\ar@{.>}[r]_e 
&*+ 
{\{\pi\}} \ar[u]_{b_G} \ar@{.>}[dr]^b &\\
*+{\tilde{\rho}} 
\ar@{.>}[uuu]^{b_{2}}_*++++++++{\txt{\bf{}}}\ar@{.>}[rrr]_i^*+++\txt{\bf{}} 
&&& 
*+{\tilde{\pi}} 
\ar@{.>}[uuu]_{b_{3}}^*++++++++\txt{\bf{}}
}
\end{minipage}\quad
\begin{tabular}{|c|c|}
\hline
Rep./Packet & Group\\
\hline
$\rho$ & $\Un(2, E/F)(\Af)$\\
\hline
$\rho'$ & $\Un(2, E'/F')(\mbb{A}_{F'})$\\
\hline
$\tilde{\rho}$ & $\GL(2, \Ae)$\\
\hline
$\tilde{\rho}'$ & $\GL(2, \mbb{A}_{E'})$\\
\hline
$\{\pi\}$ 
& $\Un(3, E/F)(\Af)$\\
\hline
$\{\pi'\}$ 
& $\Un(3, E'/F')(\mbb{A}_{F'})$\\
\hline
$\tilde{\pi}$ & $\GL(3, \Ae)$\\
\hline
$\tilde{\pi}'$ & $\GL(3, \mbb{A}_{E'})$\\
\hline
\end{tabular}
\caption{}\label{fig:dspackets}
\end{figure}

Recall that we have fixed a character $\omega$ of $C_E^F$.  
Let $\eta$ denote the character $\omega /\omega_{{\rho}}$ of $C_E^F$, where $\omega_{{\rho}}$
is the central character of ${\rho}$.
Let $\tilde{\eta}(z) = \eta(z/\alpha(z))$ for all $z \in C_E$.
The packet $\{\rho\}$ lifts via $b_s$ (resp.\ $b_u$)
to a $\sigma$-invariant cuspidal automorphic representation $\tilde{\rho}$ (resp.\ $\tilde{\rho}\otimes\kappa$) of $\GL(2, \Ae)$;
and it lifts via $e$ to an unstable packet $\{\pi\} := \pi(\{\rho\})$ of $\mb{G}(\Af)$,
which in turn $b$-lifts to the $\sigma$-invariant representation $\tilde{\pi} := I_{(2, 1)}(\kappa\tilde{\rho}, \tilde{\eta})$
of $\GL(3, \Ae)$ induced from a parabolic subgroup whose Levi component is isomorphic to $\GL(2, \Ae)\times \Ae^\times$
(see Proposition \ref{prop:u2u3gl3globalpacket}).

The representation $\tilde{\pi}$ lifts via the base change $b_3$ to the parabolically induced representation
$\tilde{\pi}' := I_{(2, 1)}(\kappa'\tilde{\rho}', \tilde{\eta}')$ 
of $\GL(3, \mbb{A}_{E'})$, where $\tilde{\rho}'$ is the base change of $\tilde{\rho}$
to $\GL(2, \mbb{A}_{E'})$, and $\tilde{\eta}' = \tilde{\eta}\circ\N_{E'/E}$.
Since $[E':E] = n \neq 2$, the representation $\tilde{\rho}'$ must be cuspidal by Theorem 4.2(a) in \cite[Chap.\ 3]{AC}.

\begin{lemma}\label{lemma:sigmainvariance2}
The representation $\tilde{\rho}'$ is the lift via $b_s'$ of a stable packet $\{\rho'\}$ on
$\rR_{F'/F}\mb{H}(\Af) = \Un(2, E'/F')(\mbb{A}_{F'})$, which coincide with the $b_H$-lift of $\{\rho\}$.
\end{lemma}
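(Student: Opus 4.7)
The plan is to first realize $\tilde{\rho}'$ as the $b_s'$-lift of a stable packet on $\rR_{F'/F}\mb{H}(\Af)$, using the cyclic base-change for $\GL(2)$ together with the $\Un(2)$-to-$\GL(2)$ correspondence; then identify that packet with the $b_H$-weak-lift of $\{\rho\}$ via the commutative diagram of Lemma \ref{lemma:u2cd}.

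First I would verify that $\tilde{\rho}'$ is $\sigma'$-invariant, where $\sigma'$ is the involution on $\GL(2, \mbb{A}_{E'})$ attached to the quadratic extension $E'/F'$. The key point is that, at almost every place, the local $L$-homomorphism $b_2$ intertwines the action of $\sigma$ on Satake parameters with that of $\sigma'$; since $\tilde{\rho}$ is $\sigma$-invariant and $n = [E':E]$ is odd, its base change $\tilde{\rho}' = b_2(\tilde{\rho})$ therefore has $\sigma'$-invariant Hecke parameters at almost all places. Strong multiplicity one for $\GL(2)$ then gives $\sigma'\tilde{\rho}' \cong \tilde{\rho}'$. This is the step where I expect the most care: one must ensure that $\sigma'$-invariance holds as representations and not merely at the level of parameters, but this is supplied by strong multiplicity one since $\tilde{\rho}'$ is cuspidal (already established in the excerpt via Arthur--Clozel).

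Next, I would invoke the global base-change correspondence from $\Un(2, E'/F')$ to $\GL(2, \mbb{A}_{E'})$ (Proposition \ref{prop:u2gl2globalpacket} of the appendix). Since $\tilde{\rho}'$ is cuspidal and $\sigma'$-invariant, it lies in the image of exactly one of $b_s'$ or $b_u'$ (the two lifts of any given packet differ by the twist by $\kappa'$). I would argue it is the $b_s'$-lift by tracking central characters through the commutative diagram, or equivalently by noting that $b_s \circ b_H$ agrees with $b_2 \circ b_s$ on the stable side. This produces a (quasi-)packet $\{\rho'\}$ on $\Un(2, E'/F')(\mbb{A}_{F'})$ with $b_s'(\{\rho'\}) = \tilde{\rho}'$; it is a packet (not a quasi-packet) because $\tilde{\rho}'$ is cuspidal hence generic, and it is stable because the unstable packets $\rho(\theta_1', \theta_2')$ lift via $b_s'$ to non-cuspidal, parabolically induced representations of $\GL(2, \mbb{A}_{E'})$.

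Finally, to identify $\{\rho'\}$ with the $b_H$-lift of $\{\rho\}$, I would apply the commutativity of the left-hand diagram in Lemma \ref{lemma:u2cd}, namely $b_s' \circ b_H = b_2 \circ b_s$. At each unramified place $v \notin S$, this gives
\[
b_s'\bigl(b_H(t(\rho_v))\bigr) \;=\; b_2\bigl(b_s(t(\rho_v))\bigr) \;=\; b_2(t(\tilde{\rho}_v)) \;=\; t(\tilde{\rho}'_v) \;=\; b_s'\bigl(t(\rho'_v)\bigr)
\]
as conjugacy classes in ${}^L\rR_{E'/F}\GL(2)$. Since $b_s'$ is injective on Satake parameters of stable unramified packets, we conclude $b_H(t(\rho_v)) = t(\rho'_v)$ for all $v \notin S$, which is precisely the statement that $\{\rho'\}$ is the $b_H$-weak-lift of $\{\rho\}$. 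The main obstacle, as noted, is the transfer of $\sigma$-invariance through $b_2$; once that is in hand the remainder is a formal unwinding of the $L$-group diagram.
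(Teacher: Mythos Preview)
Your outline follows the same three-step architecture as the paper's proof: establish $\sigma'$-invariance of $\tilde{\rho}'$ by comparing Satake parameters and invoking strong multiplicity one for $\GL(2)$; descend $\tilde{\rho}'$ to a packet $\{\rho'\}$ on $\Un(2,E'/F')$ via Proposition~\ref{prop:u2gl2globalpacket}; and identify $\{\rho'\}$ with the $b_H$-weak-lift of $\{\rho\}$ through Lemma~\ref{lemma:u2cd}. Your treatment of the first and third steps is essentially what the paper does (in fact your last paragraph spells out the parameter-matching more explicitly than the paper).

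The one place where your proposal is genuinely thinner than the paper is the middle step, ruling out that $\tilde{\rho}'$ is only a $b_u'$-lift. You write that you would ``track central characters'' or invoke the commutativity $b_s'\circ b_H = b_2\circ b_s$, but neither of these is a complete argument: the commutative diagram alone only tells you that the Satake parameters of $\tilde{\rho}'$ \emph{factor} through $b_s'$, not that $\tilde{\rho}'$ lies in the image of $b_s'$ applied to an actual automorphic packet; and a bare central-character check does not by itself separate the images of $b_s'$ and $b_u'$, since $\kappa'^2$ is trivial on $C_{F'}$. The paper handles this by contradiction: assuming $\tilde{\rho}'$ is a $b_u'$-lift, it uses the $b_u$-side of Lemma~\ref{lemma:u2cd} to force $\kappa\tilde{\rho}$ into the $b_2$-fiber over $\tilde{\rho}'$, whence $\kappa\tilde{\rho}\cong\ve_{E'/E}^m\tilde{\rho}$ for some $m$ by Arthur--Clozel. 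Since $\ve_{E'/E}$ has odd order while $\kappa$ restricts to the nontrivial quadratic character $\ve_{E/F}$ on $C_F$, the twist $\ve_{E'/E}^{-m}\kappa$ cannot have order $1$ or $2$, so $\tilde{\rho}$ and $\ve_{E'/E}^{-m}\kappa\tilde{\rho}$ would have distinct central characters---a contradiction. You should incorporate this parity argument (or an equivalent one) to close the gap.
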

\begin{proof}
We first show that $\tilde{\rho}'$ is $\sigma'$-invariant.
At each place $v$ where $\tilde{\rho}_v$ and $E_v/F_v$ are unramified,
$\tilde{\rho}_v$ is parameterized by the conjugacy class in ${}^L \rR_{E/F}\GL(2) = \GL(2, \mbb{C})^2 \rtimes W_{E_v/F_v}$ 
of an element of the form
$t_v = (g_1, g_2)\rtimes \Fr_v$, where $\Fr_v$ denotes the Frobenius element.
Since $\tilde{\rho}$ is $\sigma$-invariant, for every such place $v$ there exists an element $h_v \in {}^L \rR_{E/F}\GL(2)$
such that $h_v^{-1}t_v h_v = (\theta(g_2), \theta(g_1))\rtimes\Fr_v$, where 
$\theta(g_i) := \lp\lsm &1\\-1&\rsm\rp \;{}^t g_i^{-1}\lp\lsm &-1\\1&\rsm\rp$.
By the definition of base change, each unramified component $\tilde{\rho}'_v$ of $\tilde{\rho}'$ 
is parameterized by the conjugacy class of $t'_v = \Delta(g_1, g_2) \rtimes \Fr_v$
in ${}^L \rR_{E'/F}\GL(2) = (\GL(2, \mbb{C})^2)^n \rtimes W_{E'_v/F_v}$, 
where $\Delta(g_1, g_2)$ is the diagonal embedding of $(g_1, g_2)$.
We have \[(\Delta h_v)^{-1} t_v' (\Delta h_v) = \Delta(\theta(g_2), \theta(g_1))\rtimes\Fr_v,\]
which implies that $\tilde{\rho}'_v$ is $\sigma'_v$-invariant, 
where $\sigma'_v$ is the local component of $\sigma'$ at $v$.
Since this is true for almost all $v$,
It follows from the rigidity, or strong multiplicity one, theorem for $\GL(2)$ that $\tilde{\rho}'$ is $\sigma'$-invariant.

By Proposition \ref{prop:u2gl2globalpacket}, 
the $\GL(2, \mbb{A}_{E'})$-module $\tilde{\rho}'$ is either a $b_s'$ or a $b_u'$-lift from $\mb{\rH}$, but not both.
Suppose the second case holds.  By Lemma \ref{lemma:u2cd} 
the packet $\{\rho\}$ must $b_u$-lift to an irreducible automorphic representation $\tilde{\rho}_u$ of $\GL(2, \Ae)$,
which base-change lifts to $\tilde{\rho}'$.  By Theorem III.3.1 of \cite{AC}, $\tilde{\rho}_u$ 
must be equal to $\ve_{E'/E}^m \,\tilde{\rho}$ for some integer $m$, 
where $\ve_{E'/E}$ is the character of $\idc{E}$ associated with the odd degree field extension $E'/E$ via global class field theory.
On the other hand, we have $\tilde{\rho}_u = \kappa\tilde{\rho}$ (see Section \ref{sec:summaryF}).
So, $\tilde{\rho} \cong \ve_{E/F}^{-m}\kappa\tilde{\rho}$.  The character $\ve_{E'/E}^{-m}\kappa$ has order different from $2$.
This is because $\ve_{E'/E}^{-m}$ has odd order, 
while the order of $\kappa$ is even or undefined, for $\kappa$ restricts to a nontrivial quadratic character on $\idc{F}$.
This implies that the equivalent irreducible $\GL(2, \Ae)$-modules $\tilde{\rho}$ and $\ve_{E/F}^m\kappa\tilde{\rho}$ 
have different central characters, a contradiction.

Since $\tilde{\rho}'$ is cuspidal, by Proposition \ref{prop:u2gl2globalpacket} the packet $\{\rho'\}$ is stable.
\end{proof}

Recall that we let $S$ be a finite set of places of $F$,
outside of which are places where 
the number field extensions $E/F$, $F'/F$ are unramified.  
The automorphic representations which we study are assumed 
to have unramified local components at all the places outside of $S$.
\begin{lemma}\label{lemma:traceglobalunstable1}
{\rm (i)} The packets $\pi(\{\rho\})$ and $\{\rho\}$ weakly lift to the packet $\pi'(\{\rho'\})$ of $\mb{\rG}(\Af)$.

{\rm (ii)}
Let $\rf, f, f_H$ be matching test functions on $\mb{\rG}(\Af), \mb{G}(\Af), \mb{H}(\Af)$, 
respectively, whose local components at the places outside of $S$ are spherical. 
The following equation holds$:$
\begin{equation}\label{eq:traceglobalunstable1}
2 \sum_{\pi' \in \pi'(\{\rho'\})}\!\!\!
m(\pi') \ep(\pi') \la \pi', \rf\times \beta\ra_S 
= \la \pi(\{\rho\}), f\ra_S +
\la \{\rho\}, f_H\ra_S,
\end{equation}
where $m(\pi')$ is the multiplicity with which $\pi'$ contributes to the discrete spectrum of $\mb{\rG}(\Af)$,
and $\ep(\pi')$ is an $n$-th root of unity.
\end{lemma}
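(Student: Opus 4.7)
The plan is to prove (i) by book-keeping of the $L$-homomorphisms in Figure \ref{fig:lgroups}, and to prove (ii) by feeding the conjugacy class data of $\pi'(\{\rho'\})$ into the ``separation by eigenvalues'' identity \eqref{eq:sepevp} and then reading off the coefficients $m(\{\pi\})$ and $m(\{\rho\})$ from Section \ref{sec:DORstf}.

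For (i), I would argue as follows. By Lemma \ref{lemma:sigmainvariance2}, the stable packet $\{\rho\}$ of $\mb{H}(\Af)$ has a $b_H$-lift $\{\rho'\}$ on $\mb{\rH}(\Af)$, which is stable and not of the form $\rho'(\theta_1,\theta_2)$. Let $\{\pi\} := \pi(\{\rho\})$ and $\{\pi'\} := \pi'(\{\rho'\})$ be the $e$- and $e'$-lifts to $\mb{G}(\Af)$ and $\mb{\rG}(\Af)$, respectively. For any place $v \notin S$, the unramified Hecke parameter $t_v(\rho_v) \in {}^L H_v$ is mapped by $e_H$ to a parameter in ${}^L G_v$ corresponding to $\pi_v \in \{\pi_v\}$, and the diagram in Figure \ref{fig:lgroups} commutes, giving $b_G \circ e_H = e_H' \circ b_H$ on the level of unramified parameters. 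Tracking the parameter through both composed routes shows that the unramified component of $\pi'(\{\rho'\})$ at $v$ is the common image, which is exactly what is required for both $\{\pi\}$ and $\{\rho\}$ to weakly lift to $\{\pi'\}$.

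For (ii), I would apply \eqref{eq:sepevp} to the family of classes $t_{\rG}(S) = b_G(t_\pi(S)) = e_H'(t_{\rho'}(S))$. The left-hand side is exactly the sum on the left of \eqref{eq:traceglobalunstable1}. On the right-hand side of \eqref{eq:sepevp} I need to enumerate all discrete spectrum (quasi-)packets of $\mb{G}(\Af)$ and $\mb{H}(\Af)$ that weakly lift to $\{\pi'\}$. Using the classification in Section \ref{sec:globalpackets} together with the rigidity theorem for $\GL(2)$ and $\GL(3)$, any such packet on $\mb{H}$ must have the same lift to $\GL(2,\mathbb{A}_{E'})$ as $\{\rho\}$, namely $\tilde\rho'$; since $\{\rho\}$ is stable and $\tilde\rho'$ is cuspidal and not of $b_u'$-type (Lemma \ref{lemma:sigmainvariance2}), $\{\rho\}$ is the unique such packet. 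On the $\mb{G}$ side, the $b$-lift of any contributing $\{\pi\}$ must coincide with the $b'$-preimage of $\tilde\pi'=I_{(2,1)}(\kappa'\tilde\rho',\tilde\eta')$, pinning $\{\pi\}$ down as $\pi(\{\rho\})$. In particular, no induced terms of the form $I(\mu,\eta)$ or $I(\mu,\eta)|_H$ occur, because $\tilde\rho'$ is cuspidal and not monomial of the relevant type.

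Having identified the contributors, I read off the coefficients: $\{\pi\} = \pi(\{\rho\})$ is an unstable packet, so $m(\{\pi\})=\tfrac{1}{2}$ by term (G2), and $\{\rho\}$ is a stable packet not of the form $\rho(\theta_1,\theta_2)$, so $m(\{\rho\})=1$ by term (H1). Combined with the overall factor $\tfrac{1}{2}$ multiplying $ST_e(H,f_H)$ in \eqref{eq:kstf}, the right-hand side of \eqref{eq:sepevp} becomes
\[
\tfrac{1}{2}\la \pi(\{\rho\}), f\ra_S + \tfrac{1}{2}\la \{\rho\}, f_H\ra_S,
\]
and multiplying by $2$ yields \eqref{eq:traceglobalunstable1}.

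The main obstacle is the uniqueness step in the third paragraph: ensuring that no other (quasi-)packet of $\mb{G}(\Af)$ or $\mb{H}(\Af)$ — in particular no lift $\rho(\theta_1,\theta_2)$ on $\mb{H}$ or one-dimensional/induced packet on $\mb{G}$ — has the same almost-everywhere Hecke parameters as $\{\pi'\}$. This requires combining the rigidity theorem for $\GL(m)$ applied to the common $\GL(3,\mathbb{A}_{E'})$-lift $\tilde\pi'$ with the characterization of stable versus unstable packets of $\mb{G}$ and $\mb{H}$ from \cite{F} and \cite{FU2}, recalled in Section \ref{sec:summaryF}. Once this is in hand, the coefficient bookkeeping is routine.
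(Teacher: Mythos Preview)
Your overall strategy matches the paper's: part (i) via the commutative $L$-group diagram, part (ii) via \eqref{eq:sepevp} plus coefficient bookkeeping from (G2) and (H1). The coefficient computation at the end is correct.

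The gap is in your uniqueness argument. You write that on the $\mb{G}$-side ``the $b$-lift of any contributing $\{\pi\}$ must coincide with the $b'$-preimage of $\tilde{\pi}'$, pinning $\{\pi\}$ down as $\pi(\{\rho\})$,'' and you propose to close this with rigidity for $\GL(m)$ over $\mbb{A}_{E'}$. Rigidity on $\GL(3,\mbb{A}_{E'})$ only tells you that the $b$-lift $\tilde{\tau}$ of a contributing packet must lie in the $b_3$-fibre over $\tilde{\pi}'$. That fibre is \emph{not} a single representation: by \cite[Chap.~3, Thm.~3.1]{AC} it consists of all $I_{(2,1)}(\ve^i\kappa\tilde{\rho},\,\ve^j\tilde{\eta})$, $0\le i,j\le n-1$, where $\ve$ is the class-field character of $C_E$ attached to $E'/E$. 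The same issue arises on the $\mb{H}$-side via $b_2$. Your appeal to Lemma \ref{lemma:sigmainvariance2} (that $\tilde{\rho}'$ is a $b_s'$-lift, not a $b_u'$-lift) rules out the wrong \emph{type} of packet on $\mb{H}$, but not the $\ve$-twisted competitors.

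What is missing is exactly the step the paper supplies: any $\tilde{\tau}$ arising as a $b$-lift from $\Un(3,E/F)$ is $\sigma$-invariant, and one checks that $I_{(2,1)}(\ve^i\kappa\tilde{\rho},\,\ve^j\tilde{\eta})$ is $\sigma$-invariant only if $\ve^{2}=1$ on the relevant factor, which together with $\ve^n=1$ and $n$ odd forces $\ve$ trivial, hence $i=j=0$ (or $\ve^i\tilde{\rho}\cong\tilde{\rho}$, which gives the same induced representation). Once you insert this $\sigma$-invariance/odd-order argument, your sketch becomes the paper's proof.
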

\begin{proof}
Part (i) follows from the commutativity of the $L$-group diagram in Figure \ref{fig:lgroups}.

For part (ii):
Apply equation \eqref{eq:sepevp} to the packet $\{\pi'\} = \pi'(\{\rho'\})$.
By the commutativity of the $L$-group diagram,
we know that $\{\rho\}$ and $\pi(\{\rho\})$ contribute to the equation. We need to show that 
no other packet of $\mb{G}(\Af)$ or $\mb{H}(\Af)$ also contributes.

To determine the packets $\{\tau\}$ (resp.\ $\{\varrho\}$) of $\mb{G}(\Af)$ (resp.\ $\mb{H}(\Af)$) which contribute to the equation,
note that any such packet must $b$-lift (resp.\ $b\circ e$-lift)
to a $\sigma$-invariant automorphic representation $\tilde{\tau}$ of $\GL(3, \mbb{A}_{E})$, 
which in turn base-change lifts to $\tilde{\pi}'$ via $b_{3}$.  
Therefore, $\tilde{\tau}$ is a parabolically induced representation of the form
\[
I_{i, j} := I_{(2, 1)}(\ve^i\kappa\tilde{\rho}, \ve^j\tilde{\eta}), \quad i, j \in \{0, 1, 2,\ldots, n-1\},
\]
where $\ve$ is the character of $C_E$ associated with the number field extension $E'/E$ via global class field theory 
(\cite[Chap.\ 3 Thm.\ 3.1]{AC}).
Suppose $j \neq 0$.
Then, $I_{i, j}$ is $\sigma$-invariant if and only if 
$\ve(z \alpha (z)) = 1$ for all $z \in C_E$.
Since the Galois action of $\alpha$ commutes with that of $\beta$, we have $\ve = \ve \circ \alpha$.
So, $\ve(z \alpha (z)) = 1$ implies that $\ve^2 = 1$.
On the other hand, 
$\ve$ is the character associated with the nontrivial cyclic number field extension $E'/E$ of the odd degree $n$; thus,
$\ve^n = 1$.  These two facts together imply that $\ve$ is trivial, a contradiction.  Hence, $j = 0$.
If $\tilde{\rho} \ncong \ve^i\tilde{\rho}$, 
then by the same reasoning the $\sigma$-invariance of $\tilde{\tau}$ implies that $i = 0$.
The same type of argument applies to $\{\varrho\}$, and
we conclude that 
$\pi(\{\rho\})$ and $\{\rho\}$ are the only packets which contribute to the right-hand side of 
equation \eqref{eq:sepevp}, applied to $\pi'(\{\rho'\})$.
\end{proof}

We now consider the unstable packets of $\mb{H}(\Af) = \Un(2, E/F)(\Af)$.
Fix three characters $\theta_1$, $\theta_2$ and $\theta_3$ of $C_E^F$, 
with $\theta_1\theta_2\theta_3 = \omega$ and $\theta_1 \neq \theta_2$.
For a character $\theta$ of $C_E^F$, let $\theta'$ denote the character $\theta\circ\N_{E'/E}$ of $C_{E'}^{F'}$.
By Lemma \ref{lemma:globalnormindex} in the Appendix, 
the correspondence $\theta \mapsto \theta'$ is one-to-one.
Let $\rho(\theta_i, \theta_j)$ (resp.\ $\rho'(\theta_i', \theta_j')$)
be the packet of automorphic representations of $\mb{H}(\Af)$ (resp.\ $\mb{\rH}(\Af)$) 
associated with $(\theta_i, \theta_j)$ (resp.\ $(\theta_i', \theta_j')$), for $i, j \in \{1, 2, 3\}$.
If the characters $\theta_i$ are distinct, then all three packets in 
$\mc{Q} = \{\rho(\theta_i, \theta_j): i \neq j \in \{1, 2, 3\}\}$ are discrete spectrum packets.
Otherwise, if $\theta_1 \neq \theta_2 = \theta_3$, then $\rho(\theta_2, \theta_3)$ 
consists of the irreducible constituents of the parabolically induced representation 
$I(\tilde{\theta_2}\kappa, \omega/\tilde{\theta_2}\kappa)$, 
and the other two members of $\mc{Q}$ are discrete spectrum packets (\cite[p.\ 699-700]{FU2}).  
Here, $\tilde{\theta_2}(z) := \theta_2(z/\alpha (z))$ for all $z \in C_E$.
For all $i \neq j \in \{1, 2, 3\}$,
the packets of $\mb{G}(\Af)$ which are the $e$-lifts of $\rho(\theta_i,\theta_j)$ 
are equivalent to one another.
Let $\pi(\rho)$ (resp.\ $\pi'(\rho')$) denote the packet of $\mb{G}(\Af)$ (resp.\ $\mb{\rG}(\Af)$)
which is the lift of $\rho(\theta_1, \theta_2)$ (resp.\ $\rho'(\theta_1', \theta_2')$).

Suppose the characters $\theta_1, \theta_2, \theta_3$ are distinct. 
In particular, $\pi(\rho)$ and $\pi'(\rho')$ are discrete spectrum packets.  The following lemma holds:
\begin{lemma}\label{lemma:traceglobaluunstable}
{\rm (i)} The packets $\rho(\theta_1, \theta_2)$, $\rho(\theta_1, \theta_3)$, $\rho(\theta_2, \theta_3)$ and $\pi(\rho)$
weakly lift to the packet $\pi'(\rho')$ of $\mb{\rG}(\Af)$.

{\rm (ii)} The following equation holds for matching test functions$:$
\begin{multline}\label{eq:traceglobaluunstable}
4 \sum_{\pi' \in \pi'(\rho')}\!\!\!
m(\pi')\ep(\pi')\la \pi', \rf\times \beta\ra_S
= 
\la \pi(\rho), f \ra_S \\
+
\la \rho(\theta_1, \theta_2), f_H\ra_S 
+ \la \rho(\theta_1, \theta_3), f_H\ra_S 
+ \la \rho(\theta_2, \theta_3), f_H\ra_S,
\end{multline}
where $m(\pi')$ is the multiplicity with which $\pi'$ contributes to the discrete spectrum of $\mb{\rG}(\Af)$,
and $\ep(\pi')$ is an $n$-th root of unity.
\end{lemma}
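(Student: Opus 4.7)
The plan is to follow the same strategy as in the proof of Lemma \ref{lemma:traceglobalunstable1}. For part (i), commutativity of the $L$-group diagram in Figure \ref{fig:lgroups} shows that $\rho(\theta_i, \theta_j)$ weakly lifts via $b_H$ to the packet $\rho'(\theta_i', \theta_j')$ of $\mb{\rH}(\Af)$, and that this in turn weakly lifts via $e'$ to a packet of $\mb{\rG}(\Af)$. Since all three unordered pairs $(\theta_i', \theta_j')$ determine the same unordered triple $(\theta_1', \theta_2', \theta_3')$, their three $e'$-lifts coincide with the single packet $\pi'(\rho')$. Likewise $\pi(\rho) = \pi(\rho(\theta_1, \theta_2))$ weakly lifts to $\pi'(\rho')$ via $b_G$.

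For part (ii), I apply equation \eqref{eq:sepevp} to the packet $\pi'(\rho')$ on $\mb{\rG}(\Af)$, and then identify all (quasi-)packets of $\mb{G}(\Af)$ and $\mb{H}(\Af)$ whose weak lifts land in $\pi'(\rho')$. Any packet $\{\tau\}$ of $\mb{G}(\Af)$ contributing to the right-hand side of \eqref{eq:sepevp} must $b$-lift to a $\sigma$-invariant automorphic representation of $\GL(3, \Ae)$ whose base change to $\GL(3, \mbb{A}_{E'})$ equals the parabolically induced representation $\tilde{\pi}' := b'(\pi'(\rho'))$ attached to the triple $(\theta_1', \theta_2', \theta_3')$. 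By \cite[Chap.\ 3 Thm.\ 3.1]{AC}, the fiber of base change consists of twists of the characters in the inducing triple by powers of the id\`ele class character $\ve = \ve_{E'/E}$. Mimicking the argument in the proof of Lemma \ref{lemma:traceglobalunstable1}, the odd order of $\ve$, together with the $\sigma$-invariance constraint forcing $\ve^2 = 1$, implies $\ve = 1$: no nontrivial twist survives, and $\pi(\rho)$ is the unique $\mb{G}$-contributor. A parallel argument applied to $\mb{H}(\Af)$ leaves only the three distinct packets $\rho(\theta_1, \theta_2), \rho(\theta_1, \theta_3), \rho(\theta_2, \theta_3)$ as contributors there.

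It then remains to read off the multiplicities from Section \ref{sec:DORstf}: $m(\pi(\rho)) = 1/4$ comes from term (G3) (the triple $\theta_1, \theta_2, \omega/\theta_1\theta_2$ being distinct by assumption), and $m(\rho(\theta_i, \theta_j)) = 1/2$ from term (H2). The $\mb{H}$-side of \eqref{eq:sepevp} carries an additional factor of $1/2$ inherited from \eqref{eq:kstf}, so each of the three pair-packets enters with total weight $1/4$. Multiplying both sides of \eqref{eq:sepevp} by $4$ produces exactly the identity \eqref{eq:traceglobaluunstable}.

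The main obstacle is the elimination of the spurious contributors on the $\mb{G}$ and $\mb{H}$ sides: a priori many principal series on $\GL(3, \Ae)$ or automorphic representations on $\GL(2, \Ae)$ could base-change to the same target. What rules them out is the odd-degree hypothesis on $n = [E':E]$, which makes the character $\ve_{E'/E}$ coprime to the quadratic twisting ambiguity produced by $\sigma$-invariance. Once that bookkeeping is complete, the remainder is an immediate computation from \eqref{eq:sepevp} and the coefficients tabulated in Section \ref{sec:DORstf}.
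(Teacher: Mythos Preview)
Your proposal is correct and follows essentially the same approach as the paper's proof, which is a one-line citation of the $L$-group diagram, base change for $\GL(2)$, the coefficients in Section \ref{sec:DORstf}, and equation \eqref{eq:sepevp}; you have simply spelled out the details. One small remark: your invocation of \cite[Chap.\ 3 Thm.\ 3.1]{AC} is for the cuspidal case, whereas here the target $\tilde{\pi}'$ is a full principal series, so the fiber is computed character-by-character on the inducing data rather than by a single application of that theorem---but the $\ve$-twist bookkeeping and the odd-order argument go through unchanged.
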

\begin{proof}
This follows from the commutativity of the $L$-group diagram in Figure \ref{fig:lgroups},
the properties of base change for $\GL(2)$, 
the results recorded in Section \ref{sec:DORstf},
and equation \eqref{eq:sepevp}. 
\end{proof}

Suppose $\theta_1 \neq \theta_2 = \theta_3$.  In this case,
the packet $\pi(\rho)$ (resp.\ $\pi'(\rho')$)
consists of the irreducible constituents of:
\[
I\lp\tilde{\theta}_2, \omega/\tilde{\theta}_2\rp
\quad \lp \text{resp.\ } 
I\lp\tilde{\theta}'_2, \omega'/\tilde{\theta}'_2\rp \rp
\]
(see Section \cite[Part.\ 2. III.3.8]{F}).
\begin{lemma}
The following equation holds for matching functions$:$
\begin{equation}
\la \pi'(\rho'), \rf\times \beta\ra_S
= 
\Big\langle \pi(\rho(\theta_1, \theta_2)), f \Big\rangle_S\,.
\end{equation}
\end{lemma}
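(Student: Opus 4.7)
The plan is to apply the separation-by-eigenvalues machinery of equation \eqref{eq:sepevp} to the Hecke parameters of $\pi'(\rho')$, but now with both sides augmented to include the continuous-spectrum terms (G4), (H3) of the stable trace formulas and the induced term (G$'$2) of the twisted trace formula (Lemma \ref{lemma:tfrGspectral}). This augmentation is necessary because in the degenerate case $\theta_2 = \theta_3$, neither $\pi'(\rho')$ nor $\pi(\rho)$ lies in the discrete spectrum: both have been identified with full principal series $I(\tilde{\theta}'_2, \omega'/\tilde{\theta}'_2)$ and $I(\tilde{\theta}_2, \omega/\tilde{\theta}_2)$, respectively.

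First I would repeat the eigenvalue argument from the proof of Lemma \ref{lemma:traceglobaluunstable} to catalogue all representations sharing the Hecke parameters of $\pi'(\rho')$. The same $\varepsilon^n = 1$, $\varepsilon^2 \neq 1$ argument rules out extraneous $\sigma$-invariant $\GL(3, \mbb{A}_{E'})$-modules, leaving the same family of candidates as in the generic case, but with three structural differences: (i) $\pi(\rho)$ now contributes through the induced term (G4) with coefficient $-1/8$ rather than through a discrete term; (ii) $\rho(\theta_2, \theta_3) = \rho(\theta_2, \theta_2)$, being no longer discrete, contributes through (H3) with coefficient $-1/4$, weighted additionally by $1/2$ from \eqref{eq:kstf}; (iii) $\rho(\theta_1, \theta_2)$ and $\rho(\theta_1, \theta_3)$ coincide, so the discrete term (H1) contributes with a doubled multiplicity.

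Next, I would assemble all these contributions into one identity, together with the induced term on the twisted side $\frac{1}{4}\tr M\,I(\tilde{\theta}'_2, \omega'/\tilde{\theta}'_2)(\rf \times \beta)$ from Lemma \ref{lemma:tfrGspectral}(G$'$2). The key cancellation should come from the fact that the doubled discrete contribution of $\rho(\theta_1, \theta_2)$, when combined with the (H3) contribution of $\rho(\theta_2, \theta_2)$ and the (G4) contribution of $\pi(\rho)$, balances the intertwined twisted contribution of $I(\tilde{\theta}'_2, \omega'/\tilde{\theta}'_2)$ in such a way that only the clean untwisted character $\la \pi(\rho(\theta_1, \theta_2)), f\ra_S$ survives on the right, and the full $\beta$-twisted character $\la \pi'(\rho'), \rf\times\beta\ra_S$ survives on the left.

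The main obstacle will be the explicit identification of the intertwining operator $M$ in term (G$'$2) and the verification that, after collecting the Kottwitz--Shelstad coefficients and the doubling from $\rho(\theta_1, \theta_2) = \rho(\theta_1, \theta_3)$, the remaining terms unfold correctly. One reduces this, via Weyl integration on the maximal torus and the compatibility of matching functions with the norm map, to the identities $\tilde{\theta}'_2 = \tilde{\theta}_2 \circ \N_{E'/E}$ and $\omega' = \omega \circ \N_{E'/E}$, which are built into the setup.
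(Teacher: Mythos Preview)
Your approach is far more elaborate than what the paper does, and the elaboration is not needed. The paper's entire proof is one sentence: since in the degenerate case $\theta_2 = \theta_3$ both $\pi'(\rho')$ and $\pi(\rho(\theta_1,\theta_2))$ consist of the constituents of parabolically induced principal series $I(\tilde{\theta}'_2,\omega'/\tilde{\theta}'_2)$ and $I(\tilde{\theta}_2,\omega/\tilde{\theta}_2)$, the desired identity is simply the product over $v\in S$ of the local identity of Lemma~\ref{lemma:tracelocalinduced}. No trace-formula bookkeeping is required; the global statement reduces immediately to a local one that is proved by a direct Weyl-integration computation for induced representations.

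Your route through the augmented \eqref{eq:sepevp} is not obviously wrong in spirit, but it carries real risks that you have not discharged. First, the placement of $I(\tilde{\theta}_2,\omega/\tilde{\theta}_2)$ in term (G4) is suspect: (G4) requires $\mu|_{C_F}\neq 1$, whereas $\tilde{\theta}_2(z)=\theta_2(z/\alpha z)$ is automatically trivial on $C_F$, so the induced representation you need does not sit where you claim (the $\kappa$-twist matters here and you have not tracked it). Second, you yourself flag the identification of the intertwining operator $M$ in (G$'$2) as the ``main obstacle''; in the paper's argument this obstacle simply does not arise, because the operator $A$ in Lemma~\ref{lemma:tracelocalinduced} is written down explicitly as $(A\phi)(g)=\phi(\beta g)$. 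Third, your final paragraph concedes that the endgame is a torus computation using $\tilde{\theta}'_2=\tilde{\theta}_2\circ\N_{E'/E}$ --- but that computation \emph{is} Lemma~\ref{lemma:tracelocalinduced}, so you would be re-deriving the direct argument after a long detour through global spectral terms whose coefficients you have not actually matched.
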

\begin{proof}
The representations in this case are parabolically induced,
so the equation will follow from the local character identity in 
Lemma \ref{lemma:tracelocalinduced}.
\end{proof}

Let $\mu$ be a character of $C_E^F$, and let $\mu'$ denote the character $\mu\circ\N_{E'/E}$ of $C_{E'}^{F'}$.
Then, $\mu$ (resp.\ $\mu'$) defines a one-dimensional automorphic representation of $\mb{H}(\Af)$ (resp.\ $\mb{\rH}(\Af)$)
via composition with the determinant.  
Let $\pi(\mu)$ (resp.\ $\pi'(\mu'))$ denote the quasi-packet of $\mb{G}(\Af)$ (resp.\ $\mb{\rG}(\Af)$) 
which is the lift of $\mu$ (resp.\ $\mu'$) (see \cite[Part 2.\ Sec.\ III.3.2 Cor.]{F}).
For each nonarchimedean place $v$ of $F$, and local test function $f_v$ on $G_v$, let
\[
\la \pi(\mu)_v, f_v\ra = \la\pi(\mu_v), f_v\ra := \la \pi(\mu_v)^\times, f_v\ra - \la\pi(\mu_v)^-, f_v\ra,
\]
where $\pi(\mu_v)^\times$ and $\pi(\mu_v)^-$ are the nontempered and cuspidal members, respectively, 
of the local quasi-packet $\pi(\mu_v)$.
Using the same type of argument as before, we obtain:
\begin{lemma}\label{lemma:traceglobalonedim}
{\rm (i)} The one-dimensional representation $\mu$ and quasi-packet $\pi(\mu)$ weakly lift to the quasi-packet 
$\{\pi'(\mu')\}$ of $\mb{\rG}(\Af)$.

{\rm (ii)} The following trace identity holds for matching test functions$:$
\begin{gather}
2 \sum_{\pi' \in \{\pi'(\mu')\}}
\ep(\pi')\la \pi', \rf\times\beta\ra_S
=
\la \pi(\mu), f\ra_S + \la \mu, f_H \ra_S,
\end{gather}
where $\ep(\pi')$ is an $n$-th root of unity.
\end{lemma}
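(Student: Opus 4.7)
The plan is to follow the template established by Lemmas \ref{lemma:traceglobalunstable1} and \ref{lemma:traceglobaluunstable}, adapted to the quasi-packet setting. For part (i), the weak lifting will be a formal consequence of the commutativity of the $L$-group diagram in Figure \ref{fig:lgroups}: the character $\mu$ of $C_E^F$ defines a one-dimensional automorphic representation of $\mb{H}(\Af)$, whose $b_H$-image is the one-dimensional representation of $\mb{\rH}(\mbb{A}_{F'})$ attached to $\mu' = \mu\circ\N_{E'/E}$, which in turn lifts through $e'$ to $\{\pi'(\mu')\}$; along the other route, $\mu$ lifts through $e$ to $\pi(\mu)$, and $b_G$ then weakly lifts $\pi(\mu)$ to the same quasi-packet.

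For part (ii), I would apply the spectral identity \eqref{eq:sepevp} to the family of Hecke conjugacy classes $t(\{\pi'(\mu')\}, S)$. The first step is to identify every $($quasi-$)$ packet of $\mb{G}(\Af)$ or $\mb{H}(\Af)$ whose weak lift to $\mb{\rG}(\Af)$ equals $\{\pi'(\mu')\}$. Any contributor on the $\mb{G}$-side, composed with $b$, must produce a $\sigma$-invariant representation of $\GL(3, \Ae)$ whose $b_3$-lift coincides with what $\pi(\mu)$ lifts to; an analogous constraint applies on the $\mb{H}$-side via $b\circ e$. Mimicking the $\ve_{E'/E}$-twist argument of Lemma \ref{lemma:traceglobalunstable1}, and combining the odd order of $\ve_{E'/E}$ with the fact that $\sigma$-invariance would force the relevant twisting character to have order dividing $2$, one concludes that the only contributors are $\pi(\mu)$ and $\mu$ themselves.

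Next I would read off the coefficients from Section \ref{sec:DORstf}: the one-dimensional $\mu$ is a stable discrete spectrum packet of $\mb{H}$ not of $\rho(\theta_1,\theta_2)$-type, so it contributes under (H1) with coefficient $1$, while the quasi-packet $\pi(\mu)$, being the endoscopic lift of such a $\mu$, contributes under (G2) with coefficient $1/2$. Combined with the outer factor $1/2$ attached to $ST_e(H, f_H)$ in \eqref{eq:kstf}, both terms on the right enter with weight $1/2$, so multiplying through by $2$ yields the stated identity, provided that on the $\pi(\mu)$-side the local contributions are organised according to the sign convention $\la \pi(\mu_v), f_v\ra = \la \pi(\mu_v)^\times, f_v\ra - \la \pi(\mu_v)^-, f_v\ra$ introduced just before the lemma.

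The main obstacle is not the $L$-group bookkeeping, which is by now routine, but rather verifying that this local sign convention correctly encodes the discrete spectral multiplicities of the non-tempered and cuspidal constituents of the global quasi-packet $\pi(\mu)$, so that $\la \pi(\mu), f\ra_S$ exactly reproduces the contribution of $\pi(\mu)$ to $SI(G, f)$ with the correct signs. This matching is supplied by \cite[Part 2, Sec.\ III.3.2 Cor.]{F}, and it is the one genuinely new input needed beyond the arguments of Lemmas \ref{lemma:traceglobalunstable1} and \ref{lemma:traceglobaluunstable}.
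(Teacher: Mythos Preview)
Your proposal is correct and follows essentially the same approach as the paper, whose entire proof reads ``Using the same type of argument as before, we obtain:''. You have accurately reconstructed that argument: commutativity of the $L$-group diagram for part (i), application of \eqref{eq:sepevp} together with the $\ve_{E'/E}$-twist elimination argument for uniqueness of contributors, and the coefficient bookkeeping from (G2) and (H1) of Section \ref{sec:DORstf}; you also correctly flag that the quasi-packet sign convention $\la \pi(\mu_v), f_v\ra = \la \pi(\mu_v)^\times, f_v\ra - \la\pi(\mu_v)^-, f_v\ra$ is the one genuinely new ingredient needed to make the right-hand side match the discrete spectral contribution.
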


\subsubsection{Stable Packets}\label{sec:traceglobalstable}
Let $\{\pi\}$ be a stable discrete spectrum (quasi-) packet of automorphic representations of $\mb{G}(\Af)$.
It lifts to a $\sigma$-invariant discrete spectrum representation $\tilde{\pi}$
of $\GL(3, \Ae)$ via the $L$-homomorphism $b$ (see Figure \ref{fig:lgroups}).  
Let $\tilde{\pi}'$ be the base change via $b_{3}$ of $\tilde{\pi}$ to $\GL(3, \mbb{A}_{E'})$.
Via the same argument used in the proof of Lemma \ref{lemma:sigmainvariance2},
it can be shown that:
\begin{lemma}\label{lemma:sigmainvariance3}
$\tilde{\pi}'$ is $\sigma'$-invariant.
\end{lemma}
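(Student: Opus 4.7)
The plan is to imitate verbatim the argument used in the proof of Lemma \ref{lemma:sigmainvariance2}, with $\GL(3)$ in place of $\GL(2)$. Concretely, I would show $\sigma'$-invariance of $\tilde\pi'$ place-by-place at the unramified places, and then invoke strong multiplicity one for $\GL(3)$ to upgrade to an honest global equivalence.

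In detail: fix a finite set $S$ of places of $F$ containing the archimedean places and all finite places where either of $E/F$, $F'/F$, or the representation $\tilde\pi$ ramifies. For each $v\notin S$, the local component $\tilde\pi_v$ corresponds to a conjugacy class $t_v = g_v\rtimes \Fr_v$ in the local $L$-group ${}^L\rR_{E/F}\GL(3)_v = \GL(3,\CC)^2\rtimes W_{E_v/F_v}$, where $g_v=(g_{1,v},g_{2,v})\in \GL(3,\CC)^2$. Since $\tilde\pi$ is $\sigma$-invariant, there is $h_v$ in this local $L$-group with $h_v^{-1}t_v h_v = (\theta(g_{2,v}),\theta(g_{1,v}))\rtimes \Fr_v$, where $\theta(g) := J\,{}^tg^{-1}J$ with $J$ as defined in the introduction. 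By the definition of base change via $b_3$, the local parameter of $\tilde\pi'_v$ is the diagonal embedding $\Delta(g_{1,v},g_{2,v})\rtimes \Fr_v$ in ${}^L\rR_{E'/F}\GL(3)_v = (\GL(3,\CC)^2)^n \rtimes W_{E'_v/F_v}$. Conjugating by $\Delta h_v$ then yields
\[
(\Delta h_v)^{-1}\bigl(\Delta(g_{1,v},g_{2,v})\rtimes \Fr_v\bigr)(\Delta h_v) = \Delta(\theta(g_{2,v}),\theta(g_{1,v}))\rtimes \Fr_v,
\]
which is precisely the parameter of $\sigma'_v\tilde\pi'_v$. Hence $\tilde\pi'_v \cong \sigma'_v\tilde\pi'_v$ for every $v\notin S$.

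Finally, since $\tilde\pi'$ is a discrete spectrum automorphic representation of $\GL(3,\mbb{A}_{E'})$ (cuspidal or a residual representation, both of which are isolated by the classification of Jacquet--Shalika and Mœglin--Waldspurger), the rigidity (strong multiplicity one) theorem for $\GL(3)$ applies: an automorphic representation in the discrete spectrum is determined by its local components at almost all places. Thus the local equivalences $\tilde\pi'_v\cong \sigma'_v\tilde\pi'_v$ at all $v\notin S$ force the global equivalence $\tilde\pi'\cong \sigma'\tilde\pi'$.

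There is essentially no obstacle here beyond the verification that the base-change map $b_3$ indeed sends the Satake parameter of $\tilde\pi_v$ to the diagonal embedding, which is exactly the defining compatibility of base change for $\GL(m)$ recalled in Figure~\ref{fig:lgroups}; the only point requiring mild care is that the cyclic extension $E'/E$ is of odd degree $n$, which ensures that the base change $\tilde\pi'$ of the discrete spectrum representation $\tilde\pi$ is again discrete spectrum (this follows from \cite[Chap.~3, Thm.~4.2]{AC}), so that strong multiplicity one is genuinely applicable.
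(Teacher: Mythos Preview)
Your approach is exactly what the paper intends: it says only ``via the same argument used in the proof of Lemma~\ref{lemma:sigmainvariance2}'', and your place-by-place Satake computation followed by rigidity for $\GL(3)$ is precisely that argument.

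There is, however, a genuine gap in your final step. You assert that the odd degree of $E'/E$ forces $\tilde\pi'$ to be discrete spectrum, citing \cite[Chap.~3, Thm.~4.2]{AC}. That is not what the theorem says. When $n=3$ and $\tilde\pi$ is the cuspidal monomial representation $\pi(\chi)$ associated with a character $\chi$ of $C_{E'}$ with $\chi\neq\beta\chi$, the base change $\tilde\pi'$ is the parabolically induced representation $I(\chi,\beta\chi,\beta^2\chi)$ of $\GL(3,\mbb{A}_{E'})$, which lies in the continuous spectrum. This is exactly Case~2 treated in the paper immediately after the lemma, and the paper \emph{uses} Lemma~\ref{lemma:sigmainvariance3} there, so the lemma must cover this case. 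Your appeal to strong multiplicity one for the discrete spectrum therefore does not apply as stated.

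The repair is easy: $\tilde\pi'$ is in every case an \emph{isobaric} automorphic representation of $\GL(3,\mbb{A}_{E'})$ (cuspidal, one-dimensional, or the full induction $I(\chi,\beta\chi,\beta^2\chi)$), and Jacquet--Shalika's classification theorem gives rigidity for isobaric representations: two isobaric automorphic representations of $\GL(m)$ agreeing at almost all places are equivalent. Invoke that in place of strong multiplicity one for the discrete spectrum and your argument goes through unchanged. Alternatively, in the exceptional case you can check $\sigma'$-invariance of $I(\chi,\beta\chi,\beta^2\chi)$ directly, since $\sigma'$ sends it to $I(\alpha\chi^{-1},\alpha\beta\chi^{-1},\alpha\beta^2\chi^{-1})$ and the $\sigma$-invariance of $\tilde\pi=\pi(\chi)$ forces $\{\chi,\beta\chi,\beta^2\chi\}=\{\alpha\chi^{-1},\alpha\beta\chi^{-1},\alpha\beta^2\chi^{-1}\}$.
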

We claim that there exists a packet $\{\pi'\}$ of $\mb{\rG}(\Af)$ which $b'$-lifts to $\tilde{\pi}'$.
There are two cases to consider:

\quad

\noindent
{\bf Case 1.} Suppose $n = [E':E]$ is not equal to $3$, or $n = 3$ 
and $\tilde{\pi}$ is not the cuspidal monomial representation associated with a character of $C_{E'}$.
Then, $\tilde{\pi}'$ is cuspidal by \cite[Chap. 3 Thm.\ 4.2(a)]{AC}.
Consequently, by Theorem \ref{thm:u3gl3stableglobalpacket} and Lemma \ref{lemma:sigmainvariance3}
there exists a {\it stable} packet $\{\pi'\}$ on $\mb{\rG}(\Af)$ which $b'$-lifts to $\tilde{\pi}'$.

\quad

\noindent
{\bf Case 2.} Suppose $n = 3$,
and $\tilde{\pi}$ is the cuspidal monomial representation ${\pi}(\chi)$ 
associated with a character $\chi$ of $C_{E'}$ such that $\chi\neq\beta\chi$.
The central character of ${\pi}(\chi)$ is $\chi|_{C_E}$.
Since $\tilde{\pi}$ is the $b$-lift of a representation of $\mb{G}(\Af)$, 
its central character $\omega_{\tilde{\pi}}$ is trivial on $C_F$.  
Hence, $\chi|_{C_F} = \omega_{\tilde{\pi}}|_{C_F} = 1$.

The representation $\tilde{\pi}$ base-change lifts to the representation
$\tilde{\pi}' := I(\chi, \beta\chi, \beta^2\chi)$ of $\GL(3, \mbb{A}_{E'})$ 
parabolically induced from the upper triangular Borel subgroup.  
Since $\tilde{\pi}'$ is $\sigma'$-invariant, we have 
$\{\chi, \beta\chi, \beta^2\chi\} = \{\alpha\chi^{-1},\alpha\beta\chi^{-1},\alpha\beta^2\chi^{-1}\}$.
Suppose $\chi = \alpha\beta^i\chi^{-1}$ for $i = 1$ or $2$.  Applying $\alpha\beta^i$ to both sides of the equation,
and noting that $\alpha\beta = \beta\alpha$, $\alpha^2 = 1$, we have
$
\chi^{-1} = \beta^{2i} \chi^{-1},
$
which implies, since $\beta^3 = 1$, that $\chi$ is $\beta$-invariant, a contradiction.

Hence, $\chi = \alpha\chi^{-1}$, and therefore $\chi|_{C_{F'}} = 1$ or $\ve_{E'/F'}$, 
the quadratic character of $C_{F'}$ associated with the field extension $E'/F'$ via global class field theory.
It follows from an exercise in class field theory that $\ve_{E'/F'}$ agrees with $\ve_{E/F}$ on $C_F$,
which rules out the possibility that $\chi|_{C_{F'}} = \ve_{E'/F'}$, for we have already shown that $\chi|_{C_{F}} = 1$.
Thus, $\chi|_{C_{F'}}$ is trivial, so there exists a character ${\theta}$ of $C_{E'}^{F'}$ such that
$\chi(z) = \tilde{\theta}(z) := {\theta}(z/\alpha(z))$, $z \in C_{E'}$.

Let $\kappa''$ denote the character which is the restriction of $\kappa'^{-1}$ to $C_{E'}^{F'}$.
Let $\{\pi'\}$ be the {\it unstable} packet on $\mb{\rG}(\Af) = \Un(3, E'/F')(\mbb{A}_{F'})$ which is the 
$e'$-weak-lift of the unstable packet $\rho'(\kappa''{\theta}, \kappa''\beta{\theta})$ of $\Un(2, E'/F')(\mbb{A}_{F'})$.
It follows from Proposition \ref{prop:u2u3gl3globalpacket}
and the commutativity of the $L$-group diagram in Figure \ref{fig:lgroups}
that $\{\pi'\}$ lifts to $\tilde{\pi}'$.

If $n = [E':E]$ is not $3$, then by \cite[Chap.\ 3.\ Thm.\ 3.1]{AC} the fibre of the base-change lifting $b_3$
to $\tilde{\pi}'$ consists of $\tilde{\pi}$ alone.  
If $n = 3$, then $\ve^i\tilde{\pi}$ lifts to $\tilde{\pi}'$ for each $i = 0, 1, 2$, where
$\ve$ is the character of $C_E$ associated with the number field extension $E'/E$ via global class field theory.
By the same reasoning used in the proof of Lemma \ref{lemma:traceglobalunstable1}, 
the representation $\ve^i\tilde{\pi}$ is $\sigma$-invariant if and only if $i = 0$ or $\tilde{\pi} \cong \ve\tilde{\pi}$.  
Thus, $\{\pi\}$ is the only (quasi-) packet which $b_{3}\circ b$-lifts to $\tilde{\pi}'$,
which in turn implies that it is the only (quasi-) packet which $b_G$-weakly-lifts to $\{\pi'\}$.
Since $\{\pi\}$ lifts via $b$ to a $\sigma$-invariant discrete spectrum representation of $\GL(3, \Ae)$,
no automorphic representation of $\mb{H}(\Af)$ lifts to $\{\pi\}$ (see Proposition \ref{prop:u2u3gl3globalpacket} in the Appendix). 
Consequently, in both Cases 1 and 2,
no automorphic representation of $\mb{H}(\Af)$ $e_H$-weakly-lifts to $\{\pi'\}$,
by the commutativity of the $L$-group diagram in Figure \ref{fig:lgroups}.

We write $\{\pi\} = \{\pi(\tilde{\pi})\}$ to emphasize that $\pi$ lifts (via $b$) to $\tilde{\pi}$, 
and likewise write $\{\pi'\} = \{\pi'(\tilde{\pi}')\}$.
\begin{lemma}\label{lemma:traceglobalstable}
{\rm (i)} The packet $\{\pi(\tilde{\pi})\}$ weakly lifts to $\{\pi'(\tilde{\pi}')\}$.

{\rm (ii)} For matching functions, we have$:$
\begin{equation}\label{eq:traceglobalstable}
\sum_{\pi' \in \{\pi'(\tilde{\pi}')\}} m(\pi')\ep(\pi') \la \pi', \rf\times\beta\ra_S =
\la \{\pi(\tilde{\pi})\}, f\ra_S,
\end{equation}
where $m(\pi')$ is the multiplicity with which $\pi'$ occurs in the discrete spectrum of $\mb{\rG}(\Af)$,
and $\ep(\pi')$ is an $n$-th root of unity.
\end{lemma}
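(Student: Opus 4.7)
The plan is to follow the template of the proofs of Lemmas \ref{lemma:traceglobalunstable1} and \ref{lemma:traceglobaluunstable}. Part (i) will be immediate from the commutativity of the $L$-group diagram in Figure \ref{fig:lgroups}: the unramified Hecke parameters of $\{\pi(\tilde{\pi})\}$ lift under $b_G$ to those of a packet of $\mb{\rG}(\Af)$ whose subsequent $b'$-lift agrees with $\tilde{\pi}' = b_3(\tilde{\pi})$, and by definition (together with Lemma \ref{lemma:sigmainvariance3} and the Case 1/Case 2 analysis above) $\{\pi'(\tilde{\pi}')\}$ is the packet with this property.

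For part (ii), I would apply the separation-of-eigenvalues identity \eqref{eq:sepevp} to the set of conjugacy classes $t(\pi', S)$ attached to $\{\pi'(\tilde{\pi}')\}$, using matching test functions $\rf, f, f_H$ whose components outside $S$ are spherical. The left-hand side of \eqref{eq:sepevp} is then already the left-hand side of \eqref{eq:traceglobalstable}, since non-$\beta$-invariant members of $\{\pi'(\tilde{\pi}')\}$ contribute zero. The bulk of the work is to show that $\{\pi(\tilde{\pi})\}$ is the only (quasi-)packet contributing on the right, and to read off its coefficient.

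Most of this has already been carried out in the discussion preceding the lemma. Any (quasi-)packet $\{\tau\}$ of $\mb{G}(\Af)$ weakly lifting to $\{\pi'(\tilde{\pi}')\}$ must $b$-lift to a $\sigma$-invariant $\GL(3, \Ae)$-representation whose $b_3$-base-change is $\tilde{\pi}'$, so by \cite[Chap.\ 3\ Thm.\ 3.1]{AC} it has the form $\ve^i\tilde{\pi}$; the same odd-order-versus-$\sigma$-invariance argument used for $I_{i, j}$ in Lemma \ref{lemma:traceglobalunstable1} then forces $\ve^i\tilde{\pi} \cong \tilde{\pi}$. Since $\tilde{\pi}$ is a discrete-spectrum $\sigma$-invariant representation of $\GL(3, \Ae)$, Proposition \ref{prop:u2u3gl3globalpacket} shows that $\{\pi(\tilde{\pi})\}$ is not an $e$-lift from $\mb{H}$; by the commutativity of the diagram in Figure \ref{fig:lgroups}, no (quasi-)packet of $\mb{H}(\Af)$ contributes either. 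For the coefficient: as $\{\pi(\tilde{\pi})\}$ is stable and discrete spectrum, it enters $SI(G, f)$ solely through term (G1) of Section \ref{sec:DORstf}, with multiplicity $1$; terms (G2)--(G4) are excluded because $\{\pi(\tilde{\pi})\}$ is neither an endoscopic lift nor (in either Case 1 or Case 2) of parabolically induced type.

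The main obstacle will be the bookkeeping in Case 2, where a stable packet of $\mb{G}$ lifts to an \emph{unstable} packet $\{\pi'(\tilde{\pi}')\}$ of $\mb{\rG}$. There one must check that the multiplicities $m(\pi')$ on the left-hand side correctly reflect the unstable structure of $\{\pi'(\tilde{\pi}')\}$, and that no alternative $\mb{H}(\Af)$-contribution sneaks in via the $e_H$-route through the unstable packet $\rho'(\kappa''\theta, \kappa''\beta\theta)$ of $\mb{\rH}(\Af)$ introduced in Case 2. This will be handled by combining the assumed multiplicity one theorem for $\Un(3)$ with Proposition \ref{prop:u2u3gl3globalpacket} to exclude any $\mb{H}(\Af)$-source at the $F$-level, after which \eqref{eq:sepevp} collapses to the claimed identity \eqref{eq:traceglobalstable}, with the root of unity $\ep(\pi')$ arising as before from the comparison of $\otimes_v A(\pi'_v)$ with the global intertwiner $T(\pi')$.
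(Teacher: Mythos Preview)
Your proposal is correct and follows essentially the same approach as the paper. The paper's own proof is a two-line pointer: part (i) is declared to follow from the remarks preceding the lemma (which already establish that $\{\pi\}$ is the unique $\mb{G}$-packet $b_G$-weakly-lifting to $\{\pi'\}$ and that no $\mb{H}$-packet $e_H$-weakly-lifts there), and part (ii) from equation \eqref{eq:sepevp}; your outline simply recapitulates that preceding analysis. One small remark: the Case~2 ``obstacle'' you flag is already fully disposed of in the text before the lemma, and its resolution does not require multiplicity one for $\Un(3)$---the exclusion of any $\mb{H}(\Af)$-contribution comes purely from the commutativity $e_H = b_G\circ e$ and the stability of $\{\pi(\tilde{\pi})\}$ (the packet $\rho'(\kappa''\theta,\kappa''\beta\theta)$ of $\mb{\rH}(\Af)$ is not $\beta$-invariant, hence not a $b_H$-lift from $\mb{H}$, so it never enters the $e_H$-side of \eqref{eq:sepevp}).
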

\begin{proof}
Part (i) follows from the remarks preceding the lemma.  Part (ii) follows from equation \eqref{eq:sepevp}.
\end{proof}

Summarizing parts (i) of Lemmas \ref{lemma:traceglobalunstable1}, \ref{lemma:traceglobaluunstable},
\ref{lemma:traceglobalonedim} and \ref{lemma:traceglobalstable}, we have:
\begin{proposition}\label{prop:weaklift}
Every discrete spectrum $($quasi-$)$ packet of $\mb{G}(\Af)$, resp.\ $\mb{H}(\Af)$,
weakly lifts to a $($quasi-$)$ packet of $\mb{\rG}(\Af)$ via the $L$-homomorphism $b_G$, resp.\ $e_H$.
\end{proposition}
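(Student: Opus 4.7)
The plan is to exhaust the classification of discrete spectrum (quasi-)packets recalled in Section \ref{sec:DORstf} and, for each type, invoke the weak-lifting statement already established in the preceding lemmas. The unifying mechanism is the commutativity of the $L$-group diagram in Figure \ref{fig:lgroups} (together with Lemma \ref{lemma:u2cd} for the $\Un(2)$-piece), combined with the existence of base change for $\GL(2)$ and $\GL(3)$ (\cite{BCGL2}, \cite{AC}) and the classification of packets of $\mb{G}$ and $\mb{H}$ recorded in Section \ref{sec:globalpackets}.

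First I would enumerate the cases on $\mb{G}(\Af)$. A stable discrete spectrum (quasi-)packet $\{\pi\}$ $b$-lifts to a $\sigma$-invariant discrete spectrum representation $\tilde{\pi}$ of $\GL(3,\Ae)$ by Theorem \ref{thm:u3gl3stableglobalpacket}; forming the base change $\tilde{\pi}'$ of $\tilde{\pi}$ to $\GL(3,\mbb{A}_{E'})$ and invoking Lemma \ref{lemma:sigmainvariance3} produces a $\sigma'$-invariant target on $\GL(3,\mbb{A}_{E'})$, and Lemma \ref{lemma:traceglobalstable}(i) supplies the packet $\{\pi'(\tilde{\pi}')\}$ on $\mb{\rG}(\Af)$ to which $\{\pi\}$ weakly lifts. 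The unstable packets $\pi(\{\rho\})$ coming from a stable $\{\rho\}$ on $\mb{H}(\Af)$ are handled by Lemma \ref{lemma:traceglobalunstable1}(i); the packets $\pi(\rho(\theta_1,\theta_2))$ coming from a pair of distinct characters are handled by Lemma \ref{lemma:traceglobaluunstable}(i); and the quasi-packets $\pi(\mu)$ attached to a one-dimensional representation are handled by Lemma \ref{lemma:traceglobalonedim}(i).

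Next I would enumerate the cases on $\mb{H}(\Af)$. A stable discrete spectrum (quasi-)packet $\{\rho\}$ admits a $b_H$-lift to a packet $\{\rho'\}$ on $\mb{\rH}(\Af)$: the commutativity of the left diagram of Lemma \ref{lemma:u2cd} together with the $\sigma'$-invariance and the $b_s'$-origin of $\tilde{\rho}'$ proved in Lemma \ref{lemma:sigmainvariance2} produce it, and the resulting weak lift of $\{\rho\}$ to $\pi'(\{\rho'\})$ on $\mb{\rG}$ through $e_H$ is recorded in Lemma \ref{lemma:traceglobalunstable1}(i). The unstable packets $\rho(\theta_1,\theta_2)$ weakly lift through $e_H$ to $\pi'(\rho')$ by Lemma \ref{lemma:traceglobaluunstable}(i), and the one-dimensional representations $\mu$ by Lemma \ref{lemma:traceglobalonedim}(i). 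Concatenating these cases exhausts the classification and yields the proposition.

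The substantive work has already been carried out upstream: in each case one has to verify the $\sigma'$-invariance of the representation of the general linear group over $\mbb{A}_{E'}$ produced by base change (so that a packet of $\mb{\rG}(\Af)$ actually exists on the target side), and has to rule out the possibility that the base change factors through the wrong endoscopic route. The latter is exactly where Theorem 3.1 of \cite[Chap.\ 3]{AC} together with the parity obstruction for the character $\kappa$ intervenes, as in the proof of Lemma \ref{lemma:sigmainvariance2}; I expect this to be the main subtlety, but it is already absorbed into the lemmas. The present proposition therefore reduces to collecting parts (i) of Lemmas \ref{lemma:traceglobalunstable1}, \ref{lemma:traceglobaluunstable}, \ref{lemma:traceglobalonedim} and \ref{lemma:traceglobalstable}, which in total cover every discrete spectrum (quasi-)packet of $\mb{G}(\Af)$ and of $\mb{H}(\Af)$.
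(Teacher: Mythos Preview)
Your proposal is correct and matches the paper's own proof exactly: the paper's argument is the single sentence ``Summarizing parts (i) of Lemmas \ref{lemma:traceglobalunstable1}, \ref{lemma:traceglobaluunstable}, \ref{lemma:traceglobalonedim} and \ref{lemma:traceglobalstable}, we have\ldots'', and your enumeration of cases simply unpacks which discrete spectrum (quasi-)packets each of those lemmas covers.
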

\begin{remark}
Note that the proposition is a statement on weak lifting as defined in Section \ref{sec:globaltf}.
It follows from the commutativity of the $L$-group diagram in Figure \ref{fig:lgroups}, 
and it says nothing regarding the $\beta$-invariance of the members of the (quasi-) packets of $\mb{\rG}(\Af)$
which are $b_G$ or $e_H$-weak-lifts. 
That issue shall be addressed in Proposition \ref{prop:classification2}.
\end{remark}

\section{Local Character Identities}\label{sec:localtraceidentities}
We now deduce local twisted character identities  
from the global identities which we have established in Section \ref{sec:globaltf}.
Let $k$ be a local $p$-adic field with {\it odd} residual characteristic, 
$\K$ a quadratic extension of $k$, and $k'$ a degree $n$ cyclic extension of $k$, with $n$
odd.  Let $\K'$ be the compositum field $\K k'$, and  let $\alpha$, $\beta$ be generators of $\Gal(\K'/k)$ 
such that $k' = {\K'}^\alpha$ and $\K = {\K'}^\beta$, the subfields fixed by $\alpha$ and $\beta$.

The multiplicity one theorem for $\Un(3)$ 
has been proved only for those automorphic representations each of whose dyadic local components belongs
to a local packet containing a constituent of a parabolically induced representation (\cite{F}).
Consequently, we are able to establish our main results, i.e.\ Propositions 
\ref{prop:traceglobaldiscrete1},
\ref{prop:tracelocalunstable},
\ref{prop:uustablecaseii} and
\ref{prop:n3case},
only when the residual characteristic of $k$ is odd.  This restriction may be removed once the multiplicity one theorem for 
$\Un(3)$ is available in full generality.

Let:
\begin{center}
\begin{tabular}{cll}
$\bullet$ & $\K^k = \{x \in \K^\times : \N_{\K/k}x = 1\}$, &\quad $\K'^{k'} = \{x \in \K'^\times: \N_{\K'/k'}x = 1\}$;\\
$\bullet$ & $G = \Un(3, \K/k)$, & \quad $\rG = \Un(3, \K'/k')$; \\
$\bullet$ & $H = \Un(2, \K/k) \times \K^k$, & \quad $\rH = \Un(2, \K'/k') \times \K'^{k'}$.
\end{tabular}
\end{center}
Here, for example, $\Un(3, \K/k)$ denotes a group of $k$-points rather than an algebraic group scheme, which
was the convention in Section \ref{sec:globalcase}.

In the global case, associated with the $L$-homomorphism $e: {}^L H \rightarrow {}^L G$ is a character $\kappa$ of $C_E$
such that $\kappa\alpha\kappa = 1$ (see Section \ref{sec:summaryFglobal} of the Appendix).
Likewise, associated with 
$e' : {}^L \rH \rightarrow {}^L \rG$
is the character $\kappa' := \kappa \circ \N_{E'/E}$ of $C_{E'}$.
We let $\kappa$ and $\kappa'$ denote also the corresponding local characters of $\K^\times$ and $\K'^\times$, respectively.

We fix once and for all a character $\omega'$ of $\K'^{k'}$ which  
satisfies $\omega' = \beta\omega'$.
By Lemmas \ref{lemma:localcentralchars} and \ref{lemma:normindex} in the Appendix, 
$\omega'$ uniquely determines a character $\omega$ of $\K^k$ such that $\omega' = \omega \circ \N_{\K'/\K}$.
We consider only those representations of $\rG$ (resp.\ $G$, $H$) 
whose central characters are equal to $\omega'$ (resp.\ $\omega$).
Under this condition, a representation of $H$ (resp.\ $\rH$) is uniquely determined by its $\Un(2)$-component.  Hence,
we often abuse notation and put $H := \Un(2, \K/k)$, $\rH := \Un(2, \K'/k')$.

Let $\rf, f, f_H$ denote arbitrary smooth, compactly supported mod center functions on $\rG, G, H$, 
respectively, with matching orbital integrals (\cite[p.\ 71]{KS}).
As mentioned earlier, their existence follows from the work of Waldspurger 
(\cite{W}, \cite{Wchar}, \cite{Watordue}) and Ngo (\cite{Ngo}).

Assume that the function $\rf$ transforms under the center $Z(\rG)$ of $\rG$ via $\omega'^{-1}$, 
i.e. $\rf(z g) = \omega'^{-1}(z) \rf(g)$ for all $z \in Z(\rG)$, $g \in \rG$.
Then, by the matching condition, $f$ and $f_H$ transform under the centers of the groups on which
they are defined via $\omega^{-1}$.
For an irreducible admissible representation $\pi'$ of $\rG$, 
and $A$ an intertwining operator in $\Hom_{\rG}(\pi', \beta\pi')$,
put $\la \pi', \rf\ra_A := \tr \pi'(\rf)A$.

For our purpose in this work, it suffices to consider test functions whose orbital integrals
are zero on a neighborhood of each singular element.
We say that an element $t \in \rG$ is {\bf $\beta$-regular} if the norm of $t$ in $G$ is regular,
and we let $\rG^{\beta\text{-reg}}$ denote the subset in $\rG$ of $\beta$-regular elements.
The following lemma is well-known.  
\begin{lemma}\label{lemma:nonvanishing}
Let $M = G$ or $H$.
Given any test function $f_M$ on $M$ whose orbital integral is zero on a neighborhood of each singular element,
there exists a test function $f$ on $\rG$ such that $f$ and $f_M$ are matching functions.
\end{lemma}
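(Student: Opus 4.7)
The strategy is to construct $\rf$ by inverting the $\beta$-twisted orbital integral map on the $\beta$-regular locus of $\rG$. The hypothesis that the orbital integrals of $f_M$ vanish near each singular element is precisely what allows us to bypass the delicate matching-at-singular-elements aspect of the Langlands-Shelstad transfer conjecture; in our situation everything takes place inside the regular locus, where matching reduces to a pointwise prescription through the transfer factor.

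First I would form the stable orbital integral function $t \mapsto \mathrm{SO}_t(f_M)$ on the regular semisimple locus of $M$. By the smoothness and compact-support-modulo-center of $f_M$, together with the vanishing hypothesis, this function is smooth, transforms under $Z(M)$ by $\omega^{-1}$, is compactly supported modulo center, and vanishes on an open neighborhood of the singular locus of $M$. Recall that every $\beta$-regular element $\delta \in \rG$ has a stable norm $N\delta$ that is a regular semisimple stable class in $G$, with an analogous relation to $H$ when $\delta$ comes from the endoscopic datum for $H$. Define a function $\phi$ on $\rG^{\beta\text{-reg}}$ by
\[
\phi(\delta) := \Delta_M(\delta, N\delta)\, \mathrm{SO}_{N\delta}(f_M)
\]
when $N\delta$ lies in $M$, where $\Delta_M$ is the Kottwitz-Shelstad transfer factor for $M \hookrightarrow \rG$, and $\phi(\delta) := 0$ on the complementary subset of $\rG^{\beta\text{-reg}}$. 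Since $\Delta_M$ is locally constant on the $\beta$-regular locus and $\mathrm{SO}_\bullet(f_M)$ vanishes near the singular locus of $M$, the function $\phi$ is smooth, $\beta$-conjugation invariant, transforms under $Z(\rG)$ by $\omega'^{-1}$, and has compact support modulo center contained strictly inside $\rG^{\beta\text{-reg}}$.

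It remains to produce an actual test function $\rf$ on $\rG$ whose $\beta$-twisted orbital integral function coincides with $\phi$. This follows from the standard local surjectivity statement for $p$-adic groups: the map sending $\rf \in C_c^\infty(Z(\rG)\bs\rG, \omega'^{-1})$ to its $\beta$-twisted orbital integral function on $\rG^{\beta\text{-reg}}$ is surjective onto the space of smooth, $\beta$-invariant, compactly-supported-mod-center functions on that locus. Concretely, a partition of unity on $\mathrm{supp}\,\phi$ subordinate to a cover by neighborhoods of the form (regular torus) times ($\beta$-orbit direction) reduces the problem to a torus-by-torus construction of model functions with prescribed twisted orbital integrals, which is routine. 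The main potential obstacle would have been the appearance of Shalika germs or transfer-factor singularities at the boundary of the regular locus, but the vanishing hypothesis on $f_M$ keeps the support of $\phi$ strictly inside $\rG^{\beta\text{-reg}}$, so no gluing across singular strata is needed.
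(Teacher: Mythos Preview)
Your approach is essentially the paper's: build the candidate orbital-integral function on the $\beta$-regular set from transfer factors and the stable orbital integrals of $f_M$, then realize it by a torus-by-torus construction. Your partition-of-unity sketch is exactly what the paper carries out explicitly, choosing representatives $\{T'_i\}$ for the $\beta$-conjugacy classes of $\beta$-invariant maximal tori of $\rG$ and auxiliary bump functions $a_{T'_i}$ supported on tubular neighborhoods of these tori, then setting $f(g^{-1}t\beta(g)) = F(t)\,a_{T'_i}(g)$.

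One point you skate over, though, and which the paper addresses head-on: prescribing the \emph{ordinary} twisted orbital integrals $O_{\delta\beta}(\rf) = \phi(\delta)$ does not by itself give the matching condition, which is phrased in terms of $\kappa$-orbital integrals --- signed sums of the $O_{\delta'\beta}(\rf)$ over the $\beta$-conjugacy classes $\delta'$ inside a fixed stable $\beta$-class. Passing between the two is a finite Fourier inversion. The paper handles this by using Arthur's \emph{adjoint} transfer factor $\Delta_{\rG,M}$ (built from the usual $\Delta_{M,\rG}$ as in \cite[Sec.\ 2]{Alcr}) in the definition of $F(t) = \sum_{t_M}\Delta_{\rG,M}(t,t_M)\,SO_{f_M}(t_M)$, and then invoking the inversion formula \cite[(2.5)]{Alcr}, which it explicitly admits to assuming in the twisted case. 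Your $\phi(\delta) = \Delta_M(\delta, N\delta)\,SO_{N\delta}(f_M)$ is the right shape, but you should either identify your $\Delta_M$ with the adjoint factor or supply the short inversion argument; otherwise the implication ``$\rf$ has twisted orbital integrals $\phi$ $\Rightarrow$ $\rf$ and $f_M$ match'' is not justified.
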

\begin{proof}
Defined in \cite[Sec.\ 2]{Alcr} is an adjoint transfer factor $\Delta_{\rG, M}$ on $\rG^{\beta\text{-}{\rm reg}} \times M^\reg$.
(Or rather, we define $\Delta_{\rG, M}$ in terms of the transfer factor $\Delta_{M, \rG}$ 
on $M^\reg \times \rG^{\beta\text{-}{\rm reg}}$ in virtually the same way as in \cite[Eq.\ (2.3)]{Alcr}.)
For each $t \in \rG^{\beta\text{-}{\rm reg}}$, let:
\[
F(t) =
 \sum_{t_M \in M^\reg} \Delta_{G, M}(t, t_M) SO_{f_M}(t_M), 
\]
where the sum is over representatives of the regular conjugacy classes of $M$, 
and $SO_{f_M}(t_M)$ denotes the stable orbital integral of $f_M$ at $t_M$.
We claim that $F(t)$ may be realized as a $\beta$-twisted orbital integral; namely, there exists a smooth compactly
supported modulo center function $f$ on $\rG$ such that:
\[
F(t) = O_f(t\beta) := \int_{Z(\rG)\rG_{t\beta}\bs\rG} f(g^{-1}t\beta(g))dg
\]
for all $t \in \rG^{\beta\text{-}{\rm reg}}$, 
where $\rG_{t\beta} := \{g \in \rG : g^{-1} t\beta(g) = t\}$, the $\beta$-twisted centralizer of $t$ in $\rG$.
Once that is shown, it follows from the ``inversion formula'' (2.5) in \cite{Alcr}, extended to the twisted case, 
that $f$ and $f_M$ are in fact matching functions (see \cite[p.\ 528-529]{Alcr}).  That is, the stable orbital integral of $f_M$ 
at each element $t_M \in M^\reg$ is equal to the so-called ``$\kappa$-orbital integral'' of $f$ 
computed at the stable $\beta$-conjugacy class of an element $t \in G^{\beta\text{-}{\rm reg}}$ whose norm is $t_M$,
where $\kappa$ is a character of the finite group parameterizing the $\beta$-conjugacy classes within the stable 
$\beta$-conjugacy class of $t$ (not to be confused with the $\kappa$ associated with the lifting from 
$\Un(2, E/F)$ to $\rR_{E/F}\GL(2)$ introduced earlier).  
In this work we contend ourselves with claiming that it is reasonable to assume that the inversion formula extends to the twisted case,
without proving it.

The factor $\Delta_{G, M}(t, t_M)$ is zero unless $t_M$ is a norm of $t$.  
Hence, it suffices to consider only those elements in $\rG^{\beta\text{-}{\rm reg}}$ whose norms in $M$ are regular.
Let $t \in \rG$ be such an element.
In our context of base change for $\Un(3)$, $t$ has a norm in $G$ which is regular, even when $M = H$.
By \cite[Lemma 3.2.A]{KS}, the $\beta$-conjugacy class of $t$ meets a $\beta$-invariant maximal torus in $\rG$.
So, without loss of generality we may assume that $t$ belongs to a $\beta$-invariant maximal torus $T'$ of $\rG$.
In particular, $T'$ is the centralizer in $\rG$ of the norm $\N t$ of $t$ in $G \subset \rG$, and
$\rG_{t\beta}$ is equal to the centralizer of $\N t$ in $G$.  We also have $\rG_{t\beta} = T'^\beta$,
the subgroup of $\beta$-fixed elements in $T'$.

Let $\{T'_i\}$ be a set of representatives for the $\beta$-conjugacy classes of the $\beta$-invariant maximal tori of $\rG$.
For each $T'_i$, put
\[
N_{\rG}(T'_i\beta) := \{g \in \rG : g^{-1}T'_i\beta(g) = T'_i\},
\]
the $\beta$-normalizer of $T'_i$ in $\rG$.
Fix a smooth function $a_{T'_i}$ on $\rG$, compactly supported modulo $Z(\rG)N_{\rG}(T'_i\beta)$,
such that $a_{T'_i}({hg}) = a_{T'_i}({g})$ for all $g \in \rG$, $h \in Z(\rG)N_{\rG}(T'_i\beta)$, and
$\int_{Z(\rG)T'^\beta_i\bs \rG} a_{T'_i}(g) dg = 1$.  
For each $h \in \rG$, let:
\[
f(h) =
\begin{cases}
F(t)a_{T'_i}(g) & \text{ if } h = g^{-1} t \beta(g) \text{ for some }t\in T'_i,\, g  \in \rG, \\
0 & \text{otherwise.}
\end{cases}
\]
The function $f$ is well-defined, 
since we have made the heavy assumption that the orbital integral of $f_M$ is zero on a neighborhood of the singular set,
so that we can separate nonconjugate tori and need not study asymptotic expansions.
For each $t \in \rG^{\beta\text{-}{\rm reg}}$ with a regular norm in $G$, we have:
\[O_f(t\beta) = \int_{Z(\rG)\rG_{t\beta}\bs \rG} f(g^{-1}t \beta(g))dg
=
\int_{Z(\rG)T'^\beta_t\bs \rG} F(t)a_{T'_t}(g) dg = F(t)\] 
after a suitable change of variable; $T'_t$ is $T'_i$ for a suitable $i$ depending on $t$. 
\end{proof}

\noindent

Unless otherwise noted, all representations studied in this section are irreducible and smooth admissible.

\subsection{Classification of Local Packets for $\Un(3)$}
We now give a summary of the classification of the local (quasi-) packets of admissible representations of $\Un(3)$.  
All results recorded in this section are due to \cite{FU2}, \cite{F}. 
The local packets are defined in terms of the local character identities which they satisfy.
For an element $g$ in $\GL(m, \K)$ ($m = 2, 3$), let $\sigma (g) = J\alpha({}^t g^{-1}) J^{-1}$,
where $J$ is $\lp\lsm &&1\\&-1&\\1&&\rsm\rp$ if $m = 3$, $\lp\lsm &1\\-1&\rsm\rp$ if $m = 2$.
We say that a representation $(\pi, V)$ of $\GL(m, \K)$ is 
$\sigma$-invariant if $(\pi, V) \cong (\sigma \pi, V)$, where $(\sigma \pi, V)$ is the
$\GL(m, \K)$-module defined by
$
\sigma \pi : g \mapsto \pi(\sigma (g))$, $\forall \, g \in \GL(m, \K).
$

Each $\sigma$-invariant square-integrable representation $\tilde{\pi}$ of $\GL(3, \K)$ 
is the lift of a local packet consisting of a single square-integrable representation $\pi(\tilde{\pi})$ of $G$.
Each pair $(\theta_1, \theta_2)$ of characters of $\K^k$ lifts to a local packet
$\rho(\theta_1, \theta_2)$ which consists of two irreducible representations $\pi^+$, $\pi^-$ of $H$.  They are
cuspidal if $\theta_1 \neq \theta_2$.  If an irreducible representation $\rho$ of $H$ does not lie in a local packet of
the form $\rho(\theta_1, \theta_2)$, then $\rho$ belongs to a local packet consisting of itself alone.
A local packet $\{\rho\}$ of $H$ lifts to a local packet $\pi(\{\rho\})$ of $G$ which has cardinality
$\abs{\pi(\{\rho\})} = 2\abs{\{\rho\}}$.

A more detailed summary is given in Section \ref{sec:summaryF} of the Appendix.

\subsection{Parabolically Induced Representations}\label{sec:tracelocalinduced}
We now state the twisted character identities for the parabolically induced representations of $\rG$.
Let $\mu$ be a character of $\K^\times$, and $\eta$ a character of $\K^k$. 
Let $\mu\otimes\eta$ denote the following representation of the upper triangular parabolic subgroup $P$ of $G$
:
\[
\mu\otimes\eta : 
p = \lp\lsm a & * & * \\ & b &* \\ && \alpha (a)^{-1}\rsm\rp \mapsto \mu(a)\eta(b), 
\quad p \in P.
\]
Let $I_G(\mu, \eta)$ denote the representation of $G$ parabolically induced from $\mu\otimes\eta$, with normalization.
We extend this notation to $\rG$.
The character $\mu$ defines a representation of the upper triangular parabolic subgroup of $H = \Un(2, \K/k)$ via
\[
\mu\lp\lp\lsm a & *\\&\alpha (a)^{-1}\rsm\rp\rp = \mu(a), \quad a \in \K^\times.
\]  
Let $I_H(\mu)$ denote the representation
of $H$ parabolically induced from $\mu$, with normalization.

Let $\mu'$ denote the character $\mu\circ\N_{\K'/\K}$ of $\K'^\times$ , 
$\eta'$ the character $\eta\circ\N_{\K'/\K}$ of $\K'^{k'}$. 
For a smooth function $\phi$ in the space of $I' :=  I_{\rG}(\mu', \eta')$, let
\[
(A\phi)(g) = \phi(\beta (g)),\quad g \in \rG.
\]
Then, $A$ is an intertwining operator in $\Hom_{G'}(I', \beta I')$.
\begin{lemma}\label{lemma:tracelocalinduced}
The following character identities hold for all matching functions$:$
\begin{gather}
\la I', \rf\ra_A
= \la I_G(\mu, \eta), f\ra 
= \la I_H(\mu \kappa^{-1}), f_H\ra.
\end{gather}
\end{lemma}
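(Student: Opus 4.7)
The plan is to establish the two equalities separately, in each case reducing a trace on a parabolically induced representation to an integral of the inducing character against a (twisted) orbital integral on a Levi subgroup, and then invoking the matching of orbital integrals.

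For the endoscopic identity $\langle I_G(\mu, \eta), f\rangle = \langle I_H(\mu \kappa^{-1}), f_H\rangle$, I would apply the classical induced character formula to each side. On a regular semisimple element $t = \mathrm{diag}(a, b, \alpha(a)^{-1})$ of the diagonal torus of $G$, the character of $I_G(\mu, \eta)$ equals $\mu(a)\eta(b)$ divided by the Weyl--Jacobian factor. On the matching regular element $\mathrm{diag}(a, \alpha(a)^{-1})$ of the diagonal torus of $H$, the character of $I_H(\mu\kappa^{-1})$ equals $\mu(a)\kappa^{-1}(a)$ divided by the analogous factor. The $\kappa^{-1}$ twist is exactly what is dictated by the $L$-homomorphism $e_H$ of Section~\ref{sec:endoscopy}, which embeds $\widehat{H}$ into $\widehat{G}$ with a $\kappa$-twist on the $\mathrm{GL}(2)$-factor, and it is absorbed into the transfer factor comparing stable orbital integrals on $H$ with $\kappa$-orbital integrals on $G$. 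Since by hypothesis $f$ and $f_H$ are matching, the Weyl integration formula applied to $\langle I_G(\mu, \eta), f\rangle$ and $\langle I_H(\mu\kappa^{-1}), f_H\rangle$ yields equal integrals of the same inducing character against matching orbital integrals on the common torus, giving the identity. (This is really the induced-representation specialization of the endoscopic character identities summarized in Section~\ref{sec:summaryF} of the Appendix.)

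For the base-change identity $\langle I', f'\rangle_A = \langle I_G(\mu, \eta), f\rangle$, I first note that the inducing character $\mu' \otimes \eta'$ is $\beta$-invariant, since $\mu' = \mu\circ \N_{\K'/\K}$ and $\eta' = \eta\circ\N_{\K'/\K}$. Consequently $A$, defined by $(A\phi)(g) = \phi(\beta(g))$, does preserve the space of $I'$, and one can apply the twisted analogue of the induced character formula (as in \cite[Chap.~1]{AC}): on a $\beta$-regular semisimple element $t$ of a $\beta$-stable maximal torus of the Levi $\K'^\times \times \K'^{k'}$, the $A$-twisted character of $I'$ is
\[
\mu'(t_1)\,\eta'(t_2) = \mu(\N_{\K'/\K} t_1)\,\eta(\N_{\K'/\K} t_2),
\]
divided by the appropriate twisted Weyl denominator. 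But this is precisely the value of the character of $I_G(\mu, \eta)$ at the norm $\N t$. Combined with the fact that $\beta$-twisted orbital integrals of $f'$ match ordinary orbital integrals of $f$ via the norm, the twisted Weyl integration formula then yields the desired equality.

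The main obstacle will be bookkeeping: making sure the twisted Weyl integration formula applies with the correct normalizations, and verifying that the transfer factors comparing $(G, \rG)$ and $(H, G)$ are compatible with our choice of $A$ and with the $\kappa$-twist dictated by $e_H$. Once the normalizations are pinned down, both identities reduce to a routine combination of the (twisted) induced character formula and the defining property of matching functions (Lemma~\ref{lemma:nonvanishing} ensures that such matching functions exist in abundance). I do not expect any obstruction of global/analytic nature, since the statement concerns only fully induced representations, whose characters are given by explicit elementary formulas.
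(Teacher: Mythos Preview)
Your proposal is correct and follows essentially the same approach as the paper, which dispatches the lemma in one sentence: ``This follows from a standard computation of the trace of an induced representation, using the Weyl integration formula and its twisted form.'' Your write-up is a faithful elaboration of exactly that computation, with the (twisted) induced character formula reducing each side to an integral of the inducing character against (twisted) orbital integrals on the torus, and the matching hypothesis on $f', f, f_H$ finishing the job.
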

\begin{proof}
This follows from a standard computation of the trace of an induced representation, 
using the Weyl integration formula and its twisted form.
\end{proof}

\subsection{Cuspidal Representations}\label{sec:localcuspidal}
In this section, we establish local character relations for cuspidal representations.
Many of the proofs in this section rely on the construction of global data whose local components
coincide with the given local data.  We describe once and for all our notation here:
For a given positive integer $\n$, we construct a system of totally imaginary number fields $F$, $F'$, $E$, $E'$ such that:
$[E:F] = 2$, $F'/F$ is cyclic of degree $n$, $E' = F'E$; there is a place $w$ of $F$, which stays prime in $E$ and $F'$, 
such that $F_w = k$, $E_w = \K$, $F'_w = k'$, $E'_w = \K'$;
and there are $\n$ places $w_1,\ldots,w_\n$ of $F$ not equal to $w$, 
which stay prime in $F'$, such that $F_{w_1} = F_{w_2} = \ldots = F_{w_\n}$.
By abuse of notation, we let $\alpha$ and $\beta$ denote also generators of $\Gal(E'/F)$,
with $F' = E'^\alpha$ and $E = E'^\beta$.
Recall that we put 
\[
C_F := F^\times\bs\Af^\times, \quad C_{E}^F := \Un(1, E/F)(F)\,\bs\,\Un(1, E/F)(\Af),
\]
and define $C_E, C_{F'}, C_{E'}, C_{E'}^{F'}$ similarly.

\subsubsection{Global Construction}
Let $\pi_0$ be an irreducible square-integrable representation of a connected, reductive $p$-adic group.  
By definition, $\pi_0$ embeds in a Hilbert space (\cite[p. 5]{HC}). 
We let $(\cdot, \cdot)$ denote the inner product on the space of $\pi_0$.
Given any unit vector $v$ in the space of $\pi_0$, we define the {\bf matrix coefficient} function
$f_{\pi_0}(g) := d(\pi_0)\ol{(\pi_0(g)v, v)}$ on the group, where $d(\pi_0)$ is the formal degree of $\pi_0$ (see \cite{HC}).
The function $f_{\pi_0}$ is smooth and square-integrable modulo the center of the group.
If $\pi_0$ is cuspidal, $f_{\pi_0}$ is compactly supported mod center,
and the trace $\tr\! \pi(f_{\pi_0})$ is well-defined for any admissible representation $\pi$ 
of the group with the same central character as $\pi_0$. 
For irreducible admissible representations $\pi_0$, $\pi$ of the group with the same central character, such that $\pi_0$ is cuspidal,
Harish-Chandra proved in \cite{HC} that the trace $\tr\! \pi(f_{\pi_0})$ is nonzero if and only if $\pi\cong \pi_0$, 
and $\tr\! \pi_0(f_{\pi_0}) = 1$.
If $\pi_0$ is square-integrable but not cuspidal, then $\tr\! \pi(f_{\pi_0})$ is not defined, 
since $f_{\pi_0}$ is not compactly supported mod center.

In \cite{K1}, D. Kazhdan proved the existence of {\bf pseudo-coefficients} for square-integrable representations.  
These are smooth, compactly supported modulo center functions which behave like the matrix coefficients of cuspidal representations.  
More precisely, 
a pseudo-coefficient $f_{\pi_0}$ of an irreducible square-integrable representation $\pi_0$ 
has the property that: For an irreducible admissible representation $\pi$ with the same central character as $\pi_0$,
we have $\tr\! \pi(f_{\pi_0}) \neq 0$ if and only if $\pi$ is an elliptic constituent of the parabolically induced representation $I$ 
of which $\pi_0$ is a subquotient.  
Here, we say that a representation is {\bf elliptic} if its character is nonzero on the elliptic regular set.
If $\pi$ is tempered, then $\tr\!\pi(f_{\pi_0})$ is equal to $1$ if $\pi_0\cong \pi$, and $0$ otherwise (\cite[Thm.\ K]{K1}).
Note that any matrix coefficient of a cuspidal representation is a pseudo-coefficient.

The existence of pseudo-coefficients also holds for nonconnected groups, see for example \cite[Chap.\ 1]{AC}, where the case of $\rR_{F'/F}\GL(n, F)\rtimes \Gal(F'/F)$ ($F'/F$ cyclic field extension) is discussed, or \cite[p.\ 197]{Frigid}.
Hence, for each $\beta$-invariant, irreducible, square-integrable representation $\pi'_0$ of $\rG$,
there exists a smooth, compactly supported modulo center function $f_{\pi'_0}$ on $\rG$ with the property that: 
For any $\beta$-invariant, irreducible, admissible representation $\pi'$ of $\rG$ with the same central character as $\pi'_0$, 
and nonzero intertwining operator $A' \in \Hom_{\rG}(\pi', \beta\pi')$, 
the twisted trace $\langle \pi', f_{\pi'_0}\rangle_{A'} := \tr\!\pi'(f_{\pi'_0})A'$ is nonzero
if and only if $\pi'$ is an elliptic constituent of the parabolically induced representation
of which $\pi'_0$ is a subquotient.
If $\pi'$ is tempered, then $\langle\pi', f_{\pi'_0}\rangle_{A'}$ is nonzero if and only if $\pi_0'\cong \pi'$.
We call $f_{\pi'_0}$ a {\bf $\beta$-twisted pseudo-coefficient} of $\pi'_0$.
If $\pi'_0$ is cuspidal, we may take its twisted pseudo-coefficient to be any of its matrix coefficients, 
defined as in the nontwisted case.  For simplicity, we normalize the twisted pseudo-coefficient 
$f_{\pi'}$ for each $\beta$-invariant square-integrable $\rG$-module $\pi'$, such that $\la \pi', f_{\pi'}\ra_{A(\pi')} = 1$,
where $A(\pi')$ is a nonzero intertwining operator in $\Hom_{\rG}(\pi', \beta\pi')$ fixed once and for all,
with $A(\pi')^n = 1$.
Note that twisted pseudo-coefficients in fact exist for a larger class of representations called $\beta$-discrete representations
(see \cite[Sec.\ 1.2.3]{AC}), which are not necessarily square-integrable,
but for simplicity we restrict our attention to only the $\beta$-invariant square-integrable representations of $\Un(3, \K'/k')$.

The following lemma enables us to construct $\beta$-invariant automorphic representations of $\mb{\rG}(\Af)$
with given $\beta$-invariant cuspidal and square-integrable local components.  
A priori, if $[k':k] > 1$, it is possible that no cuspidal representation of $\rG$ is $\beta$-invariant,
in which case the lemma is vacuous.
As we shall see, at least in the case where the residual characteristic of $k$ is odd,
$\beta$-invariant cuspidal representations of $\rG$ do exist by Proposition \ref{prop:traceglobaldiscrete1},
which is proved in part by applying the lemma to the case where the field extension $k'/k$ is trivial.
As shall be shown by the comments made before Proposition \ref{prop:tracelocalsteinberg}, 
there always exist $\beta$-invariant square-integrable representations of $\rG$, 
regardless of the residual characteristic of $k$.
\newcommand{\m}{m}
\begin{lemma}\label{lemma:globalconstruction}
Let $\m$ be a positive integer.
Let $\pi'_i$, $0 \leq i \leq \m$, be $\Gal(\K'/\K)$-invariant, irreducible, square-integrable $\rG$-modules.
Let $\pi'_0$ be moreover cuspidal.

There exist totally imaginary number fields $F, F', E, E'=F'E$, with $[E:F] = 2$, $F'/F$ cyclic of degree $n$, and a $\Gal(E'/E)$-invariant, irreducible, cuspidal, automorphic representation
$\pi'$ of \[\mb{\rG}(\Af) = (\rR_{F'/F}\Un(3, E/F))(\Af) = \Un(3, E'/F')(\mbb{A}_{F'}),\] 
such that$:$

\noindent $(i)$
There are $\m + 1$ nonarchimedean places $\{w_i\}_{i = 0}^\m$ of $F$ which stay prime in $F'$ and $E$,
such that$:$ For $0 \leq i \leq \m$, we have $F_{w_i} = k$, $F'_{w_i} = k'$, $E_{w_i} = \K$, $E'_{w_i} = \K'$.

\noindent $(ii)$ 
$\pi'_{w_i}$ $(0 \leq i \leq \m)$ is equivalent to $\pi'_{i}$$;$

\noindent $(iii)$ 
$\pi_v$ is unramified for each nonarchimedean place $v \neq {w_0}, w_1, \ldots, w_\m$
of $F$ at which $E/F$ is unramified.

\noindent $(iv)$ 
For each place $u$ outside of $\{w_i\}_{i = 0}^\m$ which is either an archimedean place or
a nonarchimedean place at which $E/F$ is ramified, $\pi'_u$ is an irreducible principal series representation.

\end{lemma}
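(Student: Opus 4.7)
\emph{Proof proposal.} The plan is to produce the data in two stages: first construct the number fields with the required local behavior at the $w_i$, then produce $\pi'$ by a simple trace formula argument using pseudo-coefficients.

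For the fields, I would use weak approximation together with Grunwald--Wang. Realize $k$ as a completion of a totally imaginary number field $F$ at $m+1$ distinct nonarchimedean places $w_0,\ldots,w_m$, so that $F_{w_i} \cong k$ for all $i$. Take $E/F$ to be the CM quadratic extension attached, via global class field theory, to a quadratic id\`ele class character of $F$ whose local component at each $w_i$ cuts out $\K/k$ and which is nontrivial at every archimedean place; weak approximation provides such a character. Similarly, take $F'/F$ to be the cyclic extension of exact degree $n$ attached to an id\`ele class character of $F$ of order $n$ whose component at each $w_i$ cuts out $k'/k$. Since $n$ is odd, Grunwald--Wang produces such a character, and linear disjointness of $E$ and $F'$ over $F$ follows from $\gcd(2,n) = 1$, so $E' := F'E$ satisfies $E'_{w_i} \cong \K'$.

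For the representation, first extend $\omega'$ to a $\beta$-invariant character of $C_{E'}^{F'}$ with the prescribed local component at each $w_i$, which exists by Pontryagin duality for id\`ele class groups. Form a global test function $\rf = \bigotimes_v \rf_v$ transforming appropriately under the center, where $\rf_{w_0}$ is a matrix coefficient of the cuspidal $\rG$-module $\pi'_0$ (hence compactly supported modulo center with orbital integrals vanishing off the elliptic regular set), $\rf_{w_i}$ $(1 \leq i \leq m)$ is a $\beta$-twisted pseudo-coefficient of $\pi'_i$ normalized so that $\la \pi'_i, \rf_{w_i}\ra_{A(\pi'_i)} = 1$, $\rf_v$ is the spherical Hecke algebra unit at every nonarchimedean $v \notin \{w_0,\ldots,w_m\}$ where $E/F$ and $F'/F$ are unramified, and $\rf_u$ at the finitely many remaining (archimedean or ramified) places $u$ is chosen so that its twisted trace against an irreducible principal-series representation is nonzero while vanishing against non-principal-series constituents. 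Applying the $\beta$-twisted Arthur trace formula to $\rf$ and using the cuspidality of the matrix coefficient at $w_0$ to kill the continuous and residual spectrum (a standard consequence of the support of cuspidal matrix coefficients), the spectral side reduces to a finite sum over $\beta$-invariant cuspidal automorphic $\pi'$. By Harish-Chandra's orthogonality of cuspidal matrix coefficients and the defining property of $\beta$-twisted pseudo-coefficients, every such contributing $\pi'$ satisfies $\pi'_{w_i} \cong \pi'_i$ for $0 \leq i \leq m$, is unramified at the good places, and has irreducible principal-series local components at the remaining places.

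The main obstacle is to show that the spectral side does not vanish identically, so that at least one $\pi'$ as above actually exists. Since $\la \pi'_0, \rf_{w_0}\ra = 1$ and $\la \pi'_i, \rf_{w_i}\ra_{A(\pi'_i)} = 1$ by construction, the local contributions at $w_0,\ldots,w_m$ are nonzero; the remaining freedom in the spherical and auxiliary components must be exploited to guarantee that some cuspidal $\pi'$ matching these constraints produces a nonzero product of local twisted traces. This is a standard nonvanishing argument along the lines of \cite[Part 2, II.4]{F} and \cite{FKsimpletf}: by varying the spherical components $\rf_v$ over the spherical Hecke algebra (equivalently, over the unramified parameters via Satake) and invoking density of unramified principal series, one forces at least one term of the spectral sum to be nonzero. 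The $\beta$-invariance of the resulting $\pi'$ is automatic from its appearance in the twisted spectral expansion, as discussed in Section \ref{sec:tfspectral}, while cuspidality is guaranteed by the compact support modulo center of $\rf_{w_0}$.
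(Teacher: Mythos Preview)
Your overall skeleton matches the paper's, but there are two genuine gaps, and one of them is fatal as written.

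\textbf{Nonvanishing.} Your argument for why the spectral side is nonzero is circular. You say the local twisted traces at $w_0,\ldots,w_m$ are nonzero ``by construction'' and then propose to vary the spherical components to force some term in the spectral sum to survive. But nonvanishing of the local factors at the $w_i$ presupposes the existence of a global $\pi'$ with those local components, which is exactly what you are trying to prove; and varying spherical functions separates an \emph{existing} sum by eigenvalues, it does not manufacture a nonzero sum from nothing. The paper does something quite different: it shows the \emph{geometric} side of the simple trace formula is nonzero. Using weak approximation in the simply connected group ${\rm SU}(3,E'/F')$, one produces a rational element $g_0$ lying simultaneously in the support of the twisted orbital integrals of all the $\rf_{w_i}$ and close to prescribed elements at the ramified places; adjusting the archimedean support then isolates a single elliptic $\beta$-regular orbital integral. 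This forces the spectral side to be nonzero and hence yields $\pi'$.

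\textbf{The ramified places and part (iv).} Your choice of $\rf_u$ at a ramified place $u$ (``nonzero twisted trace on irreducible principal series, zero otherwise'') is not something one can just write down; no such cutoff function is known to exist. The paper instead specifies the \emph{twisted orbital integral} of $\rf_u$ explicitly on a family of toral elements, built from a ramified character $\mu_u$ of $F'^\times_u$ of odd order. Then, \emph{after} $\pi'$ is produced, the Deligne--Casselman theorem forces $\pi'_u$ to be a constituent of a specific $I_{\rG_u}(\chi',\eta')$ with $\chi'|_{\mc{O}^\times_{F'_u}}$ ramified of odd order, and a case-by-case check against the reducibility criteria in \cite[Part 2, I.4.3]{F} shows this induced representation is irreducible. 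That is how part (iv) is obtained.

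\textbf{A smaller gap in part (ii).} Your claim that the pseudo-coefficient at $w_i$ ($i\geq 1$) forces $\pi'_{w_i}\cong\pi'_i$ only holds once you know $\pi'_{w_i}$ is tempered: pseudo-coefficients can have nonzero trace against elliptic nontempered constituents of the same induced module. The paper rules out the nontempered possibilities (one-dimensional, or the nontempered quotient $\pi'^\times$) using strong approximation and the classification of quasi-packets in \cite{F}, and only then invokes the pseudo-coefficient property.
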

\begin{proof}
The lemma follows from techniques used in \cite[p.\ 173.\ Prop III.3]{Frigid} and, for example, \cite{Fsym}.
We summarize the procedure as follows:  

For any finite prime $p$ and finite field extension $\mbb{Q}_p(\gamma)$ of $\mbb{Q}_p$,
Krasner's Lemma implies that if an element $c$ in $\mbb{\ol{Q}}\cap\mbb{Q}_p(\gamma)$ 
is sufficiently close to $\gamma$ in $\mbb{Q}_p(\gamma)$, then $\mbb{Q}(c)\mbb{Q}_p = \mbb{Q}_p(\gamma)$.
Thus, by the Chinese Remainder Theorem, for any finite collection of algebraic $p$-adic field extensions $\{\mbb{Q}_{p_i}(\gamma_i)\}$,
with $p_i$ distinct, there exists $c \in \mbb{\ol{Q}}$ such that $\mbb{Q}(c)\mbb{Q}_{p_i} = \mbb{Q}_{p_i}(\gamma_i)$ for all $i$.
Hence, there are totally imaginary number fields $F, F', E, E' = F'E$, with $[E:F] = 2$ and $F'/F$ cyclic of degree $n$,
which satisfy condition $(i)$ of the lemma.  Note that this construction is not unique.  
Let $\beta$ denote also the generator of $\Gal(E'/E)$,
which is the restriction to $E'$ of the fixed generator $\beta$ of $\Gal(\K'/\K)$.

We consider the $\beta$-twisted ``simple trace formula'' of $\Un(3, E'/F')$, or equivalently the simple trace formula
of the nonconnected group $\rR_{F'/F}\Un(3, E/F)\rtimes\la\beta\ra$ 
(see \cite{DKV}, \cite[Chap.\ IV.\ Sec.\ 3]{Frigid}, \cite[Chap.\ 1.\ Lemma 2.5]{AC}).
To apply it we must first fix a 
$\beta$-invariant character $\omega'$ of $\Un(1, E'/F')(\mbb{A}_{F'})$, trivial on $\Un(1, E'/F')(F')$,
such that the test function $\rf$ on $\mb{\rG}(\Af)$ in the formula transforms under the center 
$Z(\mb{\rG})(\Af) \cong \Un(1, E'/F')(\mbb{A}_{F'})$ of $\mb{\rG}(\Af)$ as follows:
\[
\rf(zg) = \omega'^{-1}(z)\rf(g), \quad g \in \mb{\rG}(\Af),\, z \in Z(\mb{\rG})(\Af).
\]
Moreover, we would like $\omega'_{w_i}$ to coincide with the central character of $\pi'_i$, for $i = 0, 1, \ldots, \m$.
In what follows, we identify $\Un(1, E'/F')$ with an $F$-group via restriction of scalars.

By assumption, the central characters $\omega'_i$ ($0 \leq i \leq \m$) of $\pi'_i$
are $\beta_{w_i}$-invariant.  Hence, by Lemma \ref{lemma:localcentralchars},
there are characters $\omega_i$ of the group $\Un(1, \K/k)$ such that $\omega'_i = \omega_i \circ\N_{\K/k}$.
We construct a global character $\omega$ of $C_E^F$ such that $\omega_{w_i} = \omega_i$ for $i = 0, 1,\ldots, \m$,
and $\omega_v$ is unramified for each nonarchimedean place $v \notin \{w_0, w_1,\ldots, w_\m\}$ at which $E/F$ is unramified.
This construction is possible because $C_{E}^{F}$ and $E_{w_i}^{F_{w_i}}$(elements in $E_{w_i}$ with norm $1$ in $F_{w_i}$), 
$0 \leq i \leq \m$, are compact in their respective topologies,
which allows us to construct $\omega$ using the Poisson Summation Formula for $\Un(1)$.

Let $\omega' = \omega \circ \N_{E'/E}$.  Then, $\omega'$ is a $\beta$-invariant character of $C_{E'}^{F'}$
with the property that $\omega'_{w_i} = \omega'_i$ for $i = 0, 1, \ldots, \m$,
and $\omega'_v$ is unramified for all nonarchimedean places $v \notin \{w_i\}_{i = 0}^{\m}$ of $F$ at which $E/F$ is unramified.

For simplicity, we now view $\mb{\rG}(\Af)$ as the group of $\mbb{A}_{F'}$-points of the $F'$-group $\Un(3, E'/F')$,
so that the local components of an irreducible automorphic representation of $\mb{\rG}(\Af)$ are indexed by the places of $F'$.
Let $V_{\rm ram}$ denote the set of nonarchimedean places of $F'$ outside of $\{w_i\}_{i = 0}^\m$ 
at which $E'/F'$ is ramified.  
Here, we identify the places $w_i$ of $F$ with places of $F'$, as we may since they are by assumption prime in $F'$.

We let the local component $\rf_{w_0}$ of the test function $\rf$ be a matrix coefficient of $\pi'_{w_0}$.
For $1 \leq i \leq \m$, we let $\rf_{w_i}$ be a twisted (with respect to $\beta_{w_i}$)
pseudo-coefficient of $\pi'_{i}$.
For each nonarchimedean place $v$ of $F'$ outside of $V_{\rm ram} \cup \{w_i\}_{i = 0}^\m$,
we let $\rf_v$ be a spherical function.
Note that the condition $\m \geq 1$ is needed to ensure that the simple trace formula holds for such $\rf$.

For each place $u \in V_{\rm ram}$,
fix a ramified  character $\mu_u$ of $F'^\times_u$ of odd order.
Let $\mu_u'$ denote the character $\mu_u\circ\N_{E'_u/F'_u}$ of $E'^\times_u$.
For all $x \in {F'_u}^\times$, we have $\mu_u'(x) = \mu_u^2(x)$.
Since the order of $\mu_u$ is odd and the ramified character $\mu_u$ is nontrivial on $\mc{O}_{F'_u}^\times$,
the restriction of $\mu_u'$ to $\mc{O}_{F'_u}^\times$ is nontrivial.
We choose $\rf_u$ such that its normalized $\beta_u$-twisted orbital integral $\Delta \Phi_\beta(\rf_u)$ 
is supported on the $\beta_u$-twisted conjugacy classes of elements of the form 
$z\, \diag(r\vp_u^j,\, x({r, j}),\, \alpha(r\vp_u^j)^{-1})$, 
where $z$ is an element in the center of $\rG_u := \Un(3, E'_u/F'_u)$, 
$r$ is a unit in the ring of integers $\mc{O}_{E'_u}$ of $E'_u$, 
$\vp_u$ is a fixed uniformizer of $\mc{O}_{E'_u}$, $j$ is a fixed positive integer, 
and $x({r, j}) := (\alpha(r)/r)(\alpha(\vp_u)/\vp_u)^j$.
Moreover, we require that $\Delta\Phi_\beta(\rf_u)$ takes the value $\omega'_u(z)(\mu_u'(r) + \mu_u'(\alpha(r)^{-1}))$ 
on the twisted conjugacy class of $z\, \diag(r\vp_u^j,\, x({r, j}),\, \alpha(r\vp_u^j)^{-1})$. 

Let $\mb{Z} \cong \Un(1, E'/F')$ be the center of $\Un(3, E'/F')$.  
Let ${\rm SU}(3, E'/F')$ be the subgroup of elements in $\Un(3, E'/F')$ with determinant $1$.  
Then, \[Z{\rm SU}(3, E'/F')_{w_i} := \mb{Z}(F'_{w_i}){\rm SU}(3, E'/F')(F'_{w_i})\]
is a subgroup of index $3$ in $\Un(3, E'/F')(F'_{w_i})$, for $i = 0, 1, \ldots, \m$.
The restriction of $\pi'_{w_i}$ ($0\leq i \leq \m$) to $Z{\rm SU}(3, E'/F')_{w_i}$ 
can be shown to be nonzero on developing endoscopy for ${\rm SU}(3)$ in analogy with the theory for ${\rm SL}(2)$.
Hence, the twisted orbital integrals of the matrix/pseudo-coefficients $f'_{w_i}$ do not vanish identically on 
$Z{\rm SU}(3, E'/F')_{w_i}$, nor on ${\rm SU}(3, E'/F')_{w_i}$ since they transform under $Z_{w_i}$
via multiplication with the fixed central character $\omega'_i$

By the weak approximation property of the simply connected algebraic $F'$-group ${\rm SU}(3, E'/F')$ (\cite[Prop.\ 7.9]{PR}),
there is a $g_0 \in \mb{Z}{\rm SU}(3, E'/F')(F')$ 
which lies simultaneously in the support of the twisted orbital integrals of all $f_{w_i}'$ ($0 \leq i \leq \m$),
and is arbitrarily close to an element of the form $z\, \diag(r\vp_u^j,\, x({r, j}),\, \alpha(r\vp_u^j)^{-1})$ (see above)
in $Z{\rm SU}(3, E'/F')_u$ for each $u \in V_{\rm ram}$.
Adjusting the support of the archimedean components of $\rf$ if necessary to ensure that the twisted orbital integral of $\rf$
is nonzero at only one elliptic $\beta$-regular element in $\Un(3, E'/F')(F')$,
the geometric side of the $\beta$-twisted simple trace formula of $\Un(3, E'/F')(\mbb{A}_{F'})$ is nonzero for this choice of $\rf$.
Hence, there must be a $\beta$-invariant, irreducible, cuspidal, automorphic representation $\pi'$ 
of $\Un(3, E'/F')(\mbb{A}_{F'})$ such that
$\la \pi', \rf\times\beta\ra \neq 0$.  Since $\rf_{w_0}$ is the matrix coefficient of the cuspidal representation $\pi'_0$,
we must have $\pi'_{w_0} \cong \pi'_0$.
This proves part $(ii)$ of the lemma for $i = 0$.
Part $(iii)$ of the lemma also follows because $\pi'_v$ must be unramified if 
$\rf_v$ is spherical and $\la\pi'_v, \rf_v\times\beta_v\ra \neq 0$.

We now consider part $(iv)$ of the lemma.  Since the number fields are totally imaginary, 
the archimedean local components of $\pi'$ must be irreducible  principal series representations of $\GL(3, \mbb{C})$.
Let $u$ be a place in $V_{\rm ram}$.
Given our choice of $\rf_u$,
by the same argument as in \cite[Sec.\ 1.5]{Fsym} which uses the Deligne-Casselman Theorem \cite{C}, 
$\la \pi'_u, \rf_u \times \beta_u\ra \neq 0$ implies that the representation $\pi'_u$ 
must be a constituent of a parabolically induced representation $I_{\rG_u}(\chi', \eta')$, 
such that $\chi'$ agrees with $\mu_u'$ on $\mc{O}_{E'_u}^\times$, and $\eta'$ is trivial on $\mc{O}_{E'_u}^\times \cap E'^{F'_u}_u$
(i.e. $\eta'$ is unramified).

To show that $I_{\rG_u}(\chi', \eta')$ is irreducible, 
we first recall the criteria for the reducibility of the parabolically induced representations of $\rG_u$, 
as recorded in \cite[Part 2.\ I.4.3]{F}: 
$I_{\rG_u}(\chi', \eta')$ is reducible if and only if one of the following conditions holds:
\newcounter{Rcount}
\\(1)
$\chi'$ is trivial on ${F'^\times_u}$, 
and $\chi' \neq \tilde{\eta}'$, where $\tilde{\eta}'$ is the character of $E'^\times_u$ defined by 
$\tilde{\eta}'(z) = \eta'(z/\alpha(z))$ for all $z \in E'^\times_u$.
In this case, $I_{\rG_u}(\chi', \eta')$ is the direct sum of two tempered representations.
\\(2)
$\chi' = \xi'\kappa'\nu'^{1/2}$, for some characters $\xi', \kappa'$ of $E'^\times_u$,
such that $\xi'|_{F'^\times_u} = 1$, and $\kappa'|_{F'^\times_u} = \ve_{E'_u/F'_u} = \ve_{E_u/F_u} \circ \N_{F_u'/F_u}$, 
where $\ve_{E_u/F_u}$ is the quadratic character associated with the local field extension $E_u/F_u$ 
via local class field theory.  Here, $\nu'$ is the normalized absolute value character of $E'^\times_u$.
The composition series of $I_{\rG_u}(\xi'\kappa'\nu'^{1/2}, \eta')$ has length two
and consists of a discrete series component and a nontempered component.
\\(3)
$\chi' = \tilde{\eta}'\nu'$.  The composition series of the representation $I_{\rG_u}(\chi', \eta')$ 
consists of a square-integrable Steinberg subrepresentation and a nontempered one-dimensional quotient.

Since $\mu_u$ is ramified of odd order, 
the restriction $\mu'_u|_{\mc{O}_{F'_u}^\times} = \mu_u^2|_{\mc{O}_{F'_u}^\times}$ is nontrivial with odd order.
So, the restriction of $\chi'$ to $\mc{O}_{F'_u}^\times$ is also ramified with odd order.  
In particular, $\chi'$ is nontrivial on $F'^\times_u$;
hence, $I_{\rG_u}(\chi', \eta')$ does not belong to case (1)
of the classification above. 
Since the order of $\chi'|_{\mc{O}_{F'_u}^\times}$ is odd, $\chi'$ cannot be of the form $\xi'\kappa'\nu'^{1/2}$ 
as in case (2).
Lastly, $I_{\rG_u}(\chi', \eta')$ does not belong to case (3) of the classification above, 
since the character $\tilde{\eta}'\nu'$ is unramified, but $\chi'$ is ramified.
Thus, we conclude that $\pi'_u$ is an irreducible principal series representation.

We now prove part $(ii)$ of the lemma for $0 < i \leq \m$.
Suppose the cuspidal automorphic representation $\pi'$ of $\Un(3, E'/F')(\mbb{A}_{F'})$
has a nontempered local component $\pi'_v$ for some nonarchimedean place $v \in \{w_i\}_{i = 1}^\m$. 
By the classification of the admissible representations of $\Un(3, E'_v/F'_v)$ recorded in \cite[p.\ 214, 215]{F}, 
$\pi'_v$ must be either (a) a one-dimensional representation, or (b)
the nontempered quotient $\pi'^\times$ of an induced representation belonging to case (2) of the classification above.
Suppose case (a) holds.  Since \[\Un(3, E/F)(\Af) = {\rm SU}(3, E/F)(\Af)\cdot\{\diag(x, 1, \alpha(x)^{-1}):x \in \idc{E}\},\]
the strong approximation property of $\mbb{G}_m$ (\cite[Thm.\ 1.5]{PR})
and of the simply connected group ${\rm SU}(3, E/F)$ (\cite[Thm.\ 7.12]{PR}) implies that 
$\pi'$ is one-dimensional, a contradiction. 
In case (b), by \cite[Part 2.\ III.\ Prop.\ 5.2.3]{F} the representation $\pi'$ 
must belong to the quasi-packet which is the $e'$-lift of a one-dimensional representation of $\Un(2, E'/F')(\mbb{A}_{F'})$.
This contradicts the fact that $\pi'$ has, by construction, irreducible principal series local components.
Hence, neither case (a) nor (b) holds, and we conclude that $\pi'_v$ is tempered for all $v \in \{w_i\}_{i = 1}^\m$.
By construction, the local test functions $f_{w_i}$ ($1 \leq i \leq \m$) 
are twisted pseudo-coefficients of the square-integrable representations $\pi'_i$ of our choice, 
so $\la \pi'_{w_i}, \rf_{w_i}\times\beta_{w_i}\ra \neq 0$ implies that $\pi'_{w_i} \cong \pi'_i$.
This proves part $(ii)$ of the lemma.
\end{proof}

\begin{lemma}\label{lemma:globalconstruction1}
Let $\pi_0$ be a $\beta$-invariant, cuspidal, irreducible $\Un(3, \K'/k')$-module.
Let $F$, $F'$, $E$, $E'=F'E$ be totally imaginary number fields, 
where $[E:F] = 2$ and $F'$ is a degree $n$ cyclic extension of $F$, such that
there is a place $w$ of $F$, which remains prime in $E$ and $F'$, such that $F_w = k$, $E_w = \K$, $F'_w = k'$ and $E'_w = \K'$.
Let $u$ be a place of $F$ which stays prime in $F'$ and splits in $E$.  
There exists a $\beta$-invariant, irreducible, cuspidal, automorphic representation $\pi$ of $\Un(3, E'/F')(\mbb{A}_{F'})$ such that$:$
$(i)$ $\pi_w \cong \pi_0$$;$
$(ii)$ The $\beta$-twisted character of $\pi_u$ is not identically zero on the set of 
elliptic $\beta$-regular elements in $\Un(3, E'/F')(F'_u)$$;$
$(iii)$ $\pi_v$ is an irreducible principal series representation for all places $v$ of $F'$ at which $E'/F'$ is ramified$;$
and $(iv)$ $\pi_v$ is unramified for each place $v \notin \{w, u\}$ of $F'$ at which $E'/F'$ is unramified.
\end{lemma}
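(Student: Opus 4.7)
The plan is to run the same $\beta$-twisted simple trace formula argument as in Lemma \ref{lemma:globalconstruction}, but with test functions tailored to the new constraints. Since the number fields $F,F',E,E'$ are now prescribed as input, I skip the Krasner's Lemma construction. Using Lemma \ref{lemma:localcentralchars}, the (assumed $\beta_w$-invariant) central character of $\pi_0$ has the form $\omega_0\circ\N_{\K'/\K}$ for some character $\omega_0$ of $\K^k$, and I would extend $\omega_0$ to a global character $\omega$ of $C_E^F$, unramified outside the prescribed ramification set, via the Poisson-summation procedure used in the proof of Lemma \ref{lemma:globalconstruction}, then set $\omega'=\omega\circ\N_{E'/E}$.

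For the local test functions $\rf=\otimes_v\rf_v$, I take $\rf_w$ to be a matrix coefficient of the cuspidal $\pi_0$. At $u$, which splits in $E$ and stays prime in $F'$, we have $\rG_u\cong\GL(3,F'_u)$ with $\beta_u$ acting as the generator of $\Gal(F'_u/F_u)$; I pick a $\beta_u$-invariant cuspidal representation $\tau_u$ of $\GL(3,F'_u)$---produced as the cyclic base change of a cuspidal representation of $\GL(3,F_u)$ via \cite{AC}---and let $\rf_u$ be a twisted matrix coefficient of $\tau_u$, normalized so that $\la\tau_u,\rf_u\times\beta_u\ra=1$. At each ramified place $v\in V_{\rm ram}\smallsetminus\{w\}$ I use the odd-order ramified character construction from the proof of Lemma \ref{lemma:globalconstruction}, at the remaining nonarchimedean places I choose $\rf_v$ spherical, and I adjust the archimedean components so that the global twisted orbital integral is concentrated on a single elliptic $\beta$-regular $F$-rational class.

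The twisted simple trace formula then applies, since $\rf_w$ and $\rf_u$ are matrix coefficients of cuspidals: the spectral side collapses onto $\beta$-invariant cuspidal automorphic representations and the geometric side is supported on elliptic $\beta$-regular elements. Weak approximation for the simply connected group ${\rm SU}(3,E/F)$, used exactly as in the proof of Lemma \ref{lemma:globalconstruction}, delivers a global element in the intersection of the supports of all the $\rf_v$, making the geometric side nonzero. Some $\beta$-invariant cuspidal $\pi$ therefore satisfies $\la\pi,\rf\times\beta\ra\neq 0$. Harish-Chandra orthogonality forces $\pi_w\cong\pi_0$ (condition (i)) and $\pi_u\cong\tau_u$; condition (iv) is immediate from the spherical $\rf_v$; condition (iii) follows from the Deligne-Casselman argument imported from the proof of Lemma \ref{lemma:globalconstruction}, with the one-dimensional and nontempered alternatives eliminated by strong approximation for ${\rm SU}(3)$. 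For condition (ii), since $\pi_u\cong\tau_u$ is a $\beta_u$-invariant cuspidal $\GL(3,F'_u)$-module, the twisted Weyl integration formula applied to its own matrix coefficient $\rf_u$ shows that its twisted character cannot vanish identically on the elliptic $\beta_u$-regular set.

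The main obstacle is the usual one for simple-trace-formula arguments: I need a second ``cuspidal'' local constraint beyond the one at $w$ in order to render the trace formula genuinely simple, yet this second place must remain compatible with all the other required properties of $\pi$. The split place $u$ is exactly what is needed, because at such a place $\rG_u$ reduces to $\GL(3,F'_u)$, so Arthur--Clozel base change supplies $\beta_u$-invariant cuspidal representations $\tau_u$ of $\GL(3,F'_u)$ in abundance; a matrix coefficient of any such $\tau_u$ plays the dual role of simplifying the trace formula and securing condition (ii). Once this double use of $\rf_u$ is recognized, the rest of the argument is a routine adaptation of the proof of Lemma \ref{lemma:globalconstruction}.
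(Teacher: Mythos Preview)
Your approach is correct and closely parallels the paper's, but at the place $u$ you work harder than necessary. The paper observes that condition (ii) requires only that $\pi_u$ be elliptic in the $\beta$-twisted sense, so it suffices to choose $\rf_u$ to be \emph{any} test function whose $\beta_u$-twisted orbital integral is supported on the elliptic $\beta$-regular set; then $\la\pi_u,\rf_u\times\beta_u\ra\neq 0$ forces, via the twisted Weyl integration formula, the twisted character of $\pi_u$ to be nonvanishing there. You instead manufacture a specific $\beta_u$-invariant cuspidal $\tau_u$ of $\GL(3,F'_u)$ by Arthur--Clozel base change and use its matrix coefficient, pinning down $\pi_u\cong\tau_u$. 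This works (modulo the mild bookkeeping of matching $\tau_u$'s central character to $\omega'_u$, which is harmless), and the cuspidality of $\tau_u$ certainly gives (ii); it also gives the second cuspidal place you want for the simple trace formula. But the paper's route avoids the detour through \cite{AC} entirely: an arbitrary elliptically-supported $\rf_u$ already simplifies the trace formula and yields (ii) in one stroke. A small imprecision: your invocation of ``strong approximation for ${\rm SU}(3)$'' in connection with condition (iii) conflates two separate pieces of the Lemma~\ref{lemma:globalconstruction} argument---the irreducibility of the principal series at ramified places is checked directly against the reducibility criteria for $I_{\rG_v}(\chi',\eta')$, while strong approximation is what rules out one-dimensional $\pi$ globally.
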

\begin{proof}
The proof is very similar to that of Lemma \ref{lemma:globalconstruction}.
In this case the situation is even simpler, for we put no restriction on $\pi_u$ other than it be elliptic.
So, in the simple trace formula (see proof of Lemma \ref{lemma:globalconstruction}), it suffices to let $\rf_u$ 
be any test function whose twisted orbital integral is supported only on elliptic 
$\beta$-regular elements.
\end{proof}

\subsubsection{Local Character Relations}
If a local packet $\{\pi\}$ of $G$ lifts to a representation $\tilde{\pi}$ of $\GL(3, E)$,
we put $\{\pi(\tilde{\pi})\} := \{\pi\}$.
If $\tilde{\pi}$ is cuspidal, then $\{\pi(\tilde{\pi})\}$ is a singleton,
by Proposition \ref{prop:u3singletonlocalpacket} in the Appendix.
\begin{proposition}\label{prop:traceglobaldiscrete1}
{\rm (i)}
Let $\pi$ be a cuspidal $G$-module belonging to a local packet $\{\pi(\tilde{\pi})\}$
which lifts to a cuspidal representation $\tilde{\pi}$ of $\GL(3, \K)$.
In particular, $\tilde{\pi}$ is $\sigma$-invariant and the local packet $\{\pi(\tilde{\pi})\}$ consists of $\pi$ alone.
Assume, in the case where $n = [\K':\K] = 3$, 
that $\tilde{\pi}$ is not the monomial representation associated with any character of $\K'^\times$. 
Then, there exists a $\beta$-invariant cuspidal representation $\pi'$ of $\rG$, 
and a nonzero intertwining operator $A \in \Hom_{\rG}(\pi', \beta\pi')$, such that
the following character identity holds for matching functions$:$
\begin{equation}\label{eq:tracelocalstableprop}
\la \pi', \rf\ra_A = \la \pi, f\ra.
\end{equation}
Moreover, $\pi'$ belongs to the singleton local packet $\{\pi'(\tilde{\pi}')\}$ which lifts to
the $\beta$- and $\sigma'$-invariant cuspidal representation $\tilde{\pi}'$ of $\GL(3, \K')$, 
where $\tilde{\pi}'$ is the base-change lift of $\tilde{\pi}$ with respect to $\K'/\K$.

{\rm (ii)}
Conversely, given a $\beta$-invariant representation $\pi'$ of $\rG$ 
which lifts to a cuspidal representation $\tilde{\pi}'$ of $\GL(3, \K')$,
there exists a cuspidal $G$-module $\pi$,
and a nonzero intertwining operator $A \in \Hom_{\rG}(\pi', \beta\pi')$, 
such that equation \eqref{eq:tracelocalstableprop} holds for matching functions.
\end{proposition}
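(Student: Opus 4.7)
The plan is to deduce both local character identities from the global identity of Lemma \ref{lemma:traceglobalstable} by a globalization argument: in each direction, embed the given local representation as the distinguished $w$-component of a global representation, apply equation \eqref{eq:traceglobalstable}, and then strip off the auxiliary places by varying test functions and using linear independence of (twisted) characters. The non-monomial assumption in the case $n=3$ is precisely what keeps us in Case 1 of Section \ref{sec:traceglobalstable}, so that the resulting global packet on $\mb{\rG}(\Af)$ is stable and lifts to a cuspidal representation of $\GL(3,\mbb{A}_{E'})$; otherwise the argument is derailed into the unstable situation of Proposition D.

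For part $(i)$, first apply the analogue for $\mb{G}$ of Lemma \ref{lemma:globalconstruction1} to construct totally imaginary fields $F,F',E,E'$ with a place $w$ realizing $k,k',\K,\K'$, together with a cuspidal automorphic representation $\Pi$ of $\mb{G}(\Af)$ with $\Pi_w\cong\pi$, an elliptic $\Pi_u$ at an auxiliary split place $u$, irreducible principal series components at the remaining archimedean and ramified places, and unramified components elsewhere. The packet $\{\Pi\}$ then $b$-lifts to a $\sigma$-invariant cuspidal representation $\tilde{\Pi}$ of $\GL(3,\mathbb{A}_E)$ with $\tilde{\Pi}_w\cong\tilde{\pi}$, and the non-monomial hypothesis together with Case 1 of Section \ref{sec:traceglobalstable} produces a stable packet $\{\Pi'\}=\{\pi'(\tilde{\Pi}')\}$ on $\mb{\rG}(\Af)$ which $b'$-lifts to a cuspidal $\tilde{\Pi}'$ of $\GL(3,\mbb{A}_{E'})$. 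Lemma \ref{lemma:traceglobalstable} then gives
\[
\sum_{\pi'\in\{\Pi'\}} m(\pi')\ep(\pi')\,\la \pi',\rf\times\beta\ra_S \;=\; \la\{\Pi\},f\ra_S
\]
for matching functions whose components outside $S$ are spherical. At $w$ the local packet is a singleton by Proposition \ref{prop:u3singletonlocalpacket}. Fixing the local components at all places in $S\setminus\{w\}$ to be matching pseudo-coefficients (or, at the split place $u$, isolating $\pi'_u$ via the pseudo-coefficient of the cuspidal $\Pi_u$ so that the factor reduces to $\la\Pi_u,f_u\ra$), and varying $\rf_w,f_w,f_{H,w}$, reduces the identity to a product of local identities at $w$ times a nonzero constant. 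Absorbing that constant into the normalization of $A$ yields \eqref{eq:tracelocalstableprop}; the identification of $\pi'$ as the sole member of $\{\pi'(\tilde{\pi}')\}$ is forced by the commutativity of the $L$-group diagram in Figure \ref{fig:lgroups} and the base-change compatibility $\tilde{\Pi}'_w = \tilde{\pi}'$.

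For part $(ii)$, run the argument in reverse: use Lemma \ref{lemma:globalconstruction} to embed $\pi'$ as the $w$-component of a $\beta$-invariant cuspidal $\Pi'$ on $\mb{\rG}(\Af)$ with the auxiliary component $\Pi'_{w_1}$ chosen to be a $\beta$-invariant cuspidal whose twisted character is nonzero. By Proposition \ref{prop:classification1}, $\Pi'$ lies in a packet that is the weak lift of packets on $\mb{G}(\Af)$ and/or $\mb{H}(\Af)$; since $\tilde{\pi}'$ is cuspidal, the tracking of Hecke parameters through Figure \ref{fig:lgroups} rules out any contribution from $\mb{H}(\Af)$ (whose lift factors through the Levi $\GL(2)\times\GL(1)$) and also rules out one-dimensional or $I(\mu',\eta')$ contributions, leaving only a stable packet $\{\Pi\}$ on $\mb{G}(\Af)$ lifting to $\tilde{\Pi}$ whose base change is $\tilde{\Pi}'$. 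Applying equation \eqref{eq:traceglobalstable} and localizing at $w$ as above produces a cuspidal $\pi$ on $G$ satisfying \eqref{eq:tracelocalstableprop}.

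The main obstacle I expect is the localization step: to pass from the global identity to a pointwise local equality at $w$, one must argue that the global sum over $\pi'\in\{\Pi'\}$ with coefficients $m(\pi')\ep(\pi')$ collapses, after fixing matching auxiliary components, to a single nonzero multiple of $\la\pi'_w,\rf_w\ra_{A}$. This rests on (a) the assumed multiplicity one theorem for $\Un(3)$ (invoked in Proposition \ref{prop:classification1}), which ensures $T_\beta$ preserves the space of $\Pi'$ and hence that $m(\pi')\ep(\pi')$ is a genuine nonzero constant rather than vanishing; (b) the linear independence of twisted characters at the auxiliary places, used to separate out a single global representation; and (c) a careful choice at the split place $u$, where $\rG_u$ is a product of $n$ copies of $G_u$ and the normalized intertwiner $A(\pi'_u)$ of Section \ref{sec:globaltf} makes the twisted local factor explicitly computable. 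Once these three ingredients are combined, absorbing the resulting scalar into $A$ gives the desired identity with $A^n=\mathrm{id}$.
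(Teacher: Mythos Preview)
Your overall strategy---globalize, apply Lemma \ref{lemma:traceglobalstable}, then strip auxiliary places---is exactly the paper's approach. However, two concrete mis-steps in the auxiliary-place bookkeeping would derail the argument as written.

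First, your description of the place $u$ is inconsistent with Lemma \ref{lemma:globalconstruction1}. That lemma chooses $u$ to \emph{split in $E$ but stay prime in $F'$}; hence $G_u\cong\GL(3,F_u)$ and $\rG_u\cong\GL(3,F'_u)$, a single copy, not a product of $n$ copies. The cyclic-permutation intertwiner of Section \ref{sec:globaltf} is irrelevant there. The cancellation at $u$ instead uses the already-established local base change for $\GL(3)$ \cite[Chap.\ 1, Thm.\ 6.2]{AC}: since $\Pi'_u$ is the Arthur--Clozel base-change lift of $\Pi_u$, the twisted local factor at $u$ equals $\la\Pi_u,f_u\ra$ on the nose, and one cancels. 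Relatedly, at the places $v\in S$ the local components are irreducible \emph{principal series}, so pseudo-coefficients (which require square-integrability) are unavailable; the paper cancels these via Lemma \ref{lemma:tracelocalinduced} and linear independence of characters. You should also make sure $S$ contains the dyadic places, so that the multiplicity-one hypothesis of Theorem \ref{thm:multoneu3} applies to the global $\Pi'$ and gives $m(\pi')=1$.

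Second, in part (ii) you invoke Lemma \ref{lemma:globalconstruction} with an auxiliary $\beta$-invariant cuspidal at $w_1$. This creates a dependency problem: to cancel the $w_1$-factor you would need a local character identity there, which is precisely what is being established. The paper avoids this by again using Lemma \ref{lemma:globalconstruction1}, so that the only auxiliary nontrivial place is the split-in-$E$, prime-in-$F'$ place $u$, where the cancellation is supplied externally by \cite{AC}. With these two corrections---$u$ handled via $\GL(3)$ base change, places in $S$ handled via Lemma \ref{lemma:tracelocalinduced}---your argument becomes the paper's proof verbatim, and the leftover $n$-th root of unity is absorbed into $A$ as you say.
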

\begin{proof}[Proof of part $($i$)$ of Proposition \ref{prop:traceglobaldiscrete1}]
Let $F, F', E, E'=F'E$ be totally imaginary number fields, where $[E:F] = 2$, $F'/F$ is cyclic of degree $n$,
and there is a place $w$ of $F$ which stays prime in $E$ and $F'$, such that $F_w = k$, $E_w = \K$, $F'_w = k'$ and $E'_w = \K'$.
Let $S$ be the set of the archimedean places of $F$, the dyadic places,
and the nonarchimedean places different from $w$ where at least one of the number field extensions $E/F$, $F'/F$ is ramified.
The cardinality of $S$ is finite.
Let $u \notin S$ be a nonarchimedean place of $F$ which splits in $E$ but stays prime in $F'$.

By Lemma \ref{lemma:globalconstruction1}, there exists a cuspidal automorphic representation of $\mb{G}(\Af)$, 
which we denote again by $\pi$, such that:
\renewcommand{\labelenumi}{(\arabic{enumi})}
\begin{enumerate}
\item
$\pi_{w}$ is equivalent to the local representation of $\Un(3, \K/k)$ previously denoted by $\pi$;
\item
$\pi_u$ is an elliptic representation of $G_u = \GL(3, F_u)$;
\item
$\pi_v$ is unramified for each place $v \notin S \cup \{w, u\}$; 
and $\pi_v$ is an irreducible principal series representation for each place $v \in S$.
\end{enumerate}

By construction, the global packet $\{\pi\}$ containing $\pi$ is stable because $\{\pi_w\}$ is a singleton local packet,
so $\{\pi\}$ $b$-lifts to a \mbox{$\sigma$-invariant}, 
cuspidal, automorphic representation $\tilde{\pi}$ of $\GL(3, \mbb{A}_{E})$. 
The packet $\tilde{\pi}$ lifts via base change to a $\beta$-invariant automorphic representation $\tilde{\pi}'$ 
of $\GL(3, \mbb{A}_{E'})$, which is $\sigma'$-invariant by Lemma \ref{lemma:sigmainvariance3}.
Its local component $\tilde{\pi}'_{w}$, being the local base-change lift of the cuspidal $\GL(3, E_w)$-module $\tilde{\pi}_{w}$,
is non-cuspidal only if $n = 3$ and $\tilde{\pi}_w$ is the monomial representation associated with a character of $\K'^\times$.
Since that is not the case by hypothesis, $\tilde{\pi}'_{w}$ is cuspidal, 
which implies that $\tilde{\pi}'$ is a discrete spectrum automorphic representation.
Hence, $\tilde{\pi}'$ is the lift of a global packet $\{\pi'\} = \{\pi'(\tilde{\pi}')\}$ on $\mb{\rG}(\Af)$.
Moreover, the local packet $\{\pi'_w\} =\{\pi'(\tilde{\pi}'_{w})\}$ consists of a single cuspidal representation $\pi'_w$. 

Let $\rf$ and $f$ be matching functions on $\mb{\rG}(\Af)$, $\mb{G}(\Af)$, respectively, 
such that their local components at each place outside of $S\cup\{w, u\}$ are spherical.
By Lemma \ref{lemma:traceglobalstable}, we have the following global character identity:
\begin{equation}\label{eq:tracelocalstable}
\sum_{\tau' \in \{\pi'\}}\ep(\tau')m(\tau')\la \tau', \rf\times\beta\ra_{S\cup\{w, u\}}
= 
\la \{\pi\}, f\ra_{S\cup\{w, u\}}.
\end{equation}
Here, recall from Section \ref{sec:globaltf} that
\[
\la \tau', \rf\times\beta\ra_{S\cup\{w, u\}} := \prod_{v \in S\cup\{w, u\}}\la \tau'_v, \rf_v\times\beta_v\ra,\]
where $\la \tau'_v, \rf_v\times\beta_v\ra$ is the twisted character associated with a fixed intertwining operator
$A(\tau'_v) \in \Hom_{\rG_v}(\tau'_v, \beta_v\tau'_v)$, that is
$\la \tau'_v, \rf_v\times\beta_v\ra = \la \tau'_v, \rf_v\ra_{A(\tau'_v)}$.

For each dyadic place $v$ of $F'$, the local component $\tau'_v$ of $\tau'$ belongs to the same local packet as $\pi'_v$.
By construction, $\pi'_v$ is an irreducible principal series representation,
which implies that the multiplicity one property holds for $\tau'$ (see Theorem \ref{thm:multoneu3}).
Hence, $m(\tau') = 1$ for all $\tau' \in \{\pi'\}$.

The coefficient $\ep(\tau')$ is an $n$-th root of unity, defined as follows:
If $\tau'$ is $\beta$-invariant, then the operator induced by $T_\beta$ on $\otimes_v\tau'_v$ is nonzero, 
and $\ep(\tau')$ is the $n$-th root of unity by which the global intertwining operator $\otimes_v A(\tau'_v)$ differs from 
$T_\beta$.  If $\tau'$ is not $\beta$-invariant,
then both operators are zero, and we let $\ep(\tau') = 1$.

Since $\{\pi'\}$ is a restricted tensor product $\otimes'_v\{\pi'_v\}$ of local packets, 
and $\{\pi'_w\}$ consists of a single cuspidal representation $\pi'_{w}$,
the local component $\tau'_{w}$ of every representation $\tau' \in \{\pi'\}$ is equivalent to $\pi'_{w}$.
Moreover, by hypothesis the local packet $\{\pi_w\}$ consists of $\pi_w$ alone.
So, equation \eqref{eq:tracelocalstable} is equivalent to:
\begin{multline}\label{eq:traceglobaldiscrete11}
\sum_{\tau' \in \{\pi'\}} \ep(\tau') \lp \prod_{v \in S} 
\la \tau'_v, \rf_v\times\beta_v\ra\rp
 \la \tau'_{u}, \rf_{u}\times\beta_{u}\ra \la \pi'_{w}, \rf_{w}\times\beta_{w}\ra
\\=
\lp \prod_{v \in S} \la \{\pi_v\}, f_v\ra \rp
\la \{\pi_u\} , f_{u}\ra \la \pi_w , f_{w}\ra.
\end{multline}

Since the number fields are totally imaginary, 
the archimedean components of the automorphic representations are all irreducible principal series representations.  
Hence, by the construction of $\pi$, 
for each place $v \in S$ the local packet $\{\pi_v\}$ consists of a single irreducible principal series representation.
By the linear independence of characters, 
the right-hand side of equation \eqref{eq:traceglobaldiscrete11}
is not identically zero as a distribution on the space of test functions on $\mb{G}(\Af)$.
So, by Lemma \ref{lemma:nonvanishing} the left-hand side of the equation is nonzero.
Hence, by the linear independence of characters and Lemma \ref{lemma:tracelocalinduced}, 
the local components at $v \in S$ of the automorphic representations in equation \eqref{eq:traceglobaldiscrete11}
cancel one another.

Lastly, at the place $u$, the group $G_u$ (resp.\ $\rG_u$) is $\GL(3, F_u)$ (resp.\ $\GL(3, F'_u)$), 
and the $\GL(3, F'_u)$-module $\pi'_u$ is the local base-change lift of $\pi_u$, 
by the commutativity of the $L$-group diagram (Figure \ref{fig:lgroups}).
Since local base-change lifting for general linear groups has been established in \cite[Chap.\ 1.\ Thm.\ 6.2]{AC}, 
we may cancel the local components at $u$ of the representations in equation \eqref{eq:traceglobaldiscrete11}.
We thus obtain the identity:
\[
\xi_n \la \pi'_{w} , \rf_{w}\ra_{A(\pi'_w)} = \la \pi_{w}, f_{w}\ra
\]
for some $n$-th root of unity $\xi_n$. 
Letting $A = \xi_n A(\pi'_{w})$, part (i) of the proposition follows.
\end{proof}
\begin{proof}[Proof of part $($ii$)$ of Proposition \ref{prop:traceglobaldiscrete1}]
Let $F, F', E, E'$ again be the system of totally imaginary number fields 
introduced in the proof of part (i) of the proposition.
We let $S$ denote the same set of ``bad'' places as before.
Using Lemma \ref{lemma:globalconstruction1},
we construct a $\beta$-invariant, irreducible, cuspidal, automorphic representation of $\mb{\rG}(\Af)$, which we denote again by $\pi'$,
such that:
\renewcommand{\labelenumi}{(\arabic{enumi})}
\begin{enumerate}
\item
$\pi'_{w}$ is equivalent to the local representation of $\Un(3, \K'/k')$ previously denoted by $\pi'$;
\item
$\pi'_u$ is an elliptic representation of $\rG_u$.
\item
$\pi'_v$ is unramified for each place $v \notin S \cup \{w, u\}$, 
and $\pi'_v$ is an irreducible principal series representation for each place $v \in S$.
\end{enumerate}

By construction, the packet $\{\pi'\}$ containing $\pi'$ is stable because $\{\pi'_w\}$ is a singleton local packet,
which implies that $\{\pi'\}$ $b'$-lifts to a \mbox{$\sigma'$-invariant}, 
cuspidal, automorphic representation $\tilde{\pi}'$ of $\GL(3, \mbb{A}_{E'})$. 
By Proposition \ref{prop:classification1}, $\{\pi'\}$ is the weak lift of packet(s) from $\mb{G}$
and/or $\mb{H}$, via the $L$-homomorphisms $b_G$ and/or $e_H$.
Suppose it is the $e_H$-weak-lift of a packet of $\mb{H}(\Af)$.  
By the commutativity of the $L$-group diagram in Figure \ref{fig:lgroups}, 
the packet $\tilde{\pi}'$ must be the $b_{3}$-lift of a parabolically induced representation of $\GL(3, \Ae)$, 
which contradicts the cuspidality of $\tilde{\pi}'$.
Hence, the packet $\{\pi'\}$ must be the $b_G$-weak-lift of a stable packet $\{\pi\} = \{\pi(\tilde{\pi})\}$ of $\mb{G}(\Af)$, 
which $b$-lifts to a $\sigma$-invariant, cuspidal, automorphic representation $\tilde{\pi}$ of $\GL(3, \Ae)$.
By the commutativity of the $L$-group diagram, the representation $\tilde{\pi}$ base-change lifts to $\tilde{\pi}'$ via $b_{3}$.

By the theory of base change for general linear groups,
the automorphic representation $\tilde{\pi}$ is cuspidal at $w$, elliptic at $u$, 
and irreducible principal series everywhere else.
Hence, by Proposition \ref{prop:u3gl3localpacket}, the representation $\pi$
satisfies the conditions (1), (2) and (3) in the proof of part (i) of the proposition.  
In particular, the cuspidal representation $\pi_w$ belongs to the singleton local packet $\{\pi(\tilde{\pi}_w)\}$.

The twisted local character identity now follows via the same argument as in the proof of part (i) of the proposition.
\end{proof}

Let $\rho$ be a cuspidal representation of $H = \Un(2, \K/k)$ which does not belong to a local packet of the form
$\rho(\theta_1, \theta_2)$ for any pair of characters $(\theta_1, \theta_2)$ of $\K^k$.
Let $\pi(\rho)$ denote the local packet of $G$ which is the lift of $\rho$.
By Lemma \ref{lemma:sigmainvariance2} and the same reasoning used in the proof of Proposition \ref{prop:traceglobaldiscrete1},
there is a base-change lifting of $\rho$ 
to a cuspidal representation $\rho'$ of $\rH = \Un(2, \K'/k')$, 
corresponding to the $L$-homomorphism $b_H$ (see Figure \ref{fig:lgroups}).
Since $\rho$ is not a lift from $\Un(1, \K/k)\times\Un(1, \K/k)$, it lifts to a cuspidal representation $\tilde{\rho}$ of $\GL(2, \K)$.
The representation $\rho'$ lifts to the $\GL(2, \K')$-module $\tilde{\rho}'$ which is the base-change lift of $\tilde{\rho}$.
Since $\K'/\K$ is an odd degree extension, $\tilde{\rho}'$ is cuspidal, which implies that $\rho'$ is not a lift from
$\Un(1, \K'/k')\times\Un(1, \K'/k')$.
Let $\pi'(\rho')$ denote the local packet of $\rG$ which is the lift of $\rho'$ via the $L$-homomorphism $e'$.
It consists of two cuspidal representations $\pi'^+$ and $\pi'^-$ (see Proposition \ref{prop:u2u3localpacket}).
\begin{proposition}\label{prop:unstablebetainvariance}
The cuspidal representations $\pi'^+$ and $\pi'^-$ of $\rG$ are $\beta$-invariant.
\end{proposition}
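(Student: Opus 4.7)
The plan is to follow the strategy of the proof of Proposition \ref{prop:traceglobaldiscrete1}, using the unstable global identity of Lemma \ref{lemma:traceglobalunstable1} in place of the stable one. Choose totally imaginary number fields $F, F', E, E' = F'E$ with $[E:F] = 2$ and $F'/F$ cyclic of degree $n$, and a place $w$ of $F$ inert in both $E$ and $F'$ with completions $(F_w, F'_w, E_w, E'_w) = (k, k', \K, \K')$. Via an $\mb{H}$-analogue of Lemma \ref{lemma:globalconstruction1}, construct a cuspidal automorphic representation of $\mb{H}(\Af)$, still denoted $\rho$, whose component at $w$ is the given local $\rho$, with an additional elliptic local component at an auxiliary place $u$ split in $E$ and inert in $F'$, irreducible principal series components at the places in a finite bad set $S$, and unramified components elsewhere. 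Since $\rho_w$ is not of the form $\rho(\theta_1, \theta_2)$, the global packet $\{\rho\}$ is stable; by Lemma \ref{lemma:sigmainvariance2} its $b_H$-lift $\{\rho'\}$ to $\mb{\rH}(\Af)$ is stable and cuspidal, with $e'$-lift $\pi'(\{\rho'\})$ to $\mb{\rG}(\Af)$.

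Next, apply Lemma \ref{lemma:traceglobalunstable1} to this data, with matching test functions $\rf, f, f_H$ whose local components outside $S\cup\{w, u\}$ are spherical. Use Lemma \ref{lemma:tracelocalinduced} at the principal-series places in $S$, the local base change theorem for $\GL$ of \cite[Chap.\ 1.\ Thm.\ 6.2]{AC} at the split place $u$, and spherical transfer at the remaining places to cancel all local factors away from $w$. The standard separation-of-eigenvalues reduction (as in \cite[Part 2. II.4]{F}), together with the nonvanishing guaranteed by Lemma \ref{lemma:nonvanishing}, then yields a local identity at $w$ of the form
\[
2\xi\!\!\!\sum_{\pi'_w \in \pi'(\rho')}\!\! \la \pi'_w, \rf_w\times\beta_w\ra = \la \pi(\rho), f_w\ra + \la \rho, f_{H, w}\ra,
\]
where $\xi$ is an $n$-th root of unity, and each twisted character on the left vanishes identically unless the corresponding $\pi'_w$ is $\beta$-invariant.

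To conclude, decompose the right-hand side into its stable and unstable pieces: $\la \pi(\rho), f_w\ra$ is controlled by the stable orbital integrals of $\rf_w$, whereas $\la \rho, f_{H, w}\ra$ is controlled by its $\kappa$-orbital integrals; these two contributions can be varied independently over matching triples via Lemma \ref{lemma:nonvanishing}, and are linearly independent distributions on $\rf_w$. If either of $\pi'^+$ or $\pi'^-$ failed to be $\beta$-invariant, the left-hand side would reduce to a single twisted character distribution, which cannot simultaneously match two linearly independent distributions on the right; contradiction. Hence both $\pi'^+$ and $\pi'^-$ must be $\beta$-invariant. The main obstacle is executing the separation-of-eigenvalues step cleanly while tracking the multiplicities $m(\pi')$ and the roots of unity $\ep(\pi')$ across the restricted-tensor-product global packet $\pi'(\{\rho'\}) = \otimes'_v\pi'(\rho'_v)$; this requires Fourier inversion on the finite $2$-group indexing the members of the nonsingleton local packets, following the template of \cite[Part 2]{F}.
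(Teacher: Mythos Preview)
Your approach has a genuine gap at the reduction step. With only one place $w$ carrying a non-singleton local packet for $\pi'(\{\rho'\})$ (your construction gives singleton local packets at $u$, at the places in $S$, and at the unramified places), the multiplicity formula for the unstable global packet---$m(\pi') = 1$ iff an \emph{even} number of local components are of type ``$-$''---kills the unique global member whose $w$-component is $\pi'^-$. After cancelling the other local factors, the left-hand side contains only the $\pi'^+$ term:
\[
2\xi\,\la \pi'^+, \rf_w\times\beta_w\ra \;=\; \la \pi(\rho), f_w\ra + \la \rho, f_{H,w}\ra.
\]
This shows $\pi'^+$ is $\beta$-invariant, but says nothing about $\pi'^-$. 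Your asserted two-term identity on the left simply does not arise from a one-place construction.

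Your concluding contradiction argument is also flawed on its own terms. A single distribution can certainly equal the sum of two linearly independent distributions; the displayed identity above (which is precisely the ``$+$'' line of Proposition \ref{prop:tracelocalunstable}) is such an equality, with no contradiction whatsoever. To extract separate information about $\pi'^+$ and $\pi'^-$ from the trace identity one needs at least two places with non-singleton packets and a tensor-rank (or generating-function) argument of the type carried out in the proof of Proposition \ref{prop:tracelocalunstable}---but that proof \emph{uses} Proposition \ref{prop:unstablebetainvariance} as input, so invoking it here would be circular.

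The paper's proof avoids the trace identity entirely at this stage. It globalizes $\rho$ and observes, via the rigidity theorem for $\Un(3)$ (Theorem \ref{thm:rigidity}), that $T_\beta$ sends each discrete spectrum member of the global packet $\pi'(\rho')$ to a member of the same packet. Since the other local packets are singletons, this forces $\beta$ to permute the two-element set $\{\pi'^+, \pi'^-\}$. As $\beta$ has odd order $n$ and the set has two inequivalent elements, the permutation is trivial, so both $\pi'^+$ and $\pi'^-$ are $\beta$-invariant. This parity argument is the missing idea.
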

\begin{proof}
Construct number fields $F, F', E, E'$
such that there are $2$ nonarchimedean places $w_1, w_2$ of $F$ which stay prime in $E$ and $F'$, 
and $F_w =k$, $F'_w = k'$, $E_w = \K$, $E'_w = \K'$.
Using the same technique as in the proof of Lemma \ref{lemma:globalconstruction},
we construct an automorphic representation of $\mb{H}(\Af) = \Un(2, E/F)(\Af)$ such that
its local components at $w_1$, $w_2$ are equivalent to $\rho$.

From henceforth, by abuse of notation we denote this automorphic representation of $\mb{H}(\Af)$ by $\rho$. 
Hence, $\rho_{w_i}$ ($i = 1, 2$) is equivalent to the local representation denoted by $\rho$ in the preamble to the lemma.
Let $\rho'$ be the weak base-change lift of
$\rho$ to $\mb{\rH}(\Af) = \Un(2, E'/F')(\mbb{A}_{F'})$ corresponding to the $L$-homomorphism $b_H$
(see Lemma \ref{lemma:sigmainvariance2}).
Put $\{\pi'\} := \pi'(\rho')$, the unstable packet of $\mb{\rG}(\Af)$ which is the $e'$-lift of
$\rho'$. 

It follows from the definitions of the $L$-homomorphisms $b_H$ and $e'$ (see \cite[p.\ 207]{F})
that almost all unramified local components of each member of $\{\pi'\}$ are $\beta$-invariant. 
Hence, by the rigidity theorem for the packets of $\Un(3)$ (Theorem \ref{thm:rigidity}), the global 
intertwining operator $T_\beta$ associated with $\beta$ must map each discrete spectrum automorphic representation $\pi'$ in 
$\{\pi'\}$ to an automorphic representation in the same global packet which is equivalent to $\beta\pi'$.
This implies that the local packet $\{\pi'^+, \pi'^-\}$ is equivalent to $\{\beta\pi'^+, \beta\pi'^-\}$.
On the other hand, the action of $\beta$ is of odd order, while $\{\pi'^+, \pi'^-\}$ consists of
two inequivalent irreducible representations.  Hence, $\pi'^+ \cong \beta\pi'^+$, and $\pi'^- \cong \beta\pi'^-$.
\end{proof}

\begin{proposition}\label{prop:tracelocalunstable}
There exist nonzero operators 
$A^\dagger \in \Hom_{\rG}(\pi'^\dagger, \beta\pi'^\dagger)$ $(\dagger$ is $+$ or $-)$ such that
the following system of character identities holds for matching functions$:$
\begin{equation}
\label{eq:tracelocalunstableprop1}
\begin{split}
2 \la \pi'^+, \rf\ra_{A^+} &= \la \pi(\rho), f\ra + \la \rho, f_H\ra,\\
2 \la \pi'^-, \rf\ra_{A^-} &= \la \pi(\rho), f\ra - \la \rho, f_H\ra.
\end{split}
\end{equation}
\end{proposition}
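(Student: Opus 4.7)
The plan is to deduce \eqref{eq:tracelocalunstableprop1} from the global identity of Lemma \ref{lemma:traceglobalunstable1} by a globalisation argument paralleling the proof of Proposition \ref{prop:traceglobaldiscrete1}, but designed to separate the contributions of $\pi'^+$ and $\pi'^-$ in the resulting sum.

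First, I would construct totally imaginary number fields $F, F', E, E'$ carrying two distinguished nonarchimedean places $w_1, w_2$ whose completions realise $(k, \K, k', \K')$, and (by the technique of Lemma \ref{lemma:globalconstruction}, adapted to $\mb{H}$) produce an irreducible cuspidal automorphic representation of $\mb{H}(\Af)$, again denoted $\rho$, with $\rho_{w_1} \cong \rho_{w_2}$ equal to the given local $\rho$, such that $\rho$ is not a lift from $\Un(1, E/F) \times \Un(1, E/F)$ (so that Lemma \ref{lemma:sigmainvariance2} supplies a stable cuspidal base-change $\{\rho'\}$ and an unstable packet $\pi'(\{\rho'\})$), with $\rho_v$ an irreducible principal series at every archimedean, dyadic, or otherwise ramified place in the ``bad'' set $S$, and with an auxiliary nonarchimedean place $u \notin S$ split in $E$ but inert in $F'$.

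Next, I would apply Lemma \ref{lemma:traceglobalunstable1} to this data and cancel the local factors outside $\{w_1, w_2\}$: the components at the archimedean, dyadic, and ramified places of $S$ cancel via Lemma \ref{lemma:tracelocalinduced} and the linear independence of twisted characters, exactly as in the proof of Proposition \ref{prop:traceglobaldiscrete1}, while the component at $u$ cancels via local $\GL(3)$ base change \cite[Chap.~1]{AC}. What remains is a bilinear identity supported at $(w_1, w_2)$ whose right-hand side is
\[
\la \pi(\rho), f\ra_{w_1}\la \pi(\rho), f\ra_{w_2} + \la \rho, f_H\ra_{w_1}\la \rho, f_H\ra_{w_2},
\]
and whose left-hand side expands, via Proposition \ref{prop:unstablebetainvariance} (which ensures both $\pi'^\pm$ are $\beta$-invariant, so their twisted characters are nonzero distributions), as a bilinear form in the twisted characters of $\pi'^+$ and $\pi'^-$ at $w_1, w_2$, with coefficients determined by the multiplicities $m(\pi')$, the roots of unity $\ep(\pi')$, and the parity rule governing which sign vectors $(\epsilon_v)$ produce discrete-spectrum members of $\pi'(\{\rho'\})$.

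The main obstacle is precisely this combinatorial bookkeeping: I need to verify that the parity condition selects exactly the ``diagonal'' combinations $(+,+)$ and $(-,-)$ at $(w_1, w_2)$ with equal weight, so that the left-hand side reduces to
\[
2\bigl(\la \pi'^+, \rf\ra_{A^+}^2 + \la \pi'^-, \rf\ra_{A^-}^2\bigr),
\]
after absorbing the global $n$-th roots of unity into the normalisations of $A^\pm$. Once this is settled, the endoscopic local character identity $\la \pi(\rho)^+, f\ra - \la \pi(\rho)^-, f\ra = \la \rho, f_H\ra$ already established in \cite{F} (Proposition \ref{prop:u2u3localpacket}) lets me rewrite the right-hand side as
\[
2\bigl(\la \pi(\rho)^+, f\ra^2 + \la \pi(\rho)^-, f\ra^2\bigr),
\]
and polarising this bilinear identity in $(f_{w_1}, f_{w_2})$ together with the inequivalence $\pi'^+ \not\cong \pi'^-$ and the linear independence of twisted characters forces the separate scalar identities
\[
\la \pi'^+, \rf\ra_{A^+} = \tfrac12\bigl(\la \pi(\rho), f\ra + \la \rho, f_H\ra\bigr), \qquad
\la \pi'^-, \rf\ra_{A^-} = \tfrac12\bigl(\la \pi(\rho), f\ra - \la \rho, f_H\ra\bigr),
\]
up to a choice of labelling that matches the signs on the right. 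The indexing of $\pi'^+$ versus $\pi'^-$ is then fixed by convention so that \eqref{eq:tracelocalunstableprop1} holds as stated.
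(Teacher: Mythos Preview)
Your overall strategy---globalise, apply Lemma~\ref{lemma:traceglobalunstable1}, cancel the principal-series places, and then separate the $+$ and $-$ contributions using the multiplicity formula for $\pi'(\{\rho'\})$---is exactly the paper's. The observation that the parity rule picks out only the ``diagonal'' sign vectors $(+,+)$ and $(-,-)$ at two places is correct, and so is your use of the local identity $\la \pi(\rho)^+, f\ra - \la \pi(\rho)^-, f\ra = \la \rho, f_H\ra$ from \cite{F} to rewrite the right-hand side.

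The gap is in the final step. From the bilinear identity
\[
\ep^{++}\la \pi'^+,\rf_1\ra\la \pi'^+,\rf_2\ra + \ep^{--}\la \pi'^-,\rf_1\ra\la \pi'^-,\rf_2\ra
= \la \pi(\rho)^+, f_1\ra\la \pi(\rho)^+, f_2\ra + \la \pi(\rho)^-, f_1\ra\la \pi(\rho)^-, f_2\ra
\]
you cannot conclude the individual scalar identities by polarisation and linear independence alone. What the identity says is that the two pairs of functionals define the same symmetric bilinear form; this determines the pair $(\la \pi'^+,\cdot\ra, \la \pi'^-,\cdot\ra)$ only up to a (complex) orthogonal transformation of the pair $(\la \pi(\rho)^+,\cdot\ra, \la \pi(\rho)^-,\cdot\ra)$. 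Linear independence of the four functionals cannot hold (the identity forces them to span the same two-dimensional space), so it cannot be invoked to break the ambiguity. Nor can the roots of unity $\ep^{\pm\pm}$ simply be ``absorbed'': they depend on the global representation, and with only two places you have no control over how they vary.

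The paper resolves this by globalising with a \emph{variable} number $m \geq 2$ of places $w_1,\ldots,w_m$ (all with the same local data). Inserting matrix coefficients $\rf^{+}$ or $\rf^{-}$ at all $m$ places yields, for each $m$, numerical identities of the form $(D_\pi^+)^m + (D_\rho^+)^m = 2\xi_n^+(m)$ and $(D_\pi^+)^{m-1}D_\pi^- + (D_\rho^+)^{m-1}D_\rho^- = 0$ (with $D_\pi^*, D_\rho^*$ the values of $\la\pi(\rho),f^*\ra,\la\rho,f_H^*\ra$). Summing the latter against $z^{m-1}$ over $m\geq 2$ produces a rational function in $z$ whose pole structure forces $D_\pi^+ = D_\rho^+$ and $D_\pi^- = -D_\rho^-$; then comparing $m=2$ and $m=3$ pins these down as roots of unity. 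Only after this does the paper specialise one place and let the other vary to extract the separate identities. Your two-place setup supplies only the $m=2$ equations, which is one equation short of breaking the orthogonal ambiguity; you need the variable-$m$ mechanism (or some substitute, such as an independent identification of one of the $D$'s) to finish.
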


\begin{proof}
Construct totally imaginary number fields $F, F', E, E' = F'E$, such that there are $m \geq 2$ places $w_1, w_2,\ldots, w_\m$ of $F$
which stay prime in $E$ and $F'$, and $F_{w_i} = k$, $E_{w_i} = \K$, $F'_{w_i} = k'$, $E'_{w_i} = \K'$, $1 \leq i \leq \m$. 
Using Lemma \ref{lemma:globalconstruction}, we construct $\rho$, $\rho'$, $\{\pi\}$, $\{\pi'\}$, where
$\rho$ is an automorphic representation of $\mb{H}(\Af)$ whose local component at each place in $\{w_i\}_{i = 1}^\n$
is equivalent to the local representation of $H$ previously denoted by $\rho$; $\rho'$ is the automorphic representation of 
$\mb{\rH}(\Af)$ which is the $b_H$-lift of $\rho$; $\{\pi\} := \pi(\rho)$ is the $e$-lift of $\rho$ to $\mb{G}(\Af)$;
and $\{\pi'\} := \pi'(\rho')$ is the $e'$-lift of $\rho'$ to $\mb{\rG}(\Af)$.
Let $\{\rho\}$ be the global packet of $\mb{H}(\Af)$ to which $\rho$ belongs.
By assumption, $\rho$ does not belong to a packet of the form $\rho(\theta_1, \theta_2)$ 
for any pair of characters $\theta_1, \theta_2$ of $C_E^F$.  Hence, $\{\rho\}$ is stable.

As in the case of $\Un(3)$,
we may (and do) construct $\rho$ such that,
at a place $v \notin \{w_i\}_{i = 1}^\n$, 
the local packet $\{\rho_v\}$ contains an unramified representation 
or consists of a single irreducible principal series representation.
Consequently, 
each member of $\pi'(\rho')$ occurs in the discrete spectrum of $\mb{\rG}(\Af)$ with multiplicity at most one
(Theorem \ref{thm:multoneu3}), and it is $\beta$-invariant by Proposition \ref{prop:unstablebetainvariance}.
By Lemma \ref{lemma:traceglobalunstable1}, we have:
\[
2 \sum_{\pi' \in \pi'(\rho')}\!\!\!
m(\pi') \ep(\pi') \la \pi', \rf\times \beta\ra_S 
= \la \pi(\rho), f\ra_S +
\la \rho, f_H\ra_S.
\]
Here, $m(\pi')$ is the multiplicity with which $\pi'$ occurs in the discrete spectrum of $\mb{\rG}(\Af)$.
It is one if an even number (including zero) of local components of $\pi'$ are equivalent to $\pi'^-$, and zero otherwise.
The coefficient $\ep(\pi')$ is an $n$-th root of unity by which the global intertwining operator $T_\beta$,
restricted to $\pi'$, differs from the tensor product of the local intertwining operators $A(\pi'_v)$, 
see the proof of part (i) of Proposition \ref{prop:traceglobaldiscrete1}.

By Lemma \ref{lemma:tracelocalinduced} and the linear independence of characters, we may
cancel the unramified and parabolically induced local components of the automorphic representations, 
and obtain the following equation for matching functions:
\begin{multline}\label{eq:tracelocalunstable1}
2 \sum_{\pi' \in \{\pi'\}} m(\pi') \ep(\pi')\prod_{i = 1}^\n \la \pi'_{w_i}, \rf_{w_i}\times\beta_{w_i}\ra
\\= 
\prod_{i = 1}^\n \la \{\pi_{w_i}\}, f_{w_i}\ra + \prod_{i = 1}^\n \la \{\rho_{w_i}\}, f_{H, w_i}\ra.
\end{multline}

For $* \in \{+, -\}$,
put $A^* := A(\pi'^*)$, a fixed nonzero intertwining operator in $\Hom_{\rG}(\pi'^*, \beta\pi'^*)$
(with $\la \pi'^*, \rf_{w_i}\times\beta_{w_i}\ra = \la \pi'^*, \rf_{w_i}\ra_{A(\pi'^*)}$).
Let $\rf^*$ be a matrix coefficient of the cuspidal representation $\pi'^*$.
Hence, for a $\beta$-invariant, irreducible, admissible representation $\pi''$ of $\rG$ with nonzero intertwining operator 
$A'' \in \Hom_{\rG}(\pi'', \beta\pi'')$, 
the twisted trace $\la \pi'', \rf^*\ra_{A''}$ is nonzero if and only if $\pi'' \cong \pi'^*$.  
For simplicity, we normalize $\rf^*$ with respect to $A^*$, so that $\la \pi'^*, \rf^*\ra_{A^*} = 1$.
Let $f^*$ (resp.\ $f_H^*$)
be a fixed local test function on $G$ (resp.\ $H$) which matches $\rf^*$.
Note that, for $1 \leq i \leq \n$, the local packets
$\{\pi_{w_i}\} = \pi(\rho_{w_i})$ (resp.\ representations $\rho_{w_i}$) are equivalent to one another.

Put
\[
D_\pi^* := \la \pi(\rho_{w_i}), f^* \ra, \quad D_\rho^* := \la \rho_{w_i}, f_H^*\ra.
\]
For $1 \leq i \leq \n$, let $\rf_{w_i} = \rf^+$,
and $f_{w_i} = f^+$, $f_{H, w_i} = f_H^+$.
For such test functions, 
it follows from equation \eqref{eq:tracelocalunstable1} and the multiplicity formula for $\{\pi'\}$ that, 
for each $\n \geq 2$, there exists an $n$-th root of unity $\xi_n^+(\n)$ such that:
\begin{equation}\label{eq:trick+}
2 \cdot \xi_n^+(\n) =  (D_\pi^+)^\n + (D_\rho^+)^\n.
\end{equation}
By a similar argument, 
we also have, for each {\it even} integer $\n \geq 2$,
an $n$-th root of unity $\xi_n^-(\n)$ such that:
\begin{equation}\label{eq:trick-}
2 \cdot \xi_n^-(\n) = \lp D_\pi^- \rp^{\n} + \lp D_\rho^- \rp^{\n}.
\end{equation}

Now let $\rf_{w_i} = \rf^+$,  $f_{w_i} = f^+$ and $f_{H, w_i} = f_H^+$ for $1 \leq i \leq \n - 1$.
Let $\rf_{w_\n} = \rf^-$, $f_{w_\n} = f^-$ and $f_{H, w_\n} = f_H^-$.
For such test functions, it follows from equation \eqref{eq:tracelocalunstable1} and the multiplicity formula
for $\{\pi'\}$ that
\begin{equation}\label{eq:trick}
0 = \lp D_\pi^+ \rp^{\n-1} D_\pi^- + \lp D_\rho^+ \rp^{\n-1} D_\rho^-,\quad\forall\,\n\geq 2.
\end{equation}

Let $z$ be a complex variable.
Multiply both sides of the above equation by $z^{m - 1}$ and take the sum of each side over 
$2 \leq \n < \infty$. For $z$ sufficiently close to $0$, we get:
\begin{equation}\label{eq:meromcontunstable}
0 = \frac{D_\pi^+ D_\pi^-}{1 - D_\pi^+ z} + \frac{D_\rho^+ D_\rho^-}{1 - D_\rho^+ z}\;.
\end{equation}
Each side of the above equation meromorphically continues to the whole complex plane. 
Since the left-hand sides of equations \eqref{eq:trick+} and \eqref{eq:trick-} have absolute values $2$ 
for infinitely many integers $\n$, 
none of $D_\pi^+$, $D_\pi^-$, $D_\rho^+$ and $D_\rho^-$ can be zero.
By the absence of poles on the left-hand side (i.e.\ the zero function) 
of equation \eqref{eq:meromcontunstable}, the two nonzero terms on the right-hand side must cancel each other, which implies
that $D_\pi^+ = D_\rho^+$ and $D_\pi^- = - D_\rho^-$.
By equation \eqref{eq:trick+}, we have
\[
\xi_n^+(2) = (D_\pi^+)^2, \quad \xi_n^+(3) = (D_\pi^+)^3.
\]
Dividing one equation by the other, we have $D_\pi^+ = D_\rho^+ = \xi_n := \xi_n^+(3)/\xi_n^+(2)$, 
which is an $n$-th root of unity.
Similarly, equation \eqref{eq:trick-} implies that $(D_\pi^-)^2 = \xi_n^-(4)/\xi_n^-(2)$,
so $D_\pi^- =  -D_\rho^- = \xi_{2n}$ for some $2n$-th root of unity.

We are now at the final stage in the proof of 
the proposition.  Let $\n = 2$.  
Let $\rf_{w_2} = \rf^+$, $f_{w_2} = f^+$, and $f_{H, w_2} = f_H^+$.  
Let $\rf_{w_1}, f_{w_1}, f_{H, w_1}$ be arbitrary matching local test functions.
By equation \eqref{eq:tracelocalunstable1} and the multiplicity formula for $\{\pi'\}$, 
we have
\[
2\, \xi_n' \la \pi'^+, \rf_{w_1}\ra_{A^+} = D_\pi^+ \la \pi(\rho_{w_1}), f_{w_1}\ra + D_\rho^+ \la \rho_{w_1}, f_{H, w_1}\ra
\]
for some $n$-th root of unity $\xi_n'$.
Hence, by the values of $D_\pi^+$ and $D_\rho^+$ just computed,
we have
\[
2\, \xi_n'' \la \pi'^+, \rf_{w_1}\ra_{A^+} = 
\la \pi(\rho_{w_1}), f_{w_1}\ra + \la \rho_{w_1}, f_{H, w_1}\ra
\]
for some $n$-th root of unity $\xi_n''$.
Now let $\rf_{w_2} = \rf^-$, $f_{w_2} = f^-$, $f_{H, w_2} = f_H^-$, 
and let $\rf_{w_1}, f_{w_1}, f_{H, w_1}$ be arbitrary matching local test functions.
By the same argument as above, we have
\[
2\, \xi_{2n}'' \la \pi'^-, \rf_{w_1}\ra_{A^-} = 
\la \pi(\rho_{w_1}), f_{w_1}\ra - \la \rho_{w_1}, f_{H, w_1}\ra
\]
for some $2n$-th root of unity $\xi_{2n}''$.
Multiplying the intertwining operators $A^+$, $A^-$ by $\xi_n''$, $\xi_{2n}''$, respectively,
the system of equations \eqref{eq:tracelocalunstableprop1} follows.  
\end{proof}

We now consider the local packets of $H$ which are lifts from $\K^k \times \K^k$. 
Let $\theta_1, \theta_2$ be characters of $\K^k$ such that $\theta_1, \theta_2$ and $\theta_3 := \omega/\theta_1\theta_2$
are distinct (recall that $\omega$ is our fixed central character of $G$).
For $i = 1, 2, 3$, 
let $\theta_i'$ be the character $\theta_i\circ\N_{\K'/\K}$ of $\K'^{k'}$.
In particular, $\theta_1', \theta_2'$ and $\theta'_3$ are distinct by Lemma \ref{lemma:normindex} in the Appendix.
Let $\rho' = \rho'(\theta_1', \theta_2')$ denote the packet of $\rH = \Un(2, \K'/k')$ 
associated with the pair $(\theta_1', \theta_2')$.
Let $\{\pi'(\theta_1', \theta_2')\} = \{\pi'_a, \pi'_b, \pi'_c, \pi'_d\}$ 
denote the packet, consisting of four inequivalent cuspidal representations of $\rG$, which is the lift of $\rho'$.
We let $\pi'_a$ be the unique generic representation in $\{\pi'(\theta_1', \theta_2')\}$ (see \cite[p.\ 214]{F}).

\begin{proposition}\label{prop:uustablebetainvariance}
All four members of $\{\pi'(\theta_1', \theta_2')\}$ are $\beta$-invariant.
\end{proposition}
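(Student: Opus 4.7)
The plan is to adapt the template of Proposition \ref{prop:unstablebetainvariance} to the four-element packet, with an additional step to handle the larger size. First I would construct totally imaginary number fields $F, F', E, E' = F'E$, with $[E:F] = 2$ and $F'/F$ cyclic of degree $n$, together with two nonarchimedean places $w_1, w_2$ of $F$ which stay prime in $E$ and $F'$ and at which the completions recover $k, k', \K, \K'$. Using the technique of Lemma \ref{lemma:globalconstruction} together with the Poisson Summation Formula for $\Un(1)$, I would globalize the given local characters $\theta_1, \theta_2$ of $\K^k$ to characters of $C_E^F$ whose local components at $w_1, w_2$ are the given $\theta_1, \theta_2$, and whose other nonarchimedean components are either unramified or mild enough to produce irreducible principal series representations. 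Setting $\theta_i' := \theta_i \circ \N_{E'/E}$ for $i = 1, 2, 3$ (with $\theta_3 := \omega/\theta_1\theta_2$), I then form the global packets $\rho'(\theta_i', \theta_j')$ on $\mb{\rH}(\Af)$ and the global packet $\{\pi'\}$ on $\mb{\rG}(\Af)$ obtained as the $e'$-lift of any of them; by construction, its local component at each $w_i$ is the four-element packet $\{\pi'_a, \pi'_b, \pi'_c, \pi'_d\}$ of the statement.

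Next, because each $\theta_i'$ is a norm from $C_E^F$, it is automatically $\beta$-invariant. Hence at every place where $E'/E$ is unramified, the Satake parameter of each unramified local component of every member of $\{\pi'\}$ is $\beta$-invariant. The rigidity theorem for packets of $\Un(3)$ (Theorem \ref{thm:rigidity}) then forces the global intertwining operator $T_\beta$ to send each discrete spectrum automorphic member of $\{\pi'\}$ to a representation in the same global packet equivalent to its $\beta$-twist. Consequently, at each $w_i$ the set $\{\pi'_a, \pi'_b, \pi'_c, \pi'_d\}$ is preserved by $\beta$ up to equivalence.

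The main obstacle, and the new feature relative to Proposition \ref{prop:unstablebetainvariance}, is that the simple parity argument no longer suffices: an odd-order permutation of a four-element set need not be trivial, as it could be a $3$-cycle fixing one element. The generic member $\pi'_a$ is $\beta$-fixed by uniqueness of generic representations within a tempered packet, applied to a $\beta$-stable non-degenerate character of the maximal unipotent subgroup (which one constructs by pulling an additive character of $E$ back to $E'$ via the trace, so as to make it $\beta$-invariant). To rule out a nontrivial $3$-cycle on $\{\pi'_b, \pi'_c, \pi'_d\}$, I would repeat the globalization-and-rigidity step with each of the three pairs $(\theta_1', \theta_2'), (\theta_1', \theta_3'), (\theta_2', \theta_3')$: all three $e'$-lifts coincide with the same four-element local packet, but each induces a distinct labelling of its members by the characters of the component group $(\mathbb{Z}/2)^2$ attached to the relevant $L$-parameter. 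The putative $3$-cycle would have to preserve each of these three labellings simultaneously; since the labellings are canonically determined by the $\beta$-fixed unordered triple $\{\theta_1', \theta_2', \theta_3'\}$, they are themselves $\beta$-equivariant, and their common refinement forces $\beta$ to act trivially on every member of the packet.
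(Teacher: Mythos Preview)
Your approach diverges substantially from the paper's. The paper does not use any labelling or component-group argument; instead it invokes the Howe correspondence between $\rH$ (and its non-quasisplit inner form $\rH_d$) and $\rG$. By \cite[Thm.\ 4.3]{GRS}, the four members of $\{\pi'(\theta_1',\theta_2')\}$ are exactly the Howe lifts (twisted by a fixed $\beta$-invariant character) of the four representations comprising $\rho'(\theta_1',\theta_2')$ on $\rH$ and its analogue on $\rH_d$. The paper then shows directly that the oscillator representation $\Omega'$ of $\rG\times\rH$ is $\beta$-invariant, by checking that the two-cocycle defining the metaplectic cover and the splitting datum $(\psi',\kappa'^{-1})$ are $\beta$-invariant; hence Howe lifting commutes with $\beta$. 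Since the two members of the $\rH$-packet are individually $\beta$-invariant by the odd-order parity argument of Proposition \ref{prop:unstablebetainvariance}, so are their Howe lifts; the remaining two members then form a $\beta$-stable set of size two, and the same parity argument finishes.

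Your route could in principle succeed, but the final paragraph contains a real gap. The assertion that ``the labellings are canonically determined by the $\beta$-fixed unordered triple $\{\theta_1',\theta_2',\theta_3'\}$, [hence] they are themselves $\beta$-equivariant'' is precisely what requires proof. Canonicity of a construction in the input data $\theta_i'$ does not by itself yield $\beta$-equivariance of the output labelling on the packet: you must show that the local endoscopic character identity for $e':{}^L\rH\to{}^L\rG$ established in \cite{F} commutes with $\beta$, i.e., that if $(\rf,\rf_H)$ are matching then so are their $\beta$-translates. This amounts to checking that the Langlands--Shelstad transfer factor for $e'$ is $\beta$-invariant, which is plausible (the auxiliary data $\kappa'$, the splitting, the additive character can all be chosen $\beta$-invariantly) but is not argued in your proposal and is not a formality. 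Once that is established, a \emph{single} character identity already rules out any $3$-cycle by linear independence of characters, so the appeal to three simultaneous labellings and their ``common refinement'' is unnecessary. The contrast with Proposition \ref{prop:n3case}---where the characters $\theta,\beta\theta,\beta^2\theta$ are \emph{not} individually $\beta$-fixed and only the generic member turns out to be $\beta$-invariant---shows that this $\beta$-equivariance of the endoscopic identity is exactly the crux of the matter, and must be supplied rather than asserted. The paper's use of the Howe correspondence sidesteps this by replacing the endoscopic transfer with an explicit construction whose $\beta$-equivariance can be verified by hand.
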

\begin{proof}
The equivalence classes of the inner forms of $\rH$ are indexed by the two elements of $k'^\times/\N_{\K'/k'}\K'^\times$.
We write $\rH_1 := \rH$, and let $\rH_d$ denote a (unique up to equivalence) non-quasisplit inner form of $\rH$ associated with
the nontrivial element $d \in k'^\times/\N_{\K'/k'}\K'^\times$.

Fix a $\beta$-invariant additive character $\psi'$ of $k'$.  For $* = 1, d$, 
the pair of characters $(\psi', \kappa'^{-1})$ determines a Howe lifting 
$\rho' \mapsto \mc{H}^*_{\psi'}(\rho', \kappa'^{-1})$,
of irreducible admissible representations $\rho'$ of $\rH_*$, to $\rG$ (see \cite{WHowe}).
Let $\rho_d' := \rho_d'(\theta_1', \theta_2')$ denote the packet on $\rH_d$ which consists of the transfer to inner form of the members
of $\rho' := \rho'(\theta_1', \theta_2')$.
The packets $\rho'$ and $\rho_d'$ each consists of two representations. 
By Theorem 4.3 in \cite{GRS}, 
each of these four representations lifts via Howe correspondence to one of the members of
$\{\pi'(\theta_1', \theta_2')\}$.  
This lifting is one-to-one, hence its image exhausts all four members of the packet.

Let $V$ (resp.\ $W$) be the Hermitian space (resp.\ skew-Hermitian space) over $\K'$ associated with $\rG$ (resp.\ $\rH$).
So, as $\K'$-vector spaces, $V = \K'^3$ and $W = \K'^2$. 
The Hermitian structures of these spaces endow the tensor product $V\otimes W$ with a symplectic structure
(see, for example, \cite[p.\ 454]{GR}).
Let $\Sp = \Sp(V\otimes W)$ be the associated symplectic group.
Let $\mf{H}$ be the Heisenberg group associated with $V\otimes W$ (see \cite[Sec.\ 1]{Howe}).
Let $\mc{S} = \mc{S}(\mf{H})_{\psi'}$ be the space of complex valued, smooth, compactly supported modulo center functions on $\mf{H}$
which transform under the center (identified with $\K'$) of $\mf{H}$ via $\psi'$ (see \cite[p.\ 28]{MWV}).
Let $(\rho_{\psi'}, \mc{S})$ be the Schr\"odinger representation of $\mf{H}$, 
where $\mf{H}$ acts on the functions in $\mc{S}$
via right translation.  
Let $\Mp_1$ denote the metaplectic group ${\rm Mp}(V\otimes W)$.  
It is the group of elements $(g, M)$ in $\Sp \times \GL(\mc{S})$ which satisfy:
\[
M\rho_{\psi'}(h)M^{-1} = \rho_{\psi'}(gh),\quad \forall\, h \in \Sp.
\]
The oscillator representation $\Omega = (\Omega_{\psi'}, \mathcal{S})$ is the representation of $\Mp_1$ given by
$\Omega(g, M) = M$ (see \cite[Sec.\ 2]{Howe}).

A section $g \mapsto (g, M[g])$ from $\Sp$ to $\Mp_1$ is defined in \cite[Chap.\ 2, Sec.\ II.2]{MWV}.
It is associated with a two-cocycle $c(g, g')$ on $\Sp$ (see \cite[Chap.\ 3, Sec.\ I]{MWV}) such that:
\[M[g]\cdot M[g'] = c(g, g')M[g g'],\quad \forall\; g, g' \in \Sp.\]
Let $\Mp_2$ be the group $\Sp \times \mbb{C}^\times$ with multiplication law:
\[(g_1, b_1)(g_2, b_2) = (g_1 g_2, c(g_1, g_2)b_1 b_2).\] 
We have an isomorphism $\varphi : \Mp_2 \rightarrow \Mp_1$ defined by:
\[
\varphi(g, b) = (g,\, b\cdot M[g]), \quad g \in \Sp,\; b \in \mbb{C}^\times.
\]

By Proposition 3.1.1 in \cite{GR}, there is a splitting:
\[
s = s_{\psi', \kappa'^{-1}}:(g, h) \mapsto (m(g, h), b(g, h))
\]
from $\rG \times \rH$ to $\Mp_2$, associated with the pair of characters $(\psi', \kappa'^{-1})$.
Note that for simplicity  we have suppressed from the notation the dependence of $b$ on $(\psi', \kappa'^{-1})$.
Composing $\varphi$ with $s$, we have a splitting $\varphi\circ s : \rG\times \rH \rightarrow \Mp_1$.
Let $\Omega'$ denote the representation $\Omega\circ\varphi\circ s$ of $\rG\times\rH$.
For each irreducible admissible representation $\rho'$ of $\rH$, the representation
$\pi' = \mc{H}^1_{\psi'}(\rho', \kappa'^{-1})$ is by definition the unique irreducible $\rG$-module
such that $\pi'\otimes\rho'$ occurs as a direct summand of $\Omega'$ (see \cite{WHowe}).
We claim that $\Omega'$ is $\beta$-invariant.

Let $\omega(\beta)$ be the vector space endomorphism of $\mc{S}$
defined by:
\[
(\omega(\beta)\phi)(x) = \phi(\beta^{-1}(x)), \quad \phi \in \mc{S}, x \in \mf{H}.
\]
We define an isomorphism $\tilde{\beta}_1 : {\rm Mp}_1 \rightarrow \Mp_1$ 
by: \[\tilde{\beta}_1(g, M) = (\beta(g),\, \omega(\beta)\,M\,\omega(\beta)^{-1}),\] 
and a map $\tilde{\beta}_2 : \Mp_2 \rightarrow \Mp_2$ by: $\tilde{\beta}_2(g, b) = (\beta(g), b).$
We claim that $\tilde{\beta}_2$ is a group isomorphism.
The two-cocycle $c(g, g')$ is defined in terms of the Leray invariant $q(g, g')$,
which is uniquely determined by the triplet $(X, g^{-1}X, g'X)$,
where $X$ is a fixed Lagrangian subspace of the symplectic space $V\otimes W$ 
(see \cite[p. 55]{MWV}, \cite{Kudla}, \cite{Rao}).
Since the $\K'$-vector spaces $\beta(g)^{-1}X$ and $g^{-1}X$ are equal to each other, and likewise $\beta(g')X = g'X$,
the Leray invariant $q(g, g')$ is $\beta$-invariant, i.e.\ $q(\beta(g), \beta(g')) = q(g, g')$.
Consequently, the two-cocycle $c(g, g')$ is $\beta$-invariant, and $\tilde{\beta}_2$ is a group isomorphism.
 
Observe that the isomorphisms $\tilde{\beta}_2$ and $\varphi^{-1}\circ\tilde{\beta}_1\circ\varphi$ of $\Mp_2$ 
are equal to each other.
Moreover, the map $b : \rG\times\rH \rightarrow \mbb{C}^\times$, defined in terms of the Hilbert symbol, is $\beta$-invariant.
It now follows from the definitions of $\tilde{\beta}_1$ and $\tilde{\beta}_2$ that, for $(g, h) \in \rG\times\rH$,
\[\begin{split}
\Omega'(\beta(g), \beta(h)) &=  b(\beta(g), \beta(h)) M[m(\beta(g), \beta(h))] \\
&= b(g, h) \omega(\beta)M[m(g, h)]\omega(\beta)^{-1} = \omega(\beta)\Omega'(g, h)\omega(\beta)^{-1}.
\end{split}\]
This shows that $\Omega'$ is a $\beta$-invariant representation of $\rG\times\rH$.

Consequently, if a representation $\pi'$ of $\rG$ is the Howe lift of a representation $\rho'$ of $\rH$, 
then $\beta\pi'$ is the Howe lift of $\beta\rho'$. 

Consider the packet $\rho'(\theta_1', \theta_2') = \{\rho'^+, \rho'^-\}$ on $\rH$.
Since the order of the Galois group element $\beta$ has odd order $n$, 
it follows from the same argument used in the proof of Proposition \ref{prop:unstablebetainvariance} that
$\rho'^+$ and $\rho'^-$ are $\beta$-invariant.  By Theorem 4.3 in \cite{GRS}, 
two of the representations in the packet $\{\pi'(\theta_1', \theta_2')\}$ are the Howe lifts
$\pi'_1 := \mc{H}^1_{\psi'}(\rho'^+, \kappa'^{-1})\otimes\chi$ and $\pi'_2 := \mc{H}^1_{\psi'}(\rho'^-, \kappa'^{-1})\otimes\chi$,
where $\chi =\kappa'|_{\K'^{k'}}\cdot\theta_3'$.
For $* = +, -$ and $i = 1, 2$, 
both $\rho'^*\otimes \pi'_i$ and $\beta\rho'^*\otimes\beta\pi'_i$ occur in $\Omega'$, and $\rho'^*$ is $\beta$-invariant.
It follows from the uniqueness of Howe correspondence that $\pi'_i$ is $\beta$-invariant.

Since the $\Gal(\K'/\K)$-orbit of each of the remaining two members of $\{\pi'(\theta_1', \theta_2')\}$ 
must have cardinality dividing the odd number $n$, we conclude that these representations must also be $\beta$-invariant.
\end{proof}
\noindent {\sc Remark 1.}
We can give an alternative, more explicit proof of Proposition \ref{prop:uustablebetainvariance}.
Indeed, one can realize $\Omega'$ in the Schr\"odinger model $\mc{S}(V)$, which is the space
of complex valued, smooth compactly supported functions on $V$ (see \cite[p. 432]{GRS}).
To prove the $\beta$-invariance of  $(\Omega', \mc{S}(V))$, 
one can resort to an explicit description of the action of $\rG\times\rH$ on $(\Omega', \mc{S}(V))$:
For each function $\phi \in \mc{S}(V)$,
\begin{align*}\label{eq:oscillatorconditions}
& \lp\Omega'(g, 1)\phi\rp(x) &=&\; \kappa'^{-1}(\det g)\phi(x g),& g \in \rG,\\
& \lp\Omega'(1, \diag(c, \alpha(c)^{-1}))\phi\rp(x) &=&\; \kappa'^{-1}(c^3)\abs{c}^{3/2}\phi(cx),& c \in \K'^\times,\\
& \lp\Omega'\lp 1, \lp\lsm 1 & t\\0&1\rsm\rp\rp\phi\rp(x) &=&\; \psi'(t\la x, x\ra)\phi(x),& t \in \K',\\
& \lp\Omega'(1, \antidiag(1, -1))\phi\rp(x) &=&\; \gamma\hat{\phi}(x). &
\end{align*}
Here, $\hat{\phi}$ is the Fourier transform of $\phi$ associated with $\psi'$, and $\gamma$ 
is the Weil factor associated with the function $x \mapsto \psi'\lp\la x, x\ra\rp$ on $V$,
where $\la \cdot, \cdot\ra$ is the symplectic form on $V$.

Let $\omega(\beta)$ be the vector space endomorphism of $\mc{S}(V)$ 
defined by $(\omega(\beta)\phi)(x) = \phi(\beta^{-1}(x))$.
It follows from the $\beta$-invariance of the characters $\psi'$ and $\kappa'^{-1}$, 
and the equations defining $\Omega'$, that
$\omega(\beta)$ is an intertwining operator in $\Hom_{\rG\times\rH}(\Omega', \beta\Omega')$.
This completes the alternative proof.

\quad\\
\noindent{\sc Remark 2.}
Note that the assertion in \cite{GRS} that the multiplicity one theorem for $\Un(3)$ has been proved is incorrect
(see \cite[p.\ 394-395]{F} for explanation).
However, in the proof of Proposition \ref{prop:uustablebetainvariance},
we do not rely on any statement which uses multiplicity one for $\Un(3)$.
\quad\\

Let $\theta_i, \theta'_i$, $1 \leq i \leq 3$, be as in the paragraph preceding Proposition \ref{prop:uustablebetainvariance}.
For $i \neq j \in \{1, 2, 3\}$, let $\rho_{i, j}$ denote the local packet $\rho(\theta_i, \theta_j)$ of $H$ 
which is the lift of the pair of characters $(\theta_i, \theta_j)$.
Let $\{\pi\} := \{\pi(\theta_1, \theta_2)\}$ denote the local packet of $G$ which is the lift of $\rho(\theta_1, \theta_2)$.
Let $\{\pi'\} := \{\pi'(\theta_1', \theta_2')\}$ 
be the packet of $\rG$ defined similarly, where $\theta_i' := \theta_i\circ\N_{\K'/\K}$ ($i = 1, 2$).

\begin{proposition}\label{prop:uustablecaseii}
There exist nonzero intertwining operators 
$A_{*} \in \Hom_{\rG}(\pi'_*, \beta\pi'_*)$ $(* = a, b, c, d)$, 
and a way to index the set $\{\rho_{i, j} : {i \neq j \in \{1, 2, 3\}}\}$ as $\{\rho_1, \rho_2, \rho_3\}$,
such that the following system of local character identities holds for matching functions$:$
\vspace{-.02in}
\begin{equation}
\begin{split}
\label{eq:tracelocaluunstable00}
4 \la \pi'_a, \rf\ra_{A_a} &= \la \{\pi\}, f\ra
+ \la \rho_1, f_H\ra + \la \rho_2, f_H\ra + \la \rho_3, f_H\ra,\\
4 \la \pi'_b, \rf\ra_{A_b} &= \la \{\pi\}, f \ra
- \la \rho_1, f_H\ra - \la \rho_2, f_H\ra + \la \rho_3, f_H\ra,\\
4 \la \pi'_c, \rf\ra_{A_c} &= \la \{\pi\}, f\ra
- \la \rho_1, f_H\ra + \la \rho_2, f_H\ra  - \la \rho_3, f_H\ra,\\
4 \la \pi'_d, \rf\ra_{A_d} &= \la \{\pi\}, f\ra
+ \la \rho_1, f_H\ra - \la \rho_2, f_H\ra - \la \rho_3, f_H\ra.
\end{split}
\end{equation}
\end{proposition}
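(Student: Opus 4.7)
The plan is to mirror the proof of Proposition \ref{prop:tracelocalunstable}: globalize the given local data via a version of Lemma \ref{lemma:globalconstruction}, invoke the global trace identity from Lemma \ref{lemma:traceglobaluunstable}, cancel the auxiliary local components by linear independence of characters, and then invert the resulting linear system by varying the test functions at several places. The essential new feature is that the local packet $\{\pi'_a, \pi'_b, \pi'_c, \pi'_d\}$ is parameterized by the characters of a Klein four-group $V$, reflecting the fact that the three packets $\rho_{1,2}, \rho_{1,3}, \rho_{2,3}$ of $H$ give three equivalent endoscopic lifts to $\{\pi'\}$. By Proposition \ref{prop:uustablebetainvariance} each $\pi'_*$ is $\beta$-invariant, so a nonzero $A_* \in \Hom_{\rG}(\pi'_*, \beta\pi'_*)$ exists and is unique up to scalar.

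First, choose totally imaginary number fields $F, F', E, E' = F'E$, with $[E:F] = 2$ and $F'/F$ cyclic of degree $n$, together with $\n \geq 4$ nonarchimedean places $w_1, \ldots, w_\n$ of $F$ which stay prime in both $E$ and $F'$ and whose completions recover $(k, \K, k', \K')$. Produce global characters $\theta_1, \theta_2, \theta_3$ of $C_E^F$ whose local components at each $w_i$ are the prescribed ones and whose ramification outside $\{w_i\}$ is controlled (using the Poisson summation argument for compact $\Un(1)$ as in the proof of Lemma \ref{lemma:globalconstruction}). Build the discrete spectrum packets $\rho_{i,j}$ on $\mb{H}(\Af)$, their common endoscopic lift $\{\pi\}$ to $\mb{G}(\Af)$, the base-change lifts $\rho'_{i,j}$ to $\mb{\rH}(\Af)$, and the common endoscopic lift $\{\pi'\}$ to $\mb{\rG}(\Af)$. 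Applying Lemma \ref{lemma:traceglobaluunstable}, then canceling the archimedean, unramified, and parabolically induced local components outside $\{w_i\}$ via Lemma \ref{lemma:tracelocalinduced} and the linear independence of characters (with nonvanishing of the remaining distribution ensured by Lemma \ref{lemma:nonvanishing}), one is left with
\[
4 \sum_{\pi'} m(\pi')\ep(\pi') \prod_{i = 1}^\n \la \pi'_{w_i}, \rf_{w_i}\ra_{A(\pi'_{w_i})}
= \prod_{i = 1}^\n \la \{\pi_{w_i}\}, f_{w_i}\ra + \sum_{k = 1}^3 \prod_{i = 1}^\n \la \rho_{k, w_i}, f_{H, w_i}\ra,
\]
where the sum ranges over tensor products of members of $\{\pi'_a, \pi'_b, \pi'_c, \pi'_d\}$ at the places $w_i$.

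Next, take $\rf_{w_i}$ to be a matrix coefficient $\rf^*$ of $\pi'_*$ normalized by $\la \pi'_*, \rf^*\ra_{A_*} = 1$, with matching $f^*$ and $f_H^*$; set $D^*_\pi := \la \{\pi\}, f^*\ra$ and $D^*_{\rho_k} := \la \rho_k, f_H^*\ra$. The global multiplicity is governed by the formula $m(\pi') = \frac{1}{4}[1 + \sum_{k = 1}^3 \prod_v s_k(\pi'_v)]$, where $s_1, s_2, s_3$ are the three non-trivial sign characters on the local packet associated to the three endoscopic data $\rho'_{i,j}$. Running through all choices of $* \in \{a, b, c, d\}$ at the $\n$ places yields a family of polynomial identities in the $D$-values analogous to \eqref{eq:trick+}--\eqref{eq:trick}. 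Introducing a formal variable $z$, summing over $\n$, and meromorphically continuing as in \eqref{eq:meromcontunstable}, each $D^*_\pi$ and $D^*_{\rho_k}$ must be a root of unity, and the $4 \times 4$ sign matrix $(D^*_\pi, D^*_{\rho_1}, D^*_{\rho_2}, D^*_{\rho_3})$ reproduces, after a suitable relabeling of $\{\rho_{i,j} : i \neq j\}$ as $\{\rho_1, \rho_2, \rho_3\}$, the character table of $V$. Absorbing the residual roots of unity into the operators $A_*$ yields the four stated identities at a single place $w_1$ for arbitrary matching functions.

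The main obstacle is the combinatorial book-keeping identifying the three sign characters $s_k$ arising in the multiplicity formula with the three packets $\rho_{i,j}$ in such a way that the resulting sign matrix has precisely the displayed shape; this is what the phrase ``there exists a way to index $\{\rho_{i,j}\}$'' allows. The orthogonality relations on $\hat V$ force the right column sums and row alternations, but pinning down the bijection between $\{s_1, s_2, s_3\}$ and $\{\rho_{1,2}, \rho_{1,3}, \rho_{2,3}\}$ requires tracing through the three endoscopic lifts in Figure \ref{fig:lgroups} and their pairings with the members of $\{\pi'_a, \pi'_b, \pi'_c, \pi'_d\}$. Once this identification is in place, the inversion of the generating function is formally the same two-variable argument that appears in Proposition \ref{prop:tracelocalunstable}, with a $4 \times 4$ sign system in place of the $2 \times 2$ one.
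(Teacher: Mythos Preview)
Your proposal is correct and follows essentially the same route as the paper: globalize the characters $\theta_i$ to number fields with several controlled places, apply Lemma \ref{lemma:traceglobaluunstable}, strip off the auxiliary places via Lemma \ref{lemma:tracelocalinduced} and linear independence, feed in the multiplicity formula $m(\pi') = \frac{1}{4}\bigl(1 + \sum_i \ve_i(\pi')\bigr)$, and then run the matrix-coefficient / generating-function argument of Proposition \ref{prop:tracelocalunstable} in its $4\times 4$ Klein-four incarnation. The only minor discrepancies are cosmetic: the paper takes $\n \geq 2$ rather than $\n \geq 4$, and it finds that the $D$-values are $n$-th or $2n$-th roots of unity (not all $n$-th), but these do not affect the structure of the argument.
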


\begin{proof}
Let $F, F', E, E' = F'E$ be totally imaginary number fields, with $\m \geq 2$ places $w_1 =\ldots = w_\m$ of $F$,
which stay prime in $E$ and $F'$, such that $F_{w_i} = k$, $E_{w_i} = \K$, $F'_{w_i} = k'$ and $E'_{w_i} = \K'$, $1 \leq i \leq \m$.
Let $S$ be the set of the archimedean places of $F$ 
and the nonarchimedean places where at least one of the number field extensions $E/F$, $F'/F$ is ramified.

Construct characters of $C_E^F$, which we denote again by $\theta_i$ ($i = 1, 2, 3$), such that:
\begin{itemize}
\item[(i)]
For $1 \leq s \leq \n$, the local component $\theta_{i, w_s}$
is equal to the character of $\K^k$ previously denoted by $\theta_i$;
\item[(ii)]
At each place $v \notin S\cup\{w_1,\ldots, w_\n\}$, the character $\theta_{i, v}$ is unramified.
\item[(iii)]
For each $v \in S$ which does not split in $E$, 
the characters $\theta_{1, v}, \theta_{2, v}$ and $\theta_{3, v}$ are equal to one another.
\end{itemize}
This construction is possible due to the Poisson summation formula for $\Un(1)$ and
the fact that $\Un(1, E_v/F_v)$ is compact for the places $v$ referred to in (i), (iii).

For $i \neq j \in \{1, 2, 3\}$, we now let $\rho_{i, j}$ denote the {\it global} unstable packet
$\rho(\theta_i, \theta_j)$ of $\mb{H}(\Af)$ associated with the pair of global characters $(\theta_i, \theta_j)$.
By the construction of $\theta_i$ ($i = 1, 2, 3$), for each place $v \in S$ which does not split in $E$,
the local component $\rho_{i, j, v}$ of $\rho_{i,j}$ 
consists of the irreducible constituent(s) of a parabolically induced representation (see \cite[p.\ 699]{FU2}).
If $v$ splits in $E$, then $\rH_v$ is $\GL(2, F_v)$, and $\rho_{i, j, v}$ is an irreducible principal series representation.
Lastly, for $v \notin S\cup\{w_1, \ldots, w_\n\}$, 
the representation $\rho_{i, j, v}$ is unramified, hence parabolically induced.

For $i = 1, 2$, let $\theta_i'$ denote the character $\theta_i\circ\N_{E'/E}$ of $C_{E'}^{F'}$.
Thus, $\theta'_{i, w_s}$ ($s = 1, 2, \ldots, \n$) is equal to the local character previously denoted by $\theta'_i$.
Let $\{\pi\}$ be the unstable packet of $\mb{G}(\Af)$ which is the lift of $\rho_{i, j}$ 
corresponding to the $L$-homomorphism $e$ (see Figure \ref{fig:lgroups}).
Let $\rf$, $f$ and $f_H$ be matching test functions on $\mb{\rG}(\Af)$, $\mb{G}(\Af)$ and $\mb{H}(\Af)$, respectively.
Applying Lemmas \ref{lemma:traceglobaluunstable}, \ref{lemma:tracelocalinduced}
and the linear independence of characters
to cancel the contribution from the parabolically induced local components of the automorphic representations,
we obtain the following character identity:
\begin{multline}\label{eq:tracelocaluunstablen}
4 \!\!\! \sum_{\pi' \in \{\pi'(\theta_1', \theta_2')\}}
m(\pi') \ep(\pi')\prod_{s = 1}^\n\la \pi'_{w_s}, \rf_{w_s} \times \beta_{w_s}\ra\\
= \prod_{s = 1}^\n \la \{\pi_{w_s}\}, f_{w_s}\ra
+ \sum_{i \neq j \in \{1, 2,3\}} \prod_{s = 1}^\n \la \rho_{i, j, w_s}, f_{H, w_s}\ra,
\end{multline}
where $m(\pi')$ is the multiplicity with which $\pi'$ occurs in the discrete spectrum of $\mb{\rG}(\Af)$,
and $\ep(\pi')$ is the $n$-th root of unity by which the global intertwining operator $T_\beta|_{\pi'}$ differs from 
the tensor product $\otimes_v A(\pi'_v)$ of fixed local intertwining operators.

By construction, the dyadic local components of $\{\pi'\}$ contain constituents of parabolically induced representations,
so the following formula for $m(\pi')$, derived in \cite[Part 2.\ Sec.\ III.5.1]{F}, holds:
For each $s = 1, \ldots, \n$, the representation $\pi'_{w_s}$ belongs
to the local packet $\{\pi_a', \pi_b', \pi_c', \pi_d'\}$ of four cuspidal representations of $\rG_{w_s} = \mb{\rG}(k)$, 
where the representations are indexed such that $\pi_a'$ is the unique generic representation in the packet.
There exist distinct maps 
\[
\ve_{i} : \{\pi_a', \pi_b', \pi_c', \pi_d'\} \rightarrow  \{\pm 1\}, \quad i \in \{1, 2, 3\},
\]
such that:
\begin{itemize}
\item
$\ve_i(\pi_a') = 1$ for all $i$;
\item
For each $* = b, c, d$, there are exactly two $i$'s such that $\ve_i(\pi_*') = -1$;
\item
Putting $\ve_i(\pi') := \prod_{s = 1}^\n \ve_i(\pi'_{w_s})$, we have
\begin{equation}\label{eq:multformulauunstable}
m(\pi') = \frac{1}{4}\lp 1 + \sum_{i = 1}^3 \ve_i(\pi')\rp.
\end{equation}
It is equal to 0 or 1.
\end{itemize}
 
For $* = a, b, c, d$, by Proposition \ref{prop:uustablebetainvariance} there exist nonzero intertwining operators $A_*$
in $\Hom_{\rG}(\pi_*', \beta\pi_*')$.  We normalize $A_*$ such that $A_*^n = 1$.
Let $\rf^*$ be a matrix coefficient of $\pi_*'$ which is normalized 
such that, for any tempered representation $\pi''$ of $\rG$, $\la \pi'', \rf^*\ra_{A_*}  = \delta_{\pi_*', \pi''}$.  
For $1 \leq s \leq \n$, the local packets $\{\pi_{w_s}\}$ (resp.\ $\rho_{i,j,w_s}$) are equivalent to one another.
Put
\[
D_\pi^* := \la \{\pi_{w_s}\}, f^*\ra,\; D_{\rho_{i, j}}^* := \la \rho_{i, j, w_s}, f_{H}^*\ra,
\quad * \in \{a, b, c, d\},\;i \neq j \in \{1, 2, 3\},
\]
where $f^*$ (resp.\ $f_H^*$) is a fixed test function on $G$ (resp.\ $H$) matching $\rf^*$.
Let
$\rf_{w_s} = \rf^*$, $f_{w_s} = f^*$ and $f_{H, w_s} = f_H^*$ for $1 \leq s \leq \n$.
By equation \eqref{eq:tracelocaluunstablen} and the multiplicity formula 
\eqref{eq:multformulauunstable}, we have:
\begin{equation}\label{eq:trickuu+}
\begin{split}
4 \cdot \xi^a(\n) &= (D_\pi^a)^\n \;\;+ \sum_{i \neq j \in \{1, 2, 3\}} \lp D_{\rho_{i, j}}^a\rp^\n, \quad \forall\, \n \geq 2;\\
4 \cdot \xi^*(\n) &= (D_\pi^*)^\n \;\;+ \sum_{i \neq j \in \{1, 2, 3\}} \lp D_{\rho_{i, j}}^*\rp^\n, 
\quad * \in \{b, c, d\}, \forall\, \n \text{ even};
\end{split}
\end{equation}
where $\xi^*(\n)$ ($* = a, b, c, d$) is an $n$-th root of unity dependent on $\n$.

Now let $\rf_{w_s} = \rf^a$, $f_{w_s} = f^a$ and $f_{H, w_s} = f_H^a$ for $1 \leq s \leq \n - 1$.
Let $\rf_{w_\n} = \rf^*$, $f_{w_\n} = f^*$ and $f_{H, w_\n} = f_H^*$, for $* = b, c, d$.  
Then, again by equation \eqref{eq:tracelocaluunstablen} and the multiplicity formula \eqref{eq:multformulauunstable}, 
we have
\[
0 =\lp D_\pi^a\rp^{\n - 1}D_\pi^*\;\; + \sum_{i \neq j \in \{1, 2, 3\}}\lp D_{\rho_{i, j}}^a\rp^{\n - 1}D_{\rho_{i, j}}^* \, ,
\quad * \in \{b, c, d\}.
\]
Multiplying each side of the above equation by the $(\n - 1)$-st power of a complex variable $z$,
and summing over $2 \leq m < \infty$,
we obtain the following equality of meromorphic functions:
\begin{equation}\label{eq:trickuu}
0 =  \frac{D_\pi^a D_\pi^*}{1 - D_\pi^a z}\;\; + \sum_{i \neq j \in \{1, 2, 3\}} 
\frac{D_{\rho_{i, j}}^a D_{\rho_{i, j}}^*}{1 - D_{\rho_{i, j}}^a z},
\quad * \in \{b, c, d\}.
\end{equation}

The values of $D_\pi^*, D_{\rho_{i,j}}^*$ may be computed using the same technique as in the proof of Proposition 
\ref{prop:tracelocalunstable}.
We leave the following as an exercise:
It follows from the equations \eqref{eq:trickuu+}
and the absence of poles on the left-hand side of \eqref{eq:trickuu}, that
we may index $\rho_{i,j, w_1}$ as $\rho_s$ ($s = 1, 2, 3)$ such that:
\begin{gather*}
\xi_n = D_\pi^a = D_{\rho_1}^a = \pm D_{\rho_2}^a = \pm D_{\rho_3}^a,\\
\xi_{2n}^* = D_\pi^* = - D_{\rho_1}^* = \pm D_{\rho_2}^* = \mp D_{\rho_3}^*
\end{gather*}
for some $n$-th root of unity $\xi_n$ and $2n$-th root of unity $\xi_{2n}^*$, for all $* \in \{b, c, d\}$.

Arguing as we did at the end of the proof of Proposition \ref{prop:tracelocalunstable}, 
multiplying each intertwining operator $A_*$ by a root of unity if necessary, and noting that the
twisted characters of inequivalent $\beta$-invariant representations are linearly independent, 
the proposition follows.
\end{proof}

The local packet $\{\pi'\}$ of $\rG$
which we considered in Proposition \ref{prop:uustablecaseii} is the lift of the nonsingleton packet 
$\{\pi\} = \{\pi_a, \pi_b, \pi_c, \pi_d\}$ of $G$ which is the lift of a local packet of $H$.
The next proposition shows that, in the case where $n = [\K':\K] = 3$,
there is a packet of $\rG$  of the form $\{\pi'_a, \pi'_b, \pi'_c, \pi'_d\}$ 
which is the lift of a {\it singleton} packet of $G$.

\begin{proposition}\label{prop:n3case}
Suppose $n = [\K':\K] = 3$.
Let $\theta$ be a character of $\K'^{k'}$ such that $\theta \neq \beta\theta$.  
Moreover, suppose $\omega' = \theta\circ\N_{E'/E}$.
Let $\rho' = \rho'(\theta, \beta\theta)$ be the local packet $($of cardinality $2$$)$ of $\rH$ 
which is the lift of  $\theta\otimes\beta\theta$ from $\K'^{k'} \times \K'^{k'}$.
Let $\{\pi'\} = \{\pi'_a, \pi'_b, \pi'_c, \pi'_d\}$ be the packet of $\rG$ which is the lift of $\rho'$,
where $\pi'_a$ is the unique generic representation in the packet.
Then, $\pi'_a$ is the \textbf{only} $\beta$-invariant representation in the packet, 
and there exists a cuspidal representation $\pi$ of $G$
and a nonzero intertwining operator $A \in \Hom_{\rG}(\pi'_a, \beta\pi'_a)$, such that$:$
\[
\la \pi'_a, \rf\ra_A = \la \pi, f\ra
\]
for all matching functions.
More precisely, $\pi$ is the sole member of the singleton packet of $G$ 
which lifts to the cuspidal monomial $\GL(3, \K)$-module $\pi(\kappa'^2\tilde{\theta}) = \kappa^2\pi(\tilde{\theta})$ 
associated with $\kappa'^2\tilde{\theta}$, where $\tilde{\theta}(z) := \theta(z/\alpha(z))$, $z \in \K'^\times$.  
In particular, $\kappa^2\pi(\tilde{\theta})$ is $\sigma$-invariant.
\end{proposition}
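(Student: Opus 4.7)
The plan is to emulate the globalization and separation-by-eigenvalues strategy of Propositions A--C, while invoking the $\beta$-equivariance of the Howe correspondence from Proposition \ref{prop:uustablebetainvariance} to pin down the novel $\beta$-invariance pattern of the packet $\{\pi'\}$.

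First I would construct totally imaginary number fields $F,F',E,E'=F'E$ with $[F':F]=3$ and an inert place $w$ of $F$ at which the completions realize $k,k',\K,\K'$; globalize $\theta$ to a character of $C_{E'}^{F'}$ with $\theta\neq\beta\theta$ and $\omega'=\theta\circ\N_{E'/E}$, whose local components away from $w$ are unramified or principal-series ramified. The character $\tilde\theta(z):=\theta(z/\alpha(z))$ on $C_{E'}$ is non-$\beta$-invariant, trivial on $C_{F'}$, and satisfies $\alpha\tilde\theta=\tilde\theta^{-1}$, so $\tilde\pi:=\pi(\kappa^2\tilde\theta)=\kappa^2\pi(\tilde\theta)$ is a $\sigma$-invariant cuspidal monomial automorphic representation of $\GL(3,\Ae)$ with further base change $\tilde\pi'=I(\kappa'^2\tilde\theta,\kappa'^2\beta\tilde\theta,\kappa'^2\beta^2\tilde\theta)$. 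By the Case 2 analysis of Section \ref{sec:traceglobalstable}, the stable packet $\{\pi\}=\{\pi(\tilde\pi)\}$ on $\mb{G}(\Af)$ $b_G$-weakly-lifts to the unstable packet $\{\pi'\}$ on $\mb{\rG}(\Af)$ which is the $e'$-lift of $\rho'(\theta,\beta\theta)$, and $\{\pi\}$ is the only discrete (quasi-)packet of $\mb{G}(\Af)$ or $\mb{H}(\Af)$ that weakly lifts to $\{\pi'\}$.

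The crucial new step is the $\beta$-invariance analysis of $\{\pi'_a,\pi'_b,\pi'_c,\pi'_d\}$. The generic member $\pi'_a$ is $\beta$-invariant by its unique characterization. For the other three, observe that $\rho'(\theta,\beta\theta)$ is \emph{not} $\beta$-invariant as a packet: $\beta\rho'(\theta,\beta\theta)=\rho'(\beta\theta,\beta^2\theta)$, and $\theta\neq\beta\theta$ together with $\beta^3=1$ forces $\{\theta,\beta\theta\}\neq\{\beta\theta,\beta^2\theta\}$. The $\beta$-equivariance of the Howe splitting $s_{\psi',\kappa'^{-1}}$ (both $\psi'$ and $\kappa'$ being $\beta$-invariant) gives $\beta\cdot\mc{H}^1_{\psi'}(\rho',\kappa'^{-1})\cong\mc{H}^1_{\psi'}(\beta\rho',\kappa'^{-1})$, so any Howe lift of a member of $\rho'(\theta,\beta\theta)$ is carried by $\beta$ into a Howe lift of a member of the disjoint source packet $\rho'(\beta\theta,\beta^2\theta)$, and is therefore not $\beta$-invariant. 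This produces at least one non-$\beta$-invariant representation among $\{\pi'_b,\pi'_c,\pi'_d\}$; since $\beta$ has order $3$ and permutes this three-element set, orbit sizes divide $3$, so the presence of any non-trivial orbit forces a single $3$-orbit, and none of $\pi'_b,\pi'_c,\pi'_d$ is $\beta$-invariant.

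Finally, applying equation \eqref{eq:sepevp} to $\{\pi'\}$, the right-hand side contains only $\la\{\pi\},f\ra_S$ and the left-hand side only those automorphic $\pi'\in\{\pi'\}$ whose local component at $w$ is $\pi'_a$. Using Proposition \ref{prop:u3singletonlocalpacket} (so $\{\pi_w\}=\{\pi(\tilde\pi_w)\}$ is a singleton local packet) together with Lemma \ref{lemma:tracelocalinduced} and the linear independence of twisted characters to cancel the parabolic local components away from $w$, the global identity reduces to
\[
\ep(\pi'_a)\la\pi'_a,\rf\ra_{A(\pi'_a)}=\la\pi,f\ra,
\]
and absorbing the $n$-th root of unity $\ep(\pi'_a)$ into the intertwining operator $A:=\ep(\pi'_a)A(\pi'_a)$ yields the stated identity; the cuspidality of $\pi(\kappa'^2\tilde\theta)$ follows from $\theta\neq\beta\theta$, and its $\sigma$-invariance from $\alpha\kappa'=\kappa'^{-1}$ together with $\alpha\tilde\theta=\tilde\theta^{-1}$. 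The main obstacle is the $\beta$-invariance step: the spectral test-function tricks of Propositions B and C alone cannot distinguish a transitive from a trivial $\beta$-action on $\{\pi'_b,\pi'_c,\pi'_d\}$ (with $n=3$ both orbit structures $\{1,1,1\}$ and $\{3\}$ are a priori allowed), so one genuinely needs the Howe-correspondence $\beta$-equivariance together with the explicit non-$\beta$-invariance of the source packet $\rho'(\theta,\beta\theta)$.
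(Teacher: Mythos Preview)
Your globalization and your derivation of the final character identity are essentially the same as the paper's, but your $\beta$-invariance argument for $\{\pi'_b,\pi'_c,\pi'_d\}$ has a genuine gap, and your concluding remark about spectral methods is incorrect.

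The gap is this: from the $\beta$-equivariance of the oscillator representation you correctly deduce that $\beta$ carries a Howe lift of a member of $\rho'(\theta,\beta\theta)$ (twisted by $\chi=\kappa'|_{\K'^{k'}}\cdot\beta^2\theta$) to a Howe lift of a member of $\rho'(\beta\theta,\beta^2\theta)$ (twisted by $\beta\chi=\kappa'|_{\K'^{k'}}\cdot\theta$). But the three source packets $\rho'(\theta,\beta\theta)$, $\rho'(\beta\theta,\beta^2\theta)$, $\rho'(\beta^2\theta,\theta)$ all $e'$-lift to the \emph{same} packet $\{\pi'_a,\pi'_b,\pi'_c,\pi'_d\}$, and by \cite{GRS} the Howe lifts (from $\rH_1$ and $\rH_d$, with the appropriate twist) of each source packet exhaust all four members of $\{\pi'\}$. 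So ``$\beta\pi'_*$ is a Howe lift from the disjoint source packet $\rho'(\beta\theta,\beta^2\theta)$'' does not by itself imply $\beta\pi'_*\neq\pi'_*$; indeed $\pi'_a$ is itself such a Howe lift and is $\beta$-invariant. To make your argument work you would need the finer statement that the \emph{non-generic} $\rH_1$-lift from $(\theta_i,\theta_j)$ singles out a \emph{specific} one of $\pi'_b,\pi'_c,\pi'_d$ depending on $(i,j)$, and that different pairs give different elements. That is plausible from the $(\mbb{Z}/2\mbb{Z})^2$-structure of the packet, but it is not established in the paper and you have not supplied it.

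Contrary to your final remark, the paper resolves the $\beta$-invariance question by a pure spectral test-function trick, without any appeal to Howe correspondence. One globalizes with $m\geq 2$ places $w_1,\dots,w_m$ (not a single $w$) all realizing the given local data, and assumes for contradiction that $\pi'_b$ is $\beta$-invariant. Taking $\rf_{w_s}$ to be a normalized matrix coefficient of $\pi'_b$ for every $s$, only the automorphic $\pi'$ with $\pi'_{w_s}=\pi'_b$ for all $s$ survives on the left of \eqref{eq:traceglobaldiscrete3}; since $\ve_i(\pi'_b)=-1$ for exactly two $i$'s, the multiplicity formula \eqref{eq:multformulauunstable} gives $m(\pi')=\frac{1}{4}(1+(-1)^m+(-1)^m+1)$, which is $1$ for $m$ even and $0$ for $m$ odd. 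Hence $\la\{\pi_0\},f^b\ra^m$ equals an $n$-th root of unity for every even $m$ and vanishes for every odd $m$, which is impossible. This forces $\pi'_b$ (and by the orbit-size argument $\pi'_c,\pi'_d$) to be non-$\beta$-invariant. The multiple-places setup is essential here; your single-place globalization cannot access this parity obstruction.
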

\begin{proof}
As before, construct totally imaginary number fields $F, F', E, E'$
such that $F_{w_i} = k$, $E_{w_i} = \K$, $F'_{w_i} = k'$ and $E'_{w_i} = \K'$, for $m \geq 2$ places $w_1,\ldots, w_\m$ of $F$ 
which stay prime in $E'$ and $F'$.
Let $S$ be the set of archimedean places of $F$
and those nonarchimedean places where at least one of $E/F, F'/F$ is ramified.
Construct a global character $\theta$ of $C_{E'}^{F'}$ such that:
(i) For $1 \leq i \leq \n$, $\theta_{w_i}$ is equal to the local character of $\K'^{k'}$
previously denoted by $\theta$ in the proposition; 
(ii) 
for each place $u \in S$ which does not split in $E$, $\theta_u = (\beta\theta)_u$;
(iii) $\theta_v$ is unramified for each place $v \notin S\cup\{w_1, \ldots, w_\n\}$.
(See proof of Proposition \ref{prop:uustablecaseii}.)
Let $\tilde{\theta}$ be the character of $C_{E'}$ defined by $\tilde{\theta}(z) := \theta(z/\alpha(z))$, $z \in C_{E'}$.
In particular, $\tilde{\theta} \neq \beta\tilde{\theta}$.

Let $\tilde{\pi} := \pi(\kappa'^2\tilde{\theta}) = \kappa^2\pi(\tilde{\theta})
$ be the cuspidal, monomial, automorphic representation of $\GL(3, \Ae)$ 
associated with the character $\kappa'^2\tilde{\theta}$ of $C_{E'}$.  
Then, $\tilde{\pi}_{w_i}$ ($1 \leq i \leq \n$) is equivalent to the local representation of $\GL(3, \K)$
previously denoted by $\pi(\kappa'^2\theta)$ in the proposition.  
The automorphic representation $\tilde{\pi}$ base-change lifts to the representation
$\tilde{\pi}' := \kappa'^2 I(\tilde{\theta},\beta\tilde{\theta},\beta^2\tilde{\theta})$ 
of $\GL(3, \mbb{A}_{E'})$ induced from the upper triangular Borel subgroup.  
Since $\kappa\alpha\kappa = \tilde{\theta}\alpha\tilde{\theta} = 1$, 
it is clear that $\tilde{\pi}'$ is $\sigma'$-invariant.  Hence, $\tilde{\pi}'$ is the
$b'$-lift of a packet $\{\pi'\}$ of $\mb{\rG}(\Af)$, by Theorem \ref{thm:u3gl3stableglobalpacket}.  

We want to show that $\{\pi'\}$ is the $b_G$-weak-lift of a stable packet of $\mb{G}(\Af)$.
Towards this end, we first show that the cuspidal representation $\tilde{\pi} = \kappa^2\pi(\tilde{\theta})$
of $\GL(3, \Ae)$ is $\sigma$-invariant.
Since, $\kappa\alpha\kappa = 1$, it suffices to show that $\pi(\tilde{\theta})$ is equivalent to $\sigma(\pi(\tilde{\theta}))$.
At almost every place $v$ of $F$, the number field extensions $E/F$, $F'/F$ and the character $\theta_v$ are unramified.
If such a place $v$ stays prime in $F'$ and $E$, 
then there exists a character $\mu_v$ of $E_v^\times$ such that $\tilde{\theta}_v = \mu_v \circ \N_{E'/E}$, 
and $\pi(\tilde{\theta}_v)$ is equivalent to the induced representation 
$\mu_v I(1, \ve_{E'/E, v}, \ve_{E'/E, v}^2)$, where $\ve_{E'/E}$ is the character of $C_E$ associated with
the number field extension $E'/E$ via global class field theory.
Since $\alpha$ commutes with $\beta$, we have $\alpha\ve_{E'/E, v} = \ve_{E'/E, v}$.
Since $\alpha\tilde{\theta}^{-1}_v = \tilde{\theta}_v$,
we have $\alpha\mu^{-1}_v = \ve_{E'/E, v}^i\mu_v$ for some $i = 0, 1$ or $2$.
Hence, $\sigma(\pi(\tilde{\theta}_v)) = \ve_{E'/E, v}^i\mu_v I(1, \ve^{-1}_{E'/E, v}, \ve^{-2}_{E'/E, v})$, 
which is equivalent to $\pi(\tilde{\theta}_v)$ because $\ve_{E'/E}$ is cyclic of order $3$.
By a similar argument, we also have $(\sigma\pi(\tilde{\theta}))_v \cong \pi(\tilde{\theta})_v$ when $v$ splits in $F'$ and/or $E$.
By the strong multiplicity one theorem for $\GL(3)$,
we conclude that the cuspidal automorphic representation $\tilde{\pi} = \kappa^2\pi(\tilde{\theta})$ is $\sigma$-invariant.
Hence, $\tilde{\pi}$ is the lift of a stable packet $\{\pi\}$ of $\mb{G}(\Af)$
by Theorem \ref{thm:u3gl3stableglobalpacket}.

The representation $\tilde{\pi}'$ is the $i'$-lift (i.e. parabolic induction) of $\sigma$-invariant representations
$\tilde{\rho}'$ of $\GL(2, \mbb{A}_{E'})$ of the form $\kappa'^2 I(\tilde{\theta}_1,\tilde{\theta}_2)$, 
where $\theta_1, \theta_2$ are distinct characters in $\{\theta, \beta\theta, \beta^2\theta\}$.
Moreover, no other representation of $\GL(2, \mbb{A}_{E'})$ lifts to $\tilde{\pi}'$.  
The representations $\tilde{\rho}'$ are $b_u'$-lifts of the unstable packets 
$\{\rho'\} := \rho'(\theta_1, \theta_2)$ of $\Un(2 , E'/F')(\mbb{A}_{F'})$,
which are the lifts of the pairs $(\theta_1, \theta_2)$ (see \cite[Cor.\ 7.2]{FU2}).  
These packets $\{\rho'\}$ all lift via the $L$-group homomorphism $e'$ to the same unstable packet $\pi'(\theta, \beta\theta)$ 
of $\mb{\rG}(\Af)$, which must be $\{\pi'\}$ by the commutativity of the $L$-group diagram in Figure \ref{fig:lgroups}.
It is clear that $\{\pi'\}$ must be the $b_G$-weak-lift of $\{\pi\}$, again by the commutativity of the $L$-group diagram.
We have therefore the commutative diagram of liftings in Figure \ref{fig:n3case}.

\begin{figure}[htp!]
\begin{gather*}
\xymatrix@1{
*+{\substack{{\kappa'}^2 I(\tilde{\theta}, \beta\tilde{\theta}) \\ \GL(2, \mbb{A}_{E'})}}
\ar@{.>}[rrr]_*+++\txt{\bf{}}^{i'} 
&&&
*+{
\substack{\kappa'^2 I(\tilde{\theta}, \beta\tilde{\theta}, \beta^2\tilde{\theta}) \\ \GL(3, \mbb{A}_{E'})}
}
\\ &
*+{\substack{ \rho'(\theta, \beta\theta) \\ \Un(2, E'/F')(\mbb{A}_{F'})} 
}
\ar@{.>}[lu]_{b_u'}\ar@{.>}[r]^{e'}
&
*+{\substack{ \{\pi'\} \\ \Un(3, E'/F')(\mbb{A}_{F'})}} \ar@{.>}[ru]^{b'}
&
\\ &&
*+{\substack{\{\pi\} \\ \Un(3, E/F)(\Af)}} \ar[u]_{b_G} \ar@{.>}[dr]^b &
\\ &&&
*+{\substack{\kappa^2\pi(\tilde{\theta}) \\ \GL(3, \Ae)}}
\ar@{.>}[uuu]_{b_{3}}^*++++++++\txt{\bf{}}
}
\end{gather*}
\caption{}\label{fig:n3case}
\end{figure}

Applying Lemma \ref{lemma:traceglobalstable} to $\{\pi'\}$, we obtain:
\begin{equation}\label{eq:n3eq}
\sum_{\pi' \in \{\pi'\}}m(\pi')\ep(\pi')\la \pi', \rf\times\beta\ra_{S \cup \{w_1, \ldots, w_\n\}}
= \la \{\pi\}, f\ra_{S \cup \{w_1,\ldots, w_\n\}}.
\end{equation}
At each place $u \in S$ which does not split in $E$, by the construction of $\theta$ 
the local packet $\{\pi'_u\} = \pi'(\theta_u, (\beta\theta)_u) = \pi'(\theta_u, \theta_u)$ 
consists of a single irreducible principal series representation
(\cite[Part 2.\ I.4.3]{F}).
For each $u \in S$ which splits in $E$, we have $\rG_u = \GL(3, F'_u)$ 
and $\{\pi'_u\} = I(\theta_u, (\beta\theta)_u, (\beta^2\theta)_u)$.

For $1 \leq i \leq \n$, the local component $\{\pi'\}_{w_i}$ of the unstable global packet $\{\pi'\}$ 
is the local packet  $\{\pi'_0\} := \{\pi'_a, \pi'_b, \pi'_c, \pi'_d\}$ in the proposition.
Namely, $\{\pi'_0\}$ is the lift of the local packet $\rho'(\theta_{w_i}, (\beta\theta)_{w_i})$ of $\rH_{w_i} = \Un(2, \K'/k')$.
(Note that $\theta_{w_1} = \theta_{w_2} = \ldots = \theta_{w_\n}$.)
Via the same argument as in the proof of Proposition \ref{prop:unstablebetainvariance}, 
but using the definition of the $L$-homomorphism $b_G$ instead of $e'$, the rigidity theorem for the global packets of $\Un(3)$
implies that:
$
\{\pi'_a, \pi'_b, \pi'_c, \pi'_d\} = \{\beta\pi'_a, \beta\pi'_b, \beta\pi'_c, \beta\pi'_d\}.
$
Since $n = 3$, and $\pi'_a$ is the unique generic representation in the packet, either all four members of $\{\pi'_0\}$
are $\beta$-invariant, or only $\pi'_a$ is.  In either case, the left-hand side of equation \eqref{eq:n3eq} is not identically zero.
Using Lemma \ref{lemma:tracelocalinduced}
and the linear independence of characters to cancel the principal series local components in \eqref{eq:n3eq}, we obtain:
\begin{equation}\label{eq:traceglobaldiscrete3}
\sum_{\pi' \in \{\pi'\}} m(\pi')\ep(\pi')\prod_{i = 1}^\n \la \pi'_{w_i}, \rf_{w_i}\times\beta_{w_i}\ra 
= \prod_{i = 1}^\n \la \{\pi_{w_i}\}, f_{w_i}\ra.
\end{equation}

We want to show that not all four members of $\{\pi'_0\}$ are $\beta$-invariant.
Suppose they are, then in particular $\pi'_b$ is $\beta$-invariant.
Let $A_b$ be a nonzero intertwining operator in $\Hom_{\rG}(\pi'_b, \beta\pi'_b)$.
Let $\rf^b$ be a matrix coefficient of $\pi'_b$, normalized so that $\la \pi'_b, \rf^b\ra_{A_b} = 1$.
For $1 \leq i \leq \n$, the local packets $\{\pi\}_{w_i}$ of $G_{w_i} = G$ are all equivalent to the same local packet, 
which we denote by $\{\pi_0\}$.
We choose the matching global test functions such that $\rf_{w_i} = \rf^b$ for $1 \leq i \leq \n$,
and $f_{w_i}$ is a fixed local test function $f^b$ on $G$ matching $\rf^b$.
For such global test functions, equation \eqref{eq:traceglobaldiscrete3} and the multiplicity formula \eqref{eq:multformulauunstable}
for $\{\pi'\}$ imply that:
\[
\langle\{\pi_0\}, f^b\rangle^\n = \begin{cases}
\xi_n(\n) & \text{ if $\n$ is even},\\
0 & \text{ if $\n$ is odd},
\end{cases}
\]
where $\xi_n(\n)$ is an $n$-th root of unity.  Clearly, no such number $\la \{\pi_0\}, f^b\ra$ exists.  
Hence, $\pi'_a$ is the only $\beta$-invariant representation in the local packet $\{\pi'_0\}$.
Let $A_a$ be a nonzero intertwining operator in $\Hom_{\rG}(\pi'_a, \beta\pi'_a)$.
By equation \eqref{eq:traceglobaldiscrete3} and the multiplicity formula for $\{\pi'\}$, we have: 
\begin{equation}\label{eq:traceglobaldiscrete31}
\xi_n(\n)\prod_{i = 1}^\n \la \pi'_a, \rf_{w_i}\ra_{A_a}
= \prod_{i = 1}^\n \la \{\pi_0\}, f_{w_i}\ra,
\end{equation}
where $\xi_n(\n)$ is an $n$-th root of unity dependent on $\n$.  

Let $\rf^a$ be a matrix coefficient of $\pi'_a$, normalized so that $\la \pi'_a, \rf^a\ra_{A_a} = 1$.
Let $f^a$ be a local test function on $G$ matching $\rf^a$.
Similar to what we did before, 
we choose the matching global test functions such that $\rf_{w_i} = \rf^a$, $f_{w_i} = f^a$ for $1 \leq i \leq \n$.
Then, equation \eqref{eq:traceglobaldiscrete31} 
implies that:
\[
\xi_n(\n)\la \pi'_a, \rf^a\ra_{A_a}^\n =  \xi_n(\n) = \la \{\pi_0\}, f^a\ra^\n,\quad \forall\, \n \geq 2.
\]
Hence, $\la \{\pi_0\}, f^a\ra$ is an $n$-th root of unity $\xi_0$.

Now, let $\n = 2$.  Choose the global test functions such that $\rf_{w_2} = \rf^a$, $f_{w_2} = f^a$,
and $\rf_{w_1}, f_{w_1}$ are arbitrary matching local test functions.
Using equation \eqref{eq:traceglobaldiscrete31} again, we have:
\[
\begin{split}
\xi_n(2) \la \pi'_a, \rf_{w_1}\ra_{A_a} &= \xi_0 \la \{\pi_0\}, f_{w_1}\ra.
\end{split}
\]
Multiplying $A_a$ by an $n$-th root of unity if necessary, 
we have
\[
\la \pi'_a, \rf_{w_1}\ra_{A_a} = \la \{\pi_0\}, f_{w_1}\ra.
\]

Recall that the global packet $\{\pi\}$ of $\mb{G}(\Af)$ lifts to the cuspidal, monomial, automorphic representation 
$\tilde{\pi} = \kappa^2\pi(\tilde{\theta})$ of $\GL(3, \mbb{A}_{E})$, and that $\tilde{\pi}$ is $\sigma$-invariant.
Hence, $\{\pi_0\} = \{\pi\}_{w_i}$ ($1 \leq i \leq \n$) 
lifts to the $\sigma$-invariant, cuspidal, monomial representation 
$\tilde{\pi}_{w_i} = \kappa_{w_i}^2\pi(\tilde{\theta}_{w_i})$ of $\GL(3, E_{w_i}) = \GL(3, \K)$.
By Proposition \ref{prop:u3singletonlocalpacket},
the local packet $\{\pi_0\}$ is a singleton.  
The proof of the proposition is now complete.
\end{proof}
\begin{remark}
The packet $\{\pi'\}$ is the lift of the packet $\rho'(\theta, \beta\theta)$ on $\rH$, 
which is not $\beta$-invariant.
Hence, we cannot apply the Howe correspondence argument in the proof of Proposition \ref{prop:uustablebetainvariance} to this case
and show that each member of $\{\pi'\}$ is $\beta$-invariant.
\end{remark}

\subsection{Other Representations}
Let $\nu$ denote the normalized absolute value character of $\K^\times$.
Let $\mu$ be a character of $\K^k$.  Let $\mu'$ denote the character $\mu\circ\N_{\K'/\K}$ of $\K'^{k'}$.
Then, $\mu$ (resp.\ $\mu'$) defines via composition with the determinant a one-dimensional representation of $G$ (resp.\ $\rG$).
As a one-dimensional representation of $\rG$, $\mu'$ is clearly $\beta$-invariant.
Let $\tilde{\mu}$ be the character of $\K^\times$ defined by $\tilde{\mu}(z) = \mu(z/\alpha(z))$ for all $z \in \K^\times$.
The one-dimensional representation $\mu$ of $G$ 
is the unique nontempered quotient of the length two parabolically induced representation $I := I_G(\tilde{\mu}\nu, \mu)$
of $G$. 
The other constituent of $I$ is a Steinberg square-integrable subrepresentation,
which we denote by $\st(\mu)$.  
Likewise, the one-dimensional representation $\mu'$ of $\rG$ is the unique nontempered constituent of the length two
parabolically induced representation $I' := I_{\rG}(\tilde{\mu}'\nu', \mu')$ of $\rG$, where $\nu' = \nu\circ\N_{\K'/\K}$ is the absolute value character of $\K'^\times$.
We denote the unique square-integrable subrepresentation of $I'$ by $\st(\mu')$.

Let $A$ be the intertwining operator in $\Hom_{\rG}(I', \beta I')$ defined in the paragraph preceding Lemma
\ref{lemma:tracelocalinduced}.  Since $\st(\mu')$ is the unique square-integrable subrepresentation of $I'$, 
the restriction of $A$ to $\st(\mu')$ defines a nonzero intertwining operator, which we denote also by $A$,
in $\Hom_{\rG}(\st(\mu'), \beta\st(\mu'))$.  
Since $A^n = 1$ is a scalar on $I'$ and $A\st(\mu') \subseteq \st(\mu')$,
we have $A\varphi \notin \st(\mu')$ for all $\varphi \in I'$ which lies outside of $\st(\mu')$. 
Consequently, $A$ induces a nonzero operator $A(\mu')$ in $\Hom_{\rG}(\mu', \beta\mu')$.
Since $\mu'$ is one-dimensional, we may, and do, normalize $A$ such that $A(\mu') = 1$.

\begin{proposition}\label{prop:tracelocalsteinberg}
The following character identity holds for all matching functions $\rf$ on $\rG$ and $f$ on $G$$:$
\[
\la \st(\mu'), \rf\ra_A = \la \st(\mu), f\ra.
\]
\end{proposition}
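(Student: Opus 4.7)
The plan is to exploit the length-two composition series of the parabolically induced representations $I' = I_{\rG}(\tilde{\mu}'\nu', \mu')$ and $I = I_G(\tilde{\mu}\nu, \mu)$. Since $A \in \Hom_{\rG}(I', \beta I')$ preserves the square-integrable subrepresentation $\st(\mu')$, its restriction gives the intertwiner $A \in \Hom_{\rG}(\st(\mu'), \beta\st(\mu'))$ appearing in the statement, and it descends to the operator $A(\mu') = 1$ on the one-dimensional quotient. The twisted trace of $I'$ therefore decomposes as
\[
\la I', \rf\ra_A = \la \st(\mu'), \rf\ra_A + \la \mu', \rf\ra_{A(\mu')},
\]
and analogously $\la I, f\ra = \la \st(\mu), f\ra + \la \mu, f\ra$. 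Lemma \ref{lemma:tracelocalinduced} identifies the two induced-representation traces, reducing the proposition to the one-dimensional character identity
\[
\la \mu', \rf\ra_{A(\mu')} = \la \mu, f\ra.
\]

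To establish this one-dimensional identity, the plan is to globalize, in the spirit of Proposition \ref{prop:traceglobaldiscrete1}. Construct a totally imaginary system $F, F', E, E'$ and a Hecke character $\mu_{\mathrm{glob}}$ of $C_E^F$ whose local component at a distinguished place $w$ is $\mu$, with the remaining local components either unramified or chosen via the Poisson summation formula for $\Un(1)$ so that the other local representations are principal series or cuspidal with known character identities. Set $\mu'_{\mathrm{glob}} := \mu_{\mathrm{glob}} \circ \N_{E'/E}$. Applying Lemma \ref{lemma:traceglobalonedim} to the quasi-packets $\pi(\mu_{\mathrm{glob}})$ on $\mb{G}(\Af)$ and $\pi'(\mu'_{\mathrm{glob}})$ on $\mb{\rG}(\Af)$ produces a global twisted character identity relating a sum over $\{\pi'(\mu'_{\mathrm{glob}})\}$ with $\la \pi(\mu_{\mathrm{glob}}), f\ra_S + \la \mu_{\mathrm{glob}}, f_H\ra_S$. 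Linear independence of characters of inequivalent irreducible admissible representations, combined with the already established cuspidal identities (Proposition \ref{prop:traceglobaldiscrete1}) applied at auxiliary places, allows cancellation of the cuspidal constituents of the quasi-packets, isolating the one-dimensional contribution at $w$; the Steinberg identity then follows by subtraction.

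The chief obstacle will be this last separation step within the quasi-packets: both $\pi(\mu_{\mathrm{glob}})$ and $\pi'(\mu'_{\mathrm{glob}})$ mix a nontempered (Langlands-quotient) component, encoding the one-dimensional representation with sign conventions recorded in the bracket $\la \pi(\mu_v), f_v\ra := \la \pi(\mu_v)^\times, f_v\ra - \la \pi(\mu_v)^-, f_v\ra$, together with a cuspidal complement $\pi(\mu_v)^-$. Untangling these constituents requires a careful choice of local test functions at the auxiliary places, for instance matrix coefficients of the cuspidal members on the $G$-side together with their twisted pseudo-coefficient counterparts on the $\rG$-side, chosen so that the cuspidal traces cancel and only the one-dimensional identity survives at $w$. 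Once this is achieved, the proposition follows by reversing the decomposition step and subtracting from the identity of Lemma \ref{lemma:tracelocalinduced}.
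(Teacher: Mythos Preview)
Your reduction step is sound: the decomposition of $\la I', \rf\ra_A$ and $\la I, f\ra$ together with Lemma~\ref{lemma:tracelocalinduced} does reduce the proposition to the one-dimensional identity $\la \mu', \rf\ra_{A(\mu')} = \la \mu, f\ra$, where $\mu$ here denotes the one-dimensional representation of $G$. But your globalization step invokes the wrong identity. Lemma~\ref{lemma:traceglobalonedim} concerns the \emph{quasi-packet} $\pi(\mu)$ of $G$, obtained by lifting the one-dimensional representation $\mu$ of $H$ via the endoscopic map $e$; its local constituents $\pi(\mu_v)^\times$ and $\pi(\mu_v)^-$ are, respectively, the nontempered quotient and a cuspidal representation sitting inside $I_G(\kappa\tilde{\mu}\nu^{1/2}, \eta)$ --- not inside $I_G(\tilde{\mu}\nu, \mu)$. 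So even if you succeeded in isolating the nontempered piece, you would obtain an identity for $\pi^\times$, not for the one-dimensional $G$-module $\mu$. Your proposed cancellation via Proposition~\ref{prop:traceglobaldiscrete1} is also inapplicable: that proposition treats cuspidal members of singleton stable packets, whereas the $\pi(\mu_v)^-$ appearing here are cuspidal members of quasi-packets. What you would actually recover along this route is (part of) Proposition~\ref{prop:tracelocalonedim}, which comes later.

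The paper instead exploits the observation that the one-dimensional automorphic representation $\mu'$ of $\mb{\rG}(\Af)$ is itself a \emph{stable singleton} packet, lifting via $b'$ to the one-dimensional representation $\tilde{\mu}'\circ\det$ of $\GL(3, \mbb{A}_{E'})$; hence Lemma~\ref{lemma:traceglobalstable} applies directly and gives $\la \mu', \rf\times\beta\ra_S = \la \mu, f\ra_S$ globally, with $\mu$ the one-dimensional $\mb{G}(\Af)$-module. Specializing at $w_1$ yields $C\la \mu'_{w_1}, \rf_{w_1}\times\beta_{w_1}\ra = \la \mu_{w_1}, f_{w_1}\ra$ for some constant $C$. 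The final separation of the Steinberg and one-dimensional pieces is then achieved not by subtraction but by a central-exponent argument (Deligne--Casselman together with the linear independence of central exponents, as in \cite[Sec.~21]{FK}): the Steinberg characters have decaying central exponents while the one-dimensional characters do not, forcing the two pairs to match separately inside the identity coming from Lemma~\ref{lemma:tracelocalinduced}. This last step is also missing from your outline.
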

\begin{proof}
Let $F, F', E, E' = F'E$ be number fields, with a place $w_1$ of $F$ which stays prime in $E$ and $F'$, such that
$F_{w_1} = k$, $E_{w_1} = \K$, $F'_{w_1} = k'$ and $E'_{w_1} = \K'$.
We construct a character of $C_{E}^{F}$ whose local component at $w_1$ is the local character $\mu$ in the proposition.
We denote this global character also by $\mu$, 
and denote within this proof what was previously $\mu$ in the proposition by $\mu_{w_1}$.
We let $\mu'$ denote the character $\mu\circ\N_{E'/E}$ of $C_{E'}^{F'}$.
The characters $\mu$ and $\mu'$ define one-dimensional representations of $\mb{G}(\Af)$ and $\mb{\rG}(\Af)$, 
respectively, via composition with the determinant.

Let $\tilde{\mu}'$ denote the character of $C_{E'}$ defined by $\tilde{\mu}'(z) = \mu'(z/\alpha(z))$
for all $z$ in $C_{E'}$.
The one-dimensional representation $\mu'$ of $\mb{\rG}(\Af)$ lifts via the $L$-group homomorphism $b'$ 
(see Figure \ref{fig:lgroups})
to the one-dimensional $\sigma$-invariant representation $\tilde{\mu}' := \tilde{\mu}'\circ\det$ of $\GL(3, \Ae)$.
The representation $\mu'$ of $\mb{\rG}(\Af)$ belongs to a singleton stable packet.

By equation \eqref{eq:traceglobalstable}, we have:
\begin{equation}\label{eq:tracelocal1dim}
\la \mu', \rf \times \beta\ra_S 
= \la \mu, f\ra_S,
\end{equation}
where $S$ is a finite set of places containing $w_1$, 
such that at the places outside of $S$ the character $\mu$ and the number field extensions $E/F$, $F'/F$ are unramified.

Fix the local components of the test functions at each place $v \in S - \{w_1\}$, 
and treat $\la \mu'_v, \rf_v \times \beta_v\ra, \la \mu_v, f_v\ra$ as constants.  
We may, and do, choose the test functions such that these constants are nonzero, 
since both sides of equation \eqref{eq:tracelocal1dim} define nonzero distributions.
Thus, we have:
\[
C \la \mu'_{w_1}, \rf_{w_1} \times \beta_{w_1}\ra = \la \mu_{w_1}, f_{w_1} \ra
\]
for some nonzero constant $C$.

By Lemma \ref{lemma:tracelocalinduced}, we have:
\[
\la \st(\mu'_{w_1}), \rf_{w_1}\ra_A + \la \mu'_{w_1}, \rf_{w_1}\ra_A 
= \la \st(\mu_{w_1}), f_{w_1}\ra + 
\underbrace{\la \mu_{w_1}, f_{w_1}\ra}_{=\; C \la \mu'_{w_1}, \rf_{w_1} \times \beta_{w_1}\ra}.
\]
By Schur's Lemma, 
the twisted characters $\la \mu'_{w_1}, \rf_{w_1}\ra_A$ and $\la \mu'_{w_1}, \rf_{w_1} \times \beta_{w_1}\ra$
are scalar multiples of each other.
The representations $\st(\mu'_{w_1})$ and $\st(\mu_{w_1})$ are square-integrable, so their central exponents decay.
The representations $\mu'_{w_1}$ and $\mu_{w_1}$ are nontempered, so their central exponents do not decay.
By a well-known theorem of Harish-Chandra's, 
the character of an admissible representation $\pi$ is represented by a locally integrable function $\chi_{\pi}$.  
By the Deligne-Casselman theorem \cite{C}, the central exponents of $\pi$ may be computed from $\chi_{\pi}$.
Lastly,
by a well-known argument employing the (twisted) Weyl integration formula and the linear independence of central exponents 
(\cite[Sec.\ 21]{FK}), we conclude that:
\[\la \mu'_{w_1}, \rf_{w_1}\ra_A = C \la \mu'_{w_1}, \rf_{w_1} \times \beta_{w_1}\ra,\]
\[
\la \st(\mu'_{w_1}), \rf_{w_1}\ra_A  = \la \st(\mu_{w_1}), f_{w_1}\ra.
\]
\end{proof}

The parabolically induced representation $I = I_H(\tilde{\mu}\nu^{1/2})$ of $H = \Un(2, \K/k)$
has a unique square-integrable subrepresentation $s(\mu)$ 
and a unique nontempered quotient which is the one-dimensional representation $\mu := \mu\circ\det$ of $H$.
The representation $I_{\rH}(\tilde{\mu}'\nu'^{1/2})$ of $\rH = \Un(2, \K'/k')$
likewise has a unique square-integrable subrepresentation $s(\mu')$ and a unique nontempered quotient which is the
one-dimensional representation $\mu' := \mu'\circ\det = \mu\circ\N_{\K'/\K}\circ\det$ of $\rH$.

Let $\pi(s(\mu)) = \{\pi^+, \pi^-\}$ (resp.\ $\pi'(s(\mu')) = \{\pi'^+, \pi'^-\}$) be the local packet of $G$ 
(resp.\ $\rG$) which is the lift of $s(\mu)$ from $H$ (resp.\ $s(\mu')$ from $\rH$).
The representations $\pi'^+$ and $\pi^+$ are square-integrable, while $\pi'^-$ and $\pi^-$ are cuspidal 
(\cite[p.\ 214, 215]{F}).
\begin{lemma}\label{lemma:tracelocalsqint}
There exist nonzero intertwining operators $A^\dagger \in \Hom_{\rG}(\pi'^\dagger, \beta\pi'^\dagger)$
$(\dagger$ is $+$ or $-)$  
such that the following system of local character identities holds for matching functions$:$
\[\begin{split}
2 \la \pi'^+, \rf\ra_{A^+} =& \la\pi(s(\mu)), f\ra + \la s(\mu), f_H\ra,\\
2 \la \pi'^-, \rf\ra_{A^-} =& \la\pi(s(\mu)), f\ra - \la s(\mu), f_H\ra.
\end{split}\]
\end{lemma}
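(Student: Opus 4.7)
The plan is to mirror, almost verbatim, the strategy used in the proof of Proposition \ref{prop:tracelocalunstable}, with $\rho$ replaced by the square-integrable subrepresentation $s(\mu)$ of $I_H(\tilde\mu\nu^{1/2})$. The key modifications come from the fact that $s(\mu)$ is square-integrable but not cuspidal, so matrix coefficients will have to be replaced by pseudo-coefficients at the relevant local places.

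First, I would establish the $\beta$-invariance of both $\pi'^+$ and $\pi'^-$ by the argument used for Proposition \ref{prop:unstablebetainvariance}: construct totally imaginary number fields $F, F', E, E'$ with at least two places $w_1, w_2$ at which the completions give $k, k', E, E'$, and a global character $\mu$ of $C_E^F$ whose components at $w_1, w_2$ equal the given local $\mu$. The local representation $s(\mu_{w_i})$ is a component of a global (quasi-)packet $\{s(\mu)\}$ on $\mb{H}(\Af)$ whose $b_H$-lift is $\Gal(E'/E)$-invariant, so by the commutativity of the $L$-group diagram in Figure \ref{fig:lgroups} almost all unramified components of $\{\pi'(s(\mu'))\}$ are $\beta$-invariant. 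The rigidity theorem for (quasi-)packets of $\Un(3)$ then forces $\{\pi'^+,\pi'^-\}=\{\beta\pi'^+,\beta\pi'^-\}$, and since $n$ is odd while $\{\pi'^+,\pi'^-\}$ has two elements, both must be individually $\beta$-invariant.

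Next, I would globalize using the construction of Lemma \ref{lemma:globalconstruction}, choosing $\m\geq 2$ places $w_1,\ldots,w_\m$ with the required splitting behaviour, and arranging the global character $\mu$ of $C_E^F$ so that its local component at each $w_i$ is the given character. The resulting (quasi-)packet $\{s(\mu)\}$ globally lifts via $b_H$ to a packet $\{s(\mu')\}$ on $\mb{\rH}(\Af)$, and via $e$ to the (quasi-)packet $\pi(\{s(\mu)\})$ on $\mb{G}(\Af)$; the $e'$-lift to $\mb{\rG}(\Af)$ gives the global analogue of $\pi'(s(\mu'))=\{\pi'^+,\pi'^-\}$. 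Applying Lemma \ref{lemma:traceglobalunstable1}, and using Lemma \ref{lemma:tracelocalinduced} together with the linear independence of characters to cancel unramified and parabolically induced local components at all $v\notin\{w_1,\ldots,w_\m\}$, I obtain an equation of the shape
\[
2 \sum_{\pi' \in \{\pi'\}} m(\pi')\ep(\pi')\prod_{i=1}^\m \la \pi'_{w_i}, \rf_{w_i}\times\beta_{w_i}\ra
 = \prod_{i=1}^\m \la \pi(s(\mu))_{w_i}, f_{w_i}\ra + \prod_{i=1}^\m \la s(\mu)_{w_i}, f_{H,w_i}\ra,
\]
where $m(\pi')\in\{0,1\}$ is given by the same parity formula as in Proposition \ref{prop:tracelocalunstable} (even number of $\pi'^-$ components).

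Finally, I would extract the local identity by the $z$-trick. Fix nonzero $A^\pm\in\Hom_{\rG}(\pi'^\pm,\beta\pi'^\pm)$ with $(A^\pm)^n=1$, and pick twisted pseudo-coefficients $\rf^\pm$ of $\pi'^\pm$ (these exist since $\pi'^+$ is square-integrable and $\pi'^-$ is cuspidal, cf.\ \cite{K1} and the discussion in Section \ref{sec:localcuspidal}), normalized so that $\la\pi'^\pm,\rf^\pm\ra_{A^\pm}=1$, and matching test functions $f^\pm, f_H^\pm$. Setting $D_\pi^\pm=\la\pi(s(\mu)),f^\pm\ra$, $D_{s}^\pm=\la s(\mu),f_H^\pm\ra$ and substituting into the global identity with $(m-1)$ copies of the $+$ data and one copy of the $-$ data gives
\[
0 = (D_\pi^+)^{m-1}D_\pi^- + (D_s^+)^{m-1}D_s^-
\]
for every $m\geq 2$. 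Multiplying by $z^{m-1}$ and summing yields a rational function with no poles, hence the coefficients of each simple pole must vanish, giving $D_\pi^+=D_s^+$ and $D_\pi^-=-D_s^-$ (up to roots of unity determined by the pure-$+$ and pure-$-$ substitutions, and then absorbed into $A^\pm$). Taking $\m=2$ with only one variable test function then yields the two claimed identities, after rescaling $A^+$ and $A^-$ by appropriate roots of unity. The main technical obstacle is ensuring that pseudo-coefficients of the non-cuspidal square-integrable $\pi'^+$ and $s(\mu)$ behave correctly under the trace identity, in particular that no contribution from the nontempered constituents (the one-dimensional representations in the induced $I$, $I'$) leaks in; this is handled exactly as in the cancellation at the end of Proposition \ref{prop:tracelocalsteinberg}, using decay properties of central exponents and the Deligne--Casselman theorem.
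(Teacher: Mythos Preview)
Your approach is correct in outline, but it takes a genuinely different and more laborious route than the paper's proof. You propose to globalize with $s(\mu)$ at all $\m\geq 2$ auxiliary places and then rerun the entire generating-function (``$z$-trick'') argument of Proposition \ref{prop:tracelocalunstable}, handling along the way the complication that $\pi'^+$ and $s(\mu)$ are square-integrable but not cuspidal (so one must use pseudo-coefficients rather than matrix coefficients). The paper instead bootstraps from the cuspidal case already established: it globalizes with $s(\mu)$ at a \emph{single} place $w_1$ and an auxiliary \emph{cuspidal} $\rho_0$ (lying in a singleton packet of $H$) at a second place $w$, with principal series elsewhere. After cancelling the principal series components one is left with a two-place identity; then, at $w$, one plugs in a matrix coefficient of $\pi_0'^+$ (resp.\ $\pi_0'^-$) and invokes Proposition \ref{prop:tracelocalunstable} itself to evaluate $\la\pi(\rho_0),f_w\ra$ and $\la\rho_0,f_{H,w}\ra$ exactly, which immediately isolates the desired identity at $w_1$.

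What each approach buys: the paper's argument is shorter, avoids repeating the $z$-trick, and sidesteps entirely the pseudo-coefficient subtlety you flag at the end (since at the auxiliary place $w$ everything is cuspidal, and at $w_1$ the test functions remain arbitrary). Your approach is self-contained in the sense that it does not rely on having already proved the cuspidal case, but this independence is illusory here since Proposition \ref{prop:tracelocalunstable} precedes the lemma anyway. One small point to watch in your version: the globalization you invoke (an analogue of Lemma \ref{lemma:globalconstruction} for $\Un(2)$) needs at least one elliptic local component to run the simple trace formula, and with $s(\mu)$ at all $w_i$ you are using pseudo-coefficients rather than a genuine matrix coefficient at any place; this is fine but deserves a word, as does checking that the resulting global $\rho$ is cuspidal (it is, since the residual spectrum of $\Un(2)$ is one-dimensional).
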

\begin{proof}
Let $F, F', E, E'$ be the same number fields as in the proof of Proposition \ref{prop:tracelocalsteinberg}.
Let $w$ be a nondyadic nonarchimedean place of $F$, different from $w_1$, which does not split in $E$ or $F'$.
Fix a cuspidal representation $\rho_0$ of $H_w$ which lies in a singleton packet.  
Construct a cuspidal automorphic representation $\rho$ of $\mb{H}(\Af)$ such that $\rho_{w_1} = s(\mu)$,
$\rho_{w} = \rho_0$, and $\rho_{v}$ is an irreducible principal series representation for each place $v \neq w_1, w$.

Let $\rho_0'$ be the base-change lift of $\rho_0$ to $\rH_w = \Un(2, E'_w/F'_w)$.
Let $\pi(\rho_0)$ (resp.\ $\pi'(\rho_0') = \{\pi_0'^+, \pi_0'^-\}$) be the packet of $G_w$ (resp.\ $\rG_w$)
which is the lift of $\rho_0$ (resp.\ $\rho_0'$).
Let $\rho'$ be the lift of $\rho$ to $\mb{\rH}(\Af)$ via the $L$-homomorphism $b_H$.
Let $\pi'(\rho')$ be the packet of $\mb{\rG}(\Af)$ which is the $e'$-lift of $\rho'$.
By the construction of $\rho$,
each dyadic local component of $\pi'(\rho')$ is a local packet containing a constituent of a
parabolically induced representation.  Hence, the members of $\pi'(\rho')$ occur in the discrete spectrum
of $\mb{\rG}$ with multiplicity at most $1$.

We now apply the same argument which we have already used repeatedly:
By equation \eqref{eq:traceglobalunstable1}, the multiplicity formula for the packet $\pi'(\rho')$
(see proof of Proposition \ref{prop:tracelocalunstable}), 
and the cancellation of the principal series local components using the linear independence of characters,
we have:
\begin{multline}\label{eq:tracelocalsqint}
2 \ep^+\la \pi'^+, \rf_{w_1}\times\beta_{w_1}\ra \la \pi_0'^+, \rf_{w}\times\beta_{w}\ra +
2 \ep^-\la \pi'^-, \rf_{w_1}\times\beta_{w_1}\ra \la \pi_0'^-, \rf_{w}\times\beta_{w}\ra \\
=
\la \pi(s(\mu)), f_{w_1}\ra \la \pi(\rho_0), f_{w}\ra +
\la s(\mu), f_{H, w_1}\ra \la \rho_0, f_{H, w}\ra.
\end{multline}
The coefficients $\ep^\pm$ are $n$-th roots of unity, which we may assume to be $1$, since we can absorb them into the intertwining operators $A(\pi'^\pm)$ associated with the twisted characters $\la \pi'^\pm, \rf_w\times\beta_w\ra$.
By Proposition \ref{prop:unstablebetainvariance}, 
the representations $\pi_0'^+$ and $\pi_0'^-$ are $\beta_w$-invariant.
Choose $\rf_{w}$ to be a matrix coefficient $\rf^+$ of the cuspidal representation $\pi_0'^+$, 
normalized such that $\la \pi_0'^+, \rf^+\times\beta_{w}\ra = 1$.
The system of equations in Proposition \ref{prop:tracelocalunstable} implies that:
\[
1 = \la \pi(\rho_0), f_{w}\ra = \la \rho_0, f_{H, w}\ra,
\]
where $f_{w}$ (resp.\ $f_{H, w}$) is any test function on $G_w$ (resp.\ $H_w$) which matches $\rf^+$. 
For global test functions with such local components at the place $w$, equation \eqref{eq:tracelocalsqint} gives:
\[
2 \la \pi'^+, \rf_{w_1} \times \beta_{w_1}\ra = \la \pi(s(\mu)), f_{w_1}\ra + \la s(\mu), f_{H, w_1}\ra.
\]
Likewise, by choosing $\rf_{w}$ to be a matrix coefficient of $\pi_0'^-$, we obtain:
\[
2 \la \pi'^-, \rf_{w_1} \times \beta_{w_1}\ra = \la \pi(s(\mu)), f_{w_1}\ra - \la s(\mu), f_{H, w_1}\ra.
\]
\end{proof}

Let $\pi(\mu) = \{\pi^\times, \pi^-\}$ be the local quasi-packet of $G$ which is the lift of 
the one-dimensional representation $\mu$ 
of $H$ (see \cite[p.\ 214, 215]{F}).
Let $\pi'(\mu') = \{\pi'^\times, \pi'^-\}$ be the local quasi-packet of $\rG$ 
which is the lift of the one-dimensional representation $\mu'$ of $\rH$.
\begin{proposition}\label{prop:tracelocalonedim}
There exist nonzero operators $A^\dagger$ in $\Hom_{\rG}(\pi'^\dagger, \beta\pi'^\dagger)$ 
$(\dagger$ is $+$ or $-)$  
such that the following system of character identities holds for matching functions$:$
\[\begin{split}
\la\pi'^\times, \rf\ra_{A^\times} + \la\pi'^-, \rf\ra_{A^-} &= \la \mu, f_H\ra, \\
\la\pi'^\times, \rf\ra_{A^\times} - \la\pi'^-, \rf\ra_{A^-} &= \la \pi^\times, f\ra - \la \pi^-, f\ra.
\end{split}\]
\end{proposition}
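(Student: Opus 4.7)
The approach is to imitate the structure of the proofs of Proposition \ref{prop:tracelocalunstable} and Lemma \ref{lemma:tracelocalsqint}: globalize the data, apply Lemma \ref{lemma:traceglobalonedim}, cancel irrelevant local components via Lemma \ref{lemma:tracelocalinduced} and the linear independence of (twisted) characters, and then isolate the desired local identities by playing test functions at an auxiliary place against the already-established character identities.

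First, I would construct totally imaginary number fields $F, F', E, E' = F'E$ with $[E:F] = 2$, $F'/F$ cyclic of degree $n$, and $\m \geq 2$ nonarchimedean places $w_1, \ldots, w_\m$ of $F$ staying prime in $E$ and $F'$, with $F_{w_i} = k$, $E_{w_i} = \K$, $F'_{w_i} = k'$, $E'_{w_i} = \K'$. Using the Poisson summation formula for $\Un(1)$ (as in the proof of Proposition \ref{prop:uustablecaseii}), construct a global character $\mu$ of $C_E^F$ whose local component at each $w_i$ is the given local $\mu$, which is unramified outside a finite ``bad'' set $S \supseteq \{w_1, \ldots, w_\m\}$, and whose local components at the places of $S \setminus \{w_1, \ldots, w_\m\}$ are sufficiently generic that the global trace identity localizes cleanly. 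Put $\mu' := \mu \circ \N_{E'/E}$. These define one-dimensional automorphic representations (also called $\mu, \mu'$) of $\mb{H}(\Af)$ and $\mb{\rH}(\Af)$, and quasi-packets $\pi(\mu), \{\pi'(\mu')\}$ as in Section \ref{sec:discretespectrumpackets}.

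Lemma \ref{lemma:traceglobalonedim} gives
\[
2 \sum_{\pi' \in \{\pi'(\mu')\}} \ep(\pi') \la \pi', \rf \times \beta\ra_S = \la \pi(\mu), f\ra_S + \la \mu, f_H\ra_S.
\]
Using Lemma \ref{lemma:tracelocalinduced} and the linear independence of (twisted) characters, cancel the contributions from the parabolically induced local components at places in $S \setminus \{w_1, \ldots, w_\m\}$. Recalling that $\la \pi(\mu)_v, f_v\ra = \la \pi^\times_v, f_v\ra - \la \pi^-_v, f_v\ra$ by definition, the global identity reduces to a sum over tensor products whose local components at each $w_i$ lie in $\{\pi'^\times, \pi'^-\}$ (resp.\ are governed by the signed sum $\la \pi^\times, \cdot\ra - \la \pi^-, \cdot\ra$ and by $\la \mu, \cdot\ra$). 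The multiplicity with which such a $\pi' \in \{\pi'(\mu')\}$ occurs in the discrete spectrum is given by the formula $m(\pi') = \tfrac{1}{2}(1 + \prod_i \ve(\pi'_{w_i}))$, where $\ve(\pi'^\times) = 1$ and $\ve(\pi'^-) = -1$, derived along the lines of \cite[Part 2.\ Sec.\ III.5]{F}; and $\ep(\pi')$ is an $n$-th root of unity, to be absorbed into the intertwining operators $A^\times, A^-$.

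Now isolate the place $w_1$ by the trick used in Proposition \ref{prop:tracelocalunstable}. Let $\rf^-$ be a matrix coefficient of the cuspidal representation $\pi'^-$ and $\rf^\times$ a $\beta$-twisted pseudo-coefficient of $\pi'^\times$ (well defined since $\pi'^\times$ is the non-tempered constituent of a length-two parabolically induced representation and can be taken care of by the usual argument via central exponents, parallel to Proposition \ref{prop:tracelocalsteinberg}). Fix matching $f^*, f_H^*$ for $* \in \{\times, -\}$, and set
\[
D_\pi^* := \la \pi(\mu_{w_i}), f^*\ra, \qquad D_\mu^* := \la \mu_{w_i}, f_H^*\ra.
\]
Choosing the local components of $\rf$ at $w_2, \ldots, w_\m$ to be various combinations of $\rf^\times, \rf^-$ yields, via the multiplicity formula, polynomial relations in $D_\pi^*, D_\mu^*$ analogous to equations \eqref{eq:trick+}--\eqref{eq:trick} of Proposition \ref{prop:tracelocalunstable}. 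The meromorphic continuation argument of equation \eqref{eq:meromcontunstable} then forces $D_\pi^\times = D_\mu^\times$ and $D_\pi^- = -D_\mu^-$ up to roots of unity that can be absorbed into $A^\times, A^-$. Finally, taking $\m = 2$ and letting $\rf_{w_1}, f_{w_1}, f_{H, w_1}$ vary freely while fixing test functions of type $\rf^\times$ (resp.\ $\rf^-$) at $w_2$, the two equations displayed in the proposition drop out by solving the resulting $2 \times 2$ linear system.

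The main obstacle is the nontempered representation $\pi'^\times$: because $\pi'^\times$ is not square-integrable, a genuine matrix coefficient is unavailable, and the twisted pseudo-coefficient of $\pi'^\times$ detects not only $\pi'^\times$ itself but also the $\beta$-twisted Steinberg $\st(\mu')$ appearing in the same induced representation. This contamination is handled exactly as in the proof of Proposition \ref{prop:tracelocalsteinberg}, by invoking the Deligne--Casselman theorem and the linear independence of central exponents to separate the tempered and non-tempered contributions to the twisted character. Tracking the signs coming from the convention $c(\pi^\times) = 1, c(\pi^-) = -1$ for quasi-packet characters through this separation is the one delicate bookkeeping step.
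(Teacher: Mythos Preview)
Your approach is quite different from the paper's, and it has a real gap.

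The paper does \emph{not} globalize again for this proposition. Instead, it works entirely locally, exploiting the fact that $\pi'^\times$ and $\pi'^+$ are the two constituents of the same length-two induced representation $I' = I_{\rG}(\kappa'\tilde{\mu}'\nu'^{1/2},\eta')$, and that $\mu$ and $s(\mu)$ are the two constituents of $I_H(\tilde{\mu}\nu^{1/2})$. Lemma \ref{lemma:tracelocalinduced} therefore gives
\[
\la \pi'^+,\rf\ra_{A^+} + \la \pi'^\times,\rf\ra_{A^\times} = \la \mu,f_H\ra + \la s(\mu),f_H\ra,
\]
while Lemma \ref{lemma:tracelocalsqint} (already proved) gives $\xi_n\la\pi'^+,\rf\ra_{A^+} - \la\pi'^-,\rf\ra_{A^-} = \la s(\mu),f_H\ra$. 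Subtracting and using the central-exponent argument forces $\xi_n=1$ and yields the first identity. The second identity falls out the same way by applying Lemma \ref{lemma:tracelocalinduced} on the $G$-side and subtracting the sum identity from Lemma \ref{lemma:tracelocalsqint}. No trace formula, no matrix coefficients, no meromorphic continuation.

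The gap in your approach is the ``$\beta$-twisted pseudo-coefficient of $\pi'^\times$.'' Pseudo-coefficients in the sense of \cite{K1} exist for square-integrable (more generally $\beta$-discrete) representations; $\pi'^\times$ is nontempered and does not have one. Your remark that this ``can be taken care of by the usual argument via central exponents, parallel to Proposition \ref{prop:tracelocalsteinberg}'' conflates two different things: the central-exponent argument separates characters \emph{after} an identity is established, it does not manufacture a test function that isolates $\pi'^\times$ in a trace. Without such a function, the polynomial/meromorphic-continuation machinery from Proposition \ref{prop:tracelocalunstable} does not get off the ground. A secondary issue is that the multiplicity formula for the quasi-packet $\{\pi'(\mu')\}$ carries the global root number $\ve(\tilde{\mu}',\kappa')$ (see the proof of Proposition \ref{prop:classification2}), which your formula omits; this would foul up the parity bookkeeping even if the test-function problem were solved. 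The paper's route avoids both difficulties by never trying to isolate $\pi'^\times$ directly.
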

\begin{proof}
The representations $\pi^+$ and $\pi^\times$ are the unique square-integrable subrepresentation and nontempered quotient, 
respectively, of the parabolically induced representation $I := I_G(\kappa\tilde{\mu}\nu^{1/2}, \eta)$ of $G$,
where $\eta$ is the character of $\K^k$ such that the central character of $I$ is our fixed $\omega$.
Likewise, the representations $\pi'^+$ and $\pi'^\times$ are the unique square-integrable and nontempered constituents of
the representation $I' := I_{\rG}(\kappa'\tilde{\mu}'\nu'^{1/2},\eta')$ of $\rG$, 
where the prime on a character denotes composition with the norm map $\N_{\K'/\K}$.

The intertwining operator $A$ on $I'$, as defined in the paragraph preceding Lemma \ref{lemma:tracelocalinduced},
must send subrepresentation to subrepresentation, and quotient to quotient.  
Thus, it induces intertwining operators $A^+$ and $A^\times$ on $\pi'^+$ and $\pi'^\times$, respectively.
Since the representations $\mu$ and $s(\mu)$ are the constituents of $I_H(\tilde{\mu}\nu^{1/2})$, 
by Lemma \ref{lemma:tracelocalinduced} we have:
\begin{equation}\label{eq:tracelocalnontempered1}
\la \pi'^+, \rf\ra_{A^+} + \la \pi'^\times, \rf\ra_{A^\times} = \la \mu, f_H\ra + \la s(\mu), f_H\ra.
\end{equation}
By Lemma \ref{lemma:tracelocalsqint}, we have
\begin{equation}\label{eq:tracelocalnontempered2}
\xi_n \la \pi'^+, \rf\ra_{A^+} - \la \pi'^-, \rf\ra_{A^-} = \la s(\mu), f_H\ra
\end{equation}
for some $n$-th root of unity $\xi_n$.
The presence of $\xi_n$ is due to the fact that the intertwining operator $A^+$ 
obtained in Lemma \ref{lemma:tracelocalsqint} may differ by an $n$-th root of unity from the $A^+$ defined by $A$.
Subtracting equation \eqref{eq:tracelocalnontempered2} from \eqref{eq:tracelocalnontempered1}, we obtain:
\[
(1 - \xi_n)\la \pi'^+, \rf\ra_{A^+} + \la \pi'^\times, \rf\ra_{A^\times} + \la \pi'^-, \rf\ra_{A^-}
= \la \mu, f_H\ra.
\]
The central exponents of the square-integrable representation $\pi'^+$ decay, 
while those of the nontempered representations $\pi'^\times$ and $\mu$ do not.
Since $\pi'^-$ is cuspidal, its central exponents are zero.
As in the proof of Proposition \ref{prop:tracelocalsteinberg}, 
it follows from the linear independence of central exponents that $1 - \xi_n = 0$.
Hence,
\[
\la \pi'^\times, \rf\ra_{A^\times} + \la \pi'^-, \rf\ra_{A^-}
= \la \mu, f_H\ra.
\]

By Lemma \ref{lemma:tracelocalsqint}, and the fact that $\pi^+$ and $\pi^-$ are the members of the local packet $\pi(s(\mu))$,
 the following holds for matching functions:
\begin{equation}\label{eq:tracelocalnontempered3}
\la \pi'^+, \rf\ra_{A^+} + \la \pi'^-, \rf\ra_{A^-} = \la \pi(s(\mu)), f\ra = \la \pi^+, f\ra + \la \pi^-, f\ra.
\end{equation}
On the other hand, by Lemma \ref{lemma:tracelocalinduced} we have:
\begin{equation}\label{eq:tracelocalnontempered4}
\la \pi'^+, \rf\ra_{A^+} + \la \pi'^\times, \rf\ra_{A^\times} = \la \pi^+, f\ra + \la \pi^\times, f\ra.
\end{equation}
Subtracting equation \eqref{eq:tracelocalnontempered3} from \eqref{eq:tracelocalnontempered4}, we obtain:
\[
\la \pi'^\times, \rf\ra_{A^\times} - \la \pi'^-, \rf\ra_{A^-}
=  \la \pi^\times, f\ra - \la \pi^-, f\ra.
\]
\end{proof}

\section{Global Classification}
Let $F, F', E ,E'$ be the number fields described in the Introduction.

\begin{proposition}\label{prop:classification2}
Every discrete spectrum automorphic representation of $\mb{G}(\Af)$ or $\mb{H}(\Af)$ 
lifts to a $($quasi-$)$ packet of $\mb{\rG}(\Af)$ which contains a $\beta$-invariant,
discrete spectrum, automorphic representation.
\end{proposition}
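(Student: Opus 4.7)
The plan is to combine Proposition \ref{prop:weaklift} with the case-by-case global character identities of Section \ref{sec:discretespectrumpackets} (Lemmas \ref{lemma:traceglobalunstable1}, \ref{lemma:traceglobaluunstable}, \ref{lemma:traceglobalonedim}, \ref{lemma:traceglobalstable}), together with the local character identities of Section \ref{sec:localtraceidentities}. Given a discrete spectrum automorphic representation $\pi$ of $\mb{G}(\Af)$ or $\mb{H}(\Af)$, it belongs to a (quasi-) packet $\{\pi\}$ which, by Proposition \ref{prop:weaklift}, weakly lifts to a (quasi-) packet $\{\pi'\}$ of $\mb{\rG}(\Af)$. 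What remains is to show that $\{\pi'\}$ contains a $\beta$-invariant, discrete spectrum, automorphic member.

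First I would subdivide by the type of $\{\pi\}$ according to the classification recalled in Section \ref{sec:globalpackets}: stable discrete spectrum packets of $\mb{G}(\Af)$, unstable packets $\pi(\{\rho\})$, stable discrete spectrum packets $\{\rho\}$ of $\mb{H}(\Af)$, unstable packets $\rho(\theta_1,\theta_2)$, and the one-dimensional quasi-packets $\pi(\mu)$ and $\mu$. For each type I would invoke the corresponding global trace identity, which has the form
\[
\sum_{\pi' \in \{\pi'\}} m(\pi')\ep(\pi')\la \pi', \rf \times \beta\ra_S
\;=\; \text{RHS}_S(f, f_H),
\]
where the right-hand side is a fixed linear combination of traces $\la \{\pi\}, f\ra_S$ and $\la \{\rho\}, f_H\ra_S$ of the (quasi-) packets on $\mb{G}(\Af)$ and $\mb{H}(\Af)$ weakly lifting to $\{\pi'\}$.

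Next, I would argue that the right-hand side is a nonzero distribution in the test functions. By Lemma \ref{lemma:nonvanishing}, we may choose the test functions $f, f_H$ freely (subject to vanishing near singular elements), and match them to a function $\rf$ on $\mb{\rG}(\Af)$; the linear independence of characters and the local identities of Section \ref{sec:localtraceidentities} let us separate the contributions of distinct (quasi-) packets on $\mb{G}(\Af)$ and $\mb{H}(\Af)$ and so arrange a choice of $S$-components making $\text{RHS}_S$ nonzero. The left-hand side is therefore a nonzero distribution in $\rf_S$, which forces the existence of at least one $\pi' \in \{\pi'\}$ with $m(\pi') \neq 0$ and $\la \pi', \rf \times \beta\ra \not\equiv 0$. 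Under the assumed multiplicity one theorem for $\Un(3)$, as recorded on page \pageref{page:betatwist}, the nonvanishing of the $\beta$-twisted character of a discrete spectrum representation is equivalent to $\beta$-invariance; combined with $m(\pi') \neq 0$, this exhibits the desired $\beta$-invariant, discrete spectrum member of $\{\pi'\}$.

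The main obstacle I anticipate is the unstable case, and in particular the case $n = 3$ governed by Proposition \ref{prop:n3case}, where not every member of $\{\pi'\}$ is $\beta$-invariant: the signs appearing in the multiplicity formula \eqref{eq:multformulauunstable} can interact with the roots of unity $\ep(\pi')$ to produce cancellations. Here one must work through the system \eqref{eq:tracelocaluunstable00} and, by varying $\rf_S$ over matching functions (including pseudo-coefficients of the individual members of $\{\pi'\}$), verify that at least one summand with $m(\pi') \neq 0$ survives; similarly, for the quasi-packet $\pi(\mu)$ one must track the coefficients $c(\pi_v)$ on nontempered local components as in Section \ref{sec:globaltf}. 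Granting this bookkeeping, the argument produces a $\beta$-invariant discrete spectrum lift in each case.
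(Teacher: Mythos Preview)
Your approach is correct in principle but takes a different route from the paper's proof, and the ``obstacle'' you flag at the end is precisely where the two diverge.

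The paper's argument is \emph{constructive}: having established the weak lift $\{\pi'\} = \otimes'_v\{\pi'_v\}$, it invokes the local results of Section~\ref{sec:localtraceidentities} (Lemma~\ref{lemma:tracelocalinduced}, Propositions~\ref{prop:traceglobaldiscrete1}, \ref{prop:unstablebetainvariance}, \ref{prop:uustablebetainvariance}, \ref{prop:n3case}, \ref{prop:tracelocalonedim}) to pick, for each place $v$, a specific $\beta_v$-invariant member $\pi'_{0,v}$ of $\{\pi'_v\}$ --- namely $\pi'^+_v$ in the two-element case, the generic $\pi'_{v,a}$ in the four-element case, and $\pi'^\times_v$ (at all, or all but one, places, according to the sign $\ve(\tilde{\mu}',\kappa')$) in the quasi-packet case. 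It then checks directly from the multiplicity formulas of \cite{F} that $\otimes_v\pi'_{0,v}$ occurs in the discrete spectrum. No global trace identity is invoked, and no cancellation analysis is needed.

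Your approach is \emph{existential}: you feed the global identities of Lemmas~\ref{lemma:traceglobalunstable1}--\ref{lemma:traceglobalstable} and argue that the right-hand side is a nonzero distribution, forcing a nonzero term on the left. This works, but the step you leave as ``bookkeeping'' is exactly where one must appeal to the same local identities the paper uses: the first equation in each of the systems \eqref{eq:tracelocalunstableprop1}, \eqref{eq:tracelocaluunstable00}, and Proposition~\ref{prop:tracelocalonedim} shows that, place by place, the right-hand side collapses to a positive constant times a \emph{single} twisted character ($\la\pi'^+_v,\rf_v\ra$, $\la\pi'_{v,a},\rf_v\ra$, or $\la\pi'^\times_v,\rf_v\ra$), whence nonvanishing is immediate. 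Once you see this, your existential argument and the paper's constructive one are really the same observation read in two directions; the paper's formulation has the advantage of sidestepping any discussion of cancellation in \eqref{eq:multformulauunstable} and of making the role of the sign $\ve(\tilde{\mu}',\kappa')$ in the quasi-packet case explicit. Note also that Lemma~\ref{lemma:nonvanishing} lets you lift \emph{one} of $f$ or $f_H$ to a matching $\rf$, not both independently; your phrasing ``choose the test functions $f, f_H$ freely'' overstates the freedom, though the reduction to a single twisted character renders this harmless.
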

\begin{proof}
By Proposition \ref{prop:weaklift}, each discrete spectrum automorphic representation of
$\mb{G}(\Af)$ or $\mb{H}(\Af)$ weakly lifts to a (quasi-) packet of $\mb{\rG}(\Af)$ via the $L$-homomorphism
$b_G$ or $e_H$, respectively.
What remains to be shown is that every such packet of $\mb{\rG}(\Af)$
contains a $\beta$-invariant, discrete spectrum, automorphic representation.

Recall that a global (quasi-) packet $\{\pi'\}$ is a restricted tensor product of local packets $\{\pi'_v\}$.
Suppose $\{\pi'\}$ is a (quasi-) packet of $\mb{\rG}(\Af)$ 
which is a weak lift from either or both of $\mb{G}(\Af)$ and $\mb{H}(\Af)$.  
Then, by the classification of the images of the weak liftings $b_G$ and $e_H$ in Section \ref{sec:discretespectrumpackets}, 
the local components $\{\pi'_v\}$ of $\{\pi'\}$ 
are precisely the local (quasi-) packets which are examined in Section \ref{sec:localtraceidentities}. 
In particular, by Lemma \ref{lemma:tracelocalinduced} and Propositions \ref{prop:traceglobaldiscrete1},
\ref{prop:unstablebetainvariance}, \ref{prop:uustablebetainvariance}, \ref{prop:n3case}, \ref{prop:tracelocalonedim},
for each place $v$ the local packet  $\{\pi'_v\}$ contains at least one $\beta$-invariant representation $\pi'_{0, v}$.
If $\{\pi'_v\}$ is of the form $\{\pi'^+_v, \pi'^-_v\}$, we take $\pi'_{0, v}$ to be $\pi'^+_v$.
If $\{\pi'_v\}$ is of the form $\{\pi'_{v, a}, \pi'_{v, b}, \pi'_{v, c}, \pi'_{v, d}\}$, 
we let $\pi'_{0, v}$ be the unique generic member $\pi'_{v, a}$.
If $\{\pi'\}$ is the global quasi-packet which is the $e'$-lift of a 
$\beta$-invariant one-dimensional representation $\mu'$ of $\mb{\rH}(\Af)$,
then $\{\pi'_v\}$ is of the form $\{\pi'^\times_v, \pi'^-_v\}$.  In this case, we let $\pi'_{0, v} = \pi'^\times_v$ 
for all $v$ if the factor  $\ve(\tilde{\mu}', \kappa')$ (see \cite[p.\ 387]{F}) 
is equal to $1$, and we let $\pi'_{0, v}$ be $\pi'^\times_v$
for all but one $v$ if $\ve(\tilde{\mu}', \kappa') = -1$. 
(Note: It is conjectured by \cite{A1} and \cite{Ha} that $\ve(\tilde{\mu}', \kappa')$ 
is equal to the epsilon factor $\ve(\frac{1}{2}, \tilde{\mu}'\kappa')$.)
By the multiplicity formulas of the global packets which are $b_G$- or $e_H$-weak-lifts,
the tensor product $\otimes_v \pi'_{v, 0} \in \{\pi'\}$ 
is equivalent to a discrete spectrum automorphic representation of $\mb{\rG}(\Af)$, 
and it is $\beta$-invariant by construction.
\end{proof}

Theorem \ref{thm:classification} now follows from Propositions \ref{prop:classification1} and \ref{prop:classification2},
and all the local character identities which we have derived, 
since the condition that the residual characteristics of the local fields be odd may be removed 
if we assume that the multiplicity one theorem holds for $\Un(3)$ in general.

\appendix
\section{Discretely Occurring Representations}\label{appendix:DOR}
In this section, 
we compute the representations which occur discretely in the spectral side of the $\beta$-twisted trace formula of $\mb{\rG}(\Af)$.
The computation follows from an explicit recipe established in \cite{CLL}, which we now describe.

Let $F$ be a number field, with ring of ad\`eles $\Af$.  
For any finite place $v$ of $F$, let $\mc{O}_v$ denote the ring of integers of $F_v$.
Let $\mb{H}$ be a reductive $F$-group, with center $\mb{Z}$.
Let $\sigma$ be an automorphism of $\mb{H}(\Af)$.

Fix a character $\omega$ of $\mb{Z}(F)\bs\mb{Z}(\Af)$.
For any place $v$ of $F$, put $H_v := \mb{H}(F_v)$.
Let $C(H_v, \omega_v)$ be the space of smooth functions $f_v$ on $H_v$
such that $f_v$ is compactly supported
modulo $Z_v := \mb{Z}(F_v)$ and $f_v(zh) = \omega_v^{-1}(z)f(h)$ for all $z \in Z_{v}, h \in H_v$.

For each place $v$ of $F$, fix a maximal compact subgroup $K_v$ of $H_v$ such that
$K_v$ is the hyperspecial subgroup $\mb{H}(\mathcal{O}_v)$ if $\mb{H}$ is unramified as an $F_v$-group.
Let $\mathcal{H}(H_v, \omega_v)$ denote the Hecke algebra of $K_v$-biinvariant (resp.\ $K_v$-finite) functions
in $C(H_v, \omega_v)$ if $v$ is nonarchimedean (resp.\ archimedean).

Let $C(\HH, \omega)$ denote the span of the smooth, compactly supported functions on $\HH$ which are
of the form $\otimes_v f_v$, where $f_v \in C(H_v, \omega_v)$
for all $v$ and $f_v$ is a unit in the Hecke algebra $\mc{H}(H_v, \omega_v)$ for almost all nonarchimedean $v$.
In this work, whenever we mention a function $f \in C(\HH, \omega)$, we assume that
$f$ is a tensor product of local components $f_v$.  

Fix a minimal parabolic subgroup $\mb{P}_0$ of $\mb{H}$.  Let $\mb{A}_0$ be the maximal $F$-split component
of the Levi subgroup of $\mb{P}_0$.
Let $\mb{M}$  be a Levi subgroup of $\mb{H}$.
Let $\mb{A}_M$ denote the
split component of the center of $\mb{M}$.
Let $X_*(\mb{A}_M) := \Hom(\mbb{G}_m, \mb{A}_M)$.
Let $\mf{a}_M$ denote $X_*(\mb{A}_M)\otimes_{\mbb{Z}}\mbb{R}$, and $\mf{a}_M^*$ its dual. 
Let $\mc{P}(M)$ denote the set of parabolic subgroups of $\mb{H}$ with Levi component $\mb{M}$.

For any parabolic subgroup $\mb{P} \in \mc{P}(M)$, representation $\tau$ of $\mb{M}(\Af)$, 
and element $\zeta \in (\mf{a}_M/\mf{a}_H)^*$,
let $I_{P, \tau}(\zeta)$ denote the $\HH$-module normalizedly induced from the $\mb{P}(\Af)$-module
which sends an element $p$ in $\mb{P}(\Af)$ to $\chi_{\zeta}(m)\tau(m)$,
where $m \in \mb{M}(\Af)$ is the Levi component of $p$. 
Here, \[\chi_\zeta(m) := e^{\la \zeta, \ol{H_M({m})}\ra},\] 
where $H_M(m) \in \mf{a}_M$ is the standard notation for the ``logarithm'' of $m$,
and $\ol{H_M(m)}$ is its image in $\mf{a}_M/\mf{a}_H$.

In other words, $I_{P, \tau}(\zeta)$ is the right-regular representation on the space
of smooth functions $\varphi$ on $\HH$ which satisfy:
\[
\varphi(p g) = (\delta^{1/2}\chi_\zeta \otimes \tau)(m)\varphi(g), \quad  
\forall\, p \in \mb{P}(\Af), \; g \in \HH.  
\]
Here, $\delta(m) := \abs{\det\lp{\rm Ad}\;m|_{\mf{n}}\rp}$,
where $\mf{n}$ denotes the Lie algebra of the unipotent component of $\mb{P}$.
For any function $f$ in $C(\mb{H}(\Af), \omega)$, let
$I_{P, \tau}(\zeta, f)$ denote the convolution operator
\[
\int_{\mb{Z}(\Af)\bs\mb{H}(\Af)}\lp I_{P, \tau}(\zeta)\rp(h)f(h)\;dh,
\]
where $dh$ is a fixed Tamagawa measure on $\mb{H}(\Af)$.

Let $W(A_0, H)$ denote the Weyl group of $\mb{A}_0$ in $\mb{H}$.
Suppose $s$ is an element in $W(A_0, H)$ which satisfies $s\sigma\mb{M} = \mb{M}$.
For each $\mb{M}(\Af)$-module $\tau$,
there exists a family of intertwining operators of $\mb{H}(\Af)$-modules:
\[ 
M_{P|\sigma P}(s, \zeta) : I_{\sigma P, s^{-1}\tau}(s^{-1}\zeta) \longrightarrow I_{P, \tau}(\zeta),
\quad \zeta \in (\mf{a}_M/\mf{a}_H)^*,
\]
which is meromorphic in $\zeta$.  Here, $s^{-1}\tau$ is the $\sigma \mb{M}(\Af)$-module: 
\[
s^{-1}\tau : m \mapsto  \tau(sm), \quad \forall\, m \in \sigma\mb{M}(\Af).
\]

Let $I_{P, \tau}(\sigma)$ be the operator on the space of
$I_{P, \tau}$ which sends $\varphi(g)$ to $\varphi(\sigma g)$.  
Put 
\[
I_{P, \tau}(\zeta, f\times\sigma) := I_{P, \tau}(\zeta, f)I_{P, \tau}(\sigma).
\]

The spectral side of Arthur's trace formula is called the {\it fine $\chi$-expansion}.
For $f$ in $C(\mb{H}(\Af), \omega)$, the twisted (with respect to $\sigma$) fine $\chi$-expansion
is a sum over the set of quadruples
$\{\chi\} = (\mb{M}, \mb{L}, \tau, s)$, consisting of Levi subgroups $\mb{M}, \mb{L}$ of $\mb{H}$, 
an element $s \in W(A_0, M)$, 
and a discrete spectrum automorphic representation $\tau$ of $\mb{M}(\Af)$, such that:
\begin{itemize}
\item
  $\mb{M} \subseteq \mb{L}$;
\item
  $\mf{a}_L$ is the subspace of $\mf{a}_M$ fixed pointwise by $s$;
\item
For all $x \in \mf{a}_M$ which lies outside of $\mf{a}_H$, $\tau(\exp x)$;
\item
  the restriction of $\tau$ to $\mb{Z}(\Af)$ is equal to $\omega$;
\item
  $\tau$ is equivalent to $s \sigma \tau$, 
where $s \sigma \tau(m) := \tau (\sigma s^{-1} m)$ for all $m \in \mb{M}(\Af)$.
\end{itemize}

We say that a parabolic subgroup is \emph{standard} if it contains the fixed minimal parabolic subgroup $\mb{P}_0$.
Let $\mb{P} \in \mc{P}(M)$ be the standard parabolic subgroup with Levi component $\mb{M}$.
The term associated with $(\mb{M}, \mb{L}, \tau, s)$ in the twisted fine $\chi$-expansion is the product of
\begin{equation}\label{eq:finechiexp1}
\frac{\abs{W(A_0, M)}}{\abs{W(A_0, H)}}
\abs{ \det (1 - s)|_{\mf{a}_M/\mf{a}_L}}^{-1}
\end{equation}
and
\begin{equation}\label{eq:finechiexp2}
(2\pi)^{-\dim \lp\mf{a}_L / \mf{a}_H\rp}\int_{i (\mf{a}_L/\mf{a}_H)^*} 
\tr 
\mc{M}^T_L(P, \zeta) 
I_{P, \tau}(\zeta, f) M_{P|\sigma P}(s, \sigma(\zeta)) I_{P, \tau}(\sigma) \, d\zeta.
\end{equation}
Here, $\mc{M}^T_L(P, \zeta)$ is the logarithmic derivative of an intertwining operator.  We will not
reproduce here the definition of $\mc{M}^T_L(P, \zeta)$.  We only use the fact that $\mc{M}^T_L(P, \zeta) = 1$
when $\mb{L}$ is equal to $\mb{H}$.
In particular,
the term in the expansion associated with a quadruple of the form $(\mb{M}, \mb{H}, \tau, s)$ is
\begin{equation}\label{eq:finechidiscrete}
\frac{\abs{W(A_0, M)}}{\abs{W(A_0, H)}}
\abs{ \det (1 - s)|_{\mf{a}_M/\mf{a}_H}}^{-1}
\tr I_{P, \tau}(\zeta, f\times\sigma) M_{P|\sigma P}(s, 0).
\end{equation}
We call the sum of all the terms of the form \eqref{eq:finechidiscrete} the {\bf discrete part} of the fine $\chi$-expansion
of the $\sigma$-twisted trace formula.
We denote it by $I^d(H, f\times\sigma)$. 
We call the sum of the rest of the terms the {\bf continuous part}.  
Observe that the $\sigma$-twisted character of a discrete spectrum representation $\pi$ of $\HH$ corresponds
to the quadruple $(\mb{H}, \mb{H}, \pi, 1)$.  
For any quadruple of the form $(\mb{M}, \mb{H}, \tau, s)$ and parabolic subgroup $\mb{P}$ in $\mc{P}(M)$,
we say that the (normalizedly) parabolically induced representation 
$I_{P, \tau}$ {\bf occurs $\sigma$-discretely} in the spectrum of $\mb{H}(\Af)$.

In this work, we only study the discrete parts of the spectral expansions of the groups.  
This section is devoted to the computation of the terms in 
$I^d(\rG, \rf\times\beta)$,
where $\beta$ denotes also the automorphism of $\mb{\rG}(\Af)$ defined by:
\[
\beta(g_{ij}) := (\beta (g_{ij})),\quad \forall \, (g_{ij}) \in \mb{\rG}(\Af). 
\]

\subsection{$\beta$-Discretely Occurring Representations of $\mb{\rG}(\Af)$}\label{sec:DORrG}
If a quadruple has the form $(\mb{\rG}, \mb{\rG}, \tau, s)$, then $s$ is necessarily trivial, and
$\tau$ can be any $\beta$-invariant, discrete spectrum, automorphic representation of $\mb{\rG}(\Af)$ with
central character equal to the fixed $\omega$.
The contribution to the fine $\chi$-expansion corresponding to
quadruples of the form $(\mb{\rG}, \mb{\rG}, \tau, 1)$ is therefore
\[
\sum_{\tau} \tr \tau(\rf\times\beta),
\]
where the sum is over all $\tau$ as described just now, and $\tau(\rf \times \beta)$ denotes $\tau(\rf)T_\beta$
(see the beginning of Section \ref{sec:DORstf} for the definition of $T_\beta$).

The group $\mb{\rG}$ has only one proper Levi subgroup, namely the 
maximal diagonal torus, which we denote by $\mb{\rM}$.  Its group of $\Af$-points is
\[
\mb{\rM}(\Af) = 
\{\diag(x, y, \alpha(x)^{-1}) : x, y \in \Aep^\times,\; y\alpha(y) = 1\}.
\]
A quadruple of the form $(\mb{\rG}, \mb{\rM}, \tau, s)$ satisfies the conditions prescribed earlier only if
$s$ is the generator $(13)$ of the Weyl group $W(\mb{\rM}, \mb{\rG})$, where
$(13)$ swaps the first and third entries of a diagonal element in $\mb{\rM}(\Af)$ and leaves the
second entry unchanged.
The $\mb{\rM}(\Af)$-module $\tau$ is of the form:
\[
\tau = \mu' \otimes \eta' : \diag(x, y, \alpha (x)^{-1}) \mapsto \mu'(x)\eta'(y),\;\;
\forall \diag(x, y, \alpha(x)^{-1}) \in \mb{\rM}(\Af),
\]
where $\mu'$, $\eta'$ are characters of $C_{E'}$, $C_{E'}^{F'}$, respectively.
By the conditions on the quadruple, we must have
\begin{gather*}
\mu'\otimes\eta' = \tau \cong (13)\beta\tau = \alpha\beta \mu'^{-1} \otimes \beta\eta',\\
\text{i.e.} \quad \mu'\cdot \alpha\beta\mu' = 1,\quad \eta' = \beta\eta'.
\end{gather*}

The constant \eqref{eq:finechiexp1} associated with $(\mb{\rM}, \mb{\rG}, \tau, (13))$
is $1/4$.  Since $\alpha^2 = 1$, the condition $\mu'\cdot \alpha\beta\mu' = 1$ implies that $\beta^2 \mu' = \mu'$,
which in turn implies that $\beta \mu' = \mu'$ because the order $n = [E':E]$ of $\beta$ is odd.
Hence, $\mu' = \alpha\mu^{-1}$.

Consequently, $\mu'\otimes\eta'$ is invariant under the action of $W(\mb{\rM},\mb{\rG})$, and
$I(\mu', \eta')$ is equivalent to $I(\mu'', \eta'')$ if and only if $\mu' = \mu''$, $\eta' = \eta''$.
In conclusion:

\begin{lemma}
The discrete part of the spectral side of the $\beta$-twisted trace formula of $\mb{\rG}$ is the sum$:$
\renewcommand{\labelenumi}{(R\arabic{enumi})\quad}
\[
\sum_{\substack{\tau' = \beta\tau'}} \tr \tau'(\rf \times \beta)
\quad + \quad
\frac{1}{4}\sum_{\substack{\mu'\cdot\alpha\mu' = 1\\\mu'=\beta\mu'\\\eta'=\beta\eta'}}
\!\!\tr I(\mu', \eta')(\rf\times\beta)M_{P|\beta P}((13), 0),
\]
where $\mb{P}$ is the upper triangular Borel subgroup of $\Un(3, E'/F')$.
\end{lemma}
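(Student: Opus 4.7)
The plan is to apply the recipe for the discrete part $I^d(\rG, \rf \times \beta)$ of the fine $\chi$-expansion reviewed above, summing the contributions \eqref{eq:finechidiscrete} over quadruples $(\mb{M}, \mb{\rG}, \tau, s)$. Since $\mb{\rG}$ is a form of $\Un(3)$, it has only two Levi subgroups: itself and the diagonal torus $\mb{\rM}$ described at the start of Section \ref{sec:DORrG}. The two summands in the lemma will correspond, respectively, to these two Levis.

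For the Levi $\mb{\rG}$ itself, $s$ is forced to be trivial, the coefficient \eqref{eq:finechiexp1} equals $1$, the intertwining operator $M_{P|\beta P}(1, 0)$ is the identity, and $\tau$ ranges exactly over the $\beta$-invariant, discrete spectrum, automorphic representations of $\mb{\rG}(\Af)$ with central character $\omega'$. This produces the first summand directly. For $\mb{M} = \mb{\rM}$, I would observe that $W(A_0, \rM)$ is trivial while $W(A_0, \rG)$ has order two, generated by the transposition $(13)$ that swaps the outer diagonal entries; the requirement that the $s$-fixed subspace of $\mf{a}_{\rM}$ equal $\mf{a}_{\rG}$ then selects $s = (13)$, which acts by $-1$ on the one-dimensional $\mf{a}_{\rM}/\mf{a}_{\rG}$. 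Hence \eqref{eq:finechiexp1} yields the coefficient $\tfrac{1}{2}\cdot\tfrac{1}{2} = \tfrac{1}{4}$.

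It remains to describe the $\tau = \mu' \otimes \eta'$ that contribute. Unwinding $\tau \cong s\beta\tau$ gives $\mu' \otimes \eta' = \alpha\beta\mu'^{-1} \otimes \beta\eta'$, i.e.\ $\mu' \cdot \alpha\beta\mu' = 1$ and $\eta' = \beta\eta'$. The main obstacle — and the only step that is not routine bookkeeping — is deducing $\mu' = \beta\mu'$ from $\mu' \cdot \alpha\beta\mu' = 1$. Here I would exploit $\alpha\beta = \beta\alpha$, $\alpha^2 = 1$, and the odd order of $\beta$: applying $\alpha\beta$ to $\alpha\beta\mu' = \mu'^{-1}$ and using $(\alpha\beta)^2 = \beta^2$ yields $\beta^2 \mu' = \mu'$, and since $\gcd(2, n) = 1$ with $\beta^n = 1$ this forces $\beta\mu' = \mu'$. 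Once this is established, $\mu' \cdot \alpha\beta\mu' = 1$ reduces to $\mu'\cdot\alpha\mu' = 1$, the character $\mu'\otimes\eta'$ is automatically $(13)$-invariant so that distinct pairs give inequivalent induced representations (no overcounting), and summing \eqref{eq:finechidiscrete} yields exactly the second term of the lemma.
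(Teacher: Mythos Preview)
Your proposal is correct and follows essentially the same route as the paper: identify the two Levi subgroups $\mb{\rG}$ and $\mb{\rM}$, read off the first sum from the $(\mb{\rG},\mb{\rG},\tau,1)$ quadruples, and for $\mb{\rM}$ take $s=(13)$, compute the coefficient $\tfrac14$, derive $\mu'\cdot\alpha\beta\mu'=1$ and $\eta'=\beta\eta'$, and then use $\alpha^2=1$, $\alpha\beta=\beta\alpha$, and the oddness of $n$ to extract $\beta\mu'=\mu'$ (hence $\mu'\cdot\alpha\mu'=1$ and no overcounting). Your derivation of $\beta^2\mu'=\mu'$ via $(\alpha\beta)^2=\beta^2$ is slightly more explicit than the paper's, but the argument is the same.
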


\section{Surjectivity of the Norm}\label{appendix:normindex}
For a number field or a nonarchimedean local field $L$ with quadratic extension $K$,
put 
\[
\begin{split}
C_L &:= \begin{cases}
L^\times\bs\mbb{A}_L^\times & \text{ if } L \text{ is a number field,}\\
L^\times & \text{ if } L \text{ is a local field;}
\end{cases}\\
C_K^L &:= \begin{cases}
\Un(1, K/L)(L) \bs \Un(1, K/L)(\mbb{A}_L) & \text{ if } L \text{ is a number field,}\\
\{z \in K^\times : \N_{K/L} z = 1\} & \text{ if } L \text{ is a local field.}
\end{cases}
\end{split}
\]

Let $F$ be a number field or a local nonarchimedean field.  Let $E$ be a quadratic extension of $F$,
and $F'$ a cyclic extension of $F$ of odd degree $n$.  Let $E'$ be the compositum field $EF'$, with
$\Gal(E'/E) = \la \beta \ra$ and $\Gal(E'/F') = \la \alpha \ra$.

\begin{lemma}\label{lemma:normindex}\label{lemma:globalnormindex}
For both global and local $F$, the norm map$:$
\[
\N_{E'/E} : C_{E'}^{F'} \rightarrow C_E^F
\]
defined by $\N_{E'/E} x = \prod_{i = 0}^{n - 1} \beta^i x$ is surjective.
\end{lemma}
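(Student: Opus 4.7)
The plan is to prove surjectivity uniformly in the local and global cases, by combining Hilbert 90 for the quadratic extension $E/F$ with class field theory for the cyclic extension $E'/E$. In the global case I would first identify $C_E^F$ with $\ker(\N_{E/F}\colon C_E\to C_F)$ (the surjection onto this kernel uses the Hasse norm principle, which applies because $E/F$ is cyclic), and similarly for $C_{E'}^{F'}$, so that the statement reduces to the surjectivity of $\N_{E'/E}\colon \ker(\N_{E'/F'})\to \ker(\N_{E/F})$.

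Given $y\in C_E^F$, Hilbert 90 applied to $\Gal(E/F)=\langle\alpha\rangle$ yields $y=u/\alpha(u)$ for some $u\in C_E$. The crux of the argument would then be to show that
\[
C_E = C_F\cdot \N_{E'/E}(C_{E'}).
\]
Granting this, write $u=t\cdot \N_{E'/E}(w)$ with $t\in C_F$ and $w\in C_{E'}$. Since $\alpha(t)=t$ and $\alpha$ commutes with $\beta$,
\[
y=\frac{u}{\alpha(u)}=\frac{\N_{E'/E}(w)}{\alpha(\N_{E'/E}(w))}=\N_{E'/E}\!\left(\frac{w}{\alpha(w)}\right).
\]
Setting $x:=w/\alpha(w)\in C_{E'}$, one has $\alpha(x)=x^{-1}$, so $x\in C_{E'}^{F'}$, and $\N_{E'/E}(x)=y$ by construction.

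The decomposition $C_E = C_F\cdot \N_{E'/E}(C_{E'})$ is where class field theory would enter. By reciprocity for the cyclic extension $E'/E$, $C_E/\N_{E'/E}(C_{E'})\cong \Gal(E'/E)$. Since $[E:F]=2$ is coprime to $[F':F]=n$, one has $E\cap F'=F$, so restriction gives an isomorphism $\Gal(E'/E)\cong \Gal(F'/F)\cong \mbb{Z}/n\mbb{Z}$. Norm-functoriality of the reciprocity map applied to the compositum $E'=EF'$ identifies the composition $C_F\hookrightarrow C_E\twoheadrightarrow \Gal(E'/E)\cong \Gal(F'/F)$ with $\theta_{F'/F}\circ(\N_{E/F}|_{C_F})$. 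But $\N_{E/F}$ restricted to $C_F$ is the squaring map, and squaring is an automorphism of $\Gal(F'/F)\cong \mbb{Z}/n\mbb{Z}$ because $n$ is odd. Combined with the surjectivity of $\theta_{F'/F}$, this shows that the image of $C_F$ under $\theta_{E'/E}$ exhausts $\Gal(E'/E)$, which gives the required decomposition.

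The main subtle point, I expect, is the class-field-theoretic step: carefully invoking norm-functoriality of the reciprocity map in the tower $F\subset E\subset E'$ (equivalently $F\subset F'\subset E'$) so that the contribution of $C_F$ to $\Gal(E'/E)$ is seen to factor through squaring on $\Gal(F'/F)$. Once this is in place, the oddness of $n$ does the remaining work, and the rest of the deduction is formal.
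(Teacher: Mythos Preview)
Your proposal is correct and follows essentially the same strategy as the paper: reduce via Hilbert~90 to showing $C_E = C_F\cdot \N_{E'/E}(C_{E'})$, then deduce this from class field theory together with the oddness of $n$. The only difference is in how that decomposition is established. You argue \emph{surjectivity} of $C_F\to C_E/\N_{E'/E}C_{E'}\cong\Gal(E'/E)$ by invoking norm-functoriality of the reciprocity map, which identifies the composite with $\theta_{F'/F}\circ(\text{squaring})$. The paper instead proves \emph{injectivity} of the induced map $C_F/\N_{F'/F}C_{F'}\to C_E/\N_{E'/E}C_{E'}$ by a direct norm-transitivity computation (if $x\in C_F$ lies in $\N_{E'/E}C_{E'}$ then $x^2=\N_{E/F}(x)\in\N_{F'/F}C_{F'}$, hence $\bar{x}=1$ in a group of odd order), and then uses that both sides have order $n$. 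The paper's route is marginally more elementary in that it avoids the functoriality statement for the Artin map, needing only the index $[C_E:\N_{E'/E}C_{E'}]=n$ and the tower relation $\N_{E/F}\circ\N_{E'/E}=\N_{F'/F}\circ\N_{E'/F'}$; your route is a bit more conceptual. Either way the key input is the same: squaring is bijective on $\mathbb{Z}/n\mathbb{Z}$.
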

\begin{proof}
By Hilbert's Theorem 90, the group $C_{E'}^{F'}$ is equal to $\{z/\alpha z : z \in C_{E'}\}$, and likewise
$C_E^F = \{z/\alpha z : z \in C_E\}$.  
Hence, to prove the lemma it suffices to show that $C_E = C_F \cdot \N_{E'/E}C_{E'}$.

Consider the group homomorphism:
\[
\iota : C_F/\N_{F'/F}C_{F'} \rightarrow C_E/\N_{E'/E}C_{E'}
\]
induced from the natural embedding of $F$ into $E$.  
It is well-known in class field theory that both quotient groups are cyclic of order $n$.
We claim that $\iota$ is injective:
Let $x \in C_F$ be a representative of a class $\bar{x}$ in $C_F/\N_{F'/F}C_{F'}$
such that $\iota(\bar{x}) = 1$.  By definition, there exists $y \in C_{E'}$ such that $\N_{E'/E}\,y = x$.
Moreover, $\N_{E/F}\N_{E'/E}\, y = \N_{F'/F}\N_{E'/F'}\, y = x^2$, 
which implies that $x^2 \in \N_{F'/F}F'^\times$, i.e.\ $\bar{x}^2 = 1$.  
Since $C_{F'}/\N_{F'/F}C_{F'}$ is of odd order, $\bar{x}^2 = 1$ implies that $\bar{x} = 1$.  

Since $\iota$ is an injective homomorphism between two finite groups of equal order, it is surjective.
Hence, $C_E = C_F\cdot\N_{E'/E}C_{E'}$, which completes the proof.
\end{proof}

\section{$\beta$-Invariant Characters}
\begin{lemma}\label{lemma:localcentralchars}\label{lemma:globalcentralchars}
Let the fields $F, F', E, E'$ be as in the previous section.
In both the global and local cases,
a character $\omega'$ of $C_{E'}^{F'}$ is $\beta$-invariant if and only if it is equal to $\omega\circ\N_{E'/E}$
for some character $\omega$ of $C_E^F$.
\end{lemma}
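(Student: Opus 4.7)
The ``if'' direction is immediate: for any $x\in C_{E'}^{F'}$,
\[
\N_{E'/E}(\beta x)=\prod_{i=0}^{n-1}\beta^{i+1}x=\prod_{i=0}^{n-1}\beta^i x=\N_{E'/E}(x),
\]
so any character of the form $\omega\circ\N_{E'/E}$ is $\beta$-invariant.

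For the converse, the plan is to use Lemma \ref{lemma:normindex}, which gives the surjection $\N_{E'/E}\colon C_{E'}^{F'}\twoheadrightarrow C_{E}^{F}$. A $\beta$-invariant $\omega'$ descends through this surjection to a character $\omega$ of $C_E^F$ provided it is trivial on $\ker\!\bigl(\N_{E'/E}|_{C_{E'}^{F'}}\bigr)$. The ``coboundary'' elements $z/\beta z\in C_{E'}^{F'}$ are automatically killed by $\omega'$, so the whole matter reduces to the kernel identity
\[
\ker\!\bigl(\N_{E'/E}\colon C_{E'}^{F'}\to C_E^F\bigr)\;=\;\{z/\beta z:z\in C_{E'}^{F'}\},
\]
equivalently, the vanishing of $\hat H^{-1}(\langle\beta\rangle,C_{E'}^{F'})$ in Tate cohomology.

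I would prove this vanishing as follows. First, Hilbert 90 for the quadratic extension $E'/F'$ identifies $z\mapsto z/\alpha z$ as a $\langle\beta\rangle$-equivariant isomorphism $C_{E'}/C_{F'}\xrightarrow{\ \sim\ }C_{E'}^{F'}$ (equivariance because $\alpha$ and $\beta$ commute in $\Gal(E'/F)$). The Tate cohomology long exact sequence attached to $1\to C_{F'}\to C_{E'}\to C_{E'}/C_{F'}\to 1$, viewed under $\langle\beta\rangle=\Gal(E'/E)$, then reads
\[
\hat H^{-1}(C_{E'})\longrightarrow \hat H^{-1}(C_{E'}^{F'})\longrightarrow \hat H^0(C_{F'})\longrightarrow \hat H^0(C_{E'}).
\]
The leftmost term vanishes by Hilbert 90 in the local case and by $H^1(\Gal(E'/E),C_{E'})=0$ in the global case (a standard consequence of class field theory for cyclic extensions). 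The rightmost map is precisely the inclusion-induced map $C_F/\N_{F'/F}C_{F'}\to C_E/\N_{E'/E}C_{E'}$, which was shown to be injective in the proof of Lemma \ref{lemma:normindex}: any class in its kernel is annihilated by squaring (via $\N_{E/F}\circ\N_{E'/E}=\N_{F'/F}\circ\N_{E'/F'}$), and the target is cyclic of odd order $n$, hence has no 2-torsion. The middle term is therefore sandwiched between a zero and an injection, so it vanishes.

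The one point that requires a bit of care, rather than a genuine obstacle, is checking the $\beta$-equivariance of the Hilbert 90 isomorphism and keeping the local and global versions of the cohomological inputs straight; both are standard, and the odd-degree argument needed for the injectivity of the rightmost map is already present in Lemma \ref{lemma:normindex}.
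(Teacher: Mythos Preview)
Your proof is correct and takes a genuinely different route from the paper's.

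The paper argues by first lifting $\omega'$ to a character $\tilde{\omega}'$ of $C_{E'}$ via $\tilde{\omega}'(z)=\omega'(z/\alpha z)$, then using Hilbert~90 for $E'/E$ (and the divisibility of $\mathbb{C}^\times$) to write $\tilde{\omega}'=\tilde{\omega}\circ\N_{E'/E}$ for some character $\tilde{\omega}$ of $C_E$. Since $\tilde{\omega}$ need not be trivial on $C_F$, the paper then twists by a suitable power of the class-field-theory character $\ve_{F'/F}$ (pulled back to $C_E$) to force triviality on $C_F$; the odd-degree hypothesis enters here in the guise of finding $m$ with $2m\equiv 1\bmod n$. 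Finally Hilbert~90 for $E/F$ produces the desired $\omega$ on $C_E^F$.

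Your approach is more structural: you identify the obstruction as the Tate group $\hat H^{-1}(\langle\beta\rangle,C_{E'}^{F'})$ and kill it via the long exact sequence attached to $1\to C_{F'}\to C_{E'}\to C_{E'}^{F'}\to 1$, feeding in Hilbert~90 for $C_{E'}$ on the left and the injectivity of $\iota\colon C_F/\N_{F'/F}C_{F'}\to C_E/\N_{E'/E}C_{E'}$ (already established in the proof of Lemma~\ref{lemma:normindex}) on the right. The odd-degree hypothesis enters in exactly the same place as in the paper, namely through that injectivity. Your argument is cleaner and makes transparent that the result is really a cohomological vanishing statement, while the paper's argument is more elementary in that it avoids the Tate-cohomology formalism and works by explicit character manipulation. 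Both proofs ultimately rest on the same two inputs: Hilbert~90 and the odd-degree injectivity of $\iota$.
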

\begin{proof}
That $\omega\circ\N_{E'/E}$ is $\beta$-invariant is obvious.  We now prove the converse.
Let $\omega'$ be a $\beta$-invariant character of $C_{E'}^{F'}$.
Let $\tilde{\omega}'$ be the character of $C_{E'}$ defined by:
\[
\tilde{\omega}'(z) = \omega'(z/\alpha z),\quad \forall\, z \in C_{E'}.
\]
Since $\omega' = \beta\omega'$, we have $\tilde{\omega}' = \beta\tilde{\omega}'$, 
and it follows from Hilbert's Theorem 90 that $\tilde{\omega}' = \tilde{\omega}\circ\N_{E'/E}$ 
for some character $\tilde{\omega}$ of $C_E$.

Since the restriction of the character $\tilde{\omega}'$ to $C_{F'}$ is trivial, 
so is the restriction of $\tilde{\omega}$ to $\N_{E'/E}C_{F'} = \N_{F'/F}C_{F'}$.
By class field theory, the group $C_F/\N_{F'/F}C_{F'}$ is isomorphic to the cyclic group 
$\Gal(F'/F) \cong \mbb{Z}/n\mbb{Z}$.  Let $c$ be an element of $C_F$ 
such that the coset $c\, \N_{F'/F}C_{F'}$ generates $C_F/\N_{F'/F}C_{F'}$.
We have:
\[
\tilde{\omega}(c)^n = \tilde{\omega}\circ\N_{E'/E}(c) = \tilde{\omega}'(c) = 1.
\]
Hence, $\tilde{\omega}(c) = \ve_{F'/F}^i(c)$ for some integer $i$,
where $\ve_{F'/F}$ is the character of $C_F$ associated with the field extension $F'/F$ via class field theory.

Since $n$ is odd, there exists an integer $m$ such that $2m \equiv 1\! \mod n$.  
Let $\ve_{E'}$ denote the character $\ve_{F'/F}^m\circ\N_{E'/F'}$ of $C_{E'}$, which is trivial on $\N_{F'/F}C_{F'}$.
Noting that $\ve_{F'/F}^n = 1$, we have $\ve_E(c) = \ve_{F'/F}^{2m}(c) = \ve_{F'/F}(c)$.
Thus, $\ve_{E'}$ restricts to $\ve_{F'/F}$ on $C_{F}$, and 
the character $\tilde{\omega}_0 := \ve_E^{-i}\tilde{\omega}$ is trivial on $C_F$.
Hence, by Hilbert's Theorem 90 there exists a character $\omega$ of $C_E^F$ 
such that $\tilde{\omega}_0(z)$ is equal to $\omega(z/\alpha z)$ 
for all $z \in C_E$.  
Since $\tilde{\omega}' = \tilde{\omega}_0\circ\N_{E'/E}$, and $\alpha$, $\beta \in \Gal(E'/F)$ commute with each other, 
we have:
\[
\omega'\lp\frac{z}{\alpha z}\rp 
= \omega\lp\frac{\N_{E'/E} z}{\alpha(\N_{E'/E} z)}\rp 
= \omega\lp\N_{E'/E}\lp\frac{z}{\alpha z}\rp\rp,
\quad \forall\, z \in C_{E'}.
\]
So, invoking Hilbert's Theorem 90 yet again, we conclude that the character 
$\omega'$ of $C_{E'}^{F'}$ is equal to $\omega\circ\N_{E'/E}$.
\end{proof}

\section{Selected results of \cite{F} and \cite{FU2}}\label{sec:summaryF}
Let $F$ be a number field or a $p$-adic field, $E$ a quadratic extension of $F$.
For an element $g$ in $\GL(m, E)$ ($m = 2, 3$), let $\sigma (g) = J\alpha({}^t g^{-1}) J^{-1}$,
where $J$ is $\lp\lsm &&1\\&-1&\\1&&\rsm\rp$ if $m = 3$, $\lp\lsm &1\\-1&\rsm\rp$ if $m = 2$.
We say that a representation $(\pi, V)$ of $\GL(m, E)$ is 
$\sigma$-invariant if $(\pi, V) \cong (\sigma \pi, V)$, where $(\sigma \pi, V)$ is the
$\GL(m, E)$-module defined by
$
\sigma \pi : g \mapsto \pi(\sigma (g))$, $\forall \, g \in \GL(m, E).
$

\subsection{Local results}\label{sec:summaryFlocal}
We first consider the case where $F$ is $p$-adic.
In \cite{FU2} (resp.\ \cite{F}), 
the local packets of $\Un(2, E/F)$ (resp.\ $\Un(3, E/F)$) 
are defined in terms of the local character identities which they satisfy.
The sum of the characters of the representations in a local packet is shown to be invariant under stable conjugation.

All representations which we consider in this section are by assumption admissible.
\begin{proposition}\cite[p.\ 699-700]{FU2}\label{prop:u1u2localpacket}
Each pair $(\theta_1, \theta_2)$ of characters of $\Un(1, E/F)$ lifts to a local packet
$\rho(\theta_1, \theta_2)$ which consists of two irreducible representations $\pi^+$, $\pi^-$ of $H = \Un(2, E/F)$.  They are
cuspidal if $\theta_1 \neq \theta_2$.  
\end{proposition}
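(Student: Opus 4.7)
The plan is to realize $\rho(\theta_1,\theta_2)$ via an explicit character identity coming from endoscopy for $\Un(2,E/F)$. The elliptic endoscopic group of $\mb{H}=\Un(2,E/F)$ is the torus $\mb{T}=\Un(1,E/F)\times\Un(1,E/F)$, and associated with the $L$-embedding ${}^L\mb{T}\hookrightarrow {}^L\mb{H}$ is a character $\kappa$ of $E^\times$ whose restriction to $F^\times$ is $\ve_{E/F}$. The pair $(\theta_1,\theta_2)$ then determines, via composition with this $L$-embedding and with the $L$-homomorphism $b_u:{}^L\mb{H}\to {}^L\rR_{E/F}\GL(2)$, the $\sigma$-invariant $\GL(2,E)$-module $\tilde\rho:=I_{\GL(2,E)}\bigl(\kappa\tilde{\theta}_1,\kappa\tilde{\theta}_2\bigr)$, where $\tilde{\theta}_i(z):=\theta_i(z/\alpha(z))$. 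Working backwards from $\tilde\rho$ through both the base-change and endoscopic pictures is what will produce $\rho(\theta_1,\theta_2)$.

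First I would invoke the stabilization of the $\sigma$-twisted trace formula for $\rR_{E/F}\GL(2)$, together with the Kottwitz-Shelstad identity relating it to the stable trace formulas of $\mb{H}$ and $\mb{T}$; by the now-established fundamental lemma and transfer (\cite{Ngo}, \cite{W}), matching test functions $\tilde f$, $f_H$, $f_T$ exist, and a ``separation by eigenvalues'' argument of the kind used throughout Section~\ref{sec:globaltf} (with global avatars realizing any prescribed local data, as in Lemmas \ref{lemma:globalconstruction}--\ref{lemma:globalconstruction1}) yields a local identity
\[
\la\tilde\rho,\tilde f\times\sigma\ra_A
\;=\;\la\rho(\theta_1,\theta_2),f_H\ra
\;=\;\la\theta_1\otimes\theta_2,f_T\ra,
\]
where $\la\rho(\theta_1,\theta_2),f_H\ra$ is \emph{defined} as the virtual character on $H$ dual, via the endoscopic transfer factor, to the stable character $\theta_1\otimes\theta_2$ on the elliptic regular set of $T$.

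Next I would identify the virtual character as a signed sum of exactly two irreducibles. Linear independence of (twisted) characters and the explicit form of the transfer factor force $\rho(\theta_1,\theta_2)=\pi^+-\pi^-$ in the Grothendieck group, with $\pi^+,\pi^-$ inequivalent irreducible admissible representations of $H$ having the correct central character. Uniqueness comes from the fact that $\tilde\rho$ lies in a singleton $L$-packet of $\GL(2,E)$ and that the fibre of the endoscopic lift $b_u$ over $\tilde\rho$ is of size two, parameterized by the character group of the component group of the centralizer of the parameter (which here is $\mbb{Z}/2\mbb{Z}$ when $\theta_1\neq\theta_2$, trivial when $\theta_1=\theta_2$).

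The cuspidality assertion when $\theta_1\neq\theta_2$ is the main obstacle, and I would handle it via central exponents. Because $(\theta_1,\theta_2)$ does not factor through any proper Levi of $\mb{H}$ (the only proper parabolic is the Borel $\mb{B}$ with Levi a torus isogenous to $\rR_{E/F}\Gm$, on whose $L$-group the parameter of $\theta_1\otimes\theta_2$ does not factor), the Deligne-Casselman theorem combined with the (twisted) Weyl integration formula applied to the identity above forces the Jacquet modules of $\pi^\pm$ along $\mb{B}$ to vanish: any nontrivial central exponent on the $H$ side would have to match a corresponding exponent on the $\GL(2,E)$ side of $\tilde\rho$, and the induced composition $I(\kappa\tilde\theta_1,\kappa\tilde\theta_2)$ is irreducible (hence cuspidal on pulling back through $b_u$) precisely under the hypothesis $\theta_1\neq\theta_2$, by the standard reducibility criteria recalled in \cite[Part~2.\ I.4.3]{F}. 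Linear independence of central exponents then yields $\pi^+,\pi^-$ cuspidal. When $\theta_1=\theta_2$ the corresponding induced representation of $\GL(2,E)$ reduces, and the same analysis produces instead the two constituents of a parabolically induced representation of $H$, accounting for the non-cuspidal case.
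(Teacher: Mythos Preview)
The paper does not prove this proposition. It appears in the appendix ``Selected results of \cite{F} and \cite{FU2}'' as a restatement of a result from \cite{FU2}, cited verbatim with page reference and offered without proof. So there is no ``paper's own proof'' to compare your proposal against.

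That said, your sketch conflates two distinct character identities. The endoscopic transfer from $T=\Un(1)\times\Un(1)$ to $H=\Un(2)$ yields the \emph{unstable} distribution $\la\pi^+,f_H\ra-\la\pi^-,f_H\ra=\la\theta_1\otimes\theta_2,f_T\ra$, whereas the (stable or unstable) base change to $\GL(2,E)$ yields the \emph{stable} distribution $\la\pi^+,f_H\ra+\la\pi^-,f_H\ra$ on the $H$ side. Your displayed chain of equalities collapses these into one relation, which is not correct; you need both identities simultaneously (together with linear independence of characters) to isolate $\pi^+$ and $\pi^-$ individually and to conclude there are exactly two constituents. Your component-group remark is the right heuristic for why the packet has two members when $\theta_1\neq\theta_2$, but it is not itself the argument.

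Your cuspidality argument also has a gap. Irreducibility of $I_{\GL(2,E)}(\kappa\tilde\theta_1,\kappa\tilde\theta_2)$ says nothing about cuspidality on $H$: an irreducible principal series of $\GL(2,E)$ is not cuspidal, yet the packet on $H$ lifting to it can be. What actually rules out $\pi^\pm$ being constituents of a principal series of $H$ is a direct comparison: one classifies which local packets of $H$ arise as constituents of parabolically induced representations $I_H(\mu)$ and checks (via their lifts to $\GL(2,E)$, or via the reducibility criteria you cite) that none of these coincide with $\rho(\theta_1,\theta_2)$ when $\theta_1\neq\theta_2$. The Deligne--Casselman/central-exponent argument you invoke can be made to work, but only after you separate the stable and unstable identities and track exponents on both, not from the single equality you wrote.
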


\noindent
Note that if we fix the central character of the representations of $\Un(2, E/F)$, 
then the lifting from $\Un(1)\times\Un(1)$ is equivalent to a lifting from $\Un(1)$.

Fix a character $\kappa$ of $E^\times/\N_{E/F}E^\times$ which is nontrivial on $F^\times$.
Defined in \cite{FU2} are two local liftings from $\Un(2, E/F)$ to $\GL(2, E)$, 
called stable and unstable base change, respectively.
These two liftings are related to each other as follows:
If a local packet $\{\rho\}$ of $\Un(2, E/F)$ lifts via stable base change to a representation 
$\tilde{\rho}(\{\rho\})$ of $\GL(2, E)$, 
then $\{\rho\}$ lifts via unstable base change to $\tilde{\rho}(\{\rho\})\otimes\kappa$ (see \cite[p.\ 716]{FU2}).

\begin{proposition}{\rm (Base Change \cite[Sec.\ 7.\ Prop.\ 3]{FU2})}
Each $\sigma$-invariant, cuspidal, irreducible representation of $\GL(2, E)$ 
is the lift of a cuspidal, irreducible representation of $\Un(2, E/F)$ via either the stable or the unstable base change.
\end{proposition}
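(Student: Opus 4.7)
The plan is to imitate, in the simpler rank-one setting, the globalization-plus-trace-formula strategy used throughout Section \ref{sec:localcuspidal} of the main body. Given a $\sigma$-invariant cuspidal $\tilde{\rho}$ of $\GL(2, E)$, I would first construct totally imaginary number fields $F \subset E$ and a place $v_0$ of $F$ realizing the given local quadratic extension, together with a $\sigma$-invariant cuspidal automorphic representation $\tilde{\boldsymbol{\rho}}$ of $\GL(2, \mathbb{A}_E)$ whose component at $v_0$ is $\tilde{\rho}$, whose other ramified components are principal series, and which is elliptic at an auxiliary split place (to make a simple twisted trace formula applicable). The construction parallels Lemma \ref{lemma:globalconstruction}: the simple $\sigma$-twisted trace formula for $\GL(2)/E$ is applied with a matrix coefficient of $\tilde{\rho}$ at $v_0$, a test function supported on elliptic $\sigma$-regular elements at the auxiliary place, and spherical components elsewhere. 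The global $\sigma$-invariance of $\tilde{\boldsymbol{\rho}}$ follows from the local $\sigma$-invariance at $v_0$ together with strong multiplicity one for $\GL(2)$.

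Next I would analyze the central character. Because $\sigma(g) = J\alpha({}^t g^{-1})J^{-1}$ has $\det(\sigma(g)) = \alpha(\det g)^{-1}$, the $\sigma$-invariance of $\tilde{\rho}$ forces $\omega_{\tilde{\rho}} \cdot \alpha\omega_{\tilde{\rho}} = 1$, so $\omega_{\tilde{\rho}}$ is trivial on $\N_{E/F} E^\times$. Restricted to $F^\times$, the character $\omega_{\tilde{\rho}}|_{F^\times}$ therefore factors through the order-two group $F^\times/\N_{E/F} E^\times$ and is either trivial or equal to $\kappa|_{F^\times}$. In the first case I expect $\tilde{\rho}$ to be a stable base-change lift, and in the second case $\tilde{\rho} \otimes \kappa^{-1}$ should be the stable lift (equivalently, $\tilde{\rho}$ is an unstable lift). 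This dichotomy is exactly the one built into the definition of $b_s$ and $b_u$ in Section \ref{sec:discretespectrumpackets}.

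I would then invoke the stabilization of the $\sigma$-twisted trace formula of $\GL(2)/E$, whose elliptic regular part (by Kottwitz--Shelstad, now unconditional after Ngo and Waldspurger) equals a sum of stable trace formulas of the twisted endoscopic groups $\Un(2, E/F)$ and $\Un(1, E/F)\times\Un(1, E/F)$, with matching test functions. Because $\tilde{\boldsymbol{\rho}}$ contributes nontrivially to the discrete part of the $\sigma$-twisted spectral side (multiplicity one for $\GL(2)$ guarantees non-vanishing of its twisted character), the same set of Hecke parameters must contribute to the right-hand side. Cuspidality of $\tilde{\rho}$ at $v_0$ rules out the endoscopic contribution from $\Un(1)\times\Un(1)$ at $v_0$ (its lifts are principal series), so the contribution must come from a discrete spectrum packet $\{\boldsymbol{\rho}\}$ of $\Un(2, E/F)(\mathbb{A}_F)$, twisted by $\kappa$ if we are in the unstable case.

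Finally, using separation by Hecke eigenvalues together with Lemma \ref{lemma:tracelocalinduced} and the linear independence of characters to cancel principal-series and unramified components outside $v_0$, I would extract a local twisted character identity at $v_0$ of the form $\langle \tilde{\rho}, \tilde{f}\rangle_\sigma = \langle \{\rho\}, f_H \rangle$ (or the analogue with $\kappa$) for matching $\tilde{f}$ on $\GL(2, E)$ and $f_H$ on $\Un(2, E/F)$. The main obstacle is showing that the resulting local packet $\{\rho\} := \{\boldsymbol{\rho}_{v_0}\}$ consists of cuspidal representations rather than non-cuspidal discrete series or non-tempered constituents; this is handled, as in the proof of Proposition \ref{prop:tracelocalsteinberg}, by matching central exponents via Casselman's theorem on Jacquet modules (the central exponents of a cuspidal $\tilde{\rho}$ vanish, forcing the matching $\rho$ to be cuspidal as well). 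This completes the identification of $\tilde{\rho}$ (or $\tilde{\rho}\otimes\kappa^{-1}$) as the base change of an irreducible cuspidal $\rho \in \{\rho\}$ on $\Un(2, E/F)$.
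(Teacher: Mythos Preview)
This proposition is not proved in the paper at all: it appears in Appendix~\ref{sec:summaryF}, which is explicitly a summary of results quoted from \cite{FU2} and \cite{F}, and the statement carries the citation \cite[Sec.\ 7.\ Prop.\ 3]{FU2} with no accompanying argument. So there is no ``paper's own proof'' to compare your proposal against.

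That said, your sketch is a reasonable outline of the argument actually carried out in \cite{FU2}: globalize the given $\sigma$-invariant cuspidal $\tilde{\rho}$, compare the $\sigma$-twisted trace formula of $\GL(2)/E$ with the stable trace formula of $\Un(2, E/F)$ (and its endoscopic group), separate by Hecke eigenvalues, and descend to a local character identity. The central-character dichotomy you describe is exactly the mechanism that distinguishes the stable from the unstable lift. One small caution: your claim that ``cuspidality of $\tilde{\rho}$ at $v_0$ rules out the endoscopic contribution from $\Un(1)\times\Un(1)$'' is slightly too quick, since cuspidal dihedral representations of $\GL(2, E)$ can arise from pairs of characters; what actually happens in \cite{FU2} is that such representations are lifts of the unstable packets $\rho(\theta_1,\theta_2)$ on $\Un(2)$, which are still cuspidal on $\Un(2)$, so the conclusion stands but the reasoning is not simply ``rule out $\Un(1)\times\Un(1)$.''
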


\begin{proposition}{\rm (Endoscopy \cite[Part 2. Sec.\ III.2.3.\  Cor.]{F})}\label{prop:u2u3localpacket}
Each local packet $\{\rho\}$ of square-integrable $\Un(2, E/F)$-modules 
lifts to a local packet $\pi(\{\rho\})$ of $\Un(3, E/F)$.  The cardinality of $\pi(\{\rho\})$ is twice that of $\{\rho\}$.
\end{proposition}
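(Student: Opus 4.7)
The plan is to establish this endoscopic lifting via the trace formula technique, paralleling the strategy used throughout the rest of the paper for base change. First I would globalize the local data: construct a totally imaginary number field $F$ with a prime $w$ at which the completions realize the given $p$-adic setting, and a cuspidal automorphic representation of $\Un(2, E/F)(\Af)$ (which I again call $\rho$) whose local component at $w$ lies in the prescribed packet $\{\rho\}$, with all other nonarchimedean local components either spherical or parabolically induced from controlled data. A construction in the spirit of Lemma \ref{lemma:globalconstruction}, using (twisted) pseudo-coefficients and the weak approximation property for ${\rm SU}(2)$, makes this possible whenever $\{\rho\}$ consists of square-integrable representations.

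Next I would invoke the stabilization of the elliptic regular part of the stable trace formula of $\mb{G} = \Un(3, E/F)$, with $\mb{H} = \Un(2, E/F) \times \Un(1, E/F)$ as its elliptic endoscopic group; the Kottwitz-Shelstad recipe assigns a coefficient $\frac{1}{2}$ to the $H$-contribution. Combining this with the simple trace formula technique of \cite[Part 2.\ II.\ 4]{F} and separating by Hecke eigenvalues exactly as in the derivation of \eqref{eq:sepevp}, I would obtain a global identity whose $H$-side is a multiple of $\la \{\rho\}, f_H\ra_S$ and whose $G$-side is a sum of traces of discrete spectrum automorphic representations sharing the Satake parameters of the $e$-lift of $\{\rho\}$. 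Cancelling the parabolically induced and unramified local components at places $v \neq w$ via Lemma \ref{lemma:tracelocalinduced} (or its untwisted analogue) and appealing to the linear independence of characters extracts a local identity
\[
\la \{\rho\}, f_H\ra = \la \pi(\{\rho\}), f\ra
\]
for matching functions, where the right-hand side is by definition the finite sum of characters of those admissible $G$-modules receiving nonzero contributions; stability of this sum under stable conjugation then follows from the identity itself, since $f_H$ may be chosen with arbitrary stable orbital integrals.

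To establish the cardinality statement, I would split on the nature of $\{\rho\}$. If $\{\rho\}$ is a singleton (i.e.\ $\rho$ is cuspidal and not of the form $\rho(\theta_1, \theta_2)$), then its stable base-change lift is a $\sigma$-invariant cuspidal representation of $\GL(2, E)$, whose $(2,1)$-induction to $\GL(3, E)$, twisted appropriately by $\kappa$, is a $\sigma$-invariant parabolically induced representation of $\GL(3, E)$; the classification of local packets of $G$ by their $b$-lifts then forces $\pi(\{\rho\})$ to contain exactly two irreducible representations, one square-integrable and one cuspidal. If instead $\{\rho\} = \rho(\theta_1, \theta_2)$ has cardinality two, its endoscopic lift on $\GL(3, E)$ is the same parabolically induced representation for all three ordered pairs of distinct characters $(\theta_i, \theta_j)$ with $\theta_1\theta_2\theta_3 = \omega$, and the analysis of the resulting fourfold symmetry yields $|\pi(\{\rho\})| = 4$ in analogy with the structure appearing in Proposition C. In both cases $|\pi(\{\rho\})| = 2|\{\rho\}|$.

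The main obstacle, and the delicate part of the argument, is the precise passage from the global trace identity to the local character identity: one must verify that no spurious contributions survive after cancellation of the unramified and induced components, and that the integer coefficients attached to the individual constituents of $\pi(\{\rho\})$ are each $\pm 1$ with the correct signs, so that $\la \pi(\{\rho\}), f\ra$ is genuinely the character sum over a packet. This requires the rigidity theorem for packets of $\Un(3)$ together with a careful analysis of which stable and unstable packets of $\mb{G}(\Af)$ can share Hecke eigenvalues with the globalized $e$-lift of $\{\rho\}$, in the same spirit as the arguments in Lemmas \ref{lemma:traceglobalunstable1} and \ref{lemma:traceglobaluunstable}. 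Once these points are secured, the doubling of cardinality becomes a direct bookkeeping consequence.
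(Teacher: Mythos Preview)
The paper does not prove this proposition. It appears in Appendix~\ref{sec:summaryF}, ``Selected results of \cite{F} and \cite{FU2}'', where it is simply quoted from \cite[Part 2.\ Sec.\ III.2.3.\ Cor.]{F} as background. There is no argument given in the paper; the statement is taken as an input from prior work.

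Your sketch is a plausible outline of how such a result is established --- and is indeed in the spirit of the arguments in \cite{F}: globalize, compare stable trace formulas of $\Un(3)$ and its endoscopic group $\Un(2)\times\Un(1)$, separate by Hecke eigenvalues, and extract local identities. But since the present paper offers no proof to compare against, there is nothing further to say about agreement of approach. If your goal is to supply a self-contained proof, you should be aware that the precise local identities in \cite{F} carry signs (the members of $\pi(\{\rho\})$ are distinguished by characters $\ve(\cdot)\in\{\pm 1\}$, not all $+1$), so the bare identity $\la\{\rho\},f_H\ra=\la\pi(\{\rho\}),f\ra$ you wrote is only one of a pair; the cardinality count emerges from the full system of signed identities rather than from a single equation.
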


\begin{proposition}{\rm (Base Change \cite[Part 2. Sec.\ III.3.9]{F})}\label{prop:u3gl3localpacket}
Each local packet of $\Un(3, E/F)$ lifts to a representation of $\GL(3, E)$.
This lifting defines a one-to-one correspondence between the set of local packets of $\Un(3, E/F)$, and
the set of $\sigma$-stable, $\sigma$-invariant, irreducible representations of $\GL(3, E)$.
\end{proposition}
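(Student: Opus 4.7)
The plan is to deduce this local base-change correspondence from the global lifting theorem of \cite{F} together with a standard globalization argument. First I would set up the relevant local character identity: for an admissible representation $\tilde{\pi}$ of $\GL(3, E)$ with $\sigma$-stable central character, fix a nonzero intertwining operator $A \in \Hom(\tilde{\pi}, \sigma\tilde{\pi})$ normalized by $A^2 = \mathrm{id}$, and define the $\sigma$-twisted character $\tilde{f} \mapsto \tr \tilde{\pi}(\tilde{f}) A$. I would say a local packet $\{\pi\}$ of $\Un(3, E/F)$ \emph{lifts} to $\tilde{\pi}$ if, for all matching functions $f$ and $\tilde{f}$ in the sense of Kottwitz--Shelstad, one has $\sum_{\pi \in \{\pi\}}\tr \pi(f) = \tr \tilde{\pi}(\tilde{f})A$.

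For the direction that each local packet of $\Un(3,E/F)$ lifts, I would argue by globalization. Given a local packet $\{\pi_0\}$ of $\Un(3, E/F)(F)$, construct totally imaginary number fields $\mathbf{F}, \mathbf{E}$ with $[\mathbf{E}:\mathbf{F}]=2$ and a place $v_0$ of $\mathbf{F}$ such that $\mathbf{F}_{v_0} \cong F$, $\mathbf{E}_{v_0} \cong E$; then, by an analog of Lemma \ref{lemma:globalconstruction}, produce a discrete spectrum automorphic representation $\pi$ of $\Un(3, \mathbf{E}/\mathbf{F})(\mathbb{A}_\mathbf{F})$ whose local component at $v_0$ lies in $\{\pi_0\}$ and whose ramified components elsewhere are parabolically induced or cuspidal of singleton-packet type, so that the global $b$-lift to a $\sigma$-invariant discrete spectrum representation $\tilde{\pi}$ of $\GL(3, \mathbb{A}_\mathbf{E})$ is available via Theorem \ref{thm:u3gl3stableglobalpacket}. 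Apply the trace identity \eqref{eq:sepevp} (in the $\sigma$-twisted form for the pair $(\GL(3, \mathbf{E}), \Un(3, \mathbf{E}/\mathbf{F}))$), separate by Hecke eigenvalues, and use linear independence of characters to cancel all local components outside $v_0$; this leaves exactly the local character identity at $v_0$ saying that $\{\pi_0\}$ lifts to $\tilde{\pi}_{v_0}$.

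For the converse -- that every $\sigma$-stable, $\sigma$-invariant, irreducible $\tilde{\pi}_0$ of $\GL(3, E)$ arises in this way -- I would similarly globalize $\tilde{\pi}_0$ to a $\sigma$-invariant discrete spectrum automorphic representation of $\GL(3, \mathbb{A}_\mathbf{E})$ with a controlled set of ramified places (splitting off an auxiliary place that is split in $\mathbf{E}/\mathbf{F}$ to force cuspidality globally, using the trick in the proof of Proposition \ref{prop:traceglobaldiscrete1}(ii)). Then the global theorem of \cite{F} provides a global packet on $\Un(3, \mathbf{E}/\mathbf{F})(\mathbb{A}_\mathbf{F})$ whose $b$-lift is this globalization, and again the separation of eigenvalues in \eqref{eq:sepevp} isolates the local character relation at $v_0$ involving $\tilde{\pi}_0$. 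Bijectivity follows from the identity itself: if two packets $\{\pi\}, \{\pi'\}$ satisfy the same twisted identity with $\tilde{\pi}$, then $\sum_\pi \tr\pi(f) = \sum_{\pi'} \tr\pi'(f)$ for all $f$ whose orbital integrals match some $\tilde{f}$, and since such $f$ exhaust (modulo the singular set) the space of smooth functions on $\Un(3, E/F)$ by the matching theorem of Ngô/Waldspurger, Kazhdan density forces $\{\pi\} = \{\pi'\}$.

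The main obstacle is the usual one in local-to-global arguments: at the dyadic and archimedean auxiliary places one needs enough control on the local packets of the globalization to apply the multiplicity one result of \cite{F} (which requires each dyadic component to lie in a packet containing a parabolically induced constituent), and one needs matching functions whose orbital integrals can be prescribed on the singular locus; this is why the globalization in Lemma \ref{lemma:globalconstruction} takes such pains with archimedean and ramified components, and the same care is needed here. A secondary subtlety is the correct normalization of $A$ so that the scalar in the local character identity comes out to $1$ and not merely a root of unity; as in the proof of Proposition \ref{prop:tracelocalunstable}, one can fix this by absorbing the ambiguity into $A$ once and for all.
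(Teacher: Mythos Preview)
The paper does not prove this statement at all: Proposition~\ref{prop:u3gl3localpacket} sits in the appendix section ``Selected results of \cite{F} and \cite{FU2}'' and is simply quoted from \cite[Part~2, Sec.~III.3.9]{F} as background input. There is no proof in the present paper to compare your proposal against.

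That said, your proposal has a structural problem worth flagging. You invoke Theorem~\ref{thm:u3gl3stableglobalpacket} and the trace identity \eqref{eq:sepevp} to globalize and then localize. But Theorem~\ref{thm:u3gl3stableglobalpacket} is itself a result of \cite{F}, and in that work the global classification and the local packet structure (including Proposition~\ref{prop:u3gl3localpacket}) are developed together: the very \emph{definition} of local packets in \cite{F} is via the twisted local character identities associated with the lifting from $\Un(3,E/F)$ to $\GL(3,E)$. So using the global base-change theorem of \cite{F} as a black box to deduce the local base-change correspondence is circular, or at best a repackaging of what \cite{F} already does internally. Moreover, equation~\eqref{eq:sepevp} in this paper is the $\beta$-twisted identity for the pair $(\Un(3,E'/F'),\Un(3,E/F))$, not the $\sigma$-twisted identity for $(\GL(3,E),\Un(3,E/F))$; you would need the analogous identity from \cite{F} itself, which again presupposes the machinery you are trying to establish.

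If your goal is an independent proof, you would need to run the $\sigma$-twisted trace formula comparison between $\GL(3,E)$ and $\Un(3,E/F)$ from scratch, which is precisely the content of \cite{F}, not of the present paper.
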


\noindent
We customarily let $\tilde{\pi}(\{\pi\})$ 
denote the $\GL(3, E)$-module to which a local packet $\{\pi\}$ of $\Un(3, E/F)$ lifts.

\renewcommand{\labelenumi}{\arabic{enumi}.}
\begin{proposition}{\rm (\cite[Part 2. Sec.\ III.3.9]{F})}\label{prop:u3singletonlocalpacket}
Let $\{\pi\}$ be a local packet of square-integrable representations of $\Un(m, E/F)$ $(m = 2, 3)$.
The following statements are equivalent:
\begin{enumerate}
\item $\{\pi\}$ is a singleton.
\item 
$\{\pi\}$ lifts to a square-integrable, irreducible $\GL(m, E)$-module.
\item $\{\pi\}$ is not the lift of a packet/representation from $\Un(m - 1, E/F)$.
\end{enumerate}
\end{proposition}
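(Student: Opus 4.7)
The plan is to prove the cycle $(1) \Rightarrow (3) \Rightarrow (2) \Rightarrow (1)$, handling $m = 2$ and $m = 3$ in parallel via two inputs: the cardinality-doubling feature of endoscopic lifting (Propositions \ref{prop:u1u2localpacket} and \ref{prop:u2u3localpacket}), and the bijective parametrization of local packets by $\sigma$-stable $\sigma$-invariant $\GL(m, E)$-modules given by base change (Proposition \ref{prop:u3gl3localpacket} for $m = 3$, with its $\Un(2)$-analogue from \cite[Sec.\ 7]{FU2}).

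To establish $(1) \Rightarrow (3)$, I argue contrapositively. If $\{\pi\}$ is an endoscopic lift from $\Un(m-1, E/F)$, then for $m = 3$ Proposition \ref{prop:u2u3localpacket} gives $|\{\pi\}| = 2|\{\rho\}| \geq 2$, while for $m = 2$ Proposition \ref{prop:u1u2localpacket} gives $|\{\pi\}| = 2$; in either case $\{\pi\}$ cannot be a singleton.

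For $(3) \Rightarrow (2)$, let $\tilde{\pi}$ denote the base-change lift of $\{\pi\}$ to $\GL(m, E)$. Square-integrability of the members of $\{\pi\}$ excludes $\tilde{\pi}$ from being one-dimensional. Among the remaining $\sigma$-invariant irreducible options, commutativity of the $L$-homomorphism diagram (Figure \ref{fig:lgroups}) identifies the image under base change of endoscopic lifts from $\Un(m-1, E/F)$ as precisely the fully parabolically induced $\sigma$-invariant $\GL(m, E)$-modules: those of the form $I_{(2,1)}(\kappa\tilde{\rho}, \tilde{\eta})$ for $m = 3$, and those of the form $I(\tilde{\theta}_1\kappa, \tilde{\theta}_2\kappa)$ for $m = 2$. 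Hypothesis (3) excludes these, leaving $\tilde{\pi}$ an irreducible square-integrable representation.

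For $(2) \Rightarrow (1)$, I invoke the character identity defining the base change $\{\pi\} \mapsto \tilde{\pi}$: namely $\sum_{\pi_0 \in \{\pi\}} \la \pi_0, f\ra = \la \tilde{\pi}, \tilde{f}\ra_\sigma$ for matching $f$ and $\tilde{f}$. Each $\pi_0 \in \{\pi\}$ belongs to a unique packet determined by its own base-change image via Proposition \ref{prop:u3gl3localpacket}; bijectivity of that parametrization forces every $\pi_0 \in \{\pi\}$ to lie in the common packet $\{\pi(\tilde{\pi})\}$, and linear independence of irreducible twisted characters on the elliptic set then yields $|\{\pi\}| = 1$. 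The principal obstacle of this program is $(3) \Rightarrow (2)$, which demands an accurate identification of the image of base change as applied to endoscopic versus non-endoscopic square-integrable packets; this requires careful bookkeeping through the $L$-group diagram, though the underlying classification is essentially already in place in \cite{F} and \cite{FU2}.
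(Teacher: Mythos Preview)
The paper does not provide a proof; this proposition sits in Appendix~\ref{sec:summaryF} as a result quoted verbatim from \cite[Part 2.\ Sec.\ III.3.9]{F}, so there is no argument in the paper to compare against. What follows is an assessment of your proposal on its own terms.

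Your $(1) \Rightarrow (3)$ via the contrapositive is correct and clean. Your $(3) \Rightarrow (2)$ is essentially right and, as you acknowledge, leans on the classification in \cite{F} and \cite{FU2}; one minor point is that ``square-integrability of $\{\pi\}$ excludes $\tilde{\pi}$ from being one-dimensional'' understates what is needed---you must rule out \emph{all} nontempered $\tilde{\pi}$, which is done by matching central exponents through the character identity, not by inspection.

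The genuine gap is in $(2) \Rightarrow (1)$. The sentence ``linear independence of irreducible twisted characters on the elliptic set then yields $|\{\pi\}| = 1$'' does not do what you claim. In the identity $\sum_{\pi_0 \in \{\pi\}} \la \pi_0, f\ra = \la \tilde{\pi}, \tilde{f}\ra_\sigma$, the left side is a sum of ordinary $\Un(m)$-characters and the right side is a single $\sigma$-twisted $\GL(m,E)$-character; linear independence on either side places no constraint on the \emph{number} of summands on the left. Your preceding clause---``each $\pi_0 \in \{\pi\}$ belongs to a unique packet determined by its own base-change image''---is circular: individual members do not have base-change images, only the packet does, and that packet is $\{\pi\}$ by hypothesis. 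That $\{\pi(\tilde{\pi})\}$ is a singleton when $\tilde{\pi}$ is square-integrable is exactly the substance of the implication; in \cite{F} it is obtained by a global-to-local argument (trace formula comparison and exhaustion of the endoscopic contributions), not by any formal manipulation of the local identity. If you want to close your cycle using only what the appendix quotes, the honest route is to prove $(2) \Rightarrow (3)$ instead (contrapositive: endoscopic packets lift to properly induced, hence non-square-integrable, $\GL(m,E)$-modules via the $L$-group diagram), and then take $(3) \Rightarrow (1)$---equivalently the singleton statement at the start of Section~3.1---as a black box from \cite{F}.
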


\subsection{Global results}\label{sec:summaryFglobal}
Let $F$, $E$ now be number fields.
In \cite{FU2} and \cite{F},
the global (quasi-)packets of $\Un(2, E/F)$ and $\Un(3, E/F)$ are restricted tensor products of (quasi-)local packets.
The global liftings considered below are uniquely defined by their analogues in the local case.

A (quasi-)packet is said to be {\bf discrete spectrum} if it contains a discrete spectrum automorphic representation.
We say that a discrete spectrum (quasi-)packet is {\bf stable} if each of its members appears with the same multiplicity in the 
discrete spectrum, and {\bf unstable} otherwise.

Each pair of characters $(\theta_1, \theta_2)$ of $\Un(1, E/F)(F)\bs\Un(1, E/F)(\Af)$ 
lifts to a global packet $\rho(\theta_1, \theta_2)$ of $\Un(2, E/F)(\Af)$.  
A discrete spectrum packet of $\Un(2, E/F)(\Af)$ is stable if and only if it is not of the form $\rho(\theta_1, \theta_2)$.

As in the local case,
there are stable and unstable global base-change liftings from $\Un(2, E/F)$ to $\GL(2, \Ae)$.
The unstable lifting is associated with a character $\kappa$ of $\idc{E}/\N_{E/F}\idc{E}$ which is nontrivial on $\idc{F}$.
The stable and unstable liftings are related to each other as follows:
If a global packet $\{\rho\}$ of $\Un(2, E/F)$ lifts via stable base change to an automorphic representation 
$\tilde{\rho}(\{\rho\})$ of $\GL(2, \Ae)$, 
then $\{\rho\}$ lifts via unstable base change to $\tilde{\rho}(\{\rho\})\otimes\kappa$.

\begin{proposition}{\rm(\cite[Sec.\ 5.\ Lemma 8; Sec.\ 7.\ Prop.\ 4]{FU2})}\label{prop:u2gl2globalpacket}
A $\sigma$-invariant, irreducible, discrete spectrum, automorphic representation of $\GL(2, \Ae)$ 
is either the stable or unstable base change of a packet of $\Un(2, E/F)(\Af)$.

A discrete spectrum packet of $\Un(2, E/F)(\Af)$ 
is stable if and only if it lifts to a discrete spectrum automorphic representation of $\GL(2, \Ae)$.
\end{proposition}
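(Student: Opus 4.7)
The plan is to exploit the $\sigma$-twisted Arthur--Selberg trace formula for $\rR_{E/F}\GL(2)$, stabilized via the Kottwitz--Shelstad formula of \cite{KS}. As recalled in Section~\ref{sec:endoscopy}, the $\sigma$-twisted elliptic endoscopic data of $\rR_{E/F}\GL(2)$ are exhausted (up to equivalence) by two copies of $\mb{H} = \Un(2, E/F)$, corresponding to the $L$-homomorphisms $b_s$ and $b_u$ defined in Section~\ref{sec:discretespectrumpackets}; the latter is twisted by the character $\kappa$ of $C_E$ and detects the ``unstable'' part of the $\sigma$-twisted trace formula. Stabilization gives, for matching global test functions $\tilde f, f_s, f_u$, an identity of the schematic form
\[
T_e(\rR_{E/F}\GL(2), \tilde f\times\sigma) = c_s\, ST_e(\mb{H}, f_s) + c_u\, ST_e(\mb{H}, f_u),
\]
with coefficients computed as in \cite{KS}; the existence of matching functions is guaranteed by the fundamental lemma (Ngo) and Waldspurger's transfer. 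Promoting this identity from the elliptic regular part to the full discrete spectrum proceeds via the Iwahori technique of Section~\ref{sec:tfgeometric}: at a chosen nonarchimedean place $u$ that splits in $E$, take the local component of $\tilde f$ to be an Iwahori function vanishing on singular elements, and exploit the isomorphism $\GL(2, E_u) \cong \GL(2, F_u) \times \GL(2, F_u)$ to match discrete spectral terms.

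For the first assertion, fix a finite set of places $S$ containing all ramified places, and apply the resulting separation-by-eigenvalues identity (the $\GL(2)$-analogue of \eqref{eq:sepevp}) to the Hecke data $t(\tilde\rho, S)$ of a given $\sigma$-invariant discrete spectrum $\tilde\rho$. Since multiplicity one holds for $\GL(2)$, the intertwining operator realizing $\sigma$-invariance on the space of $\tilde\rho$ is canonical, so $\tr\tilde\rho(\tilde f\times\sigma)$ is a nonzero distribution. Linear independence of twisted characters then forces the right-hand side to contain a nonzero contribution from some (quasi-)packet $\{\rho\}$ of $\mb{H}(\Af)$ whose weak $b_s$- or $b_u$-lift matches $t(\tilde\rho, S)$. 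Rigidity for $\GL(2)$ identifies this weak lift with $\tilde\rho$ away from $S$; the remaining places are handled by the local character identities for base change from $\Un(2)$ to $\GL(2)$, themselves obtained by an analogous trace formula argument with pseudo-coefficients in the spirit of Section~\ref{sec:localcuspidal}.

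For the second assertion, the dichotomy recalled in Section~\ref{sec:globalpackets} splits discrete spectrum packets of $\mb{H}(\Af)$ into the unstable $\rho(\theta_1, \theta_2)$ and the stable ones, so it suffices to treat these two classes in turn. If $\{\rho\} = \rho(\theta_1, \theta_2)$ with $\theta_1 \neq \theta_2$, its weak $b_s$-lift has the Hecke parameters of a parabolically induced representation $I(\tilde\theta_1, \tilde\theta_2)$ on $\GL(2, \Ae)$, which is never in the discrete spectrum; rigidity for $\GL(2)$ precludes any discrete-spectrum preimage under $b_s$, so a lift to discrete spectrum forces $\{\rho\}$ to be stable. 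Conversely, if $\{\rho\}$ is stable and discrete spectrum, apply the separation-by-eigenvalues identity to $t(\rho, S)$: the contribution of $\{\rho\}$ to the right-hand side is nonzero and, by Hecke-parameter rigidity, cannot be cancelled by any $\rho(\theta_1, \theta_2)$-type packet (whose parameters are of a distinct form). Hence the left-hand side is nonzero and furnishes a $\sigma$-invariant discrete spectrum representation $\tilde\rho$ of $\GL(2, \Ae)$; the local character identities then show that $\{\rho\}$ strongly lifts to $\tilde\rho$.

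The main obstacle is the stabilization-and-extension step: Kottwitz--Shelstad only delivers the identity on the elliptic regular part, and the passage to the full discrete spectrum — without which one cannot separate individual representations by Hecke eigenvalues — requires the Iwahori-function technique borrowed from \cite[Part 2. II.4]{F}, together with the simple-trace-formula comparison at the split place $u$. A secondary subtlety is the canonical normalization of the $\sigma$-intertwining operator on $\tilde\rho$, which relies essentially on multiplicity one for $\GL(2)$; this is precisely the point that forces the parallel argument for $\Un(3)$ in the body of the present paper to be made conditional on the corresponding (still-open) multiplicity one statement.
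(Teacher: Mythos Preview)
The paper does not prove this proposition. It is stated in Appendix Section~\ref{sec:summaryF} (``Selected results of \cite{F} and \cite{FU2}'') purely as a citation of \cite[Sec.\ 5, Lemma 8; Sec.\ 7, Prop.\ 4]{FU2}, with no accompanying proof; it functions in this paper as a known input, not as something established here. So there is no ``paper's own proof'' to compare your proposal against.

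Your sketch is a plausible modernized account of the trace-formula comparison underlying \cite{FU2}, but two points deserve care. First, the original argument in \cite{FU2} predates the Kottwitz--Shelstad formalism you invoke; it proceeds by a direct comparison of the $\sigma$-twisted trace formula of $\GL(2,\Ae)$ with the (stabilized) trace formula of $\Un(2,E/F)$, and the stable/unstable dichotomy emerges from explicit $\kappa$-orbital-integral computations rather than from a \cite{KS}-style list of endoscopic data. Second, your description of the $\sigma$-twisted endoscopic data as ``two copies of $\mb{H}$ corresponding to $b_s$ and $b_u$'' should be checked against the actual classification: $b_s$ and $b_u$ are two $L$-embeddings of the same group differing by the character twist $\kappa$, and whether they constitute genuinely inequivalent endoscopic data in the sense of \cite{KS}, or a single datum with two admissible embeddings, is exactly the kind of bookkeeping that determines the coefficients $c_s,c_u$ in your displayed identity. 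The spectral steps you list (multiplicity one for $\GL(2)$, rigidity, separation by Hecke eigenvalues, linear independence of twisted characters) are the right ingredients once the geometric comparison is in place.
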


\begin{proposition}{\rm (\cite[p.\ 217, 218; Part 2.\ Thm.\ III.5.2.1]{F})}\label{prop:u2u3gl3globalpacket}
Each discrete spectrum global packet $\{\rho\}$ of $\Un(2, E/F)(\Af)$
lifts to an unstable packet $\pi(\{\rho\})$ $($if $\sum_{\rho \in \{\rho\}}\dim\rho = \infty)$,
or a quasi-packet $\pi(\{\rho\})$ $($if $\sum_{\rho \in \{\rho\}} \dim\rho = 1)$,
of $\Un(3, E/F)(\Af)$.
All unstable packets and quasi-packets of $\Un(3, E/F)(\Af)$ are so obtained.
Moreover, $\pi(\rho)$ weakly lifts to the parabolically induced representation 
$I_{(2, 1)}(\tilde{\rho}(\{\rho\})\otimes\kappa, \eta)$ of $\GL(3, \Ae)$, where $\eta$ is a character of $\idc{E}$.
\end{proposition}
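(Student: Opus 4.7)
The plan is to deduce the statement from the $\Un(3)\leftrightarrow\GL(3)$ base-change classification (Theorem \ref{thm:u3gl3stableglobalpacket}), the $\Un(2)\leftrightarrow\GL(2)$ classification (Proposition \ref{prop:u2gl2globalpacket}), and the commutativity of the $L$-group diagram of Figure \ref{fig:lgroups} (restricted to its front face, which encodes the $L$-homomorphisms $e$, $b$, $b_u$, $b_2$). First I would take a discrete spectrum packet $\{\rho\}$ of $\mb{H}(\Af)=\Un(2,E/F)(\Af)$ and form its unstable base-change lift $\tilde{\rho}\otimes\kappa$ on $\GL(2,\Ae)$ via Proposition \ref{prop:u2gl2globalpacket}. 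Using Lemma \ref{lemma:globalcentralchars} I pin down the unique character $\eta$ of $\idc{E}^{F}$ for which the parabolically induced $\tilde{\pi}:=I_{(2,1)}(\tilde{\rho}\otimes\kappa,\eta)$ has the fixed central character $\omega$, and I extend $\eta$ to $\idc{E}$ so that $\tilde{\pi}$ is $\sigma$-invariant; the verification of $\sigma$-invariance reduces to $\kappa\cdot\alpha\kappa=1$ together with the $\sigma$-invariance of $\tilde{\rho}$ (which in turn comes from the defining property of unstable base change).

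Next, by Theorem \ref{thm:u3gl3stableglobalpacket}, the $\sigma$-invariant automorphic representation $\tilde{\pi}$ of $\GL(3,\Ae)$ is the $b$-lift of a (quasi-)packet of $\mb{G}(\Af)=\Un(3,E/F)(\Af)$, which I name $\pi(\{\rho\})$. To identify this as an endoscopic lift from $\mb{H}$ rather than merely a weak base-change source, I would compare the stable trace formula of $\mb{G}$ with the stabilized trace formula identity for its proper elliptic endoscopic group $\Un(2)\times\Un(1)$, using the matching of orbital integrals from \cite{KS} and the Fundamental Lemma. Separating by Hecke eigenvalues (as in Section \ref{sec:globaltf}) and invoking the local endoscopic packet identities summarized in Proposition \ref{prop:u2u3localpacket}, one sees that $\pi(\{\rho\})=\otimes_v'\pi(\{\rho_v\})$ as a restricted tensor product of local lifts, and that it contributes to the terms (G2)–(G4) of Section \ref{sec:DORstf} rather than (G1); hence it is unstable, or a quasi-packet when $\tilde{\rho}$ is one-dimensional (which happens exactly when $\sum_{\rho\in\{\rho\}}\dim\rho=1$, since $\tilde{\rho}(\{\rho\})$ is cuspidal on $\GL(2,\Ae)$ otherwise by Proposition \ref{prop:u2gl2globalpacket}).

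For the converse — that every non-stable discrete spectrum (quasi-)packet of $\mb{G}(\Af)$ arises this way — I would argue contrapositively. If $\{\pi\}$ is a discrete spectrum (quasi-)packet which is not stable, Theorem \ref{thm:u3gl3stableglobalpacket} forces its $b$-lift $\tilde{\pi}$ to be non-cuspidal; the only $\sigma$-invariant non-cuspidal representations of $\GL(3,\Ae)$ occurring discretely in the fine $\chi$-expansion are those of the form $I_{(2,1)}(\tilde{\tau},\eta)$ for a $\sigma$-invariant discrete spectrum representation $\tilde{\tau}$ of $\GL(2,\Ae)$. Proposition \ref{prop:u2gl2globalpacket} then writes $\tilde{\tau}=\tilde{\rho}(\{\rho\})\otimes\kappa$ for a unique packet $\{\rho\}$ of $\mb{H}(\Af)$, closing the loop and simultaneously yielding the weak lift statement $\pi(\{\rho\})\rightsquigarrow I_{(2,1)}(\tilde{\rho}(\{\rho\})\otimes\kappa,\eta)$.

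The hard part will be the spectral side of the trace formula comparison, specifically the precise multiplicity bookkeeping that distinguishes the unstable-packet case ($\tilde{\rho}$ infinite-dimensional, cuspidal) from the quasi-packet case ($\tilde{\rho}$ one-dimensional), since in the latter the constituents of the induced $\tilde{\pi}$ mix residual and Steinberg-type pieces and one must track a Steinberg/non-tempered local dichotomy at every place. Dyadic places add a further complication because the multiplicity one theorem for $\Un(3)$ is only known conditionally there, but the endoscopic construction automatically places the dyadic local components of $\pi(\{\rho\})$ inside local packets containing parabolically induced constituents, which is precisely the regime in which \cite{F} verifies multiplicity one, so the argument does close.
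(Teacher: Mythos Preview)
The paper gives no proof of this proposition. It sits in Section~\ref{sec:summaryF} of the Appendix, whose stated purpose is to summarize results quoted from \cite{FU2} and \cite{F}; the citation \cite[p.\ 217, 218; Part~2.\ Thm.~III.5.2.1]{F} \emph{is} the proof. There is therefore nothing in the present paper to compare your proposal against.

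On the substance of your sketch: it is circular in a way you should be aware of. You propose to deduce Proposition~\ref{prop:u2u3gl3globalpacket} from Theorem~\ref{thm:u3gl3stableglobalpacket} (and Proposition~\ref{prop:u2gl2globalpacket}) as black boxes, but in \cite{F} these statements are not proved in sequence. The base-change classification $\Un(3)\leftrightarrow\GL(3,E)$, the endoscopic lift $\Un(2)\to\Un(3)$, the local packet structure, and the multiplicity formulas are all outputs of a \emph{single} stabilized trace-formula comparison among $\GL(3,E)$, $\Un(3,E/F)$, and $\Un(2,E/F)\times\Un(1,E/F)$; one does not first prove Theorem~\ref{thm:u3gl3stableglobalpacket} and then read off Proposition~\ref{prop:u2u3gl3globalpacket} from it. In particular, the characterization ``unstable $\Leftrightarrow$ lift from $\Un(2)$'' and the bijection ``stable $\Leftrightarrow$ $\sigma$-invariant discrete spectrum on $\GL(3)$'' are two halves of the same theorem. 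Your final paragraph about dyadic places also imports a concern that belongs to the present paper's new base-change results, not to the theory in \cite{F} being cited here.
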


\begin{theorem}
{\rm (\cite[Part 2.\ Thm.\ III.5.2.1]{F})}
\label{thm:u3gl3stableglobalpacket}
Each $\sigma$-invariant, automorphic representation of $\GL(3, \Ae)$ is the lift of an automorphic representation of $\Un(3, E/F)(\Af)$.
The global lifting from $\Un(3, E/F)(\Af)$ to $\GL(3, \Ae)$ 
gives a one-to-one correspondence between the set of stable discrete spectrum packets of $\Un(3, E/F)(\Af)$, 
and the set of $\sigma$-invariant, irreducible, discrete spectrum, automorphic representations of $\GL(3, \Ae)$.
\end{theorem}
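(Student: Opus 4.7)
The plan is to prove Proposition \ref{prop:classification2} in two stages: first produce a (quasi-)packet of $\mb{\rG}(\Af)$ by weak lifting, and then exhibit a $\beta$-invariant member of it in the discrete spectrum, by choosing compatible $\beta$-invariant local constituents and appealing to the known multiplicity formulas.

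\smallskip
\noindent\emph{Stage 1 (weak lifting).} Given a discrete spectrum automorphic representation of $\mb{G}(\Af)$ or $\mb{H}(\Af)$, I would appeal directly to Proposition \ref{prop:weaklift} to produce a (quasi-)packet $\{\pi'\} = \otimes_v' \{\pi'_v\}$ of $\mb{\rG}(\Af)$ into which it weakly lifts via $b_G$ or $e_H$. By the classification of the images of these weak liftings carried out in Section \ref{sec:discretespectrumpackets}, the packet $\{\pi'\}$ falls into one of the familiar families: (a) the stable packet $\{\pi'(\tilde{\pi}')\}$ attached to a $b'$-lifted $\sigma'$-invariant cuspidal (or discretely induced) $\GL(3,\mathbb{A}_{E'})$-module; (b) an unstable packet $\pi'(\rho')$ of size $2$ coming from a non-monomial cuspidal $\rho'$ on $\mb{\rH}$; (c) an unstable packet $\{\pi'_a,\pi'_b,\pi'_c,\pi'_d\}$ coming from a pair $(\theta_1',\theta_2')$; or (d) a quasi-packet $\pi'(\mu')$ attached to a one-dimensional $\mu'$.

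\smallskip
\noindent\emph{Stage 2 (selecting $\beta$-invariant local components).} For each place $v$, the local packet $\{\pi'_v\}$ is exactly one of the local packets analyzed in Section \ref{sec:localtraceidentities}, and each of Propositions \ref{prop:traceglobaldiscrete1}, \ref{prop:unstablebetainvariance}, \ref{prop:uustablebetainvariance}, \ref{prop:n3case}, \ref{prop:tracelocalonedim} (together with Lemma \ref{lemma:tracelocalinduced} for induced local components) furnishes at least one $\beta_v$-invariant representation in $\{\pi'_v\}$. The plan is to choose a preferred $\beta_v$-invariant member $\pi'_{0,v}$ at each $v$: in the singleton cases take the unique member; in the $\{\pi'^+,\pi'^-\}$ cases take $\pi'^+$; in the four-element case $\{\pi'_a,\pi'_b,\pi'_c,\pi'_d\}$ take the unique generic member $\pi'_a$ (which is the only known $\beta_v$-invariant member in the Proposition \ref{prop:n3case} setting, and one of four in the setting of Proposition \ref{prop:uustablebetainvariance}); and in the quasi-packet case $\{\pi'^\times_v,\pi'^-_v\}$ take $\pi'^\times_v$. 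In particular, at almost all $v$ the packet $\{\pi'_v\}$ is either induced or a singleton stable packet, so $\pi'_{0,v}$ is unramified and uniquely determined, which makes $\otimes_v \pi'_{0,v}$ a well-defined irreducible representation of $\mb{\rG}(\Af)$ lying in $\{\pi'\}$ and manifestly $\beta$-invariant by construction.

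\smallskip
\noindent\emph{Stage 3 (discreteness).} It then remains to check that the chosen $\otimes_v\pi'_{0,v}$ actually occurs in the discrete spectrum. For stable packets and for unstable packets of type (b), this is automatic: every member is discrete. For unstable packets of type (c), the multiplicity formula \eqref{eq:multformulauunstable} $m(\pi')=\tfrac14\bigl(1+\sum_{i=1}^3\varepsilon_i(\pi')\bigr)$ gives $m=1$ when $\pi'_v=\pi'_{a,v}$ at every $v$, since $\varepsilon_i(\pi'_a)=1$ for all $i$. The delicate case, and the one I expect to be the main obstacle, is the quasi-packet lift of a one-dimensional $\mu'$. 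There the analogue of Arthur's sign-character multiplicity formula depends on the epsilon factor $\varepsilon(\tilde{\mu}',\kappa')$: when $\varepsilon=+1$ one can take $\pi'_{0,v}=\pi'^\times_v$ at every place and the tensor product is automorphic, but when $\varepsilon=-1$ one must replace $\pi'^\times_v$ by the cuspidal $\pi'^-_v$ at an odd number of places, and for this one must know that $\pi'^-_v$ is itself $\beta_v$-invariant. This is delivered by Proposition \ref{prop:tracelocalonedim}, which gives separate twisted character identities for $\pi'^\times$ and $\pi'^-$ and thereby shows both are $\beta$-invariant; once that is in hand the construction goes through. Putting the three stages together yields a $\beta$-invariant discrete spectrum member of $\{\pi'\}$, completing the proof.
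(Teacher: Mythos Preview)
Your proposal addresses the wrong statement. The theorem you were asked to prove, Theorem~\ref{thm:u3gl3stableglobalpacket}, concerns the base-change lifting $b$ from $\Un(3,E/F)(\Af)$ to $\GL(3,\Ae)$ and is a result quoted from \cite[Part 2, Thm.\ III.5.2.1]{F}; it appears in the Appendix as a background result with no proof given in this paper. Your write-up instead reproduces (quite faithfully) the paper's proof of Proposition~\ref{prop:classification2}, which is about the $\beta$-base-change from $\mb{G}(\Af)$ and $\mb{H}(\Af)$ to $\mb{\rG}(\Af)$ and has nothing to do with the bijection between stable packets of $\Un(3,E/F)$ and $\sigma$-invariant discrete spectrum representations of $\GL(3,\Ae)$.

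A proof of Theorem~\ref{thm:u3gl3stableglobalpacket} would require the $\sigma$-twisted trace formula comparison between $\GL(3,E)$ and its twisted endoscopic groups $\Un(3,E/F)$ and $\Un(2,E/F)\times\Un(1,E/F)$, the local character identities for the lift $b$ (not $b_G$ or $e_H$), and the resulting classification of $\sigma$-invariant automorphic $\GL(3,\Ae)$-modules. None of the propositions you invoke (Propositions~\ref{prop:traceglobaldiscrete1}, \ref{prop:unstablebetainvariance}, \ref{prop:uustablebetainvariance}, \ref{prop:n3case}, \ref{prop:tracelocalonedim}) bear on that problem; they already presuppose Theorem~\ref{thm:u3gl3stableglobalpacket} as input. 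As written, your argument is circular for the stated theorem and simply establishes a different result.
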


\begin{theorem}{\rm (Multiplicity One Theorem \cite[Part 2.\ Cor.\ III.5.2.2(1)]{F})}
\label{thm:multoneu3}
If $\pi$ is a discrete spectrum automorphic representation of $\mb{G}(\Af)$
each of whose dyadic local components lies in a local packet containing a constituent of a parabolically induced representation,
then $\pi$ occurs in the discrete spectrum of $L^2(\mb{G}(F)\bs\mb{G}(\Af))$ with multiplicity one.
\end{theorem}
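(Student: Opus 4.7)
The plan is to deduce multiplicity one for $\mb{G} = \Un(3, E/F)$ by comparing the stable trace formula of $\mb{G}$ with the $\sigma$-twisted trace formula of $\rR_{E/F}\mb{G} = \GL(3, E)$, leveraging the multiplicity one theorem for $\GL(3)$ together with the endoscopic comparison with the trace formula of $\mb{H} = \Un(2, E/F)$. The core input is Theorem \ref{thm:u3gl3stableglobalpacket}, which gives a bijection between stable discrete spectrum packets of $\mb{G}(\Af)$ and $\sigma$-invariant discrete spectrum automorphic representations of $\GL(3, \Ae)$, together with Proposition \ref{prop:u2u3gl3globalpacket} describing the unstable packets and quasi-packets as endoscopic lifts from $\mb{H}$.

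First I would derive an explicit multiplicity formula for each class of (quasi-)packet. For a stable packet $\{\pi\}$ lifting to a $\sigma$-invariant cuspidal $\tilde{\pi}$ of $\GL(3, \Ae)$, the separation-by-eigenvalues argument applied to the identity
\begin{equation*}
\sum_{\pi \in \{\pi\}} m(\pi) \la \pi, f\ra_S = c\, \la \tilde{\pi}, \tilde{f}\times\sigma\ra_S
\end{equation*}
(for matching $f$ and $\tilde{f}$), combined with strong multiplicity one for $\GL(3)$ on the right-hand side and the local character identities of Proposition \ref{prop:u3gl3localpacket}, should force $m(\pi) = 1$ uniformly across the packet. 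For an unstable packet $\pi(\{\rho\})$ lifted from $\mb{H}$, the analogous comparison with the stable trace formula of $\mb{H}$, together with the local endoscopic identities of Proposition \ref{prop:u2u3localpacket}, should yield a multiplicity formula $m(\pi) = \tfrac{1}{2}(1 + \langle s, \pi\rangle)$ for a sign character indexed by the local constituents. The quasi-packet case $\pi(\mu)$ attached to a one-dimensional $\mu$ of $\mb{H}(\Af)$ is similar, with the sign formula governed by the epsilon factor $\varepsilon(\tfrac{1}{2}, \tilde{\mu}\kappa)$.

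The principal obstacle is dyadic: the local character identities underpinning the above comparisons are proved in \cite{F} by a global-to-local method that requires one to ``detect'' the local packet at each place by matching functions that transfer correctly. At dyadic places the full local classification has not been completed for all supercuspidals, but it is available for representations which appear as constituents of parabolically induced representations, since those can be controlled via the induced structure and the Deligne–Casselman theorem. The hypothesis in the theorem is exactly what is needed so that, at every dyadic place $v$, the local packet $\{\pi_v\}$ satisfies the full set of character identities used in the trace formula comparison, allowing the separation-by-eigenvalues argument and the linear independence of twisted characters to be applied without obstruction.

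Once these multiplicity formulas are established, the theorem is immediate: a discrete spectrum $\pi$ satisfying the dyadic hypothesis lies in exactly one (quasi-)packet of $\mb{G}(\Af)$, the multiplicity formula assigns it a value in $\{0, 1\}$, and $\pi$ being in the discrete spectrum rules out $0$. The assumption on the dyadic components is thus the precise technical hypothesis needed to invoke the local character relations at those places; removing it would require an unconditional proof of the local classification of supercuspidal representations of $\Un(3)$ at dyadic places, which is the only missing ingredient for the unqualified multiplicity one theorem.
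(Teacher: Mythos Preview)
The paper does not prove this theorem; it is quoted without proof in Appendix~\ref{sec:summaryF} as a result from \cite[Part 2.\ Cor.\ III.5.2.2(1)]{F}, followed only by the remark that the dyadic restriction reflects a shortcoming in the existing proof. So there is no ``paper's own proof'' to compare your proposal against.

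That said, your outline accurately captures the strategy carried out in \cite{F}: one compares the stable trace formula of $\Un(3, E/F)$ with the $\sigma$-twisted trace formula of $\GL(3, E)$ and with the stable trace formula of $\Un(2, E/F)$, separates by Hecke eigenvalues, and reads off multiplicity formulas packet by packet using the local character identities and multiplicity one for $\GL(3)$. Two small corrections: the sign governing the quasi-packet case is denoted $\ve(\tilde{\mu}, \kappa)$ in \cite{F} and is only \emph{conjecturally} equal to the root number $\ve(\tfrac{1}{2}, \tilde{\mu}\kappa)$ (see the remark on \cite[p.\ 387]{F} in the present paper); and your account of the dyadic obstruction, while in the right spirit, is a sketch rather than a precise identification of which step in \cite{F} fails without the hypothesis.
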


\noindent
{\sc Remark.}
The restriction at the dyadic places is due only to a shortcoming in the
current proof.

\begin{theorem}{\rm (Rigidity Theorem \cite[Part 2.\ Cor.\ III.5.2.2(2)]{F})}
\label{thm:rigidity}
If $\pi$ and $\pi'$ are discrete spectrum representations of $\mb{G}(\Af)$ whose local components $\pi_v$ and $\pi'_v$
are equivalent for almost all places $v$ of $F$, then they lie in the same packet, or quasi-packet.
\end{theorem}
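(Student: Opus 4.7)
The plan is to reduce the rigidity statement for $\mb{G} = \Un(3, E/F)$ to the strong multiplicity one theorem for $\GL(3, \Ae)$, together with the injectivity on the level of (quasi-)packets of the base-change correspondence $b : \{\pi\} \mapsto \tilde{\pi}$ of Figure \ref{fig:lgroups}. The key observation is that the lift $\tilde{\pi}$ of a (quasi-)packet is determined at each unramified place by the local base change of the unramified component, and so two discrete spectrum automorphic representations $\pi$, $\pi'$ of $\mb{G}(\Af)$ with $\pi_v \cong \pi'_v$ almost everywhere will yield $\GL(3, \Ae)$-lifts $\tilde{\pi}$, $\tilde{\pi}'$ which agree at almost all places. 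Strong multiplicity one for $\GL(3)$ will then force $\tilde{\pi} \cong \tilde{\pi}'$, and the injectivity of $b$ on (quasi-)packets will conclude that $\pi$ and $\pi'$ belong to the same (quasi-)packet of $\mb{G}(\Af)$.

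Concretely, I would proceed as follows. First, assign to $\pi$ (resp.\ $\pi'$) its (quasi-)packet $\{\pi\}$ (resp.\ $\{\pi'\}$); by Theorem \ref{thm:u3gl3stableglobalpacket} in the stable case, or by Proposition \ref{prop:u2u3gl3globalpacket} in the unstable packet or quasi-packet case, this lifts weakly to a well-defined automorphic representation $\tilde{\pi}$ of $\GL(3, \Ae)$, which is either $\sigma$-invariant cuspidal or an isobaric representation of the form $I_{(2,1)}(\tilde{\rho}(\{\rho\}) \otimes \kappa, \eta)$. Second, at each unramified place $v$ the Hecke conjugacy class $t(\tilde{\pi}_v)$ is the image of $t(\pi_v)$ under the local $L$-homomorphism $b$; hence $\pi_v \cong \pi'_v$ for almost all $v$ implies $t(\tilde{\pi}_v) = t(\tilde{\pi}'_v)$, and so $\tilde{\pi}_v \cong \tilde{\pi}'_v$ at almost all $v$. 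Third, apply the strong multiplicity one (i.e.\ rigidity) theorem for $\GL(3, \Ae)$, valid in both the cuspidal and isobaric cases via Jacquet--Shalika, to conclude $\tilde{\pi} \cong \tilde{\pi}'$. Finally, Theorem \ref{thm:u3gl3stableglobalpacket} and Proposition \ref{prop:u2u3gl3globalpacket} together imply that the map $\{\pi\} \mapsto \tilde{\pi}$ is injective on (quasi-)packets, so $\{\pi\} = \{\pi'\}$.

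The main obstacle lies in the final injectivity step in the unstable and quasi-packet cases. For stable packets the injectivity is part of the explicit one-to-one correspondence asserted in Theorem \ref{thm:u3gl3stableglobalpacket}. For unstable packets $\pi(\{\rho\})$ of $\mb{G}(\Af)$, one must rule out the possibility that two distinct packets $\{\rho\}$ and $\{\rho''\}$ of $\Un(2, E/F)(\Af)$ produce the same induced $\GL(3, \Ae)$-module yet different packets on $\mb{G}(\Af)$; this requires a careful analysis of the endoscopic lift $e$ and of the twist-by-$\kappa$ ambiguity coming from the dichotomy between stable and unstable base change for $\Un(2)$ recorded in Proposition \ref{prop:u2gl2globalpacket}. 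The subtlest sub-case is that of quasi-packets associated with one-dimensional characters, where $\tilde{\pi}$ is a non-tempered induced representation and the relevant multiplicity formula (Section \ref{sec:globalpackets}) must be combined with the local character identities of Section \ref{sec:localtraceidentities} to recover the quasi-packet uniquely from its Hecke eigenvalues at unramified places. The delicate arithmetic input is that the character $\kappa$, being nontrivial on $C_F$, has even or infinite order, which prevents it from being absorbed into the odd-order twist group and thus forces the fibres of $b$ to remain single (quasi-)packets.
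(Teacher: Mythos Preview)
The paper does not give its own proof of this statement: Theorem \ref{thm:rigidity} appears in Appendix \ref{sec:summaryF} as a quoted result from \cite[Part 2.\ Cor.\ III.5.2.2(2)]{F}, with no argument supplied. So there is no ``paper's proof'' to compare against beyond the original source. That said, your outline is exactly the standard route by which such rigidity statements are obtained, and it is the mechanism behind the corollary in \cite{F}: one pushes the Hecke data forward along $b$ to $\GL(3,\Ae)$, invokes Jacquet--Shalika strong multiplicity one for isobaric automorphic representations, and then pulls back using the injectivity of $b$ on (quasi-)packets furnished by the classification theorem.

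Two remarks on the obstacle you flag. First, your concern in the unstable case is phrased slightly off-target: distinct packets $\{\rho\}$, $\{\rho''\}$ of $\Un(2,E/F)(\Af)$ \emph{can} give the same packet $\pi(\{\rho\}) = \pi(\{\rho''\})$ of $\mb{G}(\Af)$ (this is exactly what happens for the three packets $\rho(\theta_i,\theta_j)$, $i\neq j$), so the right question is not whether $e$ is injective but whether $b$ is injective on (quasi-)packets of $\mb{G}(\Af)$. That injectivity is already contained in the classification recorded in Theorem \ref{thm:u3gl3stableglobalpacket} and Proposition \ref{prop:u2u3gl3globalpacket}: the images of stable packets are the $\sigma$-invariant discrete spectrum representations, while the images of unstable packets and quasi-packets are properly induced, and within each type the correspondence is one-to-one. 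Second, be mindful of a potential circularity: the first step of your argument, ``assign to $\pi$ its (quasi-)packet,'' presupposes that every discrete spectrum automorphic representation of $\mb{G}(\Af)$ lies in some (quasi-)packet. This is a substantive assertion, established in \cite{F} by the full trace-formula comparison, and it logically precedes the rigidity corollary rather than following from it. Once that exhaustion is in hand, your three steps (lift, Jacquet--Shalika, pull back) go through without further difficulty, and the $\kappa$-twist analysis you sketch is not needed separately.
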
%%% TEXEXPAND: END FILE ././_u3bc_final.tex

\end{document}